\tikzstyle{vertex}=[circle,draw=black,fill=black,inner sep=0,minimum size=5pt,text=white,font=\footnotesize]
\theoremstyle{plain}
\newtheorem{theorem}{Theorem}[section]
\newtheorem{claim}[theorem]{Claim}
\newtheorem{lemma}[theorem]{Lemma}
\theoremstyle{definition}
\newtheorem{definition}{Definition}
\newcommand{\wehp}{\textbf{w.e.h.p.}}
\begin{document}
	\begin{frontmatter}[classification=text]


\author[ist]{Istv\'an Tomon}

\begin{abstract}
		We present several results in extremal graph and hypergraph theory of topological nature. First, we show that if $\alpha>0$ and $\ell=\Omega(\frac{1}{\alpha}\log\frac{1}{\alpha})$ is an odd integer, then every graph $G$ with $n$ vertices and at least $n^{1+\alpha}$ edges contains an $\ell$-subdivision of the complete graph $K_t$, where $t=n^{\Theta(\alpha)}$. Also, this remains true if in addition the edges of $G$ are properly colored, and one wants to find a rainbow copy of such a subdivision. In the sparser regime, we show that properly edge colored graphs on $n$ vertices with average degree $(\log n)^{2+o(1)}$ contain rainbow cycles, while average degree $(\log n)^{6+o(1)}$ guarantees rainbow subdivisions of $K_t$ for any fixed $t$, thus improving recent results of Janzer and Jiang et al., respectively.  Furthermore, we consider certain topological notions of cycles in pure simplicial complexes (uniform hypergraphs). We show that if $G$ is a $2$-dimensional pure simplicial complex ($3$-graph) with $n$ $1$-dimensional and at least $n^{1+\alpha}$ 2-dimensional faces, then $G$ contains a triangulation of the cylinder and the M\"obius strip with $O(\frac{1}{\alpha}\log\frac{1}{\alpha})$ vertices. We present generalizations of this for higher dimensional pure simplicial complexes as well.
		
		In order to prove these results, we consider certain (properly edge colored) graphs and hypergraphs $G$ with strong expansion. We argue that if one randomly samples the vertices (and colors) of $G$ with not too small probability, then many pairs of vertices are connected by a short path whose vertices (and colors) are from the sampled set, with high probability.
\end{abstract}
\end{frontmatter}
	
	\section{Introduction}
	
	Extremal graph and hypergraph theory is one of the central areas of combinatorics. Given a family of $r$-uniform hypergraphs (or \emph{$r$-graphs}, for short) $\mathcal{H}$, one is interested in approximating the maximum number of edges in an $r$-graph $G$ on $n$ vertices which avoids a copy of every member of $\mathcal{H}$. This is called the \emph{extremal number} of $\mathcal{H}$. In this paper, we consider the extremal numbers of certain families that are topological in nature, and study so called rainbow variants as well.
	
	\subsection{Robust subdivisions}
	
	A fundamental result of Mader \cite{Mader67} from 1967 states that if $G$ is a graph on $n$ vertices, which contains no subdivision of the complete graph $K_t$, then $G$ has $O_{t}(n)$ edges. Bollob\'as and Thomason \cite{BoTho98}, and independently Koml\'os and Szemer\'edi \cite{KoSze96} further improved this by showing that such a graph has at most $O(t^2 n)$ edges. Since then, numerous variations of this problem have been considered. A particularly interesting direction is when one wants to control the size of the forbidden subdivision. Montgomery \cite{Mont15} showed that the same bounds hold if one forbids all subdivisions of $K_{t}$ of size at most $O_{t}(\log n)$. However, the situation changes if we forbid only constant sized subdivisions. Indeed, in a subdivision of $K_t$ (where $t\geq 3$) in which each edge is subdivided at most $\ell$ times, the girth is at most $3\ell+3$, and it is well known that there exist graphs with $n$ vertices, $n^{1+\Omega(1/\ell)}$ edges, and girth more than $3\ell+3$, see e.g. \cite{ErdSpenc74}. On the other hand, Kostochka and Pyber \cite{KostPyb88} proved that every graph with $n$ vertices and at least $4^{t^2}n^{1+\alpha}$ edges contains a subdivision of $K_t$ on at most $7t^2\log t/\alpha$ vertices, which answered a question of Erd\H{o}s \cite{Erdos71}. This was improved by Jiang \cite{Jiang11}, who showed that any graph with $n>n_0(t,\epsilon)$ vertices and $n^{1+\alpha}$ edges contains a subdivision of $K_t$, in which each edge is subdivided at most $O(1/\alpha)$ times. Further strengthenings were provided by Jiang and Seiver \cite{JiSe12}, and the current state of the art is due to O. Janzer \cite{Janzer21}: if $\ell$ is even, there exists $\epsilon=\epsilon(t,\ell)$ such that every graph with $n>n(\epsilon,t)$ vertices and $n^{1+1/\ell-\epsilon}$ edges contains a subdivision of $K_t$, in which each edge is subdivided exactly $\ell-1$ times. We will refer to this as an \emph{$(\ell-1)$-subdivision}. Simple probabilistic arguments show that this result is optimal up the value of $\epsilon$, which must tend to $0$ as $t$ tends to infinity.
	
	Most of the aforementioned results focused on the case where $t$ is a constant, and do not (seem to) apply if $t=t(n)$ is some rapidly growing function of $n$. In this direction, it follows from the celebrated dependent random choice method \cite{FoxSud11} that $n$-vertex dense graphs contain a $1$-subdivision of $K_t$ for some $t=\Omega(\sqrt{n})$. More precisely, Alon, Krivelevich and Sudakov \cite{AKS03} proved that any graph with $n$ vertices and at least $\delta n^{2}$ edges contains a 1-subdivision of $K_t$, where $t=\delta n^{1/2}/4$. See also \cite{FoxSud11} for a very short proof of a slightly weaker result. Note that this result is no longer meaningful if $G$ has $o(n^{3/2})$ edges, and simple probabilistic arguments show that for every $\epsilon>0$ there are graphs with $n^{3/2-\epsilon}$ edges containing no $1$-subdivision of $K_t$ for $t\geq t_0(\epsilon)$.
	
	In an attempt to unify many of the aforementioned results, we investigate the problem of finding $\ell$-subdivisions of $K_t$, where $\ell$ is constant and $t=t(n)$ might grow rapidly. But what kind of result can one hope for? Consider graphs $G$ with $n$ vertices and roughly $n^{1+\alpha}$ edges. If $G$ is the union of complete balanced bipartite graphs of size $n^{\alpha}$, then $G$ contains no subdivision of $K_t$ for $t>n^{\alpha/2}$. Also, there are graphs $G$ with $n$ vertices and $n^{1+\alpha}$ edges having girth $\Omega(1/\alpha)$. Therefore, a reasonable conjecture to make is that such graphs contain $\ell$-subdivisions of $K_t$ with $t=n^{\Theta(\alpha)}$ and $\ell=\Omega(1/\alpha)$. Our main theorem shows that this conjecture is almost true.
	
	\begin{theorem}\label{thm:robust}
		There exist constants $c_1,c_2>0$ such that the following holds. Let $\alpha\in (0,1/2)$, $\ell\geq \frac{c_1}{\alpha}\log\frac{1}{\alpha}$ odd, and $n>n_0(\alpha,\ell)$. Let $G$ be a graph on $n$ vertices with at least $n^{1+\alpha}$ edges. Then  $G$ contains the $\ell$-subdivision of $K_t$, where $t\geq n^{c_2\alpha}$.
	\end{theorem}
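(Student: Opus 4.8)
The plan is to run the standard strategy for finding subdivisions in graphs with many edges—pass to an expander-like subgraph, then wire vertices together by short paths—but to make it \emph{robust} enough to force short (length exactly $\ell$) paths between many pairs simultaneously. First I would use the standard argument that a graph with $n$ vertices and $n^{1+\alpha}$ edges contains a subgraph $H$ with some stronger expansion guarantee: every subset $S$ with $|S|\le |V(H)|/2$ satisfies $|N(S)|\ge \beta|S|$ for $\beta$ polynomial in the average degree, and $H$ still has average degree $\ge n^{\Omega(\alpha)}$. (This is the Koml\'os--Szemer\'edi/Montgomery-type sparsifying step; one may need to iterate to get the quantitative form of expansion that the path-finding step consumes.) The key point is that the expansion factor $\beta$ should be roughly $n^{c\alpha}$ for a small constant $c$, which is what ultimately produces $t=n^{c_2\alpha}$.

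The second step—and the one I expect to be the main obstacle—is to implement the ``robust short path'' idea advertised in the abstract: randomly sample a vertex set $R\subseteq V(H)$ with probability $p$ not too small (something like $p=n^{-O(\alpha)}$), and show that with high probability, for \emph{many} pairs $u,v$ of vertices, there is a path of length \emph{exactly} $\ell$ from $u$ to $v$ whose internal vertices all lie in $R$. The natural approach is a BFS/ball-growing argument: starting from $u$ and growing within $R$, expansion of $H$ restricted to the sampled set forces the $j$-th neighborhood to grow by a factor $\approx p\beta$ per step, so after $\approx \ell/2$ steps the reachable set within $R$ has size $\ge \sqrt{n}$ (say), provided $p\beta > 1$, i.e. $\ell=\Omega(\frac{1}{\alpha}\log\frac1\alpha)$ is exactly the number of doubling steps needed to get from constant size to polynomial size. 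Doing the same from $v$ and intersecting the two half-balls gives a path of length $\le \ell$; parity of $\ell$ (oddness) and the freedom to pad paths using short cycles or detours inside the expanding ball let one fix the length to be exactly $\ell$. The delicate part is handling dependencies: the events ``$u$ and $v$ are $R$-connected'' are not independent across pairs, and when we later want a rainbow/vertex-disjoint system of such paths we must ensure that using up some sampled vertices for one path does not destroy connectivity for later pairs—this is typically handled by keeping the ball sizes polynomially large (so deleting $\mathrm{poly}(t)$ vertices is negligible) and by a union bound over the $\binom{t}{2}$ pairs.

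With the robust connectivity lemma in hand, the third step is to assemble the subdivision. Pick $t=n^{c_2\alpha}$ ``branch vertices'' $v_1,\dots,v_t$ from the high-degree part of $H$ (greedily, so that they each have $\ge t$ private neighbors to start paths from), set aside a random reservoir $R$, and then process the $\binom{t}{2}$ pairs one at a time: for the pair $\{v_i,v_j\}$, use the lemma to find a length-$\ell$ path between a fresh private neighbor of $v_i$ and a fresh private neighbor of $v_j$ through the still-unused portion of $R$, delete its internal vertices from $R$, and continue. Since each path uses only $\ell-1$ internal vertices and the connectivity lemma tolerates the removal of $\mathrm{poly}(t)\ll |R|$ vertices, all $\binom{t}{2}$ paths can be routed disjointly; together with the branch vertices they form the desired $\ell$-subdivision of $K_t$. (To make the path lengths exactly $\ell$ rather than ``at most $\ell$,'' one either builds the lemma to deliver exact length using the parity hypothesis, or one absorbs the slack by lengthening short paths with detours in the reservoir, which is where oddness of $\ell$ is convenient.) The constants $c_1,c_2$ come out of balancing the expansion factor $\beta=n^{\Theta(\alpha)}$ against the number $\Theta(\frac{1}{\alpha}\log\frac1\alpha)$ of expansion steps and the sampling probability $p$.
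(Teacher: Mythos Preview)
Your high-level strategy (pass to an expanding subgraph, prove a ``robust short path through a random sample'' lemma, then assemble the subdivision) is exactly the paper's, and your use of $\alpha$-maximality / Koml\'os--Szemer\'edi-type expansion is on target. The genuine gap is in how you get from the path lemma to the subdivision. Your meet-in-the-middle step asserts that two half-balls of size $\ge \sqrt{n}$ must intersect, but two $\sqrt{n}$-sets in an $n$-set need not meet; to force intersection you would have to grow each ball past $n/2$, and with expansion factor $(n/|B|)^{\Theta(\alpha)}$ the growth slows as $|B|\to n$, so reaching $n/2$ requires $\ell=\Omega(\alpha^{-1}\log\log n)$ steps---which depends on $n$ and violates the hypothesis $\ell=O(\alpha^{-1}\log\alpha^{-1})$. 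Consequently your greedy assembly (pick branch vertices first, then connect each pair through the remaining reservoir) is not justified: you have no guarantee that an arbitrary prescribed pair is connectable by a length-$\ell$ sampled path.

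The paper resolves both issues with two ideas you are missing. First, instead of a single sample it takes a random \emph{partition} $V(H)=U_1\cup\cdots\cup U_s$ with $s=n^{\Theta(\alpha)}$, and applies the path lemma inside each $U_i$; this makes paths through different $U_i$ automatically internally disjoint, so no delicate ``resilience to deletion'' argument is needed. Second, since the lemma only gives that from every $v$ one reaches $n^{1-\tau}$ vertices (with $\tau=\Theta(\alpha)$), the paper does \emph{not} try to connect prescribed pairs. It instead records, for each pair $(x,y)$, how many indices $i$ admit a $(U_i)$-path of length $\ell/2$ from $x$ to $y$; a counting argument shows the graph $L$ of pairs with at least $n^{\Theta(\alpha)}$ such indices has $\ge m^{2-\tau}/2$ edges. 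One then applies the Alon--Krivelevich--Sudakov theorem to find a $1$-subdivision of $K_t$ \emph{inside $L$} (this is where the branch vertices get chosen, after connectivity is established), assigns distinct indices $i$ to its $\binom{t}{2}<n^{\Theta(\alpha)}$ pairs, and replaces each edge of the $1$-subdivision by a length-$\ell/2$ path through its own $U_i$. The concatenation along the $1$-subdivision yields paths of length exactly $\ell$ between branch vertices (this is also how parity is handled, not by padding), and disjointness is free from the partition. Your proposal would go through if you replace the greedy assembly by this ``build a dense auxiliary graph of robustly connected pairs and apply AKS'' step.
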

	
	Note that the condition of $\ell$ being odd is necessary, as the complete bipartite graph does not contain a subdivision of the triangle, where each edge is subdivided an even number of times. Also, in exchange for getting worse bounds on $\ell$, we can replace $t=n^{\Theta(\alpha)}$ with $t=n^{\alpha/2-\epsilon}$ for every $\epsilon>0$ (assuming $n$ is sufficiently large with respect to $\epsilon$ as well). See Theorem \ref{thm:robust_formal} for the formal statement.
	
	The proof of Theorem \ref{thm:robust} proceeds via studying a family of graphs, which we refer to as \emph{$\alpha$-maximal}. We say that a graph $G$ is $\alpha$-maximal if $G$ maximizes the quantity $e(H)/v(H)^{1+\alpha}$ among its subgraphs $H$. These graphs naturally appear in connection to degenerate Tur\'an problems: indeed, if one tries to prove that  every graph $G$ with $n$ vertices and $\Omega(n^{1+\alpha})$ edges contains some forbidden graph $F$, it is enough to prove this in case $G$ is $\alpha$-maximal, as we can always pass to the subgraph $H$ of $G$ maximizing $e(H)/v(H)^{1+\alpha}$. We remark that $\alpha$-maximality is a special case of $\psi$-maximality, which was introduced by Koml\'os and Szemer\'edi \cite{KoSze96}, however, we are unaware whether this instance was studied before.
	
	We show that if $G$ is  $\alpha$-maximal, then $G$ has extraordinary vertex- and edge-expansion properties. In particular, if one samples the vertices of $G$ with probability $d(G)^{-O(1)}$ (where $d(G)$ denotes the average degree), then with high probability, many pairs of vertices are connected by a short path, whose internal vertices are from the sample. A similar result remains true if the graph is properly edge colored, and we sample colors as well.
	
	\subsection{Rainbow cycles and subdivisions}
	
	A \emph{proper coloring} of the edges of a graph is a coloring, in which no two neighboring edges receive the same color. A \emph{rainbow copy} of a graph in an edge colored graph is a copy in which no two edges are colored with the same color. The study of rainbow Tur\'an problems was initiated by Keevash, Mubayi, Sudakov, and Verstra\"ete \cite{KMSV07}. The general question under consideration is that given a graph $H$, or more generally a family of graphs $\mathcal{H}$, at most how many edges can a properly edge colored graph on $n$ vertices have if it does not contain a rainbow copy of a member of $\mathcal{H}$. This is called the \emph{rainbow extremal number} of $\mathcal{H}$.
	
	Surprisingly, the rainbow extremal number of cycles are already not well understood. It is an elementary result in graph theory that every graph with $n$ vertices and $n$ edges contains a cycle. In contrast, there are properly edge colored graphs with $n$ vertices and average degree $\Omega(\log n)$ containing no rainbow cycles. Indeed, consider the graph of the hypercube $Q_m$, that is, $V(Q_m)=\{0,1\}^{m}$, and two vertices are joined by an edge if they differ in one coordinate. Coloring an edge with color $i$ if its end-vertices differ in coordinate $i$ gives a proper coloring with no rainbow cycles. On the other hand, Das, Lee, and Sudakov \cite{DLS13} proved that average degree $e^{(\log n)^{1/2+o(1)}}$ guarantees a rainbow cycle, which was further improved to $O((\log n)^{4})$ by O. Janzer \cite{Janzer20}. By studying properly edge colored $\alpha$-maximal graphs with $\alpha\approx 1/\log n$, we are able to get within a log factor of the lower bound.
	
	\begin{theorem}\label{thm:rainbow_cycle}
		Let $G$ be a properly edge colored graph with $n$ vertices containing no rainbow cycles. Then $G$ has at most  $n(\log n)^{2+o(1)}$ edges.
	\end{theorem}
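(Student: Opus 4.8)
I would argue by contrapositive: assuming $G$ has no rainbow cycle, it suffices to prove that for every fixed $\epsilon>0$ one has $e(G)<n(\log n)^{2+\epsilon}$ once $n>n_0(\epsilon)$, which is exactly what the bound $e(G)\le n(\log n)^{2+o(1)}$ asserts. So fix $\epsilon>0$, suppose for contradiction that $e(G)\ge n(\log n)^{2+\epsilon}$, and set $\alpha=1/\log n$. Pass to a subgraph $H\subseteq G$ maximizing $e(H)/v(H)^{1+\alpha}$; then $H$ is $\alpha$-maximal, inherits the proper edge coloring of $G$, and contains no rainbow cycle. Writing $d(H)$ for the average degree of $H$, we have $d(H)=2\bigl(e(H)/v(H)^{1+\alpha}\bigr)v(H)^{\alpha}\ge 2\bigl(e(G)/n^{1+\alpha}\bigr)v(H)^{\alpha}\ge 2(\log n)^{2+\epsilon}(v(H)/n)^{\alpha}$, and since $n^{\alpha}=O(1)$ for $\alpha=1/\log n$ the factor $(v(H)/n)^{\alpha}\ge n^{-\alpha}$ is bounded below by a constant; hence $d(H)=\Omega((\log n)^{2+\epsilon})$ no matter how small $v(H)$ is. By the standard fact that deleting a minimum-degree vertex cannot increase $e(H)/v(H)^{1+\alpha}$, the minimum degree $\delta(H)$ is $\Omega((\log n)^{2+\epsilon})$ as well.

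Next I would apply the expansion/sampling property of properly edge colored $\alpha$-maximal graphs described above to $H$, in the form: if $C'$ is a uniformly random subset of the color set --- taken at rate $1/2$, which is allowed since a larger sample of colors only helps --- then with high probability all but an $o(1)$-fraction of pairs $u,v$ are joined by a rainbow path of length $O(1/\alpha)=O(\log n)$ all of whose colors lie in $C'$. I would run this twice, using the two halves $C_1',C_2'$ of a uniformly random bipartition $C=C_1'\sqcup C_2'$ of the color set. A union bound shows that with high probability both conclusions hold simultaneously, so some pair $u,v$ admits a rainbow path $P_1$ from $u$ to $v$ with colors in $C_1'$ and a rainbow path $P_2$ from $u$ to $v$ with colors in $C_2'$, each of length $O(\log n)$. (It is here that $\delta(H)\gg\log n$ enters: it is what leaves room for such rainbow paths to be found using only one of the two color halves.)

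Now $C_1'\cap C_2'=\emptyset$ and both paths have at least one edge, so $P_1\ne P_2$; since the union of two distinct paths with the same endpoints contains a cycle, $P_1\cup P_2$ contains a cycle $C$, necessarily of length $O(\log n)$. Every edge of $C$ lies on $P_1$ or on $P_2$; the edges on $P_1$ have pairwise distinct colors lying in $C_1'$, those on $P_2$ have pairwise distinct colors lying in $C_2'$, and $C_1'\cap C_2'=\emptyset$, so all edges of $C$ receive distinct colors. Thus $C$ is a rainbow cycle in $H$, hence in $G$ --- contradicting our assumption. Therefore $e(G)<n(\log n)^{2+\epsilon}$ for $n$ large; as $\epsilon>0$ was arbitrary, $e(G)=n(\log n)^{2+o(1)}$, and keeping track of the lower-order slack actually demanded by the expansion statement yields an explicit $(\log n)^{o(1)}$ error factor.

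The main obstacle is the expansion/sampling statement itself, which I treat here as a black box; proving it --- that $\alpha$-maximal graphs, and their properly edge colored analogues, robustly retain short connecting paths after random sparsification --- is the technical heart, and is exactly where passing to an $\alpha$-maximal subgraph earns its keep, since $G$ itself need not expand. Beyond that, the one point requiring care is the scale of $\alpha$: choosing $\alpha=\Theta(1/\log n)$ is what keeps $\delta(H)$ polylogarithmically large \emph{uniformly} in $v(H)$ (a larger $\alpha$ would let the degree collapse when $v(H)\ll n$), while still leaving the guaranteed path length $O(1/\alpha)=O(\log n)$ comfortably above the $O(\log v(H)/\log\log n)$ that an expander of density $(\log n)^{2+\epsilon}$ needs to reach almost all vertices. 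The remaining estimates are routine.
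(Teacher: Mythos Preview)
Your approach is essentially the paper's: pass to an $\alpha$-maximal subgraph with $\alpha\approx 1/\log n$, randomly partition the colors, use the expansion lemma (Lemma~\ref{lemma:randompath}) to find rainbow paths from a vertex using each color class separately, and combine two such paths into a rainbow circuit. The reduction and the final ``union of two color-disjoint rainbow paths contains a rainbow cycle'' step are exactly as in the paper.

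There is, however, a genuine gap in your instantiation. You state the black box as ``all but an $o(1)$-fraction of pairs are joined by a $C'$-rainbow path,'' and then a two-class partition $C=C_1'\sqcup C_2'$ would indeed suffice. But that is not what the lemma delivers: Lemma~\ref{lemma:randompath} only guarantees that from a fixed vertex $v$ one reaches at least $m^{1-\tau}$ vertices, and the constraint $\tau\ge 1/\log_3 m$ caps this at $m/3$. With two color classes you therefore get $|B_1(v)|,|B_2(v)|\ge m/3$, and $2\cdot m/3<m$, so $B_1(v)\cap B_2(v)$ may well be empty; equivalently, only about a third of pairs are guaranteed to be connected via each class, and these thirds need not overlap. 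The paper handles exactly this by taking \emph{four} color classes at rate $p_c=1/4$: then $\sum_{i=1}^4 |B_i(v)|\ge 4m/3>m$ forces two of the $B_i(v)$ to intersect in some $w\ne v$, and the two paths to $w$ give the rainbow circuit. So the fix is trivial---replace your bipartition by a four-way partition---but as written your pigeonhole step does not go through.
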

	
	We remark that our method is very different from the approach of O. Janzer \cite{Janzer20}, it is closer in spirit to that of \cite{DLS13}.
	
	A rainbow variant of the forbidden subdivision problem was recently proposed by Jiang, Methuku, and Yepremyan \cite{JMY21}. They proved that if a properly edge colored graph $G$ on $n$ vertices contains no rainbow subdivision of $K_t$ for some fixed $t$, then $G$ has at most $ne^{O(\sqrt{\log n})}$ vertices. This upper bound was subsequently improved to $n(\log n)^{60}$ by Jiang, Letzter, Methuku, and Yepremyan \cite{JLMY21}. A simple consequence of our tools is the following further improvement.
	
	\begin{theorem}\label{thm:rainbow_subdivision}
		Let $t$ be a positive integer, and let $G$ be a properly edge colored graph with $n$ vertices containing no rainbow subdivision of $K_t$. Then $G$ has at most  $n(\log n)^{6+o(1)}$ edges.
	\end{theorem}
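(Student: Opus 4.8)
The plan is to deduce the theorem from the rainbow version of the robust subdivision result. Fix $\epsilon>0$; it is enough to show that if $n$ is large in terms of $t$ and $\epsilon$, then every properly edge coloured graph $G$ on $n$ vertices with at least $n(\log n)^{6+\epsilon}$ edges contains a rainbow subdivision of $K_t$, as this gives the bound $n(\log n)^{6+o(1)}$. Set $\alpha:=(6+\epsilon)\log\log n/\log n$, so that $\alpha\in(0,1/2)$ for $n$ large and $n^{1+\alpha}=n(\log n)^{6+\epsilon}\le e(G)$. I would then run $G$, with this value of $\alpha$, through the machinery behind Theorem~\ref{thm:robust}: pass to an $\alpha$-maximal subgraph, use its strong rainbow expansion, and sample vertices and colours in order to join pairs of vertices by short rainbow paths.

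Let $\ell$ be the smallest odd integer with $\ell\ge\frac{c_1}{\alpha}\log\frac1\alpha$. Since $1/\alpha$ is of order $\log n/\log\log n$ and $\log(1/\alpha)$ is of order $\log\log n$, we get $\ell=\Theta(\log n)$. Feeding these $\alpha$ and $\ell$ into the (formal, rainbow) version of Theorem~\ref{thm:robust}, namely Theorem~\ref{thm:robust_formal}, produces a rainbow $\ell$-subdivision of $K_{t'}$ in $G$ with $t'\ge n^{c_2\alpha}=(\log n)^{c_2(6+\epsilon)}$. One must check that the hypothesis $n>n_0(\alpha,\ell)$ is met even though $\alpha$ and $\ell$ here depend on $n$: this holds for $n$ large because the average degree $n^\alpha=(\log n)^{6+\epsilon}$ dominates the relevant (sixth) power of the length scale $\ell=\Theta(\log n)$. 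This last point is exactly what pins the exponent in the statement at $6$: the expansion argument needs $n^\alpha$ to beat roughly $(\log n)^6$, which forces $\alpha$ to be at least about $6\log\log n/\log n$, hence $e(G)$ to be at least about $n(\log n)^6$.

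Finally, $t'=(\log n)^{\Omega(1)}\to\infty$ while $t$ is fixed, so $t'\ge t$ for $n$ large; discarding all but $t$ of the branch vertices of the rainbow $K_{t'}$-subdivision, together with the $\binom{t}{2}$ internally disjoint rainbow paths among them, leaves a rainbow subdivision of $K_t$, as required.

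The serious work lies entirely in the earlier sections, on which this deduction rests: the reduction to $\alpha$-maximal graphs, the proof that such graphs expand extraordinarily robustly, and the sampling lemma asserting that after deleting a sparse random set of vertices and colours most remaining pairs are still joined by short rainbow paths — the robustness being precisely what permits routing the $\binom{t}{2}$ internally disjoint, mutually colour-disjoint paths that make up the subdivision. Given that, the present argument is just a tuning of parameters, and the one choice to get right is $\alpha$ roughly $\log\log n/\log n$, calibrated so that the degree hypothesis $n^\alpha\ge(\log n)^6$ of the expansion lemma holds (this fixes the exponent $6$) while $\ell$ stays $\Theta(\log n)$ and $t'=n^{\Theta(\alpha)}$ still tends to infinity. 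Alternatively, one can skip Theorem~\ref{thm:robust_formal} and apply the underlying rainbow connectivity lemma directly to an $\alpha$-maximal subgraph of $G$, select $t$ pairwise-connected branch vertices within the dense ``good pairs'' graph it provides, and route the $\binom{t}{2}$ paths one by one, each avoiding the $O(t^2\log n)$ vertices and colours used before it; for fixed $t$ this is slightly cleaner.
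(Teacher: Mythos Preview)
Your main route---a black-box application of Theorem~\ref{thm:rainbow_robust} (you write Theorem~\ref{thm:robust_formal}, but that is the non-rainbow version) with $\alpha=(6+\epsilon)\log\log n/\log n$---does not go through as stated. The hypothesis $n>n_0(\alpha,\ell)$ in Theorem~\ref{thm:rainbow_robust} is an unspecified function of $\alpha$, and the theorem is proved via Lemma~\ref{lemma:randompath_v2}, which the paper explicitly formulates only for \emph{constant} $\alpha$ (``In case $\alpha$ is a constant\dots''). Your justification that ``$n^\alpha$ dominates the sixth power of $\ell$'' is a heuristic, not a verification of any actual condition appearing in that proof; to make this work you would have to reopen the proof of Theorem~\ref{thm:rainbow_robust} and redo the analysis with $\alpha\to 0$, at which point you are no longer using it as a black box.

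What the paper actually does is precisely your ``alternative'' in the last paragraph, carried out in full: take $\alpha=1/\log_2 n$ (not your $\alpha$), pass to an $\alpha$-maximal subgraph $H$, and apply Lemma~\ref{lemma:randompath} directly with $p=p_c=1/s$ where $s=(\log n)^{1+\epsilon/10}$. The exponent $6$ arises from the hypothesis $d>C\lambda^7(\alpha^{3}\cdot p\cdot p_c^{2})^{-1}m^{\alpha}$ of that lemma: $\alpha^{-3}$ costs three powers of $\log n$ and $(p\,p_c^{2})^{-1}=s^{3}$ costs three more, while $m^{\alpha}\le 2$ and $\lambda^{7}=(\log n)^{o(1)}$. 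This is not the same as ``$\ell^6$''. One then builds the auxiliary graph $J$ on $V(H)$ whose edges are pairs joined by at least $s/6$ internally disjoint, mutually colour-disjoint short rainbow paths; $J$ has minimum degree $\Omega(m)$, so Lemma~\ref{lemma:1subdivision} gives a $1$-subdivision of $K_t$ in $J$, and since $t^{2}\ell=o(s)$ the $\binom{t}{2}$ paths can be routed greedily. Your sketch of this alternative is correct in outline, but it is the actual proof, not a shortcut around it.
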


	Finally, we present a rainbow variant of Theorem \ref{thm:robust} as well.
	
	\begin{theorem}\label{thm:rainbow_robust}
		There exist constants $c_1,c_2>0$ such that the following holds. Let $\alpha\in (0,\frac{1}{2})$, $\ell\geq \frac{c_1}{\alpha}\log\frac{1}{\alpha}$ odd, and $n>n_0(\alpha,\ell)$. Let $G$ be a properly edge colored graph on $n$ vertices with at least $n^{1+\alpha}$ edges. Then  $G$ contains a rainbow $\ell$-subdivision of $K_t$, where $t\geq n^{c_2\alpha}$.
	\end{theorem}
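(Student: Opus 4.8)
The plan is to run the proof of Theorem~\ref{thm:robust} essentially unchanged, but working throughout with the properly edge coloured versions of its ingredients — the same colour‑aware sampling machinery that powers Theorems~\ref{thm:rainbow_cycle} and~\ref{thm:rainbow_subdivision} — and, in the final assembly, splitting the colour set into private classes alongside the vertex set. As for Theorem~\ref{thm:robust}, the first move is to reduce to the case where $G$ is $\alpha$‑maximal; this reduction also guarantees that $N:=v(G)$ is polynomial in $n$, that every vertex of $G$ has degree at least $\tfrac12 N^{\alpha}$, that $G$ uses at least $N^{\alpha}$ colours, and that $G$ has the strong robust vertex‑, edge‑ and colour‑expansion established earlier. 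The one topological input needed is a \emph{rainbow connectivity lemma}: there is a small absolute constant so that, with $p=N^{-\Theta(\alpha)}$, if $W$ is a $p$‑random subset of $V(G)$ and $C$ a $p$‑random subset of the colour set, chosen independently, then for any two ``compatible'' vertices $u,v$ (if $G$ is bipartite, any two vertices in the same part; otherwise any two vertices), $G$ contains, with probability $1-\exp(-N^{\Omega(\alpha)})$, a rainbow $u$–$v$ path of length exactly $\ell+1$ all of whose internal vertices lie in $W$ and all of whose colours lie in $C$. Note $\ell+1$ is even since $\ell$ is odd; this parity hypothesis is necessary for exactly the reason given after Theorem~\ref{thm:robust}.

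Granting this lemma, the subdivision is assembled in the standard way. Fix $t=\lfloor N^{c_2'\alpha}\rfloor$ with $c_2'$ small, and pick any $t$ pairwise‑compatible vertices $u_1,\dots,u_t$ (all in one part if $G$ is bipartite) as branch vertices. Partition a $p$‑random set $W\subseteq V(G)\setminus\{u_1,\dots,u_t\}$ into $\binom t2$ classes $W_{ij}$ and a $p$‑random colour set $C$ into $\binom t2$ classes $C_{ij}$, one of each per pair, so that every $W_{ij}$, resp.\ $C_{ij}$, behaves like a $p''$‑random subset with $p''=N^{-\Theta(\alpha)}$ (the constant in the exponent grows only by $O(c_2')$, hence stays small). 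For a fixed pair $\{i,j\}$ the rainbow connectivity lemma applied with sampled sets $W_{ij},C_{ij}$ fails to produce a rainbow $u_i$–$u_j$ path of length $\ell+1$ with internal vertices in $W_{ij}$ and colours in $C_{ij}$ with probability at most $\exp(-N^{\Omega(\alpha)})\ll t^{-2}$; a union bound over the $\binom t2$ pairs gives, with positive probability, such a path $P_{ij}$ for every pair simultaneously. Since the $W_{ij}$ are pairwise disjoint the $P_{ij}$ are pairwise internally vertex‑disjoint, and since the $C_{ij}$ are pairwise disjoint and each $P_{ij}$ is itself rainbow, no colour is repeated across the family. Thus $\bigcup_{i<j}P_{ij}$ is a rainbow $\ell$‑subdivision of $K_t$, and as $N=n^{\Omega(1)}$ we may pick $c_2'$ so that $t\ge n^{c_2\alpha}$.

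The crux is the rainbow connectivity lemma, where all the work lies. To connect $u$ to $v$ one grows a BFS tree from $u$ and one from $v$, at each step following only edges whose colour lies in $C$ and does not already occur on the current root‑path, and only to vertices in $W$; a rainbow $u$–$v$ path of length $\ell+1$ is extracted once the two sampled balls are forced to meet (at a split $a+b=\ell+1$, with the trees otherwise disjoint so that one gets a path rather than a walk, and with the $\le\ell+1$ colours involved arranged to be distinct). Three facts make this work. First, the intrinsic expansion of $\alpha$‑maximal graphs, roughly $|N_G(S)|\ge|S|^{1+\Omega(\alpha)}$ in the relevant size range: starting from a ball of size $\approx N^{\alpha}$, the exponent evolves like $\alpha\cdot(1+\Omega(\alpha))^{k}$, which reaches $1$ — i.e.\ the ball covers a polynomial fraction of $V(G)$ — only after $k=\Theta(\tfrac1\alpha\log\tfrac1\alpha)$ steps; this is exactly why $\ell$ must be at least $\tfrac{c_1}{\alpha}\log\tfrac1\alpha$. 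Second, robustness under sampling: deleting the non‑sampled vertices and colours costs a factor $N^{\Theta(\alpha)}$ per BFS step, which the super‑polynomial growth above absorbs once the ball has polynomial size, provided the exponent in $p=N^{-\Theta(\alpha)}$ is small enough. Third, control of forbidden colours: a partial rainbow path has used at most $\ell+1=O(\tfrac1\alpha\log\tfrac1\alpha)$ colours, and by properness each lies on at most one edge at any given frontier vertex, so they block only an $O(\ell/N^{\alpha})=o(1)$ fraction of the edges available there.

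The main obstacle, relative to Theorem~\ref{thm:robust}, is that sampling a colour keeps or discards an entire colour class of edges at once, so edge‑survival under $C$‑sampling is not independent and a naive Chernoff bound is unavailable. The proper colouring is what rescues the argument: between the current sampled ball $S$ (of size $s$) and its external neighbourhood, every colour class has size at most $s$ (it meets each vertex of $S$ at most once), while the edge‑boundary has size at least $s^{1+\Omega(\alpha)}$ by expansion; hence at least $s^{\Omega(\alpha)}$ distinct colour classes occur on the boundary, and a concentration inequality \emph{over colour classes} shows that a $\approx p$‑fraction of boundary edges keep their colour with the required probability. Combining this with the independent $p$‑thinning from vertex sampling and iterating over the $\Theta(\tfrac1\alpha\log\tfrac1\alpha)$ BFS steps from each of $u$ and $v$ finishes the proof of the lemma, and with it the theorem.
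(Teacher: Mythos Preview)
Your proposal has a genuine gap at the key step: the ``rainbow connectivity lemma'' as you state it---that for \emph{any} two compatible vertices $u,v$ there is, with probability $1-\exp(-N^{\Omega(\alpha)})$, a rainbow $u$--$v$ path of length exactly $\ell+1$ through the sampled vertices and colours---is stronger than what the paper's expansion machinery delivers, and your ``balls are forced to meet'' sketch does not establish it. With $\ell=O(\alpha^{-1}\log(1/\alpha))$ held \emph{constant} in $n$, the iteration behind Lemma~\ref{lemma:randompath_v2} grows the sampled ball from $u$ only to size $N^{1-\tau}$ for some constant $\tau=\tau(\alpha)>0$; pushing the ball to size $\Theta(N)$ would require $\Theta(\alpha^{-1}\log\log N)$ steps, which is unavailable here. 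Two balls of size $N^{1-\tau}=o(N)$ have no reason to intersect, so the meet-in-the-middle argument fails. (Contrast the rainbow-cycle proof, where $\alpha\approx 1/\log n$ permits paths of length $(\log n)^{1+o(1)}$ and the balls genuinely reach size $\Theta(N)$.)

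The paper circumvents this by \emph{not} attempting to connect prescribed pairs. It partitions vertices and colours into $s=n^{\Theta(\alpha)}$ classes $(U_i,Q_i)$, applies Lemma~\ref{lemma:randompath_v2} to show that for every $i$ and every $v$ one can reach $m^{1-\tau}$ vertices by a $(U_i,Q_i)$-path of length $\ell/2$, and then builds an auxiliary multigraph $K$ (with an edge of colour $i$ whenever such a path exists) and a simple graph $L$ on $V(H)$ where $xy$ is an edge if at least $sm^{-\tau}/2$ colours connect $x$ to $y$ in $K$. A counting argument gives $e(L)\ge m^{2-\tau}/2$, and the Alon--Krivelevich--Sudakov result (Lemma~\ref{lemma:1subdivision}) then supplies a $1$-subdivision of $K_t$ inside $L$ with $t=n^{\Theta(\alpha)}$. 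Because each edge of $L$ has many colours available and $t^2<sm^{-\tau}/2$, one can assign distinct colours $r_{xy}$ to the edges of this $1$-subdivision and substitute the corresponding $(U_{r_{xy}},Q_{r_{xy}})$-paths; disjointness of the $U_i$ and $Q_i$ makes the result a rainbow $(\ell-1)$-subdivision of $K_t$. The auxiliary-graph step is not a cosmetic difference---it is precisely what lets one work with balls of size $N^{1-\tau}$ rather than $\Theta(N)$, and your outline is missing it.
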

	
	\subsection{Cycles in simplicial complexes}
	
	In the second part of our paper, we study extremal problems about cycles in hypergraphs. A celebrated result of Bondy and Simonovits \cite{BoSim74} states that if $G$ is a graph on $n$ vertices containing no cycle of length $2\ell$ for $\ell\geq 2$, then $G$ has $O(n^{1+1/\ell})$ edges. On the other hand, graphs avoiding cycles of odd length can have quadratically many edges. 
	
	Problems about extending these results to uniform hypergraphs are widely studied. It is already not clear how one defines cycles in $r$-uniform hypergraphs (or $r$-graphs, for short), and indeed, there are several different notions studied in the literature. A \emph{Berge cycle} of length $\ell$ is an $r$-graph consisting of $\ell$ edges $e_0,\dots,e_{\ell-1},e_{\ell}=e_{0}$ together with $\ell$ distinct vertices $v_1,\dots,v_{\ell}$  such that $v_{i}\in e_{i-1}\cap e_{i}$ for $i\in [\ell]$. The extremal numbers of Berge cycles are studied in \cite{BoGyo08,GyoLem12}, and are fairly well understood. A \emph{linear cycle} of length $\ell$ is an $r$-graph with  $\ell$ edges $e_0,\dots,e_{\ell-1},e_{\ell}=e_0$ such that in the circular order, any two consecutive edges intersect in exactly one vertex, and non-consecutive edges are disjoint. The extremal numbers of linear cycles are also well understood, see \cite{KMV15}. 
	
	Finally, a \emph{tight cycle} of length $\ell$ is a sequence of $\ell$ vertices $v_1,\dots,v_{\ell}$ together with the $\ell$ edges formed by $r$-tuples of consecutive vertices in the circular order. Perhaps, this definition of a cycle is the most mysterious. Indeed, until recently, very little was known about its extremal numbers. It was an old conjecture of S\'os, see also \cite{Verst16}, that any $r$-graph with $n$ vertices and at least $\binom{n-1}{r-1}$ contains a tight cycle (of some length) if $n$ is sufficiently large. This was disproved in a strong sense by B. Janzer \cite{BJanzer20}, who showed that an $r$-graph on $n$ vertices can have $\Omega(n^{r-1}\log n/\log\log n)$ edges without containing a tight cycle. On the other hand, Sudakov and Tomon \cite{SudTom21} proved the upper bound $n^{r-1+o(1)}$, which was further improved by Letzter \cite{Letzter21} to $O(n^{r-1}(\log n)^5)$. However, none of these proofs seem to extend to give any bounds on the extremal numbers of the tight cycle of fixed length $\ell$. Verstra\"ete \cite{Verst16} proposed the conjecture that if $r$ divides $\ell$, then any $r$-graph with $n$ vertices containing no tight cycle of length $\ell$ can have at most $O(n^{r-1+2(r-1)/\ell})$ edges. The complete $r$-partite $r$-graph shows that if $r\nmid \ell$, then the extremal number is $\Theta(n^r)$.
	
	In this paper, we are interested in cycles from a topological perspective. An $r$-uniform hypergraph corresponds to the $(r-1)$-dimensional pure simplicial complex equal to the downset generated by the set of edges. In the case of graphs, cycles are exactly the homeomorphic copies of $S^1$, while subdivisions of $K_t$ are the homeomorphic copies of $K_{t}$. Consider $3$-uniform tight cycles. Interestingly, tight cycles of even length are homeomorphic to the cylinder $S^1\times B^1$, while tight cycles of odd length are homeomorphic to the M\"obius strip, see Figure \ref{fig:tightcycle}. This motivates the question that at most how many edges can a 3-uniform hypergraph on $n$ vertices have without containing a homeomorphic copy of the cylinder or M\"obius strip on $\ell$ vertices. Before we embark on this problem, let us discuss further related results.
	
	Another natural way to define cycles in $r$-graphs from a topological perspective is to consider homeomorphic copies of the sphere $S^{r-1}$. This problem was already considered in 1973 by Brown, Erd\H{o}s and S\'os \cite{BES73}, who showed that a 3-graph with $n$-vertices containing no homeomorphic copy of $S^2$ has at most $O(n^{5/2})$ edges, and this bound is the best possible. Recently,  Kupavskii, Polyanskii, Tomon, and Zakharov \cite{KPTZ} showed that the same bounds hold if we forbid homeomorphic copies of any fixed orientable surface. This answered a question of Linial \cite{Linial08,Linial18}, who in general proposed many topological problems about $r$-graphs. Long, Narayanan, and Yap \cite{LNY20} proved that for every $r$ there exists $\lambda=\lambda(r)>0$ such that if $H$ is an $r$-graph, then any $r$-graph on $n$ vertices avoiding homeomorphic copies of $H$ has at most $O_{H}(n^{r-\lambda})$ edges. Determining the optimal value of $\lambda(r)$ for $r\geq 3$ remains an interesting open problem.
	
	Let us get back to the problem of forbidding triangulations of the cylinder and the M\"obius strip.  In \cite{KPTZ}, Kupavskii, Polyanskii, Tomon, and Zakharov studied so called \emph{topological cycles}, which are special triangulations of the cylinder and the M\"obius strip. In Theorem 3.9, they proved that every $3$-graph with $n$ vertices and at least $n^{2+\alpha}$ edges contains a topological cycle on at most $O(1/\alpha)$ vertices. However, upon closer inspection of their proof, the topological cycle they find is always a triangulation of the cylinder. Roughly, the reason for this is that they reduce the problem of finding topological cycles to a problem about finding rainbow cycles in certain properly edge colored graphs. However, in this setting, even cycles correspond to cylinders, and odd cycles correspond to the M\"obius strip. As the extremal number of odd cycles is $\Omega(n^2)$, this method completely breaks in case one wants to forbid the M\"obius strip. See a formal discussion of this in Section \ref{sect:simplicial}. With a different approach, we overcome this obstacle.

	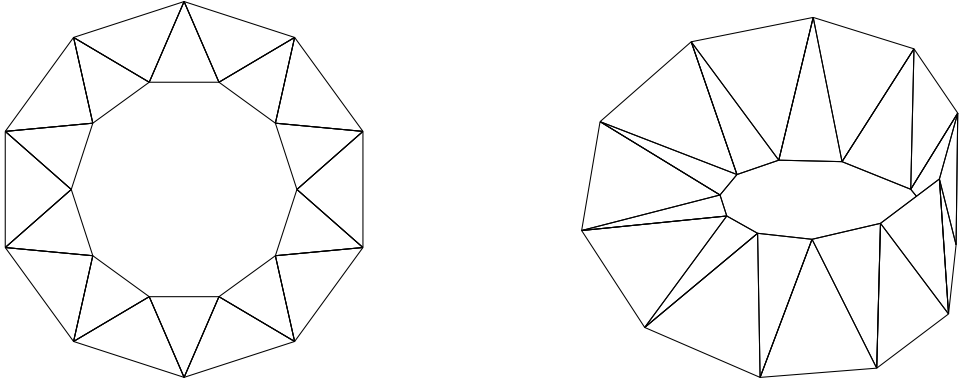
\begin{figure}
		\begin{minipage}[b]{0.45\textwidth}
			\begin{center}
				\begin{tikzpicture}
					\draw (4.87764,3.27254) -- (4,2.5) ;
\draw (4,2.5) -- (3.71353,3.38168) ;
\draw (4.87764,3.27254) -- (3.71353,3.38168) ;
\draw (3.71353,3.38168) -- (3.96946,4.52254) ;
\draw (4,2.5) -- (4.87764,3.27254) ;
\draw (4.87764,3.27254) -- (3.71353,3.38168) ;
\draw (3.71353,3.38168) -- (4.87764,3.27254) ;
\draw (4.87764,3.27254) -- (3.96946,4.52254) ;
\draw (3.96946,4.52254) -- (3.71353,3.38168) ;
\draw (3.71353,3.38168) -- (2.96353,3.92658) ;
\draw (3.96946,4.52254) -- (2.96353,3.92658) ;
\draw (2.96353,3.92658) -- (2.5,5) ;
\draw (3.71353,3.38168) -- (3.96946,4.52254) ;
\draw (3.96946,4.52254) -- (2.96353,3.92658) ;
\draw (2.96353,3.92658) -- (3.96946,4.52254) ;
\draw (3.96946,4.52254) -- (2.5,5) ;
\draw (2.5,5) -- (2.96353,3.92658) ;
\draw (2.96353,3.92658) -- (2.03648,3.92658) ;
\draw (2.5,5) -- (2.03648,3.92658) ;
\draw (2.03648,3.92658) -- (1.03054,4.52254) ;
\draw (2.96353,3.92658) -- (2.5,5) ;
\draw (2.5,5) -- (2.03648,3.92658) ;
\draw (2.03648,3.92658) -- (2.5,5) ;
\draw (2.5,5) -- (1.03054,4.52254) ;
\draw (1.03054,4.52254) -- (2.03648,3.92658) ;
\draw (2.03648,3.92658) -- (1.28647,3.38168) ;
\draw (1.03054,4.52254) -- (1.28647,3.38168) ;
\draw (1.28647,3.38168) -- (0.122359,3.27254) ;
\draw (2.03648,3.92658) -- (1.03054,4.52254) ;
\draw (1.03054,4.52254) -- (1.28647,3.38168) ;
\draw (1.28647,3.38168) -- (1.03054,4.52254) ;
\draw (1.03054,4.52254) -- (0.122359,3.27254) ;
\draw (0.122359,3.27254) -- (1.28647,3.38168) ;
\draw (1.28647,3.38168) -- (1,2.5) ;
\draw (0.122359,3.27254) -- (1,2.5) ;
\draw (1,2.5) -- (0.122358,1.72746) ;
\draw (1.28647,3.38168) -- (0.122359,3.27254) ;
\draw (0.122359,3.27254) -- (1,2.5) ;
\draw (1,2.5) -- (0.122359,3.27254) ;
\draw (0.122359,3.27254) -- (0.122358,1.72746) ;
\draw (0.122358,1.72746) -- (1,2.5) ;
\draw (1,2.5) -- (1.28647,1.61832) ;
\draw (0.122358,1.72746) -- (1.28647,1.61832) ;
\draw (1.28647,1.61832) -- (1.03054,0.477458) ;
\draw (1,2.5) -- (0.122358,1.72746) ;
\draw (0.122358,1.72746) -- (1.28647,1.61832) ;
\draw (1.28647,1.61832) -- (0.122358,1.72746) ;
\draw (0.122358,1.72746) -- (1.03054,0.477458) ;
\draw (1.03054,0.477458) -- (1.28647,1.61832) ;
\draw (1.28647,1.61832) -- (2.03647,1.07342) ;
\draw (1.03054,0.477458) -- (2.03647,1.07342) ;
\draw (2.03647,1.07342) -- (2.5,0) ;
\draw (1.28647,1.61832) -- (1.03054,0.477458) ;
\draw (1.03054,0.477458) -- (2.03647,1.07342) ;
\draw (2.03647,1.07342) -- (1.03054,0.477458) ;
\draw (1.03054,0.477458) -- (2.5,0) ;
\draw (2.5,0) -- (2.03647,1.07342) ;
\draw (2.03647,1.07342) -- (2.96352,1.07341) ;
\draw (2.5,0) -- (2.96352,1.07341) ;
\draw (2.96352,1.07341) -- (3.96946,0.477457) ;
\draw (2.03647,1.07342) -- (2.5,0) ;
\draw (2.5,0) -- (2.96352,1.07341) ;
\draw (2.96352,1.07341) -- (2.5,0) ;
\draw (2.5,0) -- (3.96946,0.477457) ;
\draw (3.96946,0.477457) -- (2.96352,1.07341) ;
\draw (2.96352,1.07341) -- (3.71352,1.61832) ;
\draw (3.96946,0.477457) -- (3.71352,1.61832) ;
\draw (3.71352,1.61832) -- (4.87764,1.72745) ;
\draw (2.96352,1.07341) -- (3.96946,0.477457) ;
\draw (3.96946,0.477457) -- (3.71352,1.61832) ;
\draw (3.71352,1.61832) -- (3.96946,0.477457) ;
\draw (3.96946,0.477457) -- (4.87764,1.72745) ;
\draw (4.87764,1.72745) -- (3.71352,1.61832) ;
\draw (3.71352,1.61832) -- (4,2.5) ;
\draw (4.87764,1.72745) -- (4,2.5) ;
\draw (4,2.5) -- (4.87764,3.27254) ;
\draw (3.71352,1.61832) -- (4.87764,1.72745) ;
\draw (4.87764,1.72745) -- (4,2.5) ;
\draw (4,2.5) -- (4.87764,1.72745) ;
\draw (4.87764,1.72745) -- (4.87764,3.27254) ;
				\end{tikzpicture}
			\end{center}
		\end{minipage}
		\begin{minipage}[b]{0.45\textwidth}
			\begin{center}
				\begin{tikzpicture}
					\draw (4.9779,1.86916) -- (4.87762,0.949325) ;
\draw (4.87762,0.949325) -- (4.75623,2.74503) ;
\draw (4.9779,1.86916) -- (4.75623,2.74503) ;
\draw (4.75623,2.74503) -- (5,3.62173) ;
\draw (4.87762,0.949325) -- (3.92287,0.228302) ;
\draw (3.92287,0.228302) -- (3.97303,2.15491) ;
\draw (4.87762,0.949325) -- (3.97303,2.15491) ;
\draw (3.97303,2.15491) -- (4.75623,2.74503) ;
\draw (3.92287,0.228302) -- (2.37245,0.105271) ;
\draw (2.37245,0.105271) -- (3.063,1.94275) ;
\draw (3.92287,0.228302) -- (3.063,1.94275) ;
\draw (3.063,1.94275) -- (3.97303,2.15491) ;
\draw (2.37245,0.105271) -- (0.838392,0.769736) ;
\draw (0.838392,0.769736) -- (2.3393,2.0225) ;
\draw (2.37245,0.105271) -- (2.3393,2.0225) ;
\draw (2.3393,2.0225) -- (3.063,1.94275) ;
\draw (0.838392,0.769736) -- (0,2.05924) ;
\draw (0,2.05924) -- (1.9294,2.25278) ;
\draw (0.838392,0.769736) -- (1.9294,2.25278) ;
\draw (1.9294,2.25278) -- (2.3393,2.0225) ;
\draw (0,2.05924) -- (0.245109,3.50784) ;
\draw (0.245109,3.50784) -- (1.83969,2.53233) ;
\draw (0,2.05924) -- (1.83969,2.53233) ;
\draw (1.83969,2.53233) -- (1.9294,2.25278) ;
\draw (0.245109,3.50784) -- (1.45662,4.57061) ;
\draw (1.45662,4.57061) -- (2.06515,2.80433) ;
\draw (0.245109,3.50784) -- (2.06515,2.80433) ;
\draw (2.06515,2.80433) -- (1.83969,2.53233) ;
\draw (1.45662,4.57061) -- (3.07835,4.89473) ;
\draw (3.07835,4.89473) -- (2.62084,2.99536) ;
\draw (1.45662,4.57061) -- (2.62084,2.99536) ;
\draw (2.62084,2.99536) -- (2.06515,2.80433) ;
\draw (3.07835,4.89473) -- (4.41822,4.47685) ;
\draw (4.41822,4.47685) -- (3.46163,2.97567) ;
\draw (3.07835,4.89473) -- (3.46163,2.97567) ;
\draw (3.46163,2.97567) -- (2.62084,2.99536) ;
\draw (4.41822,4.47685) -- (5,3.62173) ;
\draw (5,3.62173) -- (4.37505,2.60803) ;
\draw (4.41822,4.47685) -- (4.37505,2.60803) ;
\draw (4.37505,2.60803) -- (3.46163,2.97567) ;
\draw (5,3.62173) -- (4.9779,1.86916) ;
\draw (4.45095,2.515) -- (4.37505,2.60803) ;
\draw (4.9779,1.86916) -- (4.75623,2.74503) ;
\draw (4.75623,2.74503) -- (4.87762,0.949325) ;
\draw (4.9779,1.86916) -- (5,3.62173) ;
\draw (5,3.62173) -- (4.75623,2.74503) ;
\draw (4.87762,0.949325) -- (3.97303,2.15491) ;
\draw (3.97303,2.15491) -- (3.92287,0.228302) ;
\draw (4.87762,0.949325) -- (4.75623,2.74503) ;
\draw (4.75623,2.74503) -- (3.97303,2.15491) ;
\draw (3.92287,0.228302) -- (3.063,1.94275) ;
\draw (3.063,1.94275) -- (2.37245,0.105271) ;
\draw (3.92287,0.228302) -- (3.97303,2.15491) ;
\draw (3.97303,2.15491) -- (3.063,1.94275) ;
\draw (2.37245,0.105271) -- (2.3393,2.0225) ;
\draw (2.3393,2.0225) -- (0.838392,0.769736) ;
\draw (2.37245,0.105271) -- (3.063,1.94275) ;
\draw (3.063,1.94275) -- (2.3393,2.0225) ;
\draw (0.838392,0.769736) -- (1.9294,2.25278) ;
\draw (1.9294,2.25278) -- (0,2.05924) ;
\draw (0.838392,0.769736) -- (2.3393,2.0225) ;
\draw (2.3393,2.0225) -- (1.9294,2.25278) ;
\draw (0,2.05924) -- (1.83969,2.53233) ;
\draw (1.83969,2.53233) -- (0.245109,3.50784) ;
\draw (0,2.05924) -- (1.9294,2.25278) ;
\draw (1.9294,2.25278) -- (1.83969,2.53233) ;
\draw (0.245109,3.50784) -- (2.06515,2.80433) ;
\draw (2.06515,2.80433) -- (1.45662,4.57061) ;
\draw (0.245109,3.50784) -- (1.83969,2.53233) ;
\draw (1.83969,2.53233) -- (2.06515,2.80433) ;
\draw (1.45662,4.57061) -- (2.62084,2.99536) ;
\draw (2.62084,2.99536) -- (3.07835,4.89473) ;
\draw (1.45662,4.57061) -- (2.06515,2.80433) ;
\draw (2.06515,2.80433) -- (2.62084,2.99536) ;
\draw (3.07835,4.89473) -- (3.46163,2.97567) ;
\draw (3.46163,2.97567) -- (4.41822,4.47685) ;
\draw (3.07835,4.89473) -- (2.62084,2.99536) ;
\draw (2.62084,2.99536) -- (3.46163,2.97567) ;
\draw (4.41822,4.47685) -- (4.37505,2.60803) ;
\draw (4.37505,2.60803) -- (5,3.62173) ;
\draw (4.41822,4.47685) -- (3.46163,2.97567) ;
\draw (3.46163,2.97567) -- (4.37505,2.60803) ;
\draw (5,3.62173) -- (4.37505,2.60803) ;
\draw (4.37505,2.60803) -- (4.45095,2.515) ;
				\end{tikzpicture}
			\end{center}
		\end{minipage}
		\caption{A 3-uniform tight cycle of length 20 (left) and a tight cycle of length 21 (right).}
		\label{fig:tightcycle}
	\end{figure}

	\begin{theorem}\label{thm:maincycle0}(Informal.)
		Let $\alpha>0$, and let $G$ be a $3$-graph on $n$ vertices with at least $n^{2+\alpha}$ edges. If $n\geq n_0(\alpha)$, then $G$ contains a triangulation of the cylinder and the M\"obius strip on $O(\frac{1}{\alpha}\log\frac{1}{\alpha})$ vertices. 
	\end{theorem}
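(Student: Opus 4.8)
The plan is to deduce Theorem~\ref{thm:maincycle0} from the robust connectivity machinery behind Theorem~\ref{thm:robust}, after recasting the problem as one about tight cycles. Recall from the discussion above that a tight cycle of even length is homeomorphic to the cylinder $S^1\times B^1$ and one of odd length to the M\"obius strip, and that a tight cycle $v_1,\dots,v_\ell$ gives a bona fide triangulation of the corresponding surface once the $v_i$ are pairwise distinct and $\ell$ is not tiny (so that the two boundary walks of lengths $\lceil\ell/2\rceil$ and $\lfloor\ell/2\rfloor$ are genuine cycles). Hence it suffices to exhibit, inside $G$, two non-degenerate tight cycles of consecutive lengths $\ell$ and $\ell+1$ with $\ell=O(\frac1\alpha\log\frac1\alpha)$: one is even --- a triangulated cylinder --- and the other odd --- a triangulated M\"obius strip. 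As in every degenerate Tur\'an problem, we first pass to a sub-$3$-graph $G$ maximizing $e(H)/v(H)^{2+\alpha}$ among sub-$3$-graphs $H$ (the $3$-uniform analogue of $\alpha$-maximality); by the higher-dimensional version of the expansion results of this paper, such a $G$ enjoys extraordinary vertex- and edge-expansion. Concretely, after sampling its vertices with probability $n^{-O(\alpha)}$, \wehp\ many prescribed pairs of vertices are joined, inside the sample and avoiding any prescribed small set of forbidden vertices, by a tight path of each length in some interval of length $\Theta(\frac1\alpha\log\frac1\alpha)$.

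To access these results with the graph-theoretic tools of the paper, I would work with the auxiliary digraph $\Gamma$ whose vertices are the ordered pairs $(u,v)$ of distinct vertices of $G$, with an arc $(u,v)\to(v,w)$ whenever $\{u,v,w\}\in E(G)$. A directed cycle of length $\ell$ in $\Gamma$ whose underlying $G$-vertices are pairwise distinct is precisely a non-degenerate tight cycle of length $\ell$, and directed paths in $\Gamma$ correspond to tight paths. Since $\Gamma$ has $\Theta(n^2)$ vertices and average out-degree $\Theta(n^\alpha)=\Theta(v(\Gamma)^{\alpha/2})$, it behaves like an $\frac{\alpha}{2}$-dense graph; one verifies that the maximality of $G$ forces $\Gamma$ --- indeed $\Gamma$ restricted to the random sample of pairs induced by a random sample of $V(G)$ --- to satisfy the hypotheses of the robust path lemma, so that the graph version of our connectivity result applies to $\Gamma$ directly (alternatively, one applies the higher-dimensional version to $G$ itself). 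The one delicate point in this translation is that a directed path in $\Gamma$ may revisit a vertex of $G$; but a path of length $O(\frac1\alpha\log\frac1\alpha)$ inside a sparse random sample does so \wehp\ never, and in any case such revisits are eliminated by handing the $O(\frac1\alpha\log\frac1\alpha)$ already-used $G$-vertices to the lemma as forbidden vertices.

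Granting this, fix an edge $\{a,b,c\}\in E(G)$ that survives in the sample (chosen so that $((b,c),(a,b))$ is a ``good'' pair for the connectivity lemma); then $(a,b)\to(b,c)$ is an arc of $\Gamma$. Using the connectivity lemma inside the sample, join $(b,c)$ back to $(a,b)$ by directed paths of lengths $\ell-1$ and $\ell$ that avoid $a,b,c$ in their interiors and whose underlying $G$-vertices are distinct; closing each up through the arc $(a,b)\to(b,c)$ (which merely re-uses the edge $\{a,b,c\}$) produces non-degenerate tight cycles of lengths $\ell$ and $\ell+1$, hence a triangulated cylinder and a triangulated M\"obius strip on $O(\frac1\alpha\log\frac1\alpha)$ vertices each. (Should one only have the length-blind form of the lemma available, the same conclusion follows by taking any short non-degenerate tight cycle and rerouting a bounded sub-path of it through fresh vertices along a detour one step longer, which flips the parity while keeping the cycle short and non-degenerate.) I expect the main obstacle --- and the reason a genuinely new argument is needed here --- to be the odd cycle, i.e., the M\"obius strip: the even case is produced effortlessly by $C_{2k}$-type counting in $\Gamma$, which is blind to parity, and although $\Gamma$ is trivially non-bipartite (each edge of $G$ spans a directed triangle) these triangles are degenerate, so one really must harness the robust expansion to build a short \emph{non-degenerate} odd tight cycle while keeping all of its $G$-vertices distinct. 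This is exactly the step at which the previous approach --- reducing to rainbow cycles in an auxiliary properly edge-coloured graph, which may be essentially bipartite and hence free of odd rainbow cycles --- broke down.
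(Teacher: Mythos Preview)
Your high-level plan --- pass to a maximal sub-$3$-graph, establish robust expansion under vertex sampling, and close up short paths of controlled length to get cycles of both parities --- is the same as the paper's. However, the proposal has a genuine gap at its technical core, and differs from the paper in a few places worth flagging.

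The gap is the sentence ``one verifies that the maximality of $G$ forces $\Gamma$ \ldots\ to satisfy the hypotheses of the robust path lemma, so that the graph version of our connectivity result applies to $\Gamma$ directly.'' This does not go through. Sampling $V(G)$ with probability $p$ does \emph{not} induce an independent sample of $V(\Gamma)$: a pair $(u,v)$ lies in the induced sample only when both $u$ and $v$ are sampled, so the events are heavily correlated (all pairs through a fixed vertex are coupled), and the graph lemmas (Lemmas~\ref{lemma:master}, \ref{lemma:randompath}) need independence. Moreover, maximality of $G$ with respect to $e(H)/v(H)^{2+\alpha}$ says nothing about subgraphs of $\Gamma$; the quantity that governs expansion of pairs is $e(H)/p(H)^{1+\alpha}$, where $p(H)$ counts $2$-faces, and this is the maximality notion the paper actually uses. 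The paper devotes all of Sections~\ref{sect:alphasimplicial}--\ref{sect:simp_expansion} to building a hypergraph-specific sampling/expansion lemma (Lemma~\ref{lemma:simp_master}, then Lemma~\ref{lemma:simp_randompath}) precisely because the graph machinery does not transfer; in particular one must control vertex-degrees in the expanding set of $(r-1)$-faces at every step, an issue that has no analogue in the graph case and that your sketch does not address.

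Two further differences. First, the paper does not find tight cycles: it finds \emph{higher-order cycles} (closed walks on $(r-1)$-faces with no repeated vertices), which need not be tight. To convert length parity into ``cylinder versus M\"obius strip'' it first passes to a $3$-partite subhypergraph and invokes Lemma~\ref{lemma:characterization}; your alternative of aiming for genuine tight cycles (where parity alone determines the topology) is viable, but then the path-building has to produce tight paths, which is a stronger requirement than what Lemma~\ref{lemma:simp_randompath} delivers. Second, the paper's length control (Lemma~\ref{lemma:simp_max_cycle}) is not ``add one arc to flip parity''; it fixes a face $f_0$, uses Lemma~\ref{lemma:longhyppaths} to find a short path inside the set $F$ of faces well-connected to $f_0$, and stitches three internally disjoint paths of prescribed lengths $\ell_0,\ell_0,\ell-2\ell_0$ into a cycle of length exactly $\ell$, for every sufficiently large $\ell$. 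Your one-edge parity flip would work only if you already have paths of two consecutive specified lengths between the same pair of faces, which is again stronger than what the sampling lemma provides.
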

	
	We remark that a simple probabilistic argument shows that there are 3-graphs $G$ with $n$ vertices and more than $n^{2+\alpha}$ edges such that every triangulation of the cylinder and the  M\"obius strip in $G$ has at least $1/\alpha$ vertices. See Lemma \ref{lemma:construction} for a detailed argument.
	
	Instead of proving Theorem \ref{thm:maincycle0}, we prove a result which applies to much sparser 3-graphs as well. Given a 3-graph $G$, let $p(G)$ denote the number of $1$-dimensional faces of $G$, that is, the number of pairs of vertices which appear in an edge. As proved by Letzter \cite{Letzter21}, slightly improving the result of Sudakov and Tomon \cite{SudTom21}, if a 3-graph $G$ satisfies $e(G)=\Omega(p(G)(\log p(G))^{5})$, then $G$ contains a tight cycle. A similar strengthening of Theorem \ref{thm:maincycle0} also holds.
	
	\begin{theorem}\label{thm:maincycle1}(Informal.)
			Let $\alpha>0$, and let $G$ be a $3$-graph satisfying $e(G)>p(G)^{1+\alpha}$. If $p(G)\geq p_0(\alpha)$, then $G$ contains a triangulation of the cylinder and the M\"obius strip on $O(\frac{1}{\alpha}\log\frac{1}{\alpha})$ vertices. 
	\end{theorem}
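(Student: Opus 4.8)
The plan is to translate the statement into a question about short directed cycles of controlled parity in an auxiliary digraph, and then to feed that digraph into the vertex‑ and colour‑sampling machinery developed for Theorem~\ref{thm:robust}. First I would reduce to a structured core: pass from $G$ to a sub‑$3$‑graph $G_0$ (a subset of the triangles, together with its induced $1$‑skeleton) that maximizes $e(H)/p(H)^{1+\alpha}$ over all sub‑$3$‑graphs $H$. Since $e(G)>p(G)^{1+\alpha}$ this maximum exceeds $1$, so $e(G_0)>p(G_0)^{1+\alpha}$, and a routine argument shows $p(G_0)$ is still at least $p_0(\alpha)^{\Omega(1)}$, which is all we need. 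By maximality every sub‑$3$‑graph $H$ of $G_0$ satisfies $e(H)\le p(H)^{1+\alpha}\cdot e(G_0)/p(G_0)^{1+\alpha}$, which is the hypergraph analogue of $\alpha$‑maximality; exactly as in the graph case, it forces strong vertex‑ and edge‑expansion of $G_0$ at the level of its $1$‑faces.

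Next, the auxiliary digraph. Let $D$ have vertex set the ordered pairs $(a,b)$ with $\{a,b\}$ a $1$‑face of $G_0$, and an arc $(a,b)\to(b,c)$ whenever $\{a,b,c\}\in E(G_0)$; then $v(D)=2p(G_0)$ and $D$ has $6e(G_0)\ge v(D)^{1+\alpha}$ arcs up to a constant factor, and one checks that $\alpha$‑maximality of $G_0$ passes to $D$. (If one prefers to stay inside edge‑coloured graphs, one can instead work with the pair graph on the $1$‑faces of $G_0$, colouring the edge between $\{a,b\}$ and $\{b,c\}$ by the triangle $\{a,b,c\}$.) The point is that a directed cycle of length $m$ in $D$ with distinct vertices — avoiding the degenerate pattern where three consecutive states $(c,a),(a,b),(b,c)$ come from a single triangle — is exactly a tight cycle $v_1,\dots,v_m$ in $G_0$, hence, by the discussion around Figure~\ref{fig:tightcycle}, a triangulation of the cylinder if $m$ is even and of the Möbius strip if $m$ is odd. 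Two structural features will be crucial: the involution $\sigma\colon(a,b)\mapsto(b,a)$ is an isomorphism from $D$ to its reverse, so a directed path of length $k$ from $(a,b)$ to $(c,d)$ exists iff one of length $k$ from $(d,c)$ to $(b,a)$ does; and every triangle $\{a,b,c\}$ of $G_0$ spans a directed $3$‑cycle $(a,b)\to(b,c)\to(c,a)\to(a,b)$, which forces the period of any strongly connected part of $D$ to divide $3$. In particular $D$ is never ``bipartite‑like'', so it possesses odd directed cycles.

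Now I would run the sampling machinery on $D$: sampling its vertices (and, in the coloured formulation, its colours) with probability $d(D)^{-O(1)}$, with high probability a positive fraction of ordered pairs of $1$‑faces $s,t$ are joined by a directed $s\to t$ path of length $O(\frac{1}{\alpha}\log\frac{1}{\alpha})$ through the sampled set, and one can arrange several internally disjoint such paths, and paths whose length one controls modulo a small number. Concatenating a short $s\to t$ path with a short $t\to s$ path (the latter exists for many $s,t$ by the same statement, using $\sigma$ if convenient) yields a short directed cycle in $D$; discarding the few degenerate cases — repeated states, where one passes to a sub‑cycle, and the forbidden three‑consecutive‑states pattern, which is excluded for short cycles of the sampled expander — produces a short tight cycle in $G_0$. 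Choosing its length even already gives the triangulation of the cylinder on $O(\frac{1}{\alpha}\log\frac{1}{\alpha})$ vertices, recovering the cylinder half of \cite{KPTZ} and, through Theorem~\ref{thm:maincycle0}, the dense case as well.

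The main obstacle is the Möbius strip, i.e.\ a short tight cycle of \emph{odd} length. This is precisely where the reduction of \cite{KPTZ} breaks: passing to an undirected edge‑coloured graph loses the directed structure, and that graph can be bipartite, so it has no odd — hence no rainbow odd — cycle at all. The remedy is to exploit that $D$ provably is not bipartite‑like, its period dividing $3$. Concretely, I would use that in the dense core many $1$‑faces $\{u,v\}$ lie in a large family of triangles $\{u,v,w_1\},\dots,\{u,v,w_s\}$, so the state $(u,v)$ has large in‑ and out‑degree, and combine this with expansion‑driven strong connectivity and with the within‑triangle $3$‑cycles to find, for a suitable pair of states $s,t$, two short directed $s\to t$ paths of opposite parity; gluing these to one fixed short $t\to s$ path then yields short directed cycles of both parities at once. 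Making this ``two opposite‑parity short paths'' step work — rather than merely producing a closed walk that revisits a state and so fails to be a genuine triangulation on few vertices — is the technical heart of the argument, and I expect it to need a layered or bipartite‑double‑cover version of $D$ that tracks path length modulo two (or modulo six), with the non‑bipartiteness of $D$ used to connect the two parity classes.
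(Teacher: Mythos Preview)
Your reduction to the ordered-pair digraph $D$ and the subsequent sampling argument are reasonable for the cylinder half, but the M\"obius half is a genuine gap, and your own final paragraph concedes it: you do not have a mechanism that produces a short directed cycle of \emph{prescribed odd} length in $D$ whose states project to distinct vertices. The period-$3$ observation only tells you that odd closed walks exist somewhere; turning that into a short simple odd cycle via ``two opposite-parity short paths'' through a random sample is exactly the step that fails in \cite{KPTZ}, and layering a bipartite double cover on top of $D$ does not by itself manufacture the missing parity --- you still need a concrete source of odd length inside the sampled expander, not just in the ambient $D$. There is also a structural slip earlier: $\alpha$-maximality of $G_0$ does \emph{not} pass to $D$ in the sense your graph lemmas require. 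Subgraphs of $D$ do not correspond to sub-$3$-graphs of $G_0$ (an arc $(a,b)\to(b,c)$ uses the triangle $\{a,b,c\}$ without forcing the pair $\{a,c\}$ into the vertex set), so the inequality $e(D')\le c\,v(D')^{1+\alpha}$ need not hold for arbitrary $D'\subset D$, and the expansion lemmas from Section~\ref{sect:alpha} do not apply as stated.

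The paper avoids both problems by never leaving the hypergraph. It defines $\alpha$-maximality and expansion directly for sets of $(r-1)$-faces (Lemma~\ref{lemma:hypmax} and Lemma~\ref{lemma:simp_master}), runs the vertex-sampling argument at that level (Lemma~\ref{lemma:simp_randompath}), and then --- this is the point you are missing --- uses Lemma~\ref{lemma:longhyppaths} to find a proper path of \emph{any} prescribed length $\ell-2\ell_0$ with both endpoints in a large set $F$ of $(r-1)$-faces. That lemma holds for $r\ge 3$ and, as the paper remarks immediately after it, has no analogue for graphs when the length is odd; it is precisely what breaks the parity obstruction without any double-cover trick. Gluing such a path to two internally disjoint length-$\ell_0$ paths from $F$ back to a fixed $f_0$ gives a higher-order cycle of length $\ell$ for every $\ell$ in the range (Lemma~\ref{lemma:simp_max_cycle}). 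Finally, to force parity to determine topology, the paper first passes to a $3$-partite subhypergraph and invokes Lemma~\ref{lemma:characterization}; your tight-cycle route would not need this last step, but it never gets there.
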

	
	Clearly, this theorem implies Theorem \ref{thm:maincycle0} noting that $p(G)\leq \binom{n}{2}$.  For the formal version of Theorem \ref{thm:maincycle1}, as well as generalizations for $r$-graphs, see Section \ref{sect:simplicial}, Theorem \ref{thm:cycleformal} and Theorem \ref{thm:final}. In order to prove Theorem \ref{thm:maincycle1}, we study an analogue of $\alpha$-maximality for hypergraphs. Here, we say that $G$ is \emph{$\alpha$-maximal} if $G$ maximizes the quantity $e(H)/p(H)^{1+\alpha}$ among its subhypergraphs $H$. We show that $\alpha$-maximal hypergraphs have unusually good expansion properties. In the case of hypergraphs, it is already not clear how one defines expansion, and indeed, this has become a popular topic with several different definitions, see the survey of Lubotzky \cite{Lubotzky18}. However, most of these definitions are quite technical, and so far lacking combinatorial applications. Our notion of expansion is simple, we only require that subsets of $1$-dimensional faces expand via 2-dimensional faces.
	
	In the concluding remarks, we present some applications of our formal Theorem \ref{thm:cycleformal} about the extremal numbers of cycles in the hypercube.
	
	\bigskip

	\noindent
	\textbf{Organization of the paper.} We present our graph theoretic notation and some probabilistic tools in the next subsection, and introduce $\alpha$-maximal graphs in Section \ref{sect:alpha}. Then, we prepare and prove Theorems \ref{thm:rainbow_cycle}, \ref{thm:rainbow_subdivision}, and \ref{thm:rainbow_robust} in the rest of Section \ref{sect:rainbow}. In Section \ref{sect:introductiontosimplices}, we formally discuss simplicial complexes, hypergraphs, and introduce the notion of higher order walks, paths and cycles. We continue with the definition and properties of $\alpha$-maximal hypergraphs in Section \ref{sect:alphasimplicial}. We present the proof of Theorem \ref{thm:maincycle0} in Sections \ref{sect:simp_expansion} and \ref{sect:simpcycles}, and then we conclude our paper with some remarks.

	\section{Rainbow cycles and subdivisions}\label{sect:rainbow}
	
	\subsection{Preliminaries}
	Let us introduce our notation, which is mostly conventional. We omit floors and ceilings whenever they are not crucial. Given a graph $G$, $v(G)=|V(G)|,e(G)=|E(G)|$ denotes the number of vertices and edges of $G$, respectively, and $d(G)=2e(G)/v(G)$ is the average degree of $G$. Also, for $X\subset V(G)$, $N_{G}(X)=N(X)=\{y\in V(G)\setminus X:\exists x\in X, xy\in E(G)\}$ is the neighbourhood of $X$, and if $v\in V(G)$, then $N_{G}(v)=N(v):=N(\{v\})$ and $\deg_{G}(v)=\deg(v):=|N(v)|$. Furthermore, $G[X]$ denotes the subgraph of $G$ induced by $X$. If $Y\subset V(G)\setminus X$, $E_{G}(X,Y)=E(X,Y)$ is the set of edges between $X$ and $Y$, and $e_{G}(X,Y)=e(X,Y):=|E(X,Y)|$.
	
	Now let us collect a few technical results that we will be use repeatedly throughout the paper. The first is a variant of the Chernoff bound, see e.g. Theorem 1.1 in \cite{prob_book}.
	
	\begin{lemma}[Multiplicative Chernoff bound]
		Let $X_1,\dots,X_{n}$ be independent random variables taking values from $[0,1]$, let $X=\sum_{i=1}^{n}X_{i}$ and $\mu=\mathbb{E}(X)$. Then 
		$$\mathbb{P}\left(X\leq \frac{\mu}{2}\right)\leq e^{-\mu/8},$$
		and if $t>2\mu$, then 
		$$\mathbb{P}\left(X\geq t\right)\leq e^{-t/6},$$
	\end{lemma}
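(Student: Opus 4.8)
The plan is to use the classical exponential moment method (``Chernoff's trick''). For both tails the starting point is the exponential form of Markov's inequality: since $\exp$ is increasing and $\lambda>0$, one has $\{X\geq t\}=\{e^{\lambda X}\geq e^{\lambda t}\}$ and $\{X\leq \mu/2\}=\{e^{-\lambda X}\geq e^{-\lambda\mu/2}\}$, so Markov's inequality gives
$$\mathbb{P}(X\geq t)\leq e^{-\lambda t}\,\mathbb{E}(e^{\lambda X})\qquad\text{and}\qquad \mathbb{P}(X\leq \mu/2)\leq e^{\lambda\mu/2}\,\mathbb{E}(e^{-\lambda X}).$$
By independence of the $X_i$ the moment generating functions factor, $\mathbb{E}(e^{\pm\lambda X})=\prod_{i=1}^{n}\mathbb{E}(e^{\pm\lambda X_i})$, so it suffices to control each factor separately.

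The only place the hypothesis $X_i\in[0,1]$ enters is a convexity (chord) bound: since $x\mapsto e^{\lambda x}$ and $x\mapsto e^{-\lambda x}$ are convex, on the interval $[0,1]$ they lie below the segments joining their values at the endpoints, i.e. $e^{\lambda x}\leq 1+(e^{\lambda}-1)x$ and $e^{-\lambda x}\leq 1-(1-e^{-\lambda})x$ for $x\in[0,1]$. Taking expectations and using $1+y\leq e^{y}$ gives $\mathbb{E}(e^{\lambda X_i})\leq e^{(e^{\lambda}-1)\mathbb{E}(X_i)}$ and $\mathbb{E}(e^{-\lambda X_i})\leq e^{-(1-e^{-\lambda})\mathbb{E}(X_i)}$. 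Multiplying over $i$ and recalling $\mu=\sum_i\mathbb{E}(X_i)$ yields $\mathbb{E}(e^{\lambda X})\leq e^{(e^{\lambda}-1)\mu}$ and $\mathbb{E}(e^{-\lambda X})\leq e^{-(1-e^{-\lambda})\mu}$.

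Substituting back, $\mathbb{P}(X\geq t)\leq e^{-\lambda t+(e^{\lambda}-1)\mu}$ and $\mathbb{P}(X\leq \mu/2)\leq e^{-\mu(1-e^{-\lambda}-\lambda/2)}$, and it remains to pick $\lambda$. Taking $\lambda=\ln 2$ in the first bound gives $\mathbb{P}(X\geq t)\leq 2^{-t}e^{\mu}$; since $t>2\mu$ forces $e^{\mu}<e^{t/2}$, the right-hand side is at most $e^{-(\ln 2-1/2)t}\leq e^{-t/6}$, using $\ln 2-1/2>1/6$. Taking $\lambda=\ln 2$ in the second bound gives $\mathbb{P}(X\leq \mu/2)\leq e^{-\mu(1-\ln 2)/2}\leq e^{-\mu/8}$, using $(1-\ln 2)/2>1/8$. (Optimizing $\lambda$ separately in each inequality would improve the exponents, but $\lambda=\ln 2$ already delivers the stated constants.)

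I do not expect any genuine obstacle: this is a textbook estimate and, once the convexity bound is in hand, the rest is routine bookkeeping. The one point worth a moment's care is that the $X_i$ are assumed merely to take values in $[0,1]$ rather than to be $\{0,1\}$-valued, which is precisely why one passes through the chord estimate for $e^{\pm\lambda x}$ instead of computing the moment generating functions exactly; after that the proof reduces to the two numerical checks $\ln 2-1/2\geq 1/6$ and $(1-\ln 2)/2\geq 1/8$.
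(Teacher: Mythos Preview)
Your proof is correct and is the standard exponential-moment argument; the chord bound handles the $[0,1]$-valued case cleanly, and the numerical checks $\ln 2-\tfrac12>\tfrac16$ and $\tfrac{1-\ln 2}{2}>\tfrac18$ are valid. The paper does not supply a proof of this lemma at all: it simply cites it as Theorem~1.1 in the Dubhashi--Panconesi book and moves on, so there is nothing to compare against beyond noting that your argument is exactly the one found in such references.
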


    The following estimates will also come in handy. Each can be verified using simple calculus.
    
    \begin{claim}\label{claim:technical}
    \begin{itemize}
        \item[(i)]   Let $a,b>0$ such that $ab<1$, then $(1-a)^{b}\leq e^{-ab}\leq 1-\frac{ab}{2}$. Furthermore, if $a\leq 1/2$ holds as well, then $1-2ab<(1-a)^b$
        \item[(ii)]  Let $a\in (0,1/2)$. Then $(1+a)^{1/(1+a)}>1+\frac{a}{2}$ and $(1+\frac{a}{2})^{1/(1+a)}\geq 1+\frac{a}{4}$.
    \end{itemize}
  
    \end{claim}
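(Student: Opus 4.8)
The plan is to reduce each inequality to a one-variable estimate for $\exp$ or $\ln$, and from there to a polynomial inequality that can be checked by hand; the point is that the hypotheses ($ab<1$ in (i), $a\le 1/2$ throughout) are exactly what keep these polynomial inequalities valid.

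For (i), the bound $(1-a)^b\le e^{-ab}$ follows by raising $1-a\le e^{-a}$ (valid for all real $a$) to the positive power $b$. For the middle inequality I would substitute $x=ab\in(0,1)$ and prove $e^{-x}\le 1-\frac{x}{2}$ on $[0,1)$: setting $f(x)=1-\frac{x}{2}-e^{-x}$ one has $f(0)=0$ and $f'(x)=e^{-x}-\frac12$, so $f$ increases on $[0,\ln 2]$ and decreases on $[\ln 2,1]$, while $f(1)=\frac12-e^{-1}>0$; hence $f\ge 0$ on $[0,1]$. For the ``furthermore'' part, when $a\le\frac12$ the series expansion of $\ln(1-a)$ gives $\ln(1-a)\ge-\frac{a}{1-a}\ge-2a$, so $(1-a)^b\ge e^{-2ab}>1-2ab$, the last step being the strict inequality $e^{-t}>1-t$ with $t=2ab>0$.

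For (ii), I would take logarithms, so that it suffices to prove $\frac{\ln(1+a)}{1+a}>\ln\!\big(1+\frac{a}{2}\big)$ and $\frac{\ln(1+a/2)}{1+a}\ge\ln\!\big(1+\frac{a}{4}\big)$, and then apply the elementary bounds $\ln(1+t)>t-\frac{t^2}{2}$ and $\ln(1+t)<t$ for $t\in(0,1)$. The first target then reduces to $\frac{a-a^2/2}{1+a}\ge\frac{a}{2}$, i.e.\ $\frac{a(1/2-a)}{1+a}\ge 0$, which holds for $a\in(0,\frac12)$; the second reduces to $\frac{a/2-a^2/8}{1+a}\ge\frac{a}{4}$, i.e.\ $\frac{a(2-3a)}{8(1+a)}\ge 0$, which holds for all $a<\frac23$, in particular on $(0,\frac12)$.

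I do not expect any genuine obstacle here: everything is routine calculus. The only thing requiring a little care is picking crude enough Taylor/convexity bounds that the resulting polynomial inequalities are still sharp enough to hold on precisely the claimed ranges --- for instance $e^{-x}\le 1-\frac{x}{2}$ fails once $x$ is somewhat larger than $1$, which is why $ab<1$ is assumed, and $\frac{a-a^2/2}{1+a}\ge\frac{a}{2}$ fails for $a>\frac12$, which is why the hypothesis $a\le\frac12$ appears.
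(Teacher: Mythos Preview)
Your proof is correct; the paper itself gives no argument beyond the sentence ``Each can be verified using simple calculus,'' so your detailed verification via one-variable Taylor/convexity bounds is exactly what is being left to the reader. The reductions and polynomial inequalities you derive are all accurate (in particular your identities $\frac{a-a^2/2}{1+a}-\frac{a}{2}=\frac{a(1/2-a)}{1+a}$ and $\frac{a/2-a^2/8}{1+a}-\frac{a}{4}=\frac{a(2-3a)}{8(1+a)}$ check out), and the strict inequalities in the Taylor bounds carry the required strictness through in (ii).
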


   We will make use of the following graph theoretic concentration inequality. Roughly, it says that given a bipartite graph, if we randomly sample the vertices in one of the parts, the size of the neighbourhood of the sample is very unlikely to be much smaller than its expected value.

    \begin{lemma}\label{lemma:main_technical}
        Let $p\in (0,1]$, and $\lambda>1$. Let $G$ be a bipartite graph with vertex classes $A$ and $B$. Let $U\subset A$ be a random sample, each vertex included independently with probability $p$.  Let $\mu:=\mathbb{E}(|N(U)|)$, and 
        suppose that every vertex in $A$ has degree at most $K$. If $K\leq \frac{\mu}{32\lambda \log_2(\lambda p^{-1})}$, then
        $$\mathbb{P}\left(|N(U)|\leq \frac{\mu}{64\lambda\log_2(\lambda p^{-1})}\right)\leq 2e^{-\lambda}.$$
    \end{lemma}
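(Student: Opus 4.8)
The plan is to expose the vertices of $U$ one at a time and apply a Bernstein‑type (Freedman) martingale inequality to $|N(U)|$. The difficulty is that the crude bounded‑differences bound is useless here: re‑sampling a single vertex of $A$ can change $|N(U)|$ by up to $K$, and there are $|A|$ vertices in play, which may be arbitrarily large compared to $\mu$, so Azuma's inequality only gives a deviation scale of order $\sqrt{|A|}\,K$. The whole point will therefore be a good \emph{pathwise} upper bound on the predictable quadratic variation of the associated Doob martingale, and this is exactly where the degree hypothesis on $A$ enters.

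We may assume $\log_2(\lambda p^{-1})\ge 1$ (equivalently $p\le 1/2$); the complementary, very dense, case is routine (sub‑sample down to density $1/2$). Fix an arbitrary ordering $a_1,\dots,a_N$ of $A$, let $\mathcal{F}_i$ be the $\sigma$‑algebra generated by $\mathbf{1}[a_1\in U],\dots,\mathbf{1}[a_i\in U]$, and set $Z_i:=\mathbb{E}\big(|N(U)|\,\big|\,\mathcal{F}_i\big)$, so that $Z_0=\mu$ and $Z_N=|N(U)|$. Conditioning on whether $a_i\in U$, one computes $Z_i-Z_{i-1}=(1-p)\delta_i$ if $a_i\in U$ and $Z_i-Z_{i-1}=-p\,\delta_i$ if $a_i\notin U$, where $\delta_i\ge 0$, the conditional marginal gain from adding $a_i$, is
\[
\delta_i=\sum_{b:\ a_i\in N(b),\ N(b)\cap U_{<i}=\emptyset}(1-p)^{\,|N(b)\cap\{a_{i+1},\dots,a_N\}|},\qquad U_{<i}:=U\cap\{a_1,\dots,a_{i-1}\}.
\]
Since $0\le\delta_i\le |\{b\in B: a_i\in N(b)\}|=\deg(a_i)\le K$, the martingale increments are bounded by $K$; and the conditional variance of the $i$‑th increment is $p(1-p)\delta_i^2\le pK\delta_i$, so the predictable quadratic variation obeys $V:=\sum_{i=1}^N p(1-p)\delta_i^2\le pK\sum_{i=1}^N\delta_i$.

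The key step is the pointwise estimate $\sum_{i=1}^N\delta_i\le\mu/p$. Regroup the double sum defining $\sum_i\delta_i$ according to $b$: for a fixed $b$ with $\deg(b)=d$, list the vertices of $N(b)$ in the order they appear among $a_1,\dots,a_N$; the $k$‑th of them, when reached, has exactly $d-k$ vertices of $N(b)$ after it, so (dropping the indicator that the earlier ones miss $U$) it contributes at most $(1-p)^{d-k}$. Hence $b$'s total contribution is at most $\sum_{k=1}^{d}(1-p)^{d-k}=\frac{1-(1-p)^{d}}{p}$, and summing over $b\in B$ gives $\sum_i\delta_i\le\frac1p\sum_{b\in B}\big(1-(1-p)^{\deg(b)}\big)=\mu/p$. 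Therefore $V\le K\mu$ \emph{pathwise}. Freedman's inequality (a Bernstein‑type bound for martingales with bounded increments), applied to $(Z_i)_{i=0}^N$ with increments bounded by $K$ and quadratic variation at most $K\mu$, yields
\[
\mathbb{P}\left(|N(U)|\le\mu-t\right)\le\exp\left(-\frac{t^2}{2(K\mu+Kt/3)}\right)\qquad(t>0).
\]
Choosing $t=\mu/2$ and using $\mu\ge 32\lambda K\log_2(\lambda p^{-1})\ge 32\lambda K$, the right‑hand side is at most $\exp(-3\mu/(28K))\le e^{-3\lambda}\le 2e^{-\lambda}$; since $\mu/(64\lambda\log_2(\lambda p^{-1}))\le\mu/2$, this proves the lemma.

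The only genuine obstacle is the pathwise bound $V\le K\mu$ on the quadratic variation — everything else is bookkeeping. Its proof uses the hypothesis $\deg(a)\le K$ in an essential way (to cap each $\delta_i$), together with the elementary rearrangement above and the geometric‑series identity $\sum_{k=1}^{d}(1-p)^{d-k}=\frac{1-(1-p)^d}{p}$. (Incidentally, the argument actually shows $\mathbb{P}(|N(U)|\le\mu/2)\le e^{-3\lambda}$ under the same hypothesis, so the $\lambda$ and $\log_2(\lambda p^{-1})$ factors in the stated conclusion are not sharp; the weaker form is convenient for the applications.)
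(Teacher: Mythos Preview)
Your proof is correct and takes a genuinely different route from the paper. The paper splits $B$ according to degree (high-degree vertices $B_0$ with $\deg\ge\lambda/p$ versus low-degree $B_1$), handles $B_0$ by Markov's inequality, and for $B_1$ performs a dyadic decomposition into degree classes $C_i$, picks the densest class $C_I$, and then greedily partitions $C_I$ into blocks $V_1,\dots,V_\ell$ each with $|N(V_j)|\ge\lambda/p$; the indicators of the events $N(U)\cap V_j\neq\emptyset$ are then handled one by one. In contrast, you run a single Doob-martingale argument: the crucial ingredient is the pathwise estimate $\sum_i\delta_i\le\mu/p$ (via the geometric-series identity for each $b\in B$), which forces the predictable quadratic variation below $K\mu$ and feeds directly into Freedman's inequality.

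Your approach is shorter and, as you observe, actually gives the stronger conclusion $\mathbb{P}(|N(U)|\le\mu/2)\le e^{-3\lambda}$ --- the $\lambda\log_2(\lambda p^{-1})$ loss in the stated bound is an artefact of the paper's dyadic decomposition, not of the problem. The paper's approach, on the other hand, is more elementary (only Chernoff and Markov, no martingale machinery), and its block-decomposition structure is what they recycle almost verbatim for the coloured variant in the next lemma, where one samples colours as well as vertices; adapting your martingale argument to that setting would require exposing both vertex and colour coordinates and re-deriving a variance bound, which is doable but less immediate. One small quibble: your reduction to $\log_2(\lambda p^{-1})\ge1$ is phrased as ``$p\le 1/2$'' and dispatched by ``sub-sample down to density $1/2$'', which is not quite the right equivalence and is a bit glib --- but the residual range ($p>\lambda/2$, hence $\lambda<2$) is a degenerate regime where the lemma as stated is essentially vacuous, so this does not affect the substance.
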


    \begin{proof}
         Let $\Delta:=\frac{\lambda}{p}$, $B_0=\{v\in B:\deg(v)\geq \Delta\}$, and $B_1=B\setminus B_0$. For $v\in B$, let $X_v$ be the indicator random variable of the event $v\in N(U)$. Then $\mu=\sum_{v\in B}\mathbb{E}(X_v)$. Consider two cases.
\medskip

        \noindent
         \textbf{Case 1.} $\sum_{v\in B_0}\mathbb{E}(X_v)\geq \frac{\mu}{2}$.

        Note that this implies $|B_0|\geq \frac{\mu}{2}$ as well. For $v\in B_0$, we have 
        $$\mathbb{E}(X_v)=1-(1-p)^{\deg(v)}\geq 1-e^{-p\cdot \deg(v)}\geq 1-e^{-\lambda}.$$ Hence, $\mathbb{E}(|B_0\setminus N(U)|)\leq |B_0|e^{-\lambda}$. Now apply Markov's inequality to conclude that 
        $$\mathbb{P}\left(|B_0\setminus N(U)|\geq \frac{|B_0|}{2}\right)\leq 2e^{-\lambda}.$$
        But then
        $$\mathbb{P}\left(|N(U)|\leq \frac{\mu}{4}\right)\leq \mathbb{P}\left(|B_0\setminus N(U)|\geq \frac{|B_0|}{2}\right)\leq  2e^{-\lambda}.$$
        This concludes Case 1.

        \bigskip
        
        \noindent
         \textbf{Case 2.} $\sum_{v\in B_1}\mathbb{E}(X_v)>\frac{\mu}{2}$.
         
         Using that $p\cdot \deg(v)\leq \lambda$, we can write
         $$p\cdot \deg(v)\geq \mathbb{E}(X_v)\geq 1-(1-p)^{\deg(v)/\lambda}\geq \frac{p\cdot \deg(v)}{2\lambda},$$
         where the last inequality holds by Claim \ref{claim:technical}. Observe that $\sum_{v\in B_1}\deg(v)=e_G(A,B_1)=:t$, so summing the previous inequality for every $v\in B_1$ implies that 
         $$p\cdot t\geq \frac{\mu}{2}\geq \frac{p\cdot t}{4\lambda}.$$     
         Here the last inequality holds by noting that $\sum_{v\in B_1}\mathbb{E}(X_v)\leq \mu$. For $i=0,\dots,\lfloor \log_2 \Delta\rfloor$, let 
         $$C_i=\{v\in B:2^{i}\leq \deg(v)<2^{i+1}\}.$$
         Then there exists $I\in\{0,\dots, \lfloor \log_2 \Delta\rfloor\}$ such that the bipartite graph $e_{G}(A,C_I)$ contains at least $t/2\log_2 \Delta$ edges. Set $k:=2^{I}$, then $|C_I|\geq \frac{t}{4k\log_2 \Delta}$. 
         
         Let $s:=\frac{2\Delta}{k}$. Next, we will construct disjoint sets $V_1,\dots,V_{\ell}\subset C_I$, each of size at most $s$ such that $V_1,\dots,V_{\ell}$ cover at least half of the elements of $C_I$, and $|N(V_i)|\geq \Delta$ for $i\in [\ell]$. Suppose that we already constructed $V_1,\dots,V_j$, and let $C=C_I\setminus (V_1\cup\dots\cup V_j)$. If $|C|\leq |C_I|/2$, we stop, otherwise let $V$ be a set of randomly chosen $s$ elements (with repetition) of $C$. For a vertex $w\in A$, let $d_w$ denote its degree in $G[A\cup C]$. Then we have $\mathbb{P}(w \in N(V))=1-\left(1-\frac{d_w}{|C|}\right)^{s}$. Recall that every vertex in $A$ has degree at most $K$, so
         \begin{equation}\label{equ:long}
            sd_w\leq s K \leq \frac{s\mu}{32\lambda \log_2(\Delta)}=\frac{\Delta\mu}{16k\lambda \log_2(\Delta)}=\frac{\mu}{16kp\log_2(\Delta)} \leq \frac{t}{8k\log_2(\Delta)}\leq \frac{|C_I|}{2}\leq |C|.
         \end{equation}
         Therefore, Claim \ref{claim:technical} implies $\mathbb{P}(w \in N(V))\geq \frac{sd_w}{2|C|}$. Hence, 
         $$\mathbb{E}(|N(V)|)\geq \sum_{w\in A}\frac{sd_w}{2|C|}=\frac{s\cdot e_G(A,C)}{2|C|}\geq \frac{sk}{2}=\Delta.$$ Here, the last inequality holds because  in $G[A\cup C]$, the degree of every vertex in $C$  is at least $k$. Thus, there exists a choice for $V$ such that $|N(V)|\geq \Delta$, set $V_{j+1}:=V$.

         For $i\in [\ell]$, let $Y_{i}$ be the indicator random variable of the event $N(U)\cap V_i\neq \emptyset$. Then, as in Case 1, we have $\mathbb{E}(Y_i)\geq 1-e^{-\lambda}$. Therefore, with probability at least $1-2e^{-\lambda}$, at least half of the indices $i\in [\ell]$ satisfy that $N(U)\cap V_i\neq \emptyset$, which implies that $|N(U)|\geq \frac{\ell}{2}$. This finishes Case 2. as 
         $$\frac{\ell}{2}\geq \frac{|C_I|}{4s}=\frac{|C_I|k}{8\Delta}\geq \frac{t}{32\Delta\log_2\Delta}\geq \frac{\mu}{64p\Delta\log_2\Delta}=\frac{\mu}{64\lambda\log_2\Delta}.$$ 
    \end{proof}

    Finally, we prove a variant of the previous lemma, in which one has a proper edge-coloring, and we sample colors as well. The proof is very similar to that of Lemma \ref{lemma:main_technical}, so we mostly highlight the differences. Given an edge coloring of a graph $G$, $Q$ is a subset of the colors, and $U\subset V(G)$, let $N_Q(U)$ denote the set of vertices $v\in V(G)\setminus U$ that are joined to some element of $U$ by an edge of color in $Q$.

    \begin{lemma}\label{lemma:main_technical_color}
        Let $p,p_c\in (0,1]$, and $\lambda>1$. Let $G$ be a bipartite graph with vertex classes $A$ and $B$, and let $f:E(G)\rightarrow R$ be a proper edge coloring. Let $U\subset A$ be a random sample of vertices, each vertex included independently with probability $p$, and let $Q\subset R$ be a random sample of colors, each included independently with probability $p_c$.  Let $\mu:=\mathbb{E}(|N_Q(U)|)$, and 
        suppose that every vertex in $A$ has degree at most $K$. If $K+|A|\leq \frac{\mu}{128\lambda \log_2(\lambda (p\cdot p_c)^{-1})}$, then
        $$\mathbb{P}\left(|N_Q(U)|\leq \frac{\mu}{64\lambda\log_2(\lambda (p\cdot p_c)^{-1})}\right)\leq 2e^{-\lambda}.$$
    \end{lemma}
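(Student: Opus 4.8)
The plan is to follow the proof of Lemma~\ref{lemma:main_technical} essentially verbatim, replacing the sampling probability $p$ by $p\cdot p_c$ in every first-moment estimate, and altering only the construction of the auxiliary sets $V_i$.

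The observation that makes this possible is the following. Fix $v\in B$. Since $f$ is a proper edge coloring, the colors $f(uv)$, $u\in N(v)$, are pairwise distinct; as the events $\{u\in U\}$ are independent over $u\in N(v)$ and $U$ is independent of $Q$, the events $\{u\in U\text{ and }f(uv)\in Q\}$, $u\in N(v)$, are mutually independent, each of probability $pp_c$. Hence if $X_v$ denotes the indicator of $v\in N_Q(U)$, then $\mathbb{E}(X_v)=1-(1-pp_c)^{\deg(v)}$ --- precisely the quantity from Lemma~\ref{lemma:main_technical} with $p$ replaced by $pp_c$ --- and $\mu=\sum_{v\in B}\mathbb{E}(X_v)$. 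Setting $\Delta:=\lambda(pp_c)^{-1}$, $B_0:=\{v:\deg(v)\ge\Delta\}$ and $B_1:=B\setminus B_0$, we split into Case~1 ($\sum_{v\in B_0}\mathbb{E}(X_v)\ge\mu/2$) and Case~2 ($\sum_{v\in B_1}\mathbb{E}(X_v)>\mu/2$). Case~1 goes through unchanged: $\mathbb{E}(X_v)\ge 1-e^{-pp_c\deg(v)}\ge 1-e^{-\lambda}$ for $v\in B_0$, so $\mathbb{E}(|B_0\setminus N_Q(U)|)\le|B_0|e^{-\lambda}$ and Markov's inequality finishes it (neither the degree bound nor the bound on $|A|$ is used here). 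In Case~2 one runs the same dyadic decomposition of $B_1$, obtaining $C_I$ and $k\le\Delta$ with $e_G(A,C_I)\ge t/(2\log_2\Delta)$ and $|C_I|\ge\frac{t}{4k\log_2\Delta}$, where $t=e_G(A,B_1)$ satisfies $\mu/2\le pp_c t$ and $\mu\ge\frac{pp_c t}{2\lambda}$, and hence $|C_I|\ge\frac{\mu}{8pp_c k\log_2\Delta}$.

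The only real change is in the construction of $V_1,\dots,V_\ell$. Rather than demanding $|N(V_i)|\ge\Delta$, I would demand that $G[V_i\cup A]$ contains a \emph{rainbow semi-matching} $M_i$ of size $\Delta$: a set of $\Delta$ edges with pairwise distinct colors and pairwise distinct endpoints in $A$. Given disjoint sets $V_1,\dots,V_\ell\subseteq C_I$, each of size at most $s:=2\Delta/k$, covering at least half of $C_I$ and each equipped with such an $M_i$, the proof closes as before: because the edges of $M_i$ have distinct $A$-endpoints and distinct colors, the events $\{u\in U\text{ and }f(uv)\in Q\}$ over $uv\in M_i$ are mutually independent of probability $pp_c$, so $\mathbb{P}(N_Q(U)\cap V_i=\emptyset)\le(1-pp_c)^{|M_i|}\le e^{-pp_c\Delta}=e^{-\lambda}$. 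Thus $\mathbb{E}(|\{i:N_Q(U)\cap V_i=\emptyset\}|)\le\ell e^{-\lambda}$, and by Markov's inequality at least $\ell/2$ of the $V_i$ meet $N_Q(U)$ with probability at least $1-2e^{-\lambda}$; since the $V_i$ are disjoint this gives $|N_Q(U)|\ge\ell/2$, and $\ell/2\ge\frac{|C_I|k}{8\Delta}\ge\frac{\mu}{64\lambda\log_2(\lambda(pp_c)^{-1})}$ exactly as in Lemma~\ref{lemma:main_technical}.

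The difficult point, and the one I expect to be the main obstacle, is producing these $V_i$ --- and this is precisely where the bound on $|A|$ must enter. Combining $K,|A|\le\frac{\mu}{128\lambda\log_2(\lambda(pp_c)^{-1})}$ with $|C_I|\ge\frac{\mu}{8pp_c k\log_2\Delta}$ one gets $|C_I|\gg K$ and that the edges incident to $C_I$ use $\Omega(\Delta)$ colors, with no color class on $C_I\times A$ exceeding $\min(|C_I|,|A|)$ edges; this color-diversity --- which a uniformly colored graph would lack, and which rules out the degenerate situation where a sampled set of colors misses almost all edges at $C_I$ --- should be exactly what is needed to run a rainbow analogue of the random-chunk step of Lemma~\ref{lemma:main_technical}: repeatedly extract, from the not-yet-used part $C\subseteq C_I$ (of size $>|C_I|/2$), a set of at most $s$ vertices whose edges to $A$ contain $\Delta$ edges on distinct $A$-vertices with distinct colors. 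The cleanest formulation is probably to pass to the auxiliary bipartite graph with parts $C_I$ and the set of pairs $(u,f(uv))$ occurring on edges at $C_I$, in which each $v\in C_I$ is joined exactly to the matching $M_v=\{(u,f(uv)):u\in N(v)\}$ of size $\deg(v)\in[k,2k)$, and to rerun the Case~2 construction there; the subtlety is that $\bigcup_{v\in V_i}M_v$ is only a union of $|V_i|$ edge-disjoint matchings rather than a single one, so extracting a rainbow semi-matching of size $\Delta$ from a chunk of $\le 2\Delta/k$ vertices of degree $\approx k$ really does force one to use the $|A|$-bound to keep the color classes inside the chunk small.
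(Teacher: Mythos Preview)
Your overall plan matches the paper's proof exactly: replace $p$ by $q=pp_c$ throughout, run Case~1 verbatim, and in Case~2 replace the requirement $|N(V_i)|\ge\Delta$ by the existence of $\Delta$ edges in $G[A\cup V_i]$ with pairwise distinct colours and pairwise distinct $A$-endpoints. The paper calls such edges \emph{unique} (an edge is unique if no other edge of $G[A\cup V_i]$ shares its colour or its $A$-endpoint); a collection of unique edges is precisely your rainbow semi-matching, and once you have it the endgame is identical to what you wrote.

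The gap is the construction of the $V_i$, which you explicitly flag as unresolved. Your auxiliary bipartite graph on $C_I\times\{(u,f(uv))\}$ does not do the job: finding $\Delta$ distinct right-vertices $(u_i,c_i)$ in a chunk only guarantees the pairs are distinct, not that the $u_i$ are distinct \emph{and} the $c_i$ are distinct. The paper's fix is much more direct and avoids any auxiliary graph. Take $V=\{v_1,\dots,v_s\}$ uniformly at random with repetition from the current $C$, and for a fixed edge $\{u,v\}$ with $u\in A$, $v\in C$, compute
\[
\mathbb{P}(\{u,v\}\text{ is unique in }G[A\cup V])\;\ge\;\frac{s}{|C|}\Bigl(1-\frac{K+|A|}{|C|}\Bigr)^{s-1}.
\]
The point is that another sampled vertex $v_j$ spoils uniqueness of $\{u,v\}$ only if $v_j\in N(u)$ (at most $K$ choices) or $v_j$ carries an edge of colour $f(uv)$ (at most $|A|$ choices, since the colouring is proper). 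The hypothesis $K+|A|\le\frac{\mu}{128\lambda\log_2\Delta}$ combined with $|C|\ge|C_I|/2\ge\frac{t}{8k\log_2\Delta}\ge\frac{\mu}{16qk\log_2\Delta}$ gives $(K+|A|)s/|C|\le\tfrac14$, so the probability above is at least $\frac{s}{2|C|}$, and linearity yields an expected number of unique edges at least $e(A,C)\cdot\frac{s}{2|C|}\ge\frac{sk}{2}=\Delta$. This is exactly the random-chunk step of Lemma~\ref{lemma:main_technical} with one extra forbidden set of size $|A|$ per edge, and it is where both $K$ and $|A|$ are spent.
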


    \begin{proof}
         Let $q=p\cdot p_c$, $\Delta=\frac{\lambda}{q}$,  $B_0=\{v\in B:\deg(v)\geq \Delta\}$, and $B_1=B\setminus B_0$. For $v\in B$, let $X_v$ be the indicator random variable of the event $v\in N_Q(U)$. Then $\mu=\sum_{v\in B}\mathbb{E}(X_v)$. Consider two cases.
\medskip

        \noindent
         \textbf{Case 1.} $\sum_{v\in B_0}\mathbb{E}(X_v)\geq \frac{\mu}{2}$.

         Identical to Case 1. in Lemma \ref{lemma:main_technical} with $p$ replaced by $q$.

          \bigskip
        
        \noindent
         \textbf{Case 2.} $\sum_{v\in B_1}\mathbb{E}(X_v)>\frac{\mu}{2}$.

         Repeating the same calculations as in Lemma \ref{lemma:main_technical}, we get 
         $$q\cdot t\geq \frac{\mu}{2}\geq \frac{q\cdot t}{4\lambda},$$
         where $t=e_G(A,B_1)$. Define $C_I$, $k$, $s=\frac{2\Delta}{k}$ as before, and recall that $|C_I|\geq \frac{t}{4k\log_2 \Delta}$. Given a set $V\subset C_I$, say that an edge in $G[A\cup V]$ is \emph{unique} (with respect to $V$) if no other edge in  $G[A\cup V]$ has the same color or same endpoint in $A$.
         
         We will construct disjoint sets $V_1,\dots,V_{\ell}\subset C_I$, each of size at most $s$ such that $V_1,\dots,V_{\ell}$ cover at least half of the elements of $C_I$, and $G[A\cup V_i]$ has at least $\Delta$ unique edges for $i\in [\ell]$.  Suppose that we already constructed $V_1,\dots,V_j$, and let $C=C_I\setminus (V_1\cup\dots\cup V_j)$. If $|C|\leq |C_I|/2$, we stop, otherwise let $V=\{v_1,\dots,v_s\}$ be a set of randomly chosen $s$ elements (with repetition) of $C$. Given an edge $\{u,v\}$ with $u\in A$, $v\in C$, the probability that $\{u,v\}$ is unique in $G[A\cup V]$ is at least 
         $$\frac{s}{|C|}\left(1-\frac{K+|A|}{|C|}\right)^{s-1}.$$
         Indeed, for every $i\in [s]$, we have $\mathbb{P}(v_i=v)=\frac{1}{|C|}$, and for $j\in [s]\setminus \{i\}$, the probability that $v_j$ is not a neighbour of $u$, or has an edge of color $f(\{u,v\})$, is at least $1-\frac{K+|A|}{|C|}$. Here, $\frac{(K+|A|)s}{|C|}\leq \frac{1}{4}$ by repeating  the same calculations as in (\ref{equ:long}). Hence, we can write 
         $$\frac{s}{|C|}\left(1-\frac{K+|A|}{|C|}\right)^{s-1}\geq \frac{s}{2|C|}.$$
         Therefore, the expected number of unique edges in $G[A\cup V]$ is at least $e(A,C)\cdot\frac{s}{2|C|}\geq \frac{sk}{2}\geq \Delta$. This implies that there is a choice for $V$ with at least $\Delta$ unique edges, set $V_{j+1}:=V$.

         For $i\in [\ell]$, let $Y_{i}$ be the indicator random variable that $N_Q(U)\cap V_i\neq \emptyset$. Then, as in Case 1, we have $\mathbb{E}(Y_i)\geq 1-e^{-\lambda}$. Therefore, with probability at least $1-2e^{-\lambda}$, at least half of the indices $i\in [\ell]$ satisfy that $N_Q(U)\cap V_i\neq \emptyset$, which implies that $|N_Q(U)|\geq \frac{\ell}{2}$. This finishes Case 2. by the same calculations as in Lemma \ref{lemma:main_technical}.
    \end{proof}

	\subsection{\texorpdfstring{$\alpha$}{a}-maximal graphs}\label{sect:alpha}
	In this section, we introduce \emph{$\alpha$-maximal} graphs and study their properties.
	
	\begin{definition}(\emph{$\alpha$-maximal graphs})
		Let $\alpha>0$. A graph $G$ is a \emph{$\alpha$-maximal} if for every subgraph $H$ of $G$, we have $$\frac{d(H)}{v(H)^{\alpha}}\leq \frac{d(G)}{v(G)^{\alpha}}.$$ Equivalently, if $e(G)=cv(G)^{1+\alpha}$, then $e(H)\leq cv(H)^{1+\alpha}$ for every subgraph $H$ of $G$.
	\end{definition}
	
	An obvious, but highly useful property of $\alpha$-maximal graphs is that every graph contains one. Indeed, the subgraph $H$ maximizing the quantity $d(H)/v(H)^{\alpha}$ is $\alpha$-maximal. Moreover, if a graph $G$ has $n$ vertices and average degree at least $cn^{\alpha}$, then it contains an $\alpha$-maximal subgraph $H$ of average degree at least $c$, and thus of size at least $c$. In the next lemma, we list a number of further properties of $\alpha$-maximal graphs. In particular, we show that they have large minimum degree, and that they are excellent vertex- and edge-expanders. 
	\begin{lemma} \label{lemma:maximal}\emph{(Properties of $\alpha$-maximal graphs)}
		Let $0<\alpha<\frac{1}{2}$, let $G$ be an $\alpha$-maximal graph on $n$ vertices, and let $d(G)=cn^{\alpha}$.
		\begin{itemize}
			\item[(i)] If $G$ is nonempty, then $c\geq 1/2$.
			\item[(ii)] The minimum degree of $G$ is at least $d(G)/2=cn^{\alpha}/2$.
		\end{itemize}
		Let $X\subset V(G)$ such that $|X|\leq n/2$, and let $Y=V(G)\setminus X$.
		\begin{itemize}
			\item[(iii)] $e(X,N(X))\geq \frac{c}{4}n^{\alpha}|X|\left(1+\alpha-\left(\frac{|X|}{|Y|}\right)^{\alpha}\right)$.
			\item[(iv)] $|N(X)|>|X|\left(\left(1+\frac{\alpha}{2}\right)\left(\frac{|Y|}{|X|}\right)^{\alpha/(1+\alpha)}-1\right).$
		\end{itemize}
	\end{lemma}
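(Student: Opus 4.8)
The plan is to establish the four items essentially independently, each time passing to a carefully chosen subgraph $H$ of $G$ and invoking $\alpha$-maximality in the equivalent form $e(H)\le \frac{c}{2}v(H)^{1+\alpha}$ (which follows from the definition together with $e(G)=\frac12 d(G)v(G)=\frac{c}{2}n^{1+\alpha}$). For (i) I would take $H$ to be a single edge, so that $d(H)/v(H)^\alpha=2^{-\alpha}\le c$, and $2^{-\alpha}\ge 1/2$ since $\alpha\le 1$. For (ii) I would argue by contradiction: if a vertex $v$ had $\deg(v)<cn^\alpha/2$, then $H:=G-v$ would satisfy $e(H)=\frac{c}{2}n^{1+\alpha}-\deg(v)>\frac{c}{2}n^\alpha(n-1)$, contradicting $e(H)\le \frac{c}{2}(n-1)^{1+\alpha}$ because $n^\alpha>(n-1)^\alpha$ (the cases $n\le 1$ being trivial).

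For (iii), the key observation is that every edge of $G$ lies inside $X$, inside $Y$, or between $X$ and $N(X)$, hence $e(X,N(X))=e(G)-e(G[X])-e(G[Y])\ge \frac{c}{2}\bigl(n^{1+\alpha}-|X|^{1+\alpha}-|Y|^{1+\alpha}\bigr)$ once we bound $e(G[X])$ and $e(G[Y])$ by $\alpha$-maximality. Convexity of $t\mapsto t^{1+\alpha}$ gives $n^{1+\alpha}-|Y|^{1+\alpha}=(|X|+|Y|)^{1+\alpha}-|Y|^{1+\alpha}\ge (1+\alpha)|X|\,|Y|^\alpha$, so the right-hand side is at least $\frac{c}{2}|X|\,|Y|^\alpha\bigl(1+\alpha-(|X|/|Y|)^\alpha\bigr)$. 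Since $|X|\le n/2$ we have $|Y|^\alpha\ge (n/2)^\alpha\ge n^\alpha/2$, and the bracket is positive (as $|X|/|Y|\le 1<1+\alpha$), so multiplying through yields the claimed bound.

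For (iv), I would use the cleaner decomposition that every edge of $G$ lies either inside $X\cup N(X)$ (if it meets $X$) or inside $Y$ (if it misses $X$), so that $e(G)\le e(G[X\cup N(X)])+e(G[Y])$. Applying $\alpha$-maximality to both induced subgraphs gives $(|X|+|N(X)|)^{1+\alpha}\ge n^{1+\alpha}-|Y|^{1+\alpha}\ge (1+\alpha)|X|\,|Y|^\alpha$, exactly as in (iii), and taking $(1+\alpha)$-th roots turns this into $|X|+|N(X)|\ge (1+\alpha)^{1/(1+\alpha)}\,|X|\,(|Y|/|X|)^{\alpha/(1+\alpha)}$. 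Now Claim~\ref{claim:technical}(ii) supplies $(1+\alpha)^{1/(1+\alpha)}>1+\frac{\alpha}{2}$, and rearranging gives (iv); the case $X=\emptyset$ is trivial.

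I do not anticipate a deep obstacle, since the arithmetic in (iii)--(iv) is routine once the right decompositions are in hand. The one point that genuinely needs care, and where a naive attempt fails, is the choice of subgraph in (iv): the tempting route of lower-bounding $e(G[X\cup N(X)])$ via the minimum-degree bound (ii), i.e. by $\frac{c}{2}n^\alpha|X|-\frac{c}{2}|X|^{1+\alpha}$, loses a constant factor that is fatal for the \emph{sharp} constant $1+\frac{\alpha}{2}$, so one must instead use the exact identity $e(G)=\frac{c}{2}n^{1+\alpha}$ together with the complementary piece $G[Y]$. Apart from that, I would only need to watch the signs when replacing $|Y|^\alpha$ by $n^\alpha/2$ in (iii) and dispose of the degenerate cases ($X=\emptyset$, $G$ edgeless, $n\le 1$), all of which are immediate.
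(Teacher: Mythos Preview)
Your proposal is correct and follows essentially the same route as the paper's proof: the same single-edge subgraph for (i), vertex deletion for (ii), the decomposition $e(G)-e(G[X])-e(G[Y])$ with Bernoulli's inequality and $|Y|^\alpha\ge n^\alpha/2$ for (iii), and the lower bound $e(G[X\cup N(X)])\ge e(G)-e(G[Y])$ combined with the $\alpha$-maximal upper bound and Claim~\ref{claim:technical}(ii) for (iv). Your remark about why the naive minimum-degree route fails in (iv) is a helpful gloss, but the actual argument matches the paper's.
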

	
	\begin{proof}
		\begin{itemize}
			\item[(i)]If $H$ is the subgraph formed by a single edge, then $d(H)/v(H)^{\alpha}=1/2^{\alpha}>1/2$.
			\item[(ii)]Let $\delta$ be the minimum degree of $G$, and let $v$ be a vertex of degree $\delta$. Let $H=G$ be the subgraph of $G$ we get after removing $v$, then by the definition of $\alpha$-maximal, we have $d(H)/v(H)^{\alpha}\leq d(G)/v(G)^{\alpha}$. This can be rewritten as $$\frac{d(G)n-2\delta}{(n-1)^{1+\alpha}}\leq \frac{d(G)}{n^{\alpha}},$$
			which gives
			$$\frac{d(G)}{2}\left(n-\frac{(n-1)^{1+\alpha}}{n^{\alpha}}\right)\leq \delta,$$
			implying the desired inequality $\delta\geq d(G)/2$.
			\item[(iii)] We have
			$$e(X,N(X))=e(G[X\cup Y])-e(G[X])-e(G[Y])\geq \frac{c}{2}((|X|+|Y|)^{1+\alpha}-|X|^{1+\alpha}-|Y|^{1+\alpha}) .$$ 
			Using the inequalities  $(1+|X|/|Y|)^{1+\alpha}\geq 1+(1+\alpha)|X|/|Y|$ and $|Y|^{\alpha}\geq n^{\alpha}/2$, the right hand side is at least
			$$\frac{c}{2}((1+\alpha)|X||Y|^{\alpha}-|X|^{1+\alpha})\geq \frac{c\alpha}{4}|X|n^{\alpha}+\frac{c}{4}|X|n^{\alpha}\left(1-\left(\frac{|X|}{|Y|}\right)^{\alpha}\right).$$
			\item[(iv)]  We can write
			$$e(G[X\cup N(X)])\geq e(G[X\cup Y])-e(G[Y])\geq \frac{c}{2}((|X|+|Y|)^{1+\alpha}-|Y|^{1+\alpha}).$$
			As in (iii), we can further bound the right hand side from below by 
			$\frac{c(1+\alpha)}{2}|X||Y|^{\alpha}.$
			On the other hand, $e(G[X\cup N(X)])\leq \frac{c}{2}(|X|+|N(X)|)^{1+\alpha}$. Comparing the lower and upper bound on $e(G[X\cup N(X)])$, we get
			$$|N(X)|\geq \left((1+\alpha)|X||Y|^{\alpha}\right)^{1/(1+\alpha)}-|X|\geq |X|\left(\left(1+\frac{\alpha}{2}\right)\left(\frac{|Y|}{|X|}\right)^{\alpha/(1+\alpha)}-1\right).$$
            The last inequality follows by Claim \ref{claim:technical}.
		\end{itemize}
	\end{proof}
	
	Note that by taking $C$ to be a small constant, every graph $G$ on $n$ vertices of average degree $d$ contains a $C/(\log n)$-maximal subgraph $H$ of average degree at least $d(1-\epsilon)$. The graph $H$ satisfies that if $X\subset V(H)$ and $|X|\leq v(H)/2$, then $|N(X)|=\Omega(|X|/\log n)$. Graphs with similar expansion properties commonly appear in the study of sparse extremal problems, see e.g. \cite{JMY21,Mont15,ShaSu15,SudTom21}. Our definition immediately implies their existence in every graph with positive constant average degree, unlike earlier arguments.
	
	\subsection{Colorful expansion of random samples}
	
	In this section, we prepare the main technical tools needed to prove our rainbow results. To give some motivation, let us briefly outline the proof of Theorem \ref{thm:rainbow_subdivision}, as the proof of our other results follow on a very similar line.
	
	\bigskip
	
	\noindent
	\textbf{Outline of the proof of Theorem \ref{thm:rainbow_subdivision}.} Let $G$ be a graph on $n$ vertices of average degree at least $d=(\log n)^{6+\epsilon}$, and let $f:E(G)\rightarrow R$ be a proper edge coloring. We want to prove that $G$ contains rainbow subdivision of $K_{t}$. Let $\alpha=1/\log n$, then $G$ contains an $\alpha$-maximal subgraph $H$ with average degree $\Omega(d)$. Clearly, it is enough to show that $H$ contains a rainbow subdivision of $K_t$. One of our main technical results, Lemma  \ref{lemma:randompath}, tells us that if one samples the vertices and colors of $H$ with probability $p=(\log n)^{-1-\epsilon/10}$, and $v\in V(H)$, then with high probability at least $1/3$ proportion of the vertices of $H$ can be reached from $v$ by a rainbow path of length at most $\ell=(\log n)^{1+o(1)}$, whose internal vertices are from the vertex sample, and whose edges are colored with the sampled colors. By considering a random partition of the vertices and colors into $s=1/p$ parts, we argue that at least $\Omega(v(H)^{2})$ pairs of vertices $(x,y)$ have the property that there are at least $\Omega(s)$ internally vertex disjoint paths of length at most $\ell$ between $x$ and $y$, such that no color appears twice in the union of these paths. Let $L$ be the graph whose edges are such pairs of vertices. We are almost done. We find a 1-subdivision of $K_{t}$ in $L$, and greedily replace each edge $xy$ of this subdivision by a path of $H$ between $x$ and $y$ making sure that we avoid repeating vertices and colors. This is possible as $\ell t^{2}=o(s)$.
	
	\bigskip
	
	Most of the work needed to prove our theorems is concentrated in the following technical lemma. It roughly says that if $G$ is $\alpha$-maximal, $B\subset V(G)$, and $U$ is a random sample of $B$, where each vertex is sampled with some not too small probability, then $U$ expands as well as one would expect any $|B|$ element subset of $V(G)$ to expand.
	
	Let us introduce some notation. Let $G$ be a graph and $f:E(G)\rightarrow R$ be a coloring of the edges, where $R$ is some finite set. Given $U\subset V(G)$ and $Q\subset R$, a $(U,Q)$-path in $G$ is a rainbow path in which every internal vertex is contained in $U$, and every edge is colored with a color from $Q$.  Furthermore, let $\phi:V(G)\rightarrow 2^{V(G)\cup R}$ be a function that assigns every vertex a set of forbidden vertices and colors. For $X\subset V(G)$ and $Q\subset R$, define the \emph{restricted neighborhood with respect to the colors in $Q$} as $$N_{Q,\phi}(X)=\{y\in V(G)\setminus X: \exists x\in X, xy\in E(G), y\not\in \phi(x), f(xy)\in Q\setminus\phi(x)\}.$$

	\begin{lemma}\label{lemma:master}
		Let $p,p_c\in (0,1]$, $\alpha\in (0,\frac{1}{2})$, let $n$ be a positive integer and $\lambda>10^{10}\log \frac{2}{p\cdot p_c}$. Let $G$ be a graph on $n$ vertices with proper edge coloring $f:E(G)\rightarrow R$, and $B\subset V(G)$ satisfying the following properties.
		\begin{itemize}
			\item $G$ is $\alpha$-maximal,
			\item  $d:=d(G)\geq \lambda^3(\alpha\cdot p\cdot p_{c})^{-1}$,
			\item  $\phi:V(G)\rightarrow 2^{V(G)\cup R}$ such that $|\phi(v)|\leq d\alpha/32$ for every $v\in V(G)$,
			\item  $2\lambda^6 (p\cdot p_c)^{-1}<|B|<n/2$.
		\end{itemize}
		Let $U\subset B$ be a random sample of the vertices, each element chosen with probability $p$, and let $Q\subset R$ be a random sample of colors, each element chosen with probability $p_c$. Then with probability at least $1-2e^{-\lambda}$, $$|N_{Q,\phi}(U)\setminus B|\geq \frac{|B|}{4}\min\left\{\frac{d\cdot p\cdot p_c\cdot\alpha}{64\lambda^{5}},\left(\frac{n}{2|B|}\right)^{\alpha/(1+\alpha)}-1\right\}.$$
	\end{lemma}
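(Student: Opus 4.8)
The plan is to deduce the statement from the colourful concentration inequality of Lemma~\ref{lemma:main_technical_color}, using the expansion estimates for $\alpha$-maximal graphs from Lemma~\ref{lemma:maximal}. First I would replace $G$ by the bipartite graph $G'$ on classes $B$ and $W:=N_G(B)$ (which is disjoint from $B$), keeping an edge $xy$ with $x\in B$, $y\in W$ exactly when $xy\in E(G)$, $y\notin\phi(x)$ and $f(xy)\notin\phi(x)$; the restriction of $f$ to $G'$ stays proper, and one checks directly that $N_{Q,\phi}(U)\setminus B$ is the $Q$-neighbourhood of $U$ in $G'$ in the sense of Lemma~\ref{lemma:main_technical_color}. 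Since $f$ is proper, each $x\in B$ loses at most $|\phi(x)|\le d\alpha/32$ incident edges in the pruning (each forbidden vertex kills one incident edge, each forbidden colour kills at most one), while Lemma~\ref{lemma:maximal}(iii) gives $e_G(B,W)\ge\frac{c}{4}n^{\alpha}|B|\bigl(1+\alpha-(|B|/|Y|)^{\alpha}\bigr)\ge\frac{\alpha}{4}d|B|$ (using $|B|\le n/2\le|Y|$ and $cn^{\alpha}=d$), so $e(G')\ge\frac{7}{32}\alpha d|B|>\frac18\alpha d|B|$. Lemma~\ref{lemma:maximal}(iv) gives $|W|\ge|B|\bigl((n/2|B|)^{\alpha/(1+\alpha)}-1\bigr)=:|B|t_0$, which is the second quantity in the minimum.

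Next I would separate $W$ by degree into $B$: let $\Delta:=\lambda(pp_c)^{-1}$, $W_{\mathrm{big}}:=\{y\in W:\deg_{G'}(y)\ge\Delta\}$ and $W_{\mathrm{sm}}:=W\setminus W_{\mathrm{big}}$. For fixed $y$, the events ``$x\in U$ and $f(xy)\in Q$'' over the edges $xy$ of $G'$ at $y$ are mutually independent (distinct endpoints, distinct colours) with probability $pp_c$, so $\mathbb P(y\notin N_{Q,\phi}(U))=(1-pp_c)^{\deg_{G'}(y)}\le e^{-\lambda}$ for $y\in W_{\mathrm{big}}$, and Markov's inequality yields $|N_{Q,\phi}(U)\cap W_{\mathrm{big}}|\ge|W_{\mathrm{big}}|/2$ with probability at least $1-2e^{-\lambda}$. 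If $|W_{\mathrm{big}}|\ge\frac12|B|t_0$, this already gives the second term of the minimum; otherwise $|W_{\mathrm{sm}}|\ge\frac12|B|t_0$ and at least $\frac14 e(G')$ edges of $G'$ go to one of $W_{\mathrm{sm}}$ or $W_{\mathrm{big}}$, so one works with the corresponding induced bipartite subgraph (after thinning $B$, see below) and applies Lemma~\ref{lemma:main_technical_color} with $A$ this thinned part of $B$, the other class the chosen portion of $W$, and $p,p_c$ as given. Since the degrees on the $W$-side are then at most $\Delta$ (on $W_{\mathrm{sm}}$) or are bounded by the thinning (on $W_{\mathrm{big}}$), the expected value $\mu$ of $|N_{Q,\phi}(U)\setminus B|$ is at least of order $\frac{pp_c}{\lambda}e(G')\ge\frac{pp_c\alpha d}{8\lambda}|B|$, and here the hypotheses $d\ge\lambda^{3}(\alpha pp_c)^{-1}$, $|B|>2\lambda^{6}(pp_c)^{-1}$ and $\lambda>10^{10}\log\frac{2}{pp_c}$ are precisely what is needed to verify the requirement $K+|A|\le\mu/(128\lambda\log_2(\lambda(pp_c)^{-1}))$ of that lemma — the last hypothesis makes $\log(\lambda(pp_c)^{-1})$ far smaller than $\lambda$, so this factor is harmless — and to absorb the remaining $64\lambda\log_2(\cdot)$ loss into a power of $\lambda$, producing the first term of the minimum.

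The crux, where nearly all of the work lies, is the thinning: I need a subset $B'\subseteq B$ on which all $G'$-degrees into the chosen part of $W$ are bounded by some usable $K$, without destroying more than a $\mathrm{poly}(\lambda)^{-1}$ fraction of the edges of $G'$. This is genuinely delicate because an $\alpha$-maximal graph may contain a vertex adjacent to nearly all of $N_G(B)$, so there is no bound on the maximum $B$-degree of $G'$ and no truncation keeping a constant fraction of the edges can make $K$ small. The resolution is a case analysis according to how the edge mass of $G'$ is distributed over the $B$-side: when it sits on vertices of $G'$-degree at most a threshold $K$ of order $\alpha d$ one applies the argument above with $B'$ that low-degree part; when it sits on a set $B^{\mathrm{hi}}$ of vertices of larger degree, one either uses that $|B^{\mathrm{hi}}|\ge\lambda/p$ (so that $U$ meets $B^{\mathrm{hi}}$ with probability $1-e^{-\lambda}$ and the resulting high-degree vertices, together with the colour sample, contribute many vertices to $N_{Q,\phi}(U)$), or, if $B^{\mathrm{hi}}$ is small, exploits that $e(G')<\frac{\lambda}{p}|W|$ then forces $|W|$ to be large together with the $\alpha$-maximality of $G$ — which, via a $K_{s,t}$-type counting argument, bounds by $O(d)$ the number of vertices of $B$ whose $G'$-degree is $\Omega(|W|)$ — to rule out the edge mass being trapped on a tiny very-high-degree set. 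Calibrating the threshold $K$ and carefully propagating the $\mathrm{poly}(\lambda)$ and $\log((pp_c)^{-1})$ factors through these $O(1)$ many cases against the four hypotheses is the bulk of the computation; a final union bound over the constantly many failure events, each of probability at most $2e^{-\lambda/8}$, then yields the stated probability $1-2e^{-\lambda}$, with the slack absorbed into the absolute constant $10^{10}$.
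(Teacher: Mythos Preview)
Your overall architecture matches the paper's: reduce to a bipartite graph between $B$ and $W=N_G(B)$, strip the $\phi$-forbidden edges, split by degree, and feed the pieces into Lemma~\ref{lemma:main_technical_color}. You have also correctly identified the crux, namely bounding the $B$-side degrees. However, your handling of that crux has a genuine gap.

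The threshold ``of order $\alpha d$'' for $B^{\mathrm{hi}}$ is wrong; it must scale with $|B|$. The paper sets $\Lambda:=\frac{d\cdot p\cdot p_c\cdot\alpha\,|B|}{\lambda^3}$. With this choice the dichotomy is clean: either $|B^{\mathrm{hi}}|\ge 2\lambda(pp_c)^{-1}$, or $|B^{\mathrm{hi}}|<2\lambda(pp_c)^{-1}<|B|/\lambda^5$, so removing $B^{\mathrm{hi}}$ keeps half of $B$ and \emph{all} remaining $B$-degrees are at most $\Lambda$, which is exactly the $K$ that the hypothesis of Lemma~\ref{lemma:main_technical_color} requires. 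Your threshold $\alpha d$ does not force this: when the edge mass sits on a set of size below $\lambda/p$, your third subcase kicks in, and the ``$K_{s,t}$-type counting argument'' you sketch does not follow from $\alpha$-maximality (an $\alpha$-maximal graph can have a single vertex of degree $n-1$) and does not rescue the situation, since $U$ may simply miss $B^{\mathrm{hi}}$ with probability about $e^{-\lambda}$, which is exactly the allowed failure probability but leaves you with nothing.

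There is also a missing idea in the case $|B^{\mathrm{hi}}|\ge\lambda/p$. Saying ``the resulting high-degree vertices, together with the colour sample, contribute many vertices'' is not enough: one vertex of degree $\Lambda$ contributes about $p_c\Lambda$ after colour-sampling, and $\lambda/p_c$ such vertices give only $\lambda\Lambda$ if their neighbourhoods overlap, which is not obviously $\gtrsim|B|dpp_c\alpha/\lambda^5$. The paper's move here is to take $Z_0\subset U\cap B^{\mathrm{hi}}$ of size exactly $\lambda/p_c$, form the bipartite graph $J$ between the \emph{colour set} $R$ and $N_H(Z_0)$ (where $r$ is joined to $y$ if some edge from $Z_0$ to $y$ has colour $r$), and apply Lemma~\ref{lemma:main_technical} to $J$ with the colour sample $Q$ playing the role of $U$. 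Properness makes every $J$-degree at most $|Z_0|$, and $e(J)\ge\Lambda|Z_0|$, so $\mu\ge\Lambda/4$ and the lemma outputs $\gtrsim\Lambda/(\lambda\log)$ vertices, which is the first term of the minimum. Without this colour--vertex swap you cannot close this case.
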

	
	\begin{proof}
		Write $d=cn^{\alpha}$. Let $H$ be the bipartite graph with vertex classes $B$ and $N(B)$, in which $x\in B$ and $y\in N(B)$ are joined by an edge if $xy\in E(G)$, $y\not\in \phi(x)$ and $f(xy)\not\in \phi(x)$. Also, let $H_{Q}$ be the subgraph of $H$ in which we keep only the edges, whose color is in $Q$. Our goal is to show that $|N_{H_Q}(U)|$ is large with high probability.
		
		We are going to consider three cases, depending on the degree distribution of $H$. Let $\Lambda:=\frac{d\cdot p\cdot p_c\cdot \alpha|B|}{\lambda^3}$, and let $Y\subset B$ be the set of vertices $x$ such that $\deg_{H}(x)\geq \Lambda$. 
		
		\bigskip
		
		\noindent 
		\textbf{Case 1.} $|Y|\geq 2\lambda( p\cdot p_c)^{-1}$.
		
		Let $Z:=Y\cap U$, then $\mathbb{E}(|Z|)\geq 2\lambda p_{c}^{-1}$. Therefore, $$\mathbb{P}\left(|Z|\geq  \lambda p_c^{-1}\right)\geq 1-e^{-\Omega(\lambda)}$$ by the multiplicative Chernoff bound. 
		Assume that $|Z|\geq \lambda p_c^{-1}$, and let $Z_0$ be any $\lambda p_c^{-1}$ element subset of $Z$. Now let us sample the colors. Consider the bipartite graph $J$ with vertex classes $R$ and $N_H(Z_0)$, in which $r\in R$ is joined to $y\in N_H(Z_0)$ if there is an edge of color $r$ between $y$ and $Z_0$. Then every vertex in $J$ (in both parts) has degree at most $|Z_0|$, and $J$ has at least $\Lambda |Z_0|$ edges. Let us apply Lemma \ref{lemma:main_technical} to the bipartite graph $J$, with the following substitutions: $p_c\rightarrow p$, $\lambda\rightarrow\lambda$, $R\rightarrow A$, $N_H(Z_0)\rightarrow B$,  $Q\rightarrow U$, $|Z_0|\rightarrow K$. Given $y\in N_{H}(Z_0)$, 
        $$\mathbb{P}(y\in N_J(Q))=1-(1-p_c)^{|N_{J}(y)|}\geq 1- (1-p_c)^{|N_{J}(y)|/2\lambda}\geq \frac{p_c}{4\lambda}|N_{J}(y)|,$$
        where the last inequality holds by Claim \ref{claim:technical} and noting that $|N_J(Y)|\leq |Z_0|=\lambda p_c^{-1}$. Hence,  $\mathbb{P}(y\in N_J(Q))\geq \frac{p_c}{4\lambda}|N_{J}(y)|.$ But then
		$$\mathbb{E}(|N_J(Q)|)\geq \sum_{y\in N_{H}(Z_0)}\frac{p_c}{4\lambda}|N_{J}(y)|=\frac{p_c e(J)}{4\lambda}\geq \frac{p_c |Z_0|\Lambda}{4\lambda}=\frac{\Lambda}{4}.$$
        Therefore, we have $\mu:=\mathbb{E}(|N_J(Q)|)\geq\frac{\Lambda}{4}$. Furthermore, observe that 
        $$\frac{\Lambda}{|Z_0|}=\frac{d\cdot p\cdot p_c^2\cdot \alpha\cdot|B|}{\lambda^4}\geq \frac{1}{\lambda^4} p_c|B|\geq 8\lambda^2.$$
		That is, $|Z_0|\leq \frac{\mu}{32\lambda \log_2(\lambda p_c^{-1})}$ is satisfied (using our lower bound on $\lambda$), so we can apply Lemma \ref{lemma:main_technical} to conclude that
        $$\mathbb{P}\left(|N_J(Q)|\leq \frac{\mu}{64\lambda \log_2(\lambda p_c^{-1})}\right)\leq 2e^{-\lambda}.$$
        Noting that $\frac{\mu}{32\lambda\log_2(\lambda p_c^{-1})}\geq  \frac{|B|}{4}\cdot\frac{d\cdot p\cdot p_c\cdot\alpha}{64\lambda^{5}}$, this finishes Case 1.

		
		\bigskip
		In what follows, suppose that $|Y|\leq 2\lambda(p\cdot p_c)^{-1}$, and let $B':=B\setminus Y$. Then $|B'|\geq |B|/2$. Let $S\subset N_{H}(B')$ be the set of vertices $v$ such that $|N_{H}(v)\cap B'|\geq (p\cdot p_{c})^{-1}\lambda=:\Delta$, and let $T:=N_{G}(B)\setminus S$. 
		
		\medskip
		
		\noindent
		\textbf{Case 2.} $e_{G}(B',T)\leq d\alpha|B'|/16$  (note that we consider the number of edges in $G$ instead of $H$).
		
		Let $C=V(G)\setminus B'$. Noting that $|C|^{\alpha}\geq n^{\alpha}/2$, we also have $e_{G}(B',T)\leq c\alpha |B||C|^{\alpha}/8$. Crucially, we have $$E(G)=E(G[B'\cup S])\cup E(G[C])\cup E(G[B',T])\cup E(G[B',Y]).$$
		Therefore, we can write
		\begin{align*}
			e_{G}(B'\cup S)&\geq e(G)-e(G[C])-e_{G}(B',T)-e_{G}(B',Y)\\
			&\geq \frac{c}{2}((|B'|+|C|)^{1+\alpha}-|C|^{1+\alpha})-e_{G}(B',T)-e_G(B',Y)\\
			&\geq \frac{c}{2}(1+\alpha)|B'||C|^{\alpha}-\frac{c\alpha}{8}|B'||C|^{\alpha}-|Y||B'|\\
			&\geq \frac{c}{2}\left(1+\frac{\alpha}{2}\right)|B'||C|^{\alpha}.
		\end{align*}
		Here, the second inequality holds by the $\alpha$-maximality of $G$, while the third inequality follows from
		$$(|B'|+|C|)^{1+\alpha}=|C|^{1+\alpha}\left(1+\frac{|B'|}{|C|}\right)^{1+\alpha}\geq |C|^{1+\alpha}+(1+\alpha)|B'||C|^{\alpha}.$$
		The fourth inequality is true as $|Y|\leq 2\lambda(p\cdot p_{c})^{-1}<d\alpha/16<c\alpha|C|^{\alpha}/8$.
		
		On the other hand, $e_{G}(B'\cup S)\leq \frac{c}{2}(|B'|+|S|)^{1+\alpha}$. Comparing the lower and upper bound, we get 
		$$|S|\geq \left(\left(1+\frac{\alpha}{2}\right)|B'||C|^{\alpha}\right)^{1/(1+\alpha)}-|B'|\geq \left(1+\frac{\alpha}{4}\right)|B'|\left(\frac{|C|}{|B'|}\right)^{\alpha/(1+\alpha)}-|B'|.$$
		Here, the second inequality follows from Claim \ref{claim:technical}. Therefore, it suffices to show that  $|N_{H_Q}(U\cap B')\cap S|\geq |S|/2$ with high probability. For $v\in S$ let $X_{v}$ be the indicator random variable of the event $N_{H_Q}(v)\cap U\cap B'=\emptyset$, and let $X=\sum_{v\in S}X_{v}$. Then $|N_{H_Q}(U\cap B')\cap S|=|S|-X$. We have 
		$$\mathbb{E}(X_{v})=(1-p\cdot p_{c})^{|N_{H}(v)\cap B'|}\leq (1-p\cdot p_{c})^{\Delta}\leq e^{-\lambda}.$$
		Therefore, $\mathbb{E}(X)\leq |S|e^{-\lambda}$, and by Markov's inequality, $\mathbb{P}(X\geq |S|/2)\leq 2e^{-\lambda}$. This finishes the second case.
		
		\bigskip
		
		\noindent
		\textbf{Case 3.} $e_{G}(B',T)> d\alpha|B'|/16$.
		
		Then $$e_{H}(B',T)\geq e_{G}(B',T)-\sum_{v\in B'}|\phi(v)|\geq \frac{d\alpha |B'|}{32}.$$

        
		For each $v\in T$, let $X_{v}$ be the indicator random variable of the event $v\in N_{H_Q}(U\cap B')$. Let $\deg(v)$ denote the degree of $v$ in the bipartite graph $H[B'\cup T]$, then using that $\deg(v)\leq \lambda (p\cdot p_c)^{-1}$, we have
		$$\mathbb{E}(X_{v})=1-(1-p\cdot p_c)^{\deg(v)}\geq 1-(1-p\cdot p_c)^{\deg(v)/\lambda}\geq \frac{1}{2\lambda}\cdot p\cdot p_c\cdot \deg(v),$$
        where the last inequality holds by Claim \ref{claim:technical}.
		Let $X:=\sum_{v\in T} X_{v}$. Then 
		$$\mu:=\mathbb{E}(X)\geq \sum_{v\in T}\frac{1}{2\lambda}\cdot p \cdot p_{c}\cdot\deg(v)=\frac{p\cdot p_c\cdot e_{H}(B',T)}{2\lambda}\geq \frac{p\cdot p_{c}\cdot d\cdot\alpha |B'|}{64\lambda}.$$
        Let us apply Lemma \ref{lemma:main_technical_color} with the following assignments: $H[B'\cup T]\rightarrow G$, $p\rightarrow p$, $p_c\rightarrow p_c$, $\lambda\rightarrow\lambda$, $B'\rightarrow A$, $T\rightarrow B$,  $U\rightarrow U$, $Q\rightarrow Q$. The degree of every vertex in $B'$ is at most $K:=\Lambda$, so we have to verify that $\Lambda+|B'|\leq \frac{\mu}{64\lambda\log_2(\lambda (p\cdot p_c)^{-1})}$. But we have
        $$\Lambda=\frac{d\cdot p\cdot p_c\cdot \alpha|B|}{\lambda^3}\leq \frac{128\mu}{\lambda^2}\leq \frac{\mu}{128\lambda\log_2(\lambda (p\cdot p_c)^{-1})},$$
        where the last inequality holds by our lower bound on $\lambda$. Also,
        $$|B'|\leq \frac{64\lambda\mu}{\alpha\cdot p\cdot p_c\cdot d}\leq \frac{64\mu}{\lambda^2}\leq \frac{\mu}{128\lambda\log_2(\lambda (p\cdot p_c)^{-1})},$$
        where the second inequality holds by our lower bound on $d$, and the last inequality holds by our lower bound on $\lambda$. Therefore, the desired condition of Lemma \ref{lemma:main_technical_color} is satisfied, so we have
        $$\mathbb{P}\left(|N_{H_Q}(U)|\leq \frac{\mu}{64\lambda\log_2(\lambda (p\cdot p_c)^{-1})}\right)\leq 2e^{-\lambda}.$$
		Noting that $\frac{\mu}{64\lambda\log_2(\lambda (p\cdot p_c)^{-1})}\geq  \frac{|B|}{4}\cdot\frac{d\cdot p\cdot p_c\cdot\alpha}{64\lambda^{5}}$, this finishes Case 3. and the proof of the lemma.
	\end{proof}
	
	From this, we deduce that if one samples the vertices and colors of an $\alpha$-maximal graph with appropriate probabilities, then many vertices can be reached by a short rainbow path from a given vertex $v$, whose internal vertices and colors are from the sample. In the next lemma, $C$ will denote some unspecified, large absolute constant. More precisely, we prove that there exists some constant $C$ such that the following lemma is true.
	
	\begin{lemma}\label{lemma:randompath}
		Let $p,p_c\in (0,1]$, $\alpha\in (0,\frac{1}{2})$, $n$ be a positive integer, $\tau\in [1/\log_3 n,1/2)$ and $$\lambda>C(\log\log n)^{100}\log\frac{2}{p\cdot p_c}.$$ Let $G$ be an $\alpha$-maximal graph on $n$ vertices with proper edge coloring $f:E(G)\rightarrow R$, $d:=d(G)$, satisfying the following conditions. If $p=1$, then $d>C\lambda^{7}(\alpha^{2}\cdot p_c^{2})^{-1}n^{\alpha}$, otherwise $d>C\lambda^7(\alpha^{3}\cdot p\cdot p_c^{2})^{-1}n^{\alpha}$.
		
		Let $U$ be a random sample of vertices, each chosen with probability $p$, and let $Q\subset R$ be a random sample of the colors, each chosen with probability $p_c$. Then for every vertex $v\in V(G)$, with probability at least $1-O(\alpha^{-1}\log(1/\tau)e^{-\lambda})$, at least $n^{1-\tau}$ vertices of $G$ can be reached from $v$ by a $(U,Q)$-path of length at most $O(\alpha^{-1}\log (1/\tau))$. 
	\end{lemma}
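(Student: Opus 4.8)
The plan is to iterate Lemma~\ref{lemma:master} along a breadth-first exploration rooted at $v$. I would build a nested family $\{v\}=R_0\subseteq R_1\subseteq\cdots$, where $R_i$ is the set of vertices reachable from $v$ by a $(U,Q)$-path of length at most $i$, each $y\in R_i$ recorded together with the set $\phi(y)$ of vertices and colours lying on the path that first reached it. Feeding this $\phi$ into Lemma~\ref{lemma:master} is exactly what forces the paths to be rainbow, their internal vertices into $U$, and their colours into $Q$. Since the final path length will be $\ell=O(\alpha^{-1}\log(1/\tau))$, every $\phi(y)$ has size at most $2\ell$, comfortably below the permitted $d\alpha/32$ because the hypothesis makes $d$ polynomially larger than $\alpha^{-2}\log(1/\tau)$. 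The first step needs a small bootstrap, since $|R_0|=1$ lies below the size threshold $2\lambda^{6}(pp_c)^{-1}$ required by Lemma~\ref{lemma:master}: here I would use that $G$, being $\alpha$-maximal, has minimum degree at least $d/2$ (Lemma~\ref{lemma:maximal}(ii)), so that $R_1=\{v\}\cup\{w\in N(v):f(vw)\in Q\}$ already has size concentrated around $\tfrac12 d\,p_c$, which beats the threshold by the Chernoff bound as $d$ is large.

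For the growth estimate, at step $i$ I would apply Lemma~\ref{lemma:master} with $B=R_i$. The hypotheses on $d$ are calibrated so that the first term $\tfrac{d p p_c\alpha}{64\lambda^{5}}$ in the minimum dominates the second for every admissible $|B|$, so the lemma gives $|R_{i+1}|\ge |R_i|\bigl(1+\tfrac14\bigl((n/2|R_i|)^{\alpha/(1+\alpha)}-1\bigr)\bigr)$. Writing $|R_i|=n^{1-\beta_i}$ and $\theta_i:=\tfrac{\alpha}{1+\alpha}\beta_i\ln n$, so that $(n/2|R_i|)^{\alpha/(1+\alpha)}$ is essentially $e^{\theta_i}$, a direct computation shows $\theta_{i+1}\le(1-c\alpha)\theta_i$ for an absolute constant $c>0$: while $\theta_i$ is large $R$ multiplies by $n^{\Theta(\alpha)}$ per step, and while $\theta_i=O(1)$ it multiplies by $1+\Theta(\theta_i)$, and in either regime this is precisely the contraction felt by $\theta_i$. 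Hence $\theta_i$ falls below $\tau\cdot\tfrac{\alpha}{1+\alpha}\ln n$, that is $|R_i|\ge n^{1-\tau}$, within $O(\alpha^{-1}\log(1/\tau))$ steps; one checks along the way that $|R_i|<n/2$, so every application is legitimate. Union-bounding the failure probability $O(e^{-\lambda})$ over these $O(\alpha^{-1}\log(1/\tau))$ steps and the bootstrap yields the claimed probability bound.

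The main obstacle is that these $O(\alpha^{-1}\log(1/\tau))$ invocations of Lemma~\ref{lemma:master} must be conditionally independent, yet the lemma samples both $U$ and $Q$ internally while the sets $B=R_i$ are nested, so the randomness at step $i$ is entangled with that of all previous steps. I would resolve this by exposing the randomness in rounds: split $Q$ into $\ell$ independent subsamples, one per step, and — when $p<1$ — split $U$ likewise, so that at step $i$ Lemma~\ref{lemma:master} is applied to genuinely fresh samples of probabilities $p/\ell$ and $p_c/\ell$ (their unions still carrying the correct marginals $p$ and $p_c$). The resulting loss is only polynomial in $\ell$, which is what the slack in the hypotheses on $d$ and $\lambda$ is there to absorb; that no vertex splitting is needed when $p=1$ is precisely why the hypothesis on $d$ is weaker in that case. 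Verifying that every numerical condition of Lemma~\ref{lemma:master} — the degree bound, the size window $2\lambda^{6}(pp_c)^{-1}<|B|<n/2$, and the lower bound on $\lambda$ — still holds after this splitting, and that the bookkeeping keeps the loss within what the stated hypotheses permit, is where I expect the bulk of the technical work to lie.
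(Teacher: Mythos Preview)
Your proposal is correct and follows essentially the same approach as the paper: the paper likewise bootstraps $B_1$ via the minimum-degree bound and Chernoff, then sprinkles $U$ and $Q$ into $\ell-1$ and $\ell$ independent rounds (with per-round probabilities $q,q_c$ satisfying $p=1-(1-q)^{\ell-1}$, $p_c=1-(1-q_c)^{\ell}$) so that each application of Lemma~\ref{lemma:master} uses fresh randomness, observes that the first term of the minimum exceeds $n^{\alpha}$ so the second term always governs, and tracks the growth by writing $|B_i|=(n/2)^{1-\delta}$ and showing $\delta\mapsto\delta(1-\alpha/16)$ per step. Your identification of the independence obstacle and its resolution, the reason for the weaker hypothesis when $p=1$, and the growth analysis via the contraction of $\theta_i$ all match the paper's argument.
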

	
	\begin{proof}
		Let $\ell=100\alpha^{-1}\log (1/\tau)$ (then $\ell\leq d\alpha/64$ is satisfied), and let $q,q_c\in (0,1]$ be the unique solutions of the equations $p=1-(1-q)^{\ell-1}$ and $p_c=1-(1-q_c)^{\ell}$, respectively. If $p=1$, then $q=1$. In general,  $q=\Omega(p/\ell)$ and  $q_c=\Omega(p_c/\ell)$. Instead of sampling the vertices with probability $p$, we sample the vertices in $\ell-1$ rounds, in each round with probability $q$, independently from the other rounds, and similarly with the colors (this method is sometimes referred to as ''sprinkling''). For $i=1,\dots,\ell-1$, let $U_{i}$ be a random sample of the vertices, each vertex chosen with probability $q$, and for $i=1,\dots,\ell$, let $Q_{i}$ be a random sample of $R$, each color chosen independently with probability $q_c$. Then $\bigcup_{i=1}^{\ell-1}U_i$ has the same distribution as $U$, and $\bigcup_{i=1}^{\ell}Q_i$ has the same distribution as $Q$.
		
		For $i=1,\dots,\ell-1$, let $B_i$ be the set of vertices that can be reached from $v$ by a $(U_1\cup\dots\cup U_{i-1},Q_{1}\cup\dots\cup Q_{i})$-path of length at most $i$. Also, let $\phi_{i}:V(G)\rightarrow 2^{V(G)\cup R}$ be any function that assigns every $x\in B_i$ the set of vertices and colors that appear on such a path, and let $\phi_{i}(x)=\emptyset$ if $x\not\in B_{i}$. Then $|\phi_{i}(x)|\leq 2i\leq 2\ell$ for every $x\in V(G)$. Our goal is to show that the sequence $|B_1|,\dots,|B_{\ell-1}|$ is rapidly increasing with high probability, thus $|B_{\ell-1}|\geq n^{1-\tau}$. 
		
		First, note that $B_1$ is the set of neighbours $x$ of $v$ such that $f(vx)\in Q$. Noting that $\deg(v)\geq d/2$ by Lemma \ref{lemma:maximal}, we have $\mathbb{E}(|B_1|)= \deg(v)q_{c}\geq dq_{c}/2$. Therefore, by the multiplicative Chernoff bound, we have $\mathbb{P}(|B_1|\geq dq_{c}/4)\geq 1-e^{-\Omega(dq_{c})}\geq 1-e^{-\lambda}$. In what comes, we assume that $|B_1|\geq dq_c/4>2\lambda^{6}(q\cdot q_c)^{-1}$.
		
		Note that $(N_{Q_{i},\phi_{i}}(B_{i}\cap U_{i})\cup B_{i})\subset B_{i+1}$, and
        $$d\geq \left\{\begin{array}{lr}
        C\lambda^7(\alpha^2\cdot p_c^2)^{-1}n^{\alpha}, & \text{if } p=1\\
        C\lambda^7(\alpha^3\cdot p\cdot p_c^2)^{-1}n^{\alpha}, & \text{if } p<1
        \end{array}\right\}\geq C'\lambda^7 (\alpha\cdot q\cdot q_c)^{-1}\left(\log\frac{1}{\tau}\right)^{-3}\geq \lambda^3(\alpha\cdot q\cdot q_c)^{-1}.$$ 
         Here, $C'$ denotes a large unspecified constant depending on $C$. Also, the second inequality holds by omitting a factor of $p_c^{-1}\geq 1$ and $n^{\alpha}\geq 1$, and recalling the bounds on $q$ and $q_c$. The inequality $|B_{i}|\geq |B_{1}|\geq 2\lambda^{6}(q\cdot q_c)^{-1}$ holds as well, so as long as $|B_{i}|\leq n/3$, the desired conditions of Lemma \ref{lemma:master} are satisfied (with $q$ instead of $p$, and $q_c$ instead of $p_c$). Therefore, we can conclude that
		$$|N_{Q_{i},\phi_{i}}(U_{i}\cap B_i)|\geq \frac{|B_i|}{4}\min\left\{\frac{d\cdot q\cdot q_c\cdot\alpha}{64\lambda^{5}},\left(\frac{n}{2|B_{i}|}\right)^{\alpha/(1+\alpha)}-1\right\}$$
		with probability at least $1-2e^{-\lambda}$. Note that the left side of the minimum is at least $n^{\alpha}$, so the minimum is always attained by the right side. Therefore, 
		$$|N_{Q_{i},\phi_{i}}(U_{i}\cap B_i)|\geq \frac{|B_i|}{4}\left(\left(\frac{n}{2|B_{i}|}\right)^{\alpha/(1+\alpha)}-1\right)\geq \frac{|B_{i}|}{4}\left(\left(\frac{n}{2|B_{i}|}\right)^{\alpha/2}-1\right).$$
		 with probability at least $1-2e^{-\lambda}$. Hence, with probability at least $1-2\ell\cdot e^{-\lambda}=1-O(\alpha^{-1}\log(1/\tau)e^{-\lambda})$, this is true simultaneously for every $i\in [\ell]$. We show that in this case $|B_{\ell-1}|\geq n^{1-\tau}$. 
		
		Write $|B_{i}|=(n/2)^{1-\delta}$. Then 
		\begin{align*}
			|B_{i+1}|&\geq |B_{i}|\left(1+\frac{1}{4}\left(\left(\frac{n}{2|B_{i}|}\right)^{\alpha/2}-1\right)\right)= |B_i|\left(1+\frac{1}{4}\left((n/2)^{\delta\alpha/2}-1\right)\right)\\
			& \geq |B_{i}| (n/2)^{\delta\alpha/16}= (n/2)^{1-\delta(1-\alpha/16)}.
		\end{align*}
		In the last inequality, we used that the function $f_{\beta}(x)=\frac{3}{4}+\frac{1}{4}x^{\beta}-x^{\beta/4}$ is nonnegative for $x\geq 1$ and $\beta>0$, which can be seen from $f_{\beta}(1)=0$ and $f'_{\beta}(x)=\frac{\beta}{4}(x^{\beta-1}-x^{\beta/4-1})\geq 0$. By induction, we get $|B_{i}|\geq (n/2)^{1-(1-\alpha/16)^{i}}$ if $|B_{i}|\leq n^{1-\tau}$. Hence, if $(n/2)^{(1-\alpha/16)^{i}}<n^{-\tau}$, then $|B_{i}|\geq n^{1-\tau}$. This does hold if $i\geq 100\alpha^{-1}(\log (1/\tau))$, so $|B_{\ell}|\geq n^{1-\tau}$.
	\end{proof}
	
	In case $\alpha$ is a constant, we can prove a slightly stronger version of the previous lemma, following the same proof. We omit the details.
	
	\begin{lemma}\label{lemma:randompath_v2}
		Let $p,p_c,\tau\in (0,1]$, $\alpha\in (0,\frac{1}{2})$, and let $\ell$ be a positive integer satisfying $\ell\geq C\alpha^{-1}\log(1/\tau)$. Let $n$ be a positive integer which is sufficiently large with respect to $\alpha,\tau,\ell$. Let $G$ be an $\alpha$-maximal graph on $n$ vertices with proper edge coloring $f:E(G)\rightarrow R$, $d:=d(G)$, satisfying $d>(p\cdot p_{c}^{2})^{-1}\cdot n^{\alpha}$.
		
		Let $U$ be a random sample of the vertices, each chosen with probability $p$, and let $Q\subset R$ be a random sample of the colors, each chosen with probability $p_c$. Then with probability at least $1-e^{n^{-\Omega(\alpha^{2})}}$, for every vertex $v\in V(G)$, at least $n^{1-\tau}$ vertices of $G$ can be reached from $v$ by a $(U,Q)$-path of length exactly~$\ell$. 
	\end{lemma}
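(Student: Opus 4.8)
The plan is to rerun the proof of Lemma~\ref{lemma:randompath} almost verbatim, exploiting that $\alpha$ and $\ell$ are now constants (recall $n$ is taken large in terms of $\alpha,\tau,\ell$), and that we only ask for paths of length exactly $\ell$. As there, set $q,q_c\in(0,1]$ by $p=1-(1-q)^{\ell-1}$ and $p_c=1-(1-q_c)^\ell$ (so $q=\Omega(p/\ell)$, $q_c=\Omega(p_c/\ell)$, and $q=1$ if $p=1$), sample $U_1,\dots,U_{\ell-1}$ at rate $q$ and $Q_1,\dots,Q_\ell$ at rate $q_c$, and --- the first change --- let $B_i$ be the set of vertices reachable from $v$ by a $(U_1\cup\dots\cup U_{i-1},\,Q_1\cup\dots\cup Q_i)$-path of length \emph{exactly} $i$, with $\phi_i$ recording the vertices and colors on such a path (so $|\phi_i(x)|\le 2\ell$). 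Exactly as in Lemma~\ref{lemma:randompath}, conditioning on rounds $1,\dots,i-1$ fixes $B_i$ and $\phi_i$ while $U_i,Q_{i+1}$ stay fresh, $N_{Q_{i+1},\phi_i}(B_i\cap U_i)\subseteq B_{i+1}$, and Lemma~\ref{lemma:master} applies to the fresh sample $B_i\cap U_i$ of $B_i$. The only structural difference is that the $B_i$ are no longer nested, so in place of $|B_{i+1}|\ge|B_i|\bigl(1+\frac14(\cdots)\bigr)$ one only gets $|B_{i+1}|\ge\frac14|B_i|\bigl((n/2|B_i|)^{\alpha/2}-1\bigr)$.

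To deal with this I would split the process into two regimes. As long as $|B_i|<n^{1-\tau}$, the factor $(n/2|B_i|)^{\alpha/2}\ge(n^{\tau}/2)^{\alpha/2}$ exceeds any fixed constant for $n$ large, so it absorbs the lost factor $\frac14$ and the $-1$, and the same elementary estimate used at the end of the proof of Lemma~\ref{lemma:randompath} gives, for $n$ large, $\delta_{i+1}\le(1-\alpha/16)\delta_i$, where $|B_i|=(n/2)^{1-\delta_i}$; hence $\delta_\ell\le(1-\alpha/16)^{\ell-1}\delta_1<\tau$ once $\ell\ge C\alpha^{-1}\log(1/\tau)$ for a suitable absolute constant $C$, that is, the frontier reaches size $\ge n^{1-\tau}$ by round $\ell$. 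Once $|B_i|\ge n^{1-\tau}$ I would not expand $B_i$ itself (it may already exceed $n/2$), but a fixed subset $B_i'\subseteq B_i$ of size $\min(|B_i|,n/3)$ chosen as a function of the history of rounds $1,\dots,i-1$; applying Lemma~\ref{lemma:master} to $B_i'$ and checking that its output always exceeds $\max\{n^{1-\tau},2\lambda^6(q q_c)^{-1}\}$ shows this lower bound persists through round $\ell$. Either way $|B_\ell|\ge n^{1-\tau}$.

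For the probability bound and the uniformity over $v$, I would invoke Lemma~\ref{lemma:master} with $\lambda:=n^{c\alpha^2}$ for a small absolute constant $c$, checking its hypotheses with $q,q_c$ in place of $p,p_c$. The hypothesis $d>(pp_c^2)^{-1}n^\alpha$ is still generous: since $q q_c=\Omega(pp_c\ell^{-2})$ and, because $d\le n$, also $(pp_c)^{-1}<n^{1-\alpha}$, for $n$ large one obtains $d\ge\lambda^3(\alpha q q_c)^{-1}$, $|B_1|\ge dq_c/4>2\lambda^6(q q_c)^{-1}$, $\max\{n^{1-\tau},2\lambda^6(q q_c)^{-1}\}<n/2$, $|\phi_i(x)|\le 2\ell\le d\alpha/32$, and $\lambda>10^{10}\log(2/(q q_c))$, all using that $\alpha,\ell$ are constant. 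The initial Chernoff estimate $\mathbb{P}(|B_1|<dq_c/4)\le e^{-\Omega(dq_c)}$ contributes failure $e^{-n^{\Omega(\alpha)}}$, and each of the $O(\ell)$ expansion rounds fails with probability $\le 2e^{-\lambda}$, so a union bound over these rounds and all $n$ choices of $v$ gives total failure at most $2n\ell\,e^{-\lambda}=e^{-n^{\Omega(\alpha^2)}}$.

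The two steps needing the most care are precisely these verifications: that the weaker hypothesis $d>(pp_c^2)^{-1}n^\alpha$ still leaves room to take $\lambda$ polynomial in $n$ --- this is where constancy of $\alpha$ and $\ell$ together with largeness of $n$ are essential, and the reason this sharpening is stated only for constant $\alpha$ --- and that the non-nested frontiers $B_i$ neither collapse nor overshoot $n/2$ during the $\ell=O(1)$ rounds, which is handled by the capping device above. Everything else is identical to the proof of Lemma~\ref{lemma:randompath}.
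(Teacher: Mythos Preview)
Your proposal is correct and matches the paper's approach: the paper omits the proof entirely, stating only that it follows the same argument as Lemma~\ref{lemma:randompath}, and your plan does precisely that---rerun the sprinkling argument with $B_i$ redefined for exact length, take $\lambda=n^{c\alpha^2}$ (which the constancy of $\alpha,\ell$ and largeness of $n$ permit under the weaker hypothesis $d>(pp_c^2)^{-1}n^\alpha$), and union-bound over all $v$. The two genuine adjustments you identify---the loss of nesting of the $B_i$ and the need to cap $|B_i|$ near $n/3$ once the frontier is large---are exactly the points one must address, and your treatment of both is sound; the only cosmetic remark is that for $\tau$ not small compared to $\alpha$ the hypothesis $|B_i'|>2\lambda^6(qq_c)^{-1}$ needs a moment's care, but this is harmless since one may always prove the statement for $\tau'=\min(\tau,\alpha/4)$ and deduce it for larger~$\tau$.
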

	
	\subsection{Proofs of the rainbow theorems}
	
	In this section, we present the proofs of Theorems \ref{thm:rainbow_cycle} (rainbow cycle), \ref{thm:rainbow_subdivision} (rainbow subdivision) and \ref{thm:rainbow_robust} (large rainbow subdivision). Note that Theorem \ref{thm:robust} (large subdivision) then immediately follows from Theorem \ref{thm:rainbow_robust}. Let us start with Theorem \ref{thm:rainbow_cycle}, which we restate as follows.
	
	\begin{theorem}
		Let $\epsilon\in (0,1)$. If $n$ is sufficiently large, $G$ is a properly edge colored graph with $n$ vertices and $d(G)\geq (\log n)^{2+\epsilon}$, then $G$ contains a rainbow cycle.
	\end{theorem}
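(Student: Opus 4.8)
The plan is to pass to an $\alpha$-maximal subgraph with $\alpha$ of order $1/\log n$ and then combine the random path lemma, Lemma~\ref{lemma:randompath}, with a pigeonhole argument over a bounded number of colour classes. Fix a small absolute constant $C_0$ and set $\alpha=C_0/\log n$; as observed after Lemma~\ref{lemma:maximal}, $G$ contains an $\alpha$-maximal subgraph $H$ with $d(H)\ge d(G)/2\ge\frac{1}{2}(\log n)^{2+\epsilon}$. Write $n'=v(H)$, so $n'\ge d(H)$ is large and $\log n'\le\log n$. The reason for taking $\alpha\approx 1/\log n$ is that then $(n')^{\alpha}=O(1)$, which makes Lemma~\ref{lemma:randompath} usable at average degree essentially $(\log n)^{2}$ — but only if we refrain from sampling the vertices. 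Concretely, I would apply Lemma~\ref{lemma:randompath} to $H$ with $p=1$ (so the extra $\alpha^{-1}=\Theta(\log n)$ factor of the $p<1$ regime is avoided), $p_c=1/3$, $\tau=1/\log_3 n'$, and $\lambda$ a sufficiently large absolute multiple of $(\log\log n)^{100}$; the required inequality $d(H)>C\lambda^{7}(\alpha^{2}p_c^{2})^{-1}(n')^{\alpha}$ then reduces to $(\log n)^{\epsilon}\gg(\log\log n)^{O(1)}$, valid once $n$ is large.

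The combinatorial core is as follows. Partition the colour set $R=R_1\sqcup R_2\sqcup R_3$ by putting each colour independently and uniformly into one of the three classes, so that each $R_i$ is, marginally, a $p_c$-random subset with $p_c=1/3$. For $i\in\{1,2,3\}$, apply Lemma~\ref{lemma:randompath} with $U=V(H)$ and $Q=R_i$; by a union bound over the three classes and over all $v\in V(H)$, with probability tending to $1$ we have, for every $v$ and every $i$, that at least $(n')^{1-\tau}=n'/3$ vertices are joined to $v$ by a rainbow path whose edges all use colours of $R_i$. Let $L_i$ be the graph on $V(H)$ whose edges are the pairs so joined in class $i$; then every vertex has $L_i$-degree at least $n'/3$, so $e(L_i)\ge(n')^{2}/6$ and $\sum_{i=1}^{3}e(L_i)\ge(n')^{2}/2>\binom{n'}{2}$. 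Hence some pair $\{x,y\}$ is an edge of $L_i$ and of $L_j$ for two distinct indices $i\ne j$.

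It remains to lift this pair to a rainbow cycle. Choose a rainbow $x$--$y$ path $P_i$ with all colours in $R_i$ and a rainbow $x$--$y$ path $P_j$ with all colours in $R_j$. Since $R_i\cap R_j=\emptyset$ and each path has at least one edge, $P_i\ne P_j$, so the connected graph $P_i\cup P_j$ contains two distinct $x$--$y$ paths and therefore a cycle $C$. Every edge of $C$ belongs to $P_i$ or to $P_j$; the colours contributed by $P_i$ are pairwise distinct, likewise those from $P_j$, and the two colour sets are disjoint, so $C$ is rainbow. As $C\subseteq H\subseteq G$, this proves the theorem. The only delicate point in the argument is the parameter bookkeeping above — in particular that $p=1$ is what brings the required average degree down to $(\log n)^{2+o(1)}$, and that the identity $(n')^{1/\log_3 n'}=3$ is exactly what makes three colour classes suffice for the pigeonhole step; the lifting step itself is routine, since any cycle contained in the union of two colour-disjoint rainbow paths is automatically rainbow.
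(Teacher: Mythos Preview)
Your overall strategy is the same as the paper's: pass to an $\alpha$-maximal subgraph with $\alpha\approx 1/\log n$, apply Lemma~\ref{lemma:randompath} with $p=1$ and a constant number of colour classes, and finish with a pigeonhole argument. The lifting step (two colour-disjoint rainbow paths yield a rainbow cycle) is fine.

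The gap is the union bound ``over all $v\in V(H)$''. With your choice $\lambda=\Theta\bigl((\log\log n)^{100}\bigr)$, the failure probability from Lemma~\ref{lemma:randompath} is $O(\alpha^{-1}\log(1/\tau)e^{-\lambda})$, which is $o(1)$ but decays only like $\exp(-\mathrm{polyloglog}\,n)$; multiplied by $n'$ (which may be as large as $n$) this blows up. You cannot simply raise $\lambda$ to $\log n$: the degree hypothesis in Lemma~\ref{lemma:randompath} (the $p=1$ case) requires $d(H)>C\lambda^{7}\alpha^{-2}p_c^{-2}(n')^{\alpha}$, and with $\alpha^{-2}\approx(\log n)^{2}$ this forces $\lambda=o((\log n)^{\epsilon/7})$, far below $\log n$ for $\epsilon\in(0,1)$. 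So the statement ``for every $v$ and every $i$'' is not obtainable with positive probability under your parameters.

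The paper sidesteps this by never union-bounding over vertices. It fixes a single vertex $v$, uses \emph{four} colour classes with $p_c=1/4$, and only union-bounds over the four classes; this needs the success probability per class to exceed $3/4$, which is immediate. Four classes (rather than your three) are exactly what makes the one-vertex pigeonhole go through: with $\tau=1/\log_3 m$ one gets $|B_i|\ge m/3$, and $4\cdot m/3>m$ forces $B_i\cap B_j\neq\emptyset$ for some $i\neq j$. With three classes this fails ($3\cdot m/3=m$), which is presumably why you introduced the graphs $L_i$---but that workaround is precisely what requires the illegitimate union bound. Your argument is easily repaired either by switching to four classes and a single $v$, or by replacing the union bound with an expectation argument (for some partition, all but $o(n')$ of the pairs $(v,i)$ succeed, which still gives $\sum_i e(L_i)>\binom{n'}{2}$).
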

	
	\begin{proof}
		 Let $\alpha=1/\log_2 n$, $p=1$, $p_c=1/4$, $\lambda=(\log n)^{\epsilon/10^6}$. Let $H$ be the subgraph of $G$ maximizing the quantity $d(H)/v(H)^{\alpha}$ and let $m=v(H)$. Then $H$ is $\alpha$-maximal and $d(H)\geq d(G)/2>C\lambda^{7}(\alpha^{2}p_{c}^{2})^{-1}m^{\alpha}$, and  $\lambda\geq C(\log\log m)^{100}\log\frac{2}{p\cdot p_c}$ if $n$ is sufficiently large. Furthermore, let $\tau= 1/\log_3 m$.
		
		Let $v\in V(H)$. Partition $R$ into four parts $Q_1,Q_2,Q_3,Q_4$ randomly such that each color appears independently with probability $p_c=1/4$ in each part. For $i\in [4]$, let $B_{i}$ be the set of vertices that can be reached from $v$ by a $(V(G),Q_{i})$-path. Applying Lemma \ref{lemma:randompath} with the parameters above (with $m$ instead of $n$), we get the following. With probability at least $1-O(\alpha^{-1}\log(1/\tau)e^{-\lambda})>4/5$, we have $|B_{i}|\geq n/3$. Hence, there exists a partition $Q_1,Q_2,Q_3,Q_4$ such that $|B_{i}|\geq n/3$ for every $i\in [4]$. 
		
		Note that there exists $1\leq i<j\leq 4$ such that $B_{i}\cap B_{j}$ is nonempty, let $w\in B_{i}\cap B_{j}$ be an arbitrary vertex. Then $w$ can be reached by a $(V(G),Q_{i})$-path $P_{1}$ and a $(V(G),Q_{j})$-path $P_{2}$ from $v$. The union of $P_1$ and $P_2$ is a rainbow circuit, so in particular it contains a rainbow cycle.
	\end{proof}
	
	In what follows, we prove our results about rainbow subdivisions. In order to do this, we use the result of \cite{AKS03} mentioned in the introduction.
	
	\begin{lemma}\label{lemma:1subdivision}(\cite{AKS03})
		If $G$ is a graph on $n$ vertices with at least $\delta n^{2}$ edges, then $G$ contains a 1-subdivision of $K_{t}$ for $t=\delta n^{1/2}/4$.
	\end{lemma}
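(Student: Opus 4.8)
The plan is to prove Lemma~\ref{lemma:1subdivision} by the dependent random choice method. First I would reduce matters to finding a single large, pairwise well‑connected set of vertices: it suffices to exhibit $A\subseteq V(G)$ with $|A|\geq t$ such that $|N(u)\cap N(w)|\geq t^{2}$ for every pair $u,w\in A$ (we may assume $t\geq 2$, since otherwise the statement is vacuous). Indeed, given such an $A$, take any $t$ of its vertices as the branch vertices of the subdivision, and then process the $\binom{t}{2}$ pairs one at a time, choosing for each pair $\{u,w\}$ a common neighbour that avoids the $t$ branch vertices and all previously chosen path vertices; this is always possible because $t^{2}>t+\binom{t}{2}$.

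To produce such a set $A$, let $d=2e(G)/n\geq 2\delta n$ be the average degree of $G$ and fix a positive integer $x$. Sample $v_{1},\dots,v_{x}\in V(G)$ independently and uniformly at random with repetition, and set $A_{0}=N(v_{1})\cap\dots\cap N(v_{x})$. Since $\mathbb{P}(w\in A_{0})=(\deg(w)/n)^{x}$ for each $w$, convexity gives $\mathbb{E}[|A_{0}|]=n^{-x}\sum_{w}\deg(w)^{x}\geq d^{x}/n^{x-1}\geq (2\delta)^{x}n$. Let $Y$ be the number of pairs $\{u,w\}\subseteq A_{0}$ with $|N(u)\cap N(w)|<t^{2}$; then $\mathbb{E}[Y]=\sum(|N(u)\cap N(w)|/n)^{x}$, where the sum is over all pairs with fewer than $t^{2}$ common neighbours, and I would bound this from above in terms of the number of paths of length two, $\sum_{v}\binom{\deg v}{2}$, rather than by the crude estimate $\binom{n}{2}(t^{2}/n)^{x}$. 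The whole point of the exponent $x$ is that, once it is chosen large enough (as a function of $\delta$ and $n$), this expectation becomes negligible compared with $\mathbb{E}[|A_{0}|]$, so $\mathbb{E}[|A_{0}|-Y]\geq t$; hence there is an outcome with $|A_{0}|-Y\geq t$, and deleting one vertex from each of the at most $Y$ bad pairs leaves a set $A$ with the required property.

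The one genuinely delicate point --- and the reason that nailing down the constant in $t=\delta n^{1/2}/4$ takes work --- is the choice of $x$. It has to be large enough to suppress the bad‑pair term $\mathbb{E}[Y]$, yet the common neighbourhood of $x$ random vertices contracts geometrically, so $x$ cannot be pushed too far before $\mathbb{E}[|A_{0}|]$ itself drops below $t$. Balancing these two competing constraints --- which may require handling small and large $\delta$ in separate ranges and exploiting the cherry‑count bound on $\mathbb{E}[Y]$ --- is precisely the computation carried out in \cite{AKS03}.
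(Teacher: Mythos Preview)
The paper does not give a proof of this lemma at all: it is simply quoted from \cite{AKS03} and used as a black box in the arguments that follow. So there is no ``paper's own proof'' to compare against.

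That said, your outline is the right one and is essentially the argument of \cite{AKS03}. Dependent random choice produces a set $A_0$ in which almost every pair has many common neighbours, and deleting a vertex from each bad pair leaves the desired $A$; the greedy embedding of the subdivision from such an $A$ is exactly as you describe, and the inequality $t^2>t+\binom{t}{2}$ (valid for $t\ge2$) is what makes the greedy step go through. Your remark that the precise constant $1/4$ in $t=\delta n^{1/2}/4$ requires a careful choice of the exponent $x$ and a sharper bound on $\mathbb{E}[Y]$ than the crude $\binom{n}{2}(t^2/n)^x$ is accurate; this is where the work in \cite{AKS03} lies. For the purposes of the present paper the exact constant is irrelevant (only $t=\Omega(\delta n^{1/2})$ is ever used), so even the cruder version of dependent random choice --- as in the short proof mentioned in \cite{FoxSud11} --- would suffice.
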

	
	Here is Theorem \ref{thm:rainbow_subdivision}, restated.
	
	\begin{theorem}
		Let $t$ be a positive integer, $\epsilon\in (0,1)$. If $n$ is sufficiently large, $G$ is a properly edge colored graph with $n$ vertices and $d(G)\geq (\log n)^{6+\epsilon}$, then $G$ contains a rainbow subdivision of $K_{t}$.
	\end{theorem}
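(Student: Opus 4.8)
Set $\alpha=1/\log_2 n$, $\lambda=(\log n)^{\epsilon/14}$, $s=(\log n)^{1+\epsilon/12}$ and $p=p_c=1/s$, and let $H$ be a subgraph of $G$ maximising $d(H)/v(H)^{\alpha}$. Then $H$ is $\alpha$-maximal, $d(H)\ge d(G)/2\ge(\log n)^{6+\epsilon}/2$, and writing $m=v(H)$, Lemma~\ref{lemma:maximal} gives $m\ge d(H)/2\to\infty$; it suffices to find a rainbow subdivision of $K_t$ inside $H$. The idea is to split both the vertices and the colours into $s$ parts, so that short rainbow paths living in different parts are automatically internally vertex-disjoint and use disjoint colours, then use Lemma~\ref{lemma:randompath} in each part to produce a dense auxiliary graph of pairs joined by $\Omega(s)$ mutually compatible short rainbow paths, find a $1$-subdivision of $K_t$ there by Lemma~\ref{lemma:1subdivision}, and finally replace each auxiliary edge by one of its paths, greedily avoiding everything used so far.

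Concretely, partition $V(H)$ and $R$ uniformly at random into $s$ parts, obtaining $(U_1,Q_1),\dots,(U_s,Q_s)$, each $U_i$ (resp. $Q_i$) having the law of a $p$-random (resp. $p_c$-random) subset. For each fixed $i$ the hypotheses of Lemma~\ref{lemma:randompath} hold, applied with $m$ and $\tau=1/\log_3 m$ (so $m^{1-\tau}=m/3$): the degree condition $d(H)>C\lambda^7(\alpha^3pp_c^2)^{-1}m^{\alpha}$ has right-hand side $O((\log n)^{6+3\epsilon/4})$, comfortably below $d(H)$, and $\lambda\gg C(\log\log m)^{100}\log\frac{2}{pp_c}=O((\log\log n)^{101})$. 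Hence for every fixed $i$ and every $v\in V(H)$, with probability $1-\eta$, where $\eta=O(\alpha^{-1}\log(1/\tau)e^{-\lambda})$, at least $m/3$ vertices lie in $B_i(v)$, the set of vertices reachable from $v$ by a $(U_i,Q_i)$-path of length at most $\ell:=O(\alpha^{-1}\log(1/\tau))=O(\log n\log\log n)$.

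The key twist is that $\lambda=(\log n)^{o(1)}$, so we cannot afford a union bound over all $m$ vertices; instead I apply Markov's inequality inside each part. For each $i$, with probability $\ge 1-2\eta$ at least $m/2$ vertices $v$ are \emph{$i$-good}, i.e. satisfy $|B_i(v)|\ge m/3$; since $s\eta\to 0$, a union bound over the $s$ parts lets me assume this for all $i$ simultaneously. Two successive averaging steps then produce a set of at least $m/4$ vertices $v$, each equipped with a set $L_v$ of at least $m/24$ vertices $y$ such that $y\in B_i(v)$ for at least $s/24$ indices $i$. Let $\bar L$ be the graph on $V(H)$ with $\{v,y\}\in E(\bar L)$ whenever $y\in L_v$ or $v\in L_y$; then $e(\bar L)\ge m^2/200$, so Lemma~\ref{lemma:1subdivision} yields a $1$-subdivision of $K_{t'}$ in $\bar L$ with $t'=\Theta(m^{1/2})\ge t$ once $n$ is large.

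Finally, fix such a $1$-subdivision with branch vertices $a_1,\dots,a_t$ and subdivision vertices $b_{jk}$, and process its $t(t-1)$ edges one at a time. When treating an edge $\{x,y\}$, at least $s/24$ parts $i$ certify it, giving $\ge s/24$ candidate $(U_i,Q_i)$-paths of length $\le\ell$ between $x$ and $y$; because the $U_i$ and the $Q_i$ are pairwise disjoint, these candidates are pairwise internally vertex-disjoint and colour-disjoint, so each of the $O(t^2\ell)$ vertices and $O(t^2\ell)$ colours already committed lies on at most one candidate. Since $s/24>O(t^2\ell)$ for $n$ large — this is exactly where the slack $(\log n)^{\epsilon/12}\gg\log\log n$ is spent — some candidate avoids all committed vertices and colours; select it. Concatenating, for each $j<k$, the paths chosen for $\{a_j,b_{jk}\}$ and $\{b_{jk},a_k\}$ gives pairwise internally vertex-disjoint $a_j$–$a_k$ paths meeting no other $a_\ell$ or $b_{j'k'}$, and since every chosen path is rainbow and avoids the colours of all earlier ones, the union is a rainbow subdivision of $K_t$. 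The main obstacle is precisely this $e^{-\lambda}$-versus-$n$ mismatch: it closes off the naive ``reachability for every vertex'' route and forces both the Markov-plus-averaging detour and the delicate balancing of $s$, $\lambda$ and $\ell$ against the degree bound $(\log n)^{6+\epsilon}$; everything else is bookkeeping. (Theorem~\ref{thm:rainbow_robust}, and hence Theorem~\ref{thm:robust}, is proved along the same lines with $\alpha$ a constant, using Lemma~\ref{lemma:randompath_v2} in place of Lemma~\ref{lemma:randompath}.)
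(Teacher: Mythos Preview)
Your proof is correct and follows the same overall strategy as the paper: pass to an $\alpha$-maximal subgraph, randomly partition vertices and colours into $s$ parts, apply Lemma~\ref{lemma:randompath} in each part, build a dense auxiliary graph of pairs admitting many mutually compatible short rainbow paths, extract a $1$-subdivision of $K_t$ via Lemma~\ref{lemma:1subdivision}, and greedily lift it back to $H$.

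The one genuine difference is how the ``$e^{-\lambda}$ versus $m$'' issue is handled. You fix a single random partition and use Markov's inequality plus two averaging steps to show that the auxiliary graph $\bar L$ has $\Omega(m^2)$ edges. The paper instead defines its auxiliary graph $J$ \emph{without reference to any partition}: $\{x,y\}\in E(J)$ simply means there exist $\geq s/6$ internally vertex-disjoint short paths between $x$ and $y$ whose union is rainbow. Then, for each fixed vertex $v$, it picks a partition tailored to $v$ (needing only a union bound over the $s$ parts, which succeeds since $s\eta\to 0$) to certify that $v$ has at least $m/6$ good partners, hence $\deg_J(v)\geq m/6$. This bypasses your Markov-plus-averaging detour entirely and is a little slicker; your route is equally valid, and the greedy endgame and parameter balancing are the same.
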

	
	\begin{proof}
		Let $s:=(\log n)^{1+\epsilon/10}$, $\alpha=1/\log_2 n$, $p=p_c=1/s$, and $\lambda=(\log n)^{\epsilon/10^6}$. Let $H$ be the subgraph of $G$ maximizing the quantity $d(H)/v(H)^{\alpha}$ and let $m=v(H)$. Then $H$ is $\alpha$-maximal and $d(H)\geq d(G)/2>C\lambda^7(\alpha^{3}\cdot p\cdot p_{c}^{2})^{-1}m^{\alpha}$, and  $\lambda\geq C(\log\log m)^{100}\log\frac{2}{p\cdot p_c}$ if $n$ is sufficiently large. Furthermore, let $\tau= 1/\log_3 m$.

		Let $v\in V(H)$. Let $U_1,\dots,U_{s}$ be a random partition of $V(H)$ such that each vertex appears in each part independently with probability $p=1/s$. Similarly, let $Q_1,\dots,Q_{s}$ be a random partition of $R$ such that each color appears in each part independently with probability $p_{c}=1/s$. For $i\in [s]$, let $B_{i}$ be the set of vertices which can be reached from $v$ by a $(U_{i},Q_{i})$-path of length  $O(\alpha^{-1}\log (1/\tau))=(\log n)^{1+o(1)}$. Applying Lemma \ref{lemma:randompath} with the above parameters (and $m$ instead of $n$), we conclude that for every $i\in [s]$, with probability at least $1-O(\alpha^{-1}\log(1/\tau)e^{-\lambda})>1-1/2s$, $|B_{i}|\geq n/3$. But then there exist  partitions  $U_1,\dots,U_{s}$ and $Q_1,\dots,Q_s$ such that $|B_{i}|\geq m/3$ for every $i$. Fix such partitions.
		
		Say that a vertex $w\in V(H)$ is good if $w$ is contained in at least $s/6$ of the sets $B_{1},\dots,B_{s}$. Then the number of good vertices is at least $m/6$. Indeed, consider the multiset $B_{1}\cup\dots\cup B_{s}$. It has at least $sm/3$ elements, and vertices that are not good contribute at most $sm/6$ elements. As each good vertex contributes at most $s$ elements, we must have at least $m/6$ good vertices. Note that if $w$ is good, then there are at least $s/6$ internally vertex disjoint paths of length at most $\alpha^{-1}(\log n)^{o(1)}$ between $v$ and $w$ such that no color appears twice on the union of the paths. 
		
		Define the graph $J$ on $V(H)$ in which two vertices $x,y$ are joined by an edge if there are at least $s/6$ internally vertex disjoint paths between $x$ and $y$ such that no color appears twice on the union of the paths. By the preceding argument, every vertex of $J$ has degree at least $n/6$ (note that we do not claim that the same partitions work for every vertex of $J$). Hence, $J$ contains a 1-subdivision of $K_{t}$ by Lemma \ref{lemma:1subdivision}, noting that $t\leq v(J)^{1/2}/100$ if $n$ is sufficiently large. As long as $s\geq t^{2}(\log n)^{1+o(1)}$, we can greedily substitute a rainbow path for each edge of this subdivision, such that these paths are internally vertex disjoint, and no color appears twice. This gives the desired rainbow subdivision of $K_{t}$.
	\end{proof}
	
	Next, we prove Theorem \ref{thm:rainbow_robust}, which we restate as follows.
	
	\begin{theorem}
		There exist constants $c_1,c_2>0$ such that the following holds. Let $\alpha>0$, $\ell>\frac{c_1}{\alpha}\log\frac{1}{\alpha}$ even. Let $G$ be a properly edge colored graph on $n$ vertices with $d(G)\geq n^{\alpha}$. If $n$ is sufficiently large with respect to $\alpha,\ell$, then  $G$ contains a rainbow $(\ell-1)$-subdivision of $K_t$, where $t=n^{c_2\alpha}$.
	\end{theorem}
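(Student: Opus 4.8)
The plan is to combine the Alon--Krivelevich--Sudakov bound (Lemma \ref{lemma:1subdivision}) with the colourful expansion of Lemma \ref{lemma:randompath_v2}, using two devices to cope with the fact that $\ell$ is a fixed constant: first, passing to an $(\alpha/2)$-maximal subgraph, which makes the ratio of average degree to size polynomial and hence lets us take polynomially many independent samples; and second, using a \emph{distinct} sample for every edge of a $1$-subdivision of $K_t$, which makes internal vertex-disjointness and the rainbow condition automatic.

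First I would replace $G$ by a subgraph $H$ maximising $d(\cdot)/v(\cdot)^{\alpha/2}$; write $m=v(H)$ and $c'=d(H)/m^{\alpha/2}$. By maximality $c'\geq d(G)/n^{\alpha/2}\geq n^{\alpha/2}$, and since $d(H)<m$ this gives $m>n^{\alpha/(2-\alpha)}$; in particular $m\to\infty$ as $n\to\infty$, $H$ is $(\alpha/2)$-maximal, and it inherits the proper edge colouring. Put $s:=\lfloor (c')^{1/3}/2\rfloor$, so $s^3\leq c'/8$ and $s\geq n^{\alpha/6}/4$, let $p=p_c:=1/s$, choose $\tau:=\exp(-\ell\alpha/(8C))$ where $C$ is the constant of Lemma \ref{lemma:randompath_v2} (since $\ell>\frac{c_1}{\alpha}\log\frac1\alpha$ with $c_1$ chosen large we then have both $\ell/2\geq C(\alpha/2)^{-1}\log(1/\tau)$ and $\tau<\alpha^{4}$), and set the target to $t:=n^{c_2\alpha}$ for a small absolute constant $c_2$. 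Partition $V(H)$ into $U_1,\dots,U_s$ and the colour set $R$ into $Q_1,\dots,Q_s$ by sending each vertex and each colour to a uniformly random part, independently, so that every $(U_j,Q_j)$ is a $(p,p_c)$-sample. Lemma \ref{lemma:randompath_v2}, applied to $H$ with $\alpha$ replaced by $\alpha/2$, probabilities $p=p_c=1/s$, parameter $\tau$, and its ``$\ell$'' set to $\ell/2$ (its hypothesis $d(H)>(p p_c^2)^{-1}m^{\alpha/2}=s^3 m^{\alpha/2}$ holds since $s^3\leq c'/8$), tells us that for each fixed $j$, with probability $1-e^{-m^{\Omega(\alpha^2)}}$ every $v\in V(H)$ reaches at least $m^{1-\tau}$ vertices by a $(U_j,Q_j)$-path of length exactly $\ell/2$. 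A union bound over the $s\leq m$ parts shows some partition works for all $j$ at once; fix it and write $B^v_j$ for the reach-sets.

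Now I would count good pairs. Since $\sum_j\sum_v|B^v_j|\geq s\,m^{2-\tau}$, and ordered pairs $(x,y)$ with $|\{j:y\in B^x_j\}|<s m^{-\tau}/2$ contribute at most $m^2\cdot s m^{-\tau}/2=s m^{2-\tau}/2$ in total, at least $m^{2-\tau}/2$ ordered pairs are \emph{good}, i.e.\ satisfy $y\in B^x_j$ for at least $s m^{-\tau}/2$ indices $j$. Let $L$ be the graph of good pairs on $V(H)$; it has $m$ vertices and at least $m^{2-\tau}/4$ edges, so by Lemma \ref{lemma:1subdivision} it contains a $1$-subdivision of $K_{t'}$ with $t'=m^{1/2-\tau}/16$, which exceeds $t=n^{c_2\alpha}$ once $n$ is large because $m\geq n^{\alpha/(2-\alpha)}$ and $c_2<1/4-\tau/2$; hence $L$ contains a $1$-subdivision of $K_t$. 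Let $b_1,\dots,b_t$ be its branch vertices, $u_{ac}$ its subdivision vertices, and $X$ the set of all these, so $|X|<t^2$. Finally, process the $2\binom t2$ ``half-edges'' $\{b_a,u_{ac}\}$ and $\{u_{ac},b_c\}$ one by one. Each half-edge $\{x,y\}$ is a good pair, so there are at least $s m^{-\tau}/2$ indices $j$ with $y\in B^x_j$; since at most $|X|<t^2$ of these have $U_j\cap X\neq\emptyset$, at most $2\binom t2<t^2$ of them have already been used, and $s m^{-\tau}/2>4t^2$ for $n$ large (using $c_2<1/12-\tau/(2\alpha)$, where $\tau<\alpha^4$ keeps the right-hand side a positive absolute constant), I can pick a fresh index $j^\ast$ with $U_{j^\ast}\cap X=\emptyset$ and take a $(U_{j^\ast},Q_{j^\ast})$-path of length exactly $\ell/2$ from $x$ to $y$. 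Distinct half-edges use disjoint parts $U_{j^\ast}$ (all disjoint from $X$) and disjoint colour classes $Q_{j^\ast}$, so the chosen paths are pairwise internally vertex-disjoint and pairwise colour-disjoint, and each is internally rainbow; concatenating the two halves of each $K_t$-edge then produces, since $\ell/2+\ell/2=\ell$ and $\ell$ is even, a rainbow $(\ell-1)$-subdivision of $K_t$ inside $H\subseteq G$.

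The main difficulty, and the reason for this architecture, is that $\ell$ is fixed while $n\to\infty$: Lemma \ref{lemma:randompath_v2} can then only deliver reach-sets of polynomially small density $m^{-\tau}$ (to reach a constant fraction of the vertices one would need rainbow paths of length $\Omega(\alpha^{-1}\log\log n)$), so the usual ``grow a ball from each endpoint until the balls overlap'' connection step is unavailable --- two reach-sets of size $m^{1-\tau}$ need not meet. The resolution is that the good-pair graph $L$, though far from an expander, is still dense enough for Alon--Krivelevich--Sudakov to locate the branch structure, and that the $(\alpha/2)$-maximality step makes $c'$ --- hence the number $s$ of independent samples --- polynomially large, which is what lets the good-pair count beat the $m^2$-sized junk term (the goodness threshold must be taken as $s m^{-\tau}/2$, not a constant fraction of $s$) and lets the greedy inflation find a fresh sample for each of the $\leq 2\binom t2$ edges. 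Checking that $\tau$ can be made a small power of $\alpha$ and that the exponent inequalities between $c_1,c_2,\alpha$ and $\ell$ hold uniformly for all $\alpha\in(0,\frac12)$ is routine but a little fiddly.
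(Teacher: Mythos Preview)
Your proposal is correct and follows essentially the same approach as the paper's own proof: pass to an $\alpha'$-maximal subgraph (you take $\alpha'=\alpha/2$, the paper $\alpha'=\alpha/4$), randomly partition vertices and colours into $s=n^{\Theta(\alpha)}$ parts, invoke Lemma~\ref{lemma:randompath_v2} on each part to obtain reach-sets of size $m^{1-\tau}$, form the dense ``good-pair'' graph $L$, extract a $1$-subdivision of $K_t$ via Lemma~\ref{lemma:1subdivision}, and greedily inflate its edges using distinct sample indices. Your greedy step is in fact slightly more explicit than the paper's, since you verify that the chosen parts $U_{j^\ast}$ avoid the branch and subdivision vertices of the $K_t$-skeleton, a point the paper leaves implicit.
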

	
	\begin{proof}
		Let $\alpha'=\alpha/4$, $s=n^{\alpha/6}$, and let $p=p_{c}=1/s$.  Let $H$ be the subgraph of $G$ maximizing the quantity $d(H)/v(H)^{\alpha'}$. Then $H$ is $\alpha'$-maximal and $d(H)\geq d(G)/n^{\alpha'}>n^{3\alpha/4}$. Let $m=v(H)$, then $m\geq d(H)>n^{3\alpha/4}$. Furthermore, let $\tau=\alpha/15$, and assume that $\ell/2\geq C\alpha'^{-1}\log(1/\tau)$, where $C$ is given by Lemma \ref{lemma:randompath_v2}.
		
		Let $U_1,\dots,U_{s}$ be a random partition of $V(G)$ such that each vertex appears in each part independently with probability $p=1/s$. Similarly, let $Q_1,\dots,Q_{s}$ be a random partition of $R$ such that each color appears in each part independently with probability $p_{c}=1/s$. As $d(H)>(p\cdot p_{c}^{2})^{-1}m^{\alpha'}$, we can apply Lemma \ref{lemma:randompath_v2} to conclude the following. For every $i\in [s]$, with probability at least $1-e^{-m^{\Omega(\alpha^{2})}}$, for every $v\in V(G)$ at least $m^{1-\tau}$ of the vertices can be reached from $v$ by a $(U_i,Q_i)$-path of length $\ell/2$. But then there exist partitions $U_1,\dots,U_{s}$ and $Q_1,\dots,Q_s$ such that this is simultaneously true for every $i\in [s]$. Let us fix such partitions. 
		
		Define the edge colored multigraph $K$ on $V(G)$ in which for every $i\in [s]$, we add an edge of color $i$ between $x,y$ if there exists a $(U_i,Q_i)$-path of length $\ell/2$ with endpoints $x$ and $y$. Then $K$ has at least $sm^{2-\tau}$ edges, and there are at most $s$ edges between any pair of vertices. Let $L$ be the graph on $V(G)$ in which $x,y$ are joined by an edge if there are at least $sm^{-\tau}/2\geq n^{\alpha/12}$ edges between $x$ and $y$ in $K$. Note that the nonedges of $L$ contribute at most $sm^{2-\tau}/2$ edges to $K$, and each edge of $L$ contributes at most $s$ edges, so $L$ has at least $m^{2-\tau}/2$ edges. Therefore, by Lemma \ref{lemma:1subdivision}, $L$ contains a 1-subdivision of $K_{t}$ for every  $t\leq m^{1/2-\tau}/8$. Here $m^{1/2-\tau}/8\geq m^{1/4}>n^{\alpha/24}$, so we can choose $t=n^{\alpha/24}$. Let $S$ denote this subdivision. Then $e(S)<t^{2}=n^{\alpha/12}$, so we can assign a color $r_{xy}$ to each edge $xy\in E(S)$ among the colors of the edges connecting $x$ and $y$ in $K$ such that no two edges of $S$ receive the same color. But then choosing a $(U_{r_{xy}},Q_{r_{xy}})$-path $P_{xy}$ for every $xy\in E(S)$, the union of these paths is an $(\ell-1)$-subdivision of $K_{t}$.
	\end{proof}
	
	Finally, let us address the claim we made after Theorem \ref{thm:robust}. As the proof of this is essentially the same as the previous proof, only with some parameters changed, we give only a sketch.
	
	\begin{theorem}\label{thm:robust_formal}
	Let $\alpha>\epsilon>0$, then there exists $\ell_{0}=\ell_{0}(\epsilon,\alpha)$ such that the following holds. If $\ell\geq \ell_0$ is odd and $G$ is a graph on $n$ vertices with $d(G)>n^{\alpha}$, then $G$ contains an $\ell$-subdivision of $K_{t}$ for $t=d(G)^{1/2}n^{-\epsilon}$, if $n$ is sufficiently large.
	\end{theorem}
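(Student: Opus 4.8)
The plan is to run the proof of Theorem~\ref{thm:rainbow_robust} above essentially verbatim, discarding the colour sampling and tuning the two auxiliary exponents to $\epsilon$ rather than to $\alpha$. Since there is no colouring in the statement, fix an arbitrary proper edge colouring of $G$ (for instance colour each edge by itself) and take $p_c=1$, so that a $(U,Q)$-path is just a path all of whose internal vertices lie in $U$. Fix $\alpha>\epsilon>0$, and pick an auxiliary exponent $\alpha'=\alpha'(\epsilon,\alpha)>0$ and a parameter $\tau=\tau(\epsilon,\alpha)>0$, both sufficiently small; it suffices that $\alpha'<\frac14\min\{\epsilon,\alpha\}$, that $\tau<\epsilon/2$, and that $\alpha',\tau$ are small enough that $\frac{(\beta-\alpha')(1/2-\tau)}{1-\alpha'}\ge\frac{\beta}{2}-\frac{\epsilon}{2}$ for every $\beta\in[\alpha,1]$ (which holds for $\alpha',\tau$ near $0$, since the left-hand side equals $\beta/2$ at $\alpha'=\tau=0$, and $[\alpha,1]$ is compact). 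Let $\ell_0=\ell_0(\epsilon,\alpha)$ be the least odd integer with $(\ell_0+1)/2\ge C\alpha'^{-1}\log(1/\tau)$, where $C$ is the constant of Lemma~\ref{lemma:randompath_v2}. Assume $\ell\ge\ell_0$ is odd; we may assume $t:=d(G)^{1/2}n^{-\epsilon}\ge 3$, as otherwise there is nothing to prove.

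First I would pass to an $\alpha'$-maximal subgraph: let $H$ maximise $d(H)/v(H)^{\alpha'}$ among subgraphs of $G$, so $H$ is $\alpha'$-maximal and $c:=d(H)/v(H)^{\alpha'}\ge d(G)/n^{\alpha'}>n^{\alpha-\alpha'}$. Writing $m:=v(H)$ we get $d(H)=cm^{\alpha'}$, hence $m\ge d(H)>n^{\alpha-\alpha'}$, and from $m\ge d(H)=cm^{\alpha'}\ge (d(G)/n^{\alpha'})m^{\alpha'}$ the key size bound $m\ge (d(G)/n^{\alpha'})^{1/(1-\alpha')}$. Put $s:=d(G)\,n^{-2\alpha'}$ and $p:=1/s$; since $s<d(G)n^{-\alpha'}\le c$ we have $d(H)>sm^{\alpha'}=(p\cdot p_c^2)^{-1}m^{\alpha'}$. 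Take a uniformly random partition $U_1,\dots,U_s$ of $V(H)$ (each vertex independently in a uniform part), and apply Lemma~\ref{lemma:randompath_v2} to $H$, with the $\ell$ there replaced by $(\ell+1)/2$ (an integer, as $\ell$ is odd) and $\tau$ as above: the hypotheses hold by the choice of $\ell_0$, of $s$, and as $m\to\infty$. So for each fixed $i$, with probability at least $1-e^{-m^{\Omega(\alpha'^2)}}$ every $v\in V(H)$ reaches at least $m^{1-\tau}$ vertices of $H$ by a $U_i$-path of length exactly $(\ell+1)/2$; since $s\le n$ while the failure probability is super-polynomially small in $m\ge n^{\alpha-\alpha'}$, a union bound over $i$ lets us fix a partition for which this holds for all $i\in[s]$ simultaneously.

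I would then build the two auxiliary graphs on $V(H)$, exactly as in the proof of Theorem~\ref{thm:rainbow_robust}. Let $K$ be the multigraph with an edge of colour $i$ between $x,y$ whenever some $U_i$-path of length $(\ell+1)/2$ joins them; then $e(K)\ge sm^{2-\tau}/2$, with at most $s$ parallel edges per pair. Let $L$ be the simple graph with $x\sim y$ iff at least $sm^{-\tau}/4$ colours $i$ give an edge $xy$ of $K$; double counting (non-edge pairs of $L$ account for at most $sm^{2-\tau}/8$ edges of $K$, each edge of $L$ for at most $s$) gives $e(L)\ge m^{2-\tau}/4$. Thus $L$ has $m$ vertices and at least $\delta m^2$ edges with $\delta=m^{-\tau}/4$, and by Lemma~\ref{lemma:1subdivision} it contains a $1$-subdivision $S$ of $K_t$ for any $t\le\delta m^{1/2}/4=m^{1/2-\tau}/16$. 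This inequality is precisely where the choice of $\alpha',\tau$ enters: by the size bound on $m$ it is enough that $m^{1/2-\tau}\ge n^{\beta/2-\epsilon}\cdot n^{\epsilon/2}$ for $\beta:=\log_n d(G)\in[\alpha,1]$, i.e. that $\frac{(\beta-\alpha')(1/2-\tau)}{1-\alpha'}\ge\frac\beta2-\frac\epsilon2$, which is how $\alpha',\tau$ were chosen, the factor $16$ being absorbed by $n^{\epsilon/2}$ for $n$ large.

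Finally I would convert $S$ into an $\ell$-subdivision of $K_t$ inside $G$ by greedily routing each of its $t(t-1)<t^2$ edges through a short path of $H$. Processing the edges of $S$ one at a time: when routing $xy\in E(S)$ there are at least $sm^{-\tau}/4$ colours $i$ admitting a $U_i$-path of length $(\ell+1)/2$ between $x$ and $y$, while the set $W$ of vertices to be avoided --- the at most $t^2$ principal and subdivision vertices of $S$, plus the internal vertices of the already-routed paths --- has size at most $t^2(\ell+1)$; since the $U_i$ \emph{partition} $V(H)$, each vertex of $W$ is internal to at most one candidate path, and each previously used colour kills at most one more, so at most $2t^2(\ell+1)$ colours are blocked. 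Hence a fresh colour $r_{xy}$ and path $P_{xy}$ can be chosen provided $sm^{-\tau}/4>2t^2(\ell+1)$, i.e. (using $s=d(G)n^{-2\alpha'}$, $t^2\le d(G)n^{-2\epsilon}$, $m^{-\tau}\ge n^{-\tau}$) provided $8(\ell+1)<n^{2\epsilon-2\alpha'-\tau}$, which holds for large $n$ since $\ell$ is a constant and $2\alpha'+\tau<\epsilon$. As the $r_{xy}$ are distinct, the paths $P_{xy}$ have pairwise disjoint interiors (distinct parts of the partition) and avoid all principal and subdivision vertices of $S$ internally; concatenating, for each pair $\{u,w\}$ the two paths along $u,z_{uw},w$ in $S$ gives a $u$--$w$ path of length $\ell+1$ with $\ell$ internal vertices, and the union of these $\binom t2$ paths is the desired $\ell$-subdivision of $K_t$ in $G$. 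The one genuinely delicate point is the joint choice of $\alpha'$ and $\tau$: they must be small enough that $H$ is large enough for the $1$-subdivision of $K_t$ to fit inside $L$ (uniformly over $d(G)\in[n^\alpha,n]$), while $s$ still leaves room $2t^2\ell m^\tau<s<d(H)/m^{\alpha'}$ for the greedy routing --- both demands are met as $\alpha',\tau\to 0$, so this is only bookkeeping.
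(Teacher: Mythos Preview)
Your proposal is correct and follows essentially the same route as the paper's own sketch: rerun the proof of Theorem~\ref{thm:rainbow_robust} with $p_c=1$ and with the auxiliary parameters $\alpha',s,\tau$ tuned to $\epsilon$ rather than to $\alpha$. The only difference is cosmetic: the paper fixes explicit values $\alpha'=\epsilon/10$, $s=d(G)n^{-\epsilon/10}$, $\tau=\alpha'/10$, whereas you choose $\alpha',\tau$ abstractly via a continuity/compactness argument to make the inequality $\frac{(\beta-\alpha')(1/2-\tau)}{1-\alpha'}\ge\frac{\beta}{2}-\frac{\epsilon}{2}$ hold uniformly over $\beta\in[\alpha,1]$; your choice $s=d(G)n^{-2\alpha'}$ also cleanly secures the strict inequality $d(H)>s\,m^{\alpha'}$ needed for Lemma~\ref{lemma:randompath_v2}, which the paper's explicit choice only gives with equality. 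Your bookkeeping in the greedy routing step (counting blocked colours via the partition) is likewise a faithful and slightly more careful rendering of the corresponding step in the proof of Theorem~\ref{thm:rainbow_robust}.
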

	
	\begin{proof}[Sketch proof.]
	Repeat the previous proof with the following changes. We only sample vertices, so one can take $p_{c}=1$. Moreover, set $\alpha'=\epsilon/10$, $s=d(G)n^{-\epsilon/10}$, $p=1/s$, $\tau=\alpha'/10$. Then $\ell_0\geq 2C\alpha'^{-1}\log(1/\tau)$ suffices.
	\end{proof}
	
	\section{Cycles in simplicial complexes}\label{sect:simplicial}
	
	\subsection{Simplicial complexes and higher order walks}\label{sect:introductiontosimplices}
	
	In this section, we introduce our notation concerning hypergraphs and simplicial complexes. An $r$-graph $G$ naturally corresponds to the $(r-1)$-dimensional \emph{pure simplicial complex} given by the downward closure of the edge set, that is, the simplicial complex $$S_{G}=S_{G}(0)\cup\dots\cup S_{G}(r),$$
	where $S_{G}(r)=E(G)$ is the edge set of $G$, and $S_{G}(i)$ is the family of $i$ element subsets of the edges for $i=0,\dots,r-1$. We call the elements of $S_{G}(i)$ the \emph{$i$-faces of $S_{G}$}. When talking about edges, we always mean the $r$-faces of $G$. Let $P(G)=S_{G}(r-1)$ denote the set of $(r-1)$-faces, and let $V(G)=S_{G}(1)$ denote the vertices. Also, we set $e(G):=|E(G)|$, $p(G):=|P(G)|$ and $v(G):=|V(G)|$. We say that $H$ is a \emph{subhypergraph} of $G$ if $E(H)\subset E(G)$. 
	
	We will use $r$-graphs and simplicial complexes interchangeably, as in certain situations one is more natural than the other; also, we might identify a family of $r$-elements sets $E$ with the $r$-graph $G$ satisfying $E(G)=E$.
	
	Given $X\subset P(G)$, that is, a set of $(r-1)$-faces, we define its \emph{neighborhood} as $$N_{G}(X)=N(X):=\{f\in P(G)\setminus X:\exists f'\in X, f\cup f'\in E(G)\}.$$ Also, the subhypergraph \emph{induced by $X$} is $$G[X]:=\{e\in E(G):e^{(r-1)}\subset X\},$$ that is, the set of edges whose every $(r-1)$-element subset appear in $X$. Finally, for $f\in S_{G}$, define the \emph{degree} of $f$ as  $$\deg_{G}(f)=\deg(f):=|\{e\in E(G):f\subset e\}|.$$

	In this paper, we are interested in $r$-graphs in which the $(r-1)$-faces have large average degree. Therefore, given an $r$-graph $G$, define its \emph{average degree} as $d(G):=r\cdot e(G)/p(G)$. We will use the following technical lemmas repeatedly.
	
	\begin{lemma}\label{lemma:mindeg}
		Let $G$ be an $r$-graph. Then $G$ contains a subhypergraph $H$ such that $\deg_{H}(f)\geq d(G)/r$ for every $f\in P(H)$.
	\end{lemma}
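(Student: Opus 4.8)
The plan is the standard ``delete low-degree faces'' argument, the only point requiring care being that the number of deletion rounds must be charged against $p(G)$ rather than against a cruder quantity. Write $d=d(G)=r\,e(G)/p(G)$, so that the desired conclusion $\deg_H(f)\ge d(G)/r$ is just $\deg_H(f)\ge e(G)/p(G)$. If $G$ has no edges there is nothing to prove, so assume $e(G)>0$. I would build a nested sequence $G=H_0\supseteq H_1\supseteq\cdots$ of subhypergraphs as follows: given $H_i$, if there is an $(r-1)$-face $f\in P(H_i)$ with $\deg_{H_i}(f)<e(G)/p(G)$, let $H_{i+1}$ be obtained from $H_i$ by deleting every edge containing $f$; if no such face exists, stop and put $H:=H_i$. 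When the process stops, every $(r-1)$-face of $H$ has degree at least $e(G)/p(G)=d(G)/r$ in $H$ by the stopping rule, so it remains only to verify that $E(H)\neq\emptyset$.

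First I would bound the number of rounds. The face $f$ chosen in round $i$ belongs to $P(H_i)$, but after deleting all edges through $f$ we have $\deg_{H_{i+1}}(f)=0$, hence $f\notin P(H_{i+1})$; since in general $P(H_{i+1})\subseteq P(H_i)$, the sets $P(H_0)\supsetneq P(H_1)\supsetneq\cdots$ strictly decrease, and $|P(H_0)|=p(G)$, so the process performs at most $p(G)$ rounds.

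Next I would count the deleted edges. Round $i$ deletes exactly $\deg_{H_i}(f)<e(G)/p(G)$ edges. If the process performs $k\ge 1$ rounds, the total number of deleted edges is strictly less than $k\cdot e(G)/p(G)\le e(G)$, using $k\le p(G)$; and if $k=0$ then $H=G$ already has $e(G)>0$ edges. Either way $E(H)\neq\emptyset$, which together with the stopping condition completes the proof.

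There is essentially no obstacle here: the entire content is the observation in the second paragraph that each round permanently destroys one $(r-1)$-face of the complex, which is precisely what makes the edge bookkeeping close up against the normalization $d(G)=r\,e(G)/p(G)$. For $r=2$ this recovers the textbook fact that every graph contains a subgraph of minimum degree at least half its average degree.
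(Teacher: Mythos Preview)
Your proof is correct and is essentially the same standard deletion argument as the paper's. The only difference is packaging: the paper takes $H$ to be a minimal subhypergraph with $d(H)\ge d(G)$ and observes that deleting a face of degree $<d(H)/r$ would yield a strictly smaller $H'$ with $d(H')>d(H)$, a contradiction; you carry out the deletions explicitly and close the books with the global count $k\cdot e(G)/p(G)<e(G)$, which is the same computation unwound.
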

	
	\begin{proof}
		Let $H$ be a minimal subhypergraph of $G$ with $d(H)\geq d(G)$. Suppose that there exists $f\in P(H)$ such that $\deg_{H}(f)< d(G)/r\leq d(H)/r$. Remove all the edges containing $f$ from $H$, and let $H'$ be the resulting subhypergraph. Then $p(H')\leq p(H)-1$ and $e(H')> e(H)-d(H)/r$, so
		$$d(H')=\frac{r\cdot e(H')}{p(H')}>\frac{r\cdot e(H)-d(H)}{p(H)-1}=d(H),$$
		contradicting the minimality of $H$. Therefore, $H$ suffices.
	\end{proof}
	
	\begin{lemma}\label{lemma:maxdeg}
		Let $G$ be an $r$-graph such that $p(G)=n$ and $\deg(f)\geq d$ for every $f\in P(G)$. Then for every $v\in V(G)$, we have $\deg_{P(G)}(v)\leq rn/d$.
	\end{lemma}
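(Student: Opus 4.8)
The plan is to prove Lemma~\ref{lemma:maxdeg} by a straightforward double counting of incidences through the vertex $v$. Fix $v\in V(G)$ and let $\mathcal{P}$ be the set of pairs $(g,e)$ where $g\in P(G)$ is an $(r-1)$-face with $v\in g$ and $e\in E(G)$ is an edge with $g\subset e$. I will bound $|\mathcal{P}|$ from below and from above, and compare.

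For the lower bound I would simply group the pairs by their first coordinate: there are $\deg_{P(G)}(v)$ faces $g\in P(G)$ containing $v$, and each of them satisfies $\deg(g)\ge d$ by hypothesis, so $|\mathcal{P}|=\sum_{g\ni v}\deg(g)\ge d\cdot\deg_{P(G)}(v)$.

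For the upper bound I would associate to each pair $(g,e)\in\mathcal{P}$ the $(r-1)$-set $f:=e\setminus\{v\}$. Since $v\in g\subseteq e$, this $f$ is an $(r-1)$-subset of the edge $e$, hence $f\in P(G)$ (the set $P(G)$ consists of all $(r-1)$-faces of the downward-closed complex $S_G$), and $v\notin f$. The crucial point is that this assignment is at most $(r-1)$-to-one: given $f$ with $v\notin f$, the relation $f=e\setminus\{v\}$ together with $v\in e$ forces $e=f\cup\{v\}$, so $e$ is uniquely determined by $f$; and then $g$ must be one of the $r-1$ sets $e\setminus\{w\}$ with $w\in f$. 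Since there are at most $p(G)=n$ faces $f\in P(G)$ altogether, we get $|\mathcal{P}|\le (r-1)\,n\le rn$.

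Combining the two estimates gives $d\cdot\deg_{P(G)}(v)\le rn$, that is, $\deg_{P(G)}(v)\le rn/d$, as required (in fact the argument yields the marginally stronger bound $(r-1)n/d$). I do not expect any real obstacle here; the only points needing a moment's care are that every $(r-1)$-subset of an edge genuinely lies in $P(G)$, which is immediate from the definition of the pure simplicial complex attached to $G$, and that the reconstruction $e=f\cup\{v\}$ from $f$ is unique.
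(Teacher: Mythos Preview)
Your proof is correct and essentially identical to the paper's: both double count incidences between $(r-1)$-faces $g\ni v$ and edges $e\supset g$, use the minimum-degree hypothesis for the lower bound, and obtain the upper bound via the map $e\mapsto e\setminus\{v\}$ into $P(G)$ together with the fact that each edge in $E_v$ contains exactly $r-1$ faces through $v$. The paper presents the same argument, also obtaining $(r-1)n/d$ before relaxing to $rn/d$.
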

	
	\begin{proof}
		Let $E_v\subset E(G)$ be the set of edges containing $v$, and let $P_v\subset P(G)$ be the set of $(r-1)$-faces containing $v$. Note that $|E_{v}|\leq n$, as each $e\in E_v$ contributes the face $e\setminus\{v\}\in P(G)$, and any two of these faces are different. On the other hand, each $f\in P_v$ appears in at least $d$ edges in $E_v$. Therefore,
		$$|P_v|d\leq |E_v|(r-1)\leq nr,$$
		which gives $\deg_{P(G)}(v)=|P_v|\leq rn/d$.
	\end{proof}
	
	\bigskip
	
	\noindent
	\textbf{Higher order walks, paths, and cycles.} A \emph{higher order walk} is a generalization of a graph walk for  simplicial complexes. Recall that a walk in a graph moves on vertices via edges. This can be extended for simplicial complexes at any level $k\in [r-1]$: a higher order walk traverses $k$-faces via $(k+1)$-faces. Here, we are mostly interested in the case $k=r-1$, so let us define this formally.
	
	\begin{definition}(\emph{Higher order walks})
		A \emph{walk of length $\ell$} in an $r$-graph $G$ is a sequence of $(r-1)$-faces $f_0,\dots,f_{\ell}$ such that $f_{i-1}\cup f_{i}\in E(G)$ for $i\in [\ell]$. Say that the walk is \emph{closed} if $f_0=f_{\ell}$. The simplicial complex \emph{associated with the walk} is the $r$-graph with edges $e_{i}:=f_{i-1}\cup f_{i}$ for $i\in [\ell]$.
	\end{definition}
	
	With slight abuse of notation, when talking about a walk as a topological space, we always mean the simplicial complex associated with it.
	
	We would also like to extend the notion of paths in graphs for simplicial complexes. A natural way to do this is to consider walks with no repeated vertices. In other words, as we traverse the $(r-1)$-faces $f_0,\dots,f_{\ell}$, we uncover one new vertex at each step. This can be encoded in the following short definition.
	
	\begin{definition}(\emph{Higher order paths}) A \emph{path of length $\ell$} in an $r$-graph $G$ is a walk $f_0,\dots,f_{\ell}$ such that $|f_0\cup\dots\cup f_{\ell}|=\ell+r-1$. The \emph{internal vertices} of a path are the elements of $(f_0\cup\dots\cup f_{\ell})\setminus (f_0\cup f_{\ell})$. The \emph{endpoints} of the path are $f_0$ and $f_{\ell}$, and the path is \emph{proper} if its endpoints are disjoint.
	\end{definition}
	
	\begin{figure}
		\begin{center}
			\begin{tikzpicture}
				
				\node[vertex,label=below:$1$] (v1) at (-3.5,-0.5) {};
				\node[vertex,label=above:$2$] (v2) at (-3,1) {};
				\node[vertex,label=below:$3$] (v3) at (-2.5,-0.5) {};
				\node[vertex,label=below:$5$] (v5) at (-1.5,-0.5) {};
				\node[vertex,label=above:$4$] (v4) at (-2,1) {};
				\node[vertex,label=above:$7$] (v7) at (-1,1) {};
				\node[vertex,label=below:$6$] (v6) at (-0.5,-0.5) {};
				\node[vertex,label=above:$8$] (v8) at (0,1) {};
				\node[vertex,label=below:$10$] (v10) at (0.5,-0.5) {};
				\node[vertex,label=above:$9$] (v9) at (1.5,1) {};
				\node[vertex,label=below:$11$] (v11) at (2,-0.5) {};
				\draw[red]  (v1) edge (v2); \node at (-3.8,0.25) {$f_0$};
				\draw  (v3) edge (v1);
				\draw[red]  (v2) edge (v3);
				\draw[red]  (v3) edge (v4);
				\draw  (v2) edge (v4);
				\draw[red]  (v4) edge (v5);
				\draw  (v3) edge (v5);
				\draw[red]  (v6) edge (v4);
				\draw  (v5) edge (v6);
				\draw  (v4) edge (v7);
				\draw[red]  (v7) edge (v6);
				\draw[red]  (v6) edge (v8);
				\draw[red]  (v6) edge (v9);
				\draw  (v7) edge (v8);
				\draw  (v8) edge (v9);
				\draw  (v6) edge (v10);
				\draw[red]  (v10) edge (v9);
				\draw[red]  (v9) edge (v11);  \node at (2.2,0.25) {$f_{\ell}$};
				\draw  (v11) edge (v10);
			\end{tikzpicture}
		\end{center}
		\caption{An illustration of a path in a 3-graph, where the numbers denote the order of the vertices, and the red edges denote $f_0,\dots,f_{\ell}$.}
		\label{fig:paths}
	\end{figure}
	
	Indeed, as $|f_{i}\setminus f_{i-1}|=1$ for $i\in [\ell]$ in a walk $f_0,\dots,f_{\ell}$, we have $|f_0\cup\dots\cup f_{\ell}|\leq \ell+r-1$, with equality if and only if $f_{i}\setminus f_{i-1}$ is disjoint from $f_0\cup\dots\cup f_{i-1}$ for every $i$. 
 
 In case the path is proper, we can also define the \emph{order of the vertices}. First, order the vertices of $f_0$ according to the order they disappear, that is, $v$ is before $w$ if the largest index $i$ such that $v\in f_i$ is smaller than the largest index $j$ such that $w\in f_j$. Next, order the vertices not in $f_0$ according to the order they appear, that is, $v$ is before $w$ if the smallest index $i$ such that $v\in f_i$ is smaller than the smallest index $j$ such that $w\in f_j$. Finally, every vertex in $f_0$ is before every vertex not in $f_0$. This defines a total ordering of the vertices in case the path is proper, see Figure \ref{fig:paths}.
 
 From a topological perspective, if $f_0,\dots,f_{\ell}$ is a path, then the pure simplicial complex associated with it is homeomorphic to the ball $B^{r-1}$. Finally, let us define cycles.
	
	\begin{definition}(\emph{Higher order cycles})
		A \emph{cycle of length $\ell$} in an $r$-graph $G$ is a closed walk $f_0,\dots,f_{\ell}$ such that $|f_0\cup\dots\cup f_{\ell}|=\ell$, and there exist $1\leq i<j\leq \ell$ such that $f_i\cap f_j=\emptyset$.
	\end{definition}

	Let us accompany this definition with some explanation. If $f_0,\dots,f_{\ell}$ is a cycle and $f_i\cap f_j=\emptyset$ for some $i<j$, then the walks $P=(f_i,f_{i+1},\dots,f_j)$ and $P'=(f_j,f_{j+1},\dots,f_i)$ are both proper paths (where indices are meant modulo $\ell$). To see this, note that $v(P)\leq j-i+r-1$, and $v(P')\leq \ell-j+i+r-1$, so 
	\begin{equation}\label{equ:paths}
		|f_{0}\cup\dots\cup f_{\ell}|\leq v(P)+v(P')-|f_i|-|f_j|\leq \ell.
	\end{equation}
	Hence, as the left and right hand sides are equal, we must have $v(P)=j-i+r-1$, and $v(P')=\ell-j+i+r-1$, so $P$ and $P'$ are paths. The inequality (\ref{equ:paths}) also shows that $P$ and $P'$ do not share any internal vertices. 
	
	Furthermore, consider the simplicial complex $S$ associated with the cycle $f_0,\dots,f_{\ell}$ from a topological perspective. In the case $r=2$, we get the usual notion of cycles, which are always homeomorphic to $S^1$. However, in dimension 2, $S$ is homeomorphic to either the \emph{cylinder} $S^1\times B^1$, or the \emph{M\"obius strip} $M$, see Figure \ref{fig:simplicialcycle} for an illustration. In general, for $r\geq 3$, we always have two cases.

	\begin{lemma}
		For $r\geq 3$, a cycle in an $r$-graph is homeomorphic to either $S^1\times B^{r-2}$ or $M\times B^{r-3}$.
	\end{lemma}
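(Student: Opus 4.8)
The plan is to recognise the pure simplicial complex $S$ associated with the cycle as a $B^{r-2}$-bundle over a circle, and then to invoke the (short) classification of such bundles. Write the cycle as $f_0,\dots,f_{\ell}$ with $f_{\ell}=f_0$, set $e_k=f_{k-1}\cup f_k$, and let $\Delta_k$ be the $(r-1)$-simplex on vertex set $e_k$; then $\Delta_k$ and $\Delta_{k+1}$ meet in the common facet $f_k$ (indices mod $\ell$), so $S=\Delta_1\cup\dots\cup\Delta_{\ell}$ is a chain of $(r-1)$-simplices glued cyclically along facets. The first step is combinatorial: one checks that this chain is \emph{non-degenerate}, i.e.\ that every $(r-2)$-face of $S$ lies in at most two of the $\Delta_k$, those lying in two being exactly the facets $f_0,\dots,f_{\ell-1}$, and that the link of every face is a PL ball or sphere of the expected dimension; consequently $S$ is a compact $(r-1)$-dimensional PL manifold with boundary. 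This uses the walk condition together with $|f_0\cup\dots\cup f_{\ell}|=\ell$ (which forces exactly one new vertex per step) and the fact that non-consecutive $\Delta_k$ share no facet. I expect this bookkeeping to be the main obstacle, and it is the one point at which ``cycle of length $\ell$'' has to be read with $\ell$ not too small: non-degeneracy can genuinely fail for tiny cycles — for $r=3$ the cycle of length $4$ is $\partial\Delta^{3}\cong S^{2}$ — but it holds once $\ell$ exceeds a small threshold depending on $r$, and in particular for every cycle produced by the theorems of this section.

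Granting non-degeneracy, use the pair $f_i,f_j$ with $f_i\cap f_j=\emptyset$ supplied by the definition of a cycle. As already observed in the excerpt following that definition (see (\ref{equ:paths})), splitting the cyclic walk at $f_i$ and $f_j$ writes $S=P\cup P'$, where $P=(f_i,\dots,f_j)$ and $P'=(f_j,\dots,f_i)$ are proper paths with no common internal vertices; by non-degeneracy, $P\cap P'=f_i\sqcup f_j$, a disjoint union of two $(r-2)$-simplices. Each of $P$ and $P'$ is a proper path and hence, as recalled in the excerpt, homeomorphic to $B^{r-1}$ (one sees this by shelling the simplices one at a time, each meeting the union of the previous ones in a single facet of its boundary). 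Moreover $f_i$ and $f_j$ appear as two \emph{disjoint} $(r-2)$-balls in the boundary sphere $\partial P\cong S^{r-2}$, so a standard PL fact lets us fix a homeomorphism $P\cong B^{r-2}\times[0,1]$ carrying $f_i$ to $B^{r-2}\times\{0\}$ and $f_j$ to $B^{r-2}\times\{1\}$, and likewise for $P'$. Gluing $P$ to $P'$ along $f_i$ and along $f_j$ therefore exhibits $S$ as the mapping torus of a self-homeomorphism $\rho$ of $B^{r-2}$, i.e.\ as a fibre bundle over $S^{1}$ with fibre $B^{r-2}$. (The hypothesis $f_i\cap f_j=\emptyset$ is what makes the core circle of this bundle essential: without a separating facet the complex can collapse to a point — a ``fan'' of triangles about a common vertex is a cyclic chain of $2$-simplices but not a cycle, precisely because all of its separating $1$-faces contain that vertex.)

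It remains to classify $B^{r-2}$-bundles over $S^{1}$. Since the base is $S^{1}$, such a bundle is determined up to isomorphism by the isotopy class of its clutching homeomorphism, i.e.\ by an element of $\pi_0(\mathrm{Homeo}(B^{r-2}))$; and for $r-2\ge 1$, that is for $r\ge 3$, this group is $\mathbb{Z}/2$, with the two classes represented by the identity and by the reflection $\rho_0$ with $\rho_0(x_1,x_2,\dots,x_{r-2})=(-x_1,x_2,\dots,x_{r-2})$. If $\rho$ is isotopic to the identity, $S$ is the trivial bundle $S^{1}\times B^{r-2}$. If $\rho$ is isotopic to $\rho_0$, then since $\rho_0$ moves only the first coordinate the other $r-3$ coordinates split off as a direct factor, and $S\cong M\times B^{r-3}$, where $M$ is the M\"obius strip (the mapping torus of $t\mapsto -t$ on $B^{1}$). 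These are exactly the two cases in the statement. (For $r=2$ one has $B^{r-2}=B^0$, $\pi_0(\mathrm{Homeo}(B^0))$ is trivial, and only $S^{1}\times B^{0}=S^{1}$ arises — which is why the lemma is restricted to $r\ge 3$.)
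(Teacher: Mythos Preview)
Your argument is a rigorous elaboration of what the paper does: the paper gives no formal proof of this lemma, only the one-sentence remark that a cycle is obtained from a proper path (homeomorphic to $B^{r-1}$) by identifying its two disjoint end-facets, with the two possible orientations of that identification producing $S^1\times B^{r-2}$ or $M\times B^{r-3}$. Your two-path decomposition (already set up in the paper around (\ref{equ:paths})) together with the mapping-torus viewpoint and the classification via $\pi_0(\mathrm{Homeo}_{\mathrm{PL}}(B^{r-2}))\cong\mathbb Z/2$ says exactly the same thing, just more carefully.

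Your caveat about very short cycles is not a defect of your proof but a genuine observation the paper glosses over: the $r=3$, $\ell=4$ example you give really is a cycle in the paper's sense and really is $\partial\Delta^3\cong S^2$, so the lemma as stated is slightly loose. Your diagnosis that non-degeneracy (equivalently, $P\cap P'=f_i\sqcup f_j$ as complexes) holds once $\ell$ is large enough is correct---since $V(P)\cap V(P')=f_i\cup f_j$, any extra shared face would have to mix vertices of $f_i$ and $f_j$ inside a single simplex $\Delta_k$, which is impossible once both paths have length at least $r-1$ and hence once $\ell\ge 2(r-1)+1$; in particular it holds for every cycle the later theorems actually produce.
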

	
	Roughly, this is true as we get a cycle by taking a proper path $f_0,\dots,f_{\ell}$, and identifying $f_0$ and $f_{\ell}$. Depending on the identification, we get either an orientable simplicial complex, which is then $S^1\times B^{r-2}$, or a non-orientable one, which is then $M\times B^{r-3}$.
	
	We remark that in the case $r=3$, our notion of cycle coincides with the notion of topological cycles introduced in \cite{KPTZ}. In the same paper, it is proved that if a cycle in a 3-graph is also 3-partite, then the parity of its length determines whether it is a cylinder or a M\"obius strip.
	
	\begin{lemma}\label{lemma:characterization}(\cite{KPTZ})
		Let $C$ be a cycle of length $\ell$ in a 3-graph, which is also 3-partite. Then $C$ is homeomorphic to the cylinder if $\ell$ is even, and $C$ is homeomorphic to the M\"obius strip if $\ell$ is odd.
	\end{lemma}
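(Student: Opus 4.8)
The plan is to reduce the statement to a parity count about orientations of the triangles of $C$, where the $3$-colouring is used to pin down a canonical orientation of each triangle. By the preceding lemma, $C$ is homeomorphic either to the cylinder $S^1\times B^1$ or to the M\"obius strip $M$, and these are distinguished by orientability: the cylinder is orientable, the M\"obius strip is not. Recall that a triangulated surface with boundary is orientable exactly when one can orient each of its triangles so that any two triangles sharing a $1$-face induce opposite orientations on it. Writing $e_k=f_{k-1}\cup f_k$ ($k\in\mathbb{Z}/\ell$) for the triangles of $C$ --- so that $e_k$ and $e_{k+1}$ share the $1$-face $f_k$ --- it therefore suffices to determine for which $\ell$ a coherent orientation of the $e_k$ exists.

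The only place $3$-partiteness enters is the following local observation. Fix a proper $3$-colouring $c\colon V(C)\to\{1,2,3\}$; since $C$ is $3$-partite, every edge $e_k$ of the underlying $3$-graph is rainbow. Orient each $e_k$ by the cyclic colour order $1\to 2\to 3\to 1$, and call this $\sigma_k$. Then the orientation that $\sigma_k$ induces on a $1$-face $\{x,y\}\subseteq e_k$ is $x\to y$ precisely when $(c(x),c(y))\in\{(1,2),(2,3),(3,1)\}$ --- this depends only on the colours $c(x),c(y)$, not on $k$. Hence $\sigma_k$ and $\sigma_{k+1}$ induce the \emph{same} orientation on their shared $1$-face $f_k$; so if we orient $e_k$ as $\eta_k\sigma_k$ with $\eta_k\in\{+1,-1\}$, then coherence across $f_k$ is equivalent to $\eta_k=-\eta_{k+1}$.

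I would then run this constraint around the cyclic sequence $e_1,\dots,e_\ell,e_1$. The system $\eta_{k+1}=-\eta_k$ for all $k\in\mathbb{Z}/\ell$ is solvable if and only if $\ell$ is even (take $\eta_k=(-1)^k$). If $\ell$ is odd there is no coherent orientation whatsoever --- extra constraints from any further interior $1$-faces can only make matters worse --- so $C$ is non-orientable and hence the M\"obius strip. If $\ell$ is even, I would conversely exhibit a coherent orientation, the natural candidate being $\eta_k=(-1)^k$: it is coherent at each $f_k$ by the previous paragraph, and one checks it is coherent at any remaining interior $1$-face as well, so that $C$ is orientable and hence the cylinder.

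The step I expect to be fussiest is that last verification, because the notion of cycle used here permits the walk $f_0,\dots,f_\ell$ to revisit faces, so the triangles $e_1,\dots,e_\ell$ need not be distinct and there could a priori be interior $1$-faces beyond the $f_k$. The clean way to organise it is via the ``unrolled strip'': the disk obtained by gluing $e_1,\dots,e_\ell$ in a row along $f_1,\dots,f_{\ell-1}$, together with the identification of its two end copies of $f_0=f_\ell$ that recovers $C$. Because the $3$-colouring forces that last identification, whether it is orientation-reversing (yielding the cylinder) or orientation-preserving (yielding the M\"obius strip) is exactly the sign $(-1)^\ell$ produced by the alternating computation of the third paragraph. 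Making the unrolled-strip model rigorous in the presence of degenerate, non-embedded configurations is the main bookkeeping chore, but conceptually the whole proof rests on the one-line colour observation of the second paragraph.
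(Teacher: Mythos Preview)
The paper does not supply its own proof of this lemma; it is quoted from \cite{KPTZ}, so there is no in-paper argument to compare against. Your approach is correct and is essentially the standard proof via orientability.

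One simplification: your worry about ``remaining interior $1$-faces'' and possibly repeated triangles is unnecessary once you have invoked the preceding lemma. Knowing that $C$ is a compact surface with boundary, the Euler-characteristic identity $e(C)-p(C)+v(C)=0$ together with $3e(C)=2p(C)-b$ (where $b$ is the number of boundary $1$-faces) gives $e(C)=2v(C)-b$. Since $v(C)=\ell$, $e(C)\le\ell$, and $b\le v(C)$, this forces $e(C)=\ell$ and $b=\ell$. Hence the triangles $e_1,\dots,e_\ell$ are pairwise distinct and $p(C)=2\ell$; as each $f_k$ lies in the two distinct triangles $e_k,e_{k+1}$ (and no $1$-face of a surface lies in more than two), the $\ell$ faces $f_0,\dots,f_{\ell-1}$ are pairwise distinct and constitute exactly the interior $1$-faces. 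Thus the only coherence constraints are the cyclic ones $\eta_{k+1}=-\eta_k$, and your parity conclusion follows immediately without the unrolled-strip bookkeeping.
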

	
	\bigskip
	
	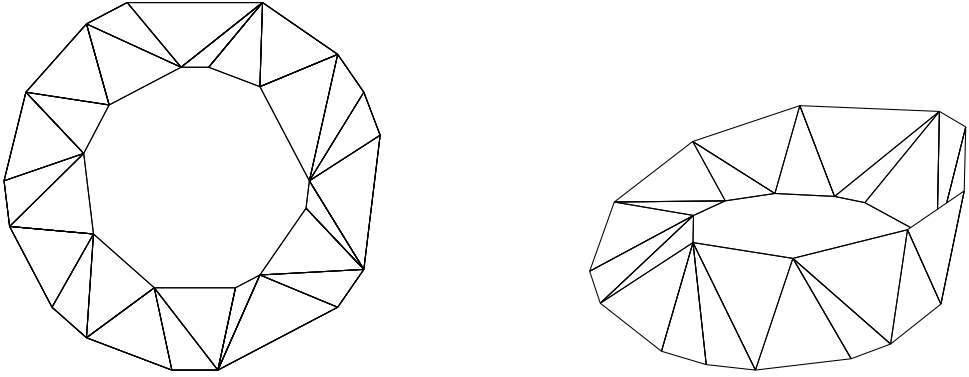
\begin{figure}
		\begin{minipage}[b]{0.45\textwidth}
			\begin{center}
				\begin{tikzpicture}
					\draw (4.05897,2.57318) -- (5,3.18029) ;
\draw (5,3.18029) -- (4.78313,3.75212) ;
\draw (4.05897,2.57318) -- (4.78313,3.75212) ;
\draw (4.78313,3.75212) -- (5,3.18029) ;
\draw (4.05897,2.57318) -- (5,3.18029) ;
\draw (5,3.18029) -- (4.78313,3.75212) ;
\draw (4.05897,2.57318) -- (4.78313,3.75212) ;
\draw (4.78313,3.75212) -- (5,3.18029) ;
\draw (4.05897,2.57318) -- (4.78313,3.75212) ;
\draw (4.78313,3.75212) -- (4.43572,4.25543) ;
\draw (4.05897,2.57318) -- (4.43572,4.25543) ;
\draw (4.43572,4.25543) -- (4.78313,3.75212) ;
\draw (4.05897,2.57318) -- (4.78313,3.75212) ;
\draw (4.78313,3.75212) -- (4.43572,4.25543) ;
\draw (4.05897,2.57318) -- (4.43572,4.25543) ;
\draw (4.43572,4.25543) -- (4.78313,3.75212) ;
\draw (4.05897,2.57318) -- (4.43572,4.25543) ;
\draw (4.43572,4.25543) -- (3.40152,3.82585) ;
\draw (4.05897,2.57318) -- (3.40152,3.82585) ;
\draw (3.40152,3.82585) -- (4.43572,4.25543) ;
\draw (4.05897,2.57318) -- (4.43572,4.25543) ;
\draw (4.43572,4.25543) -- (3.40152,3.82585) ;
\draw (4.05897,2.57318) -- (3.40152,3.82585) ;
\draw (3.40152,3.82585) -- (4.43572,4.25543) ;
\draw (3.40152,3.82585) -- (4.43572,4.25543) ;
\draw (4.43572,4.25543) -- (3.43644,4.94518) ;
\draw (3.40152,3.82585) -- (3.43644,4.94518) ;
\draw (3.43644,4.94518) -- (4.43572,4.25543) ;
\draw (3.40152,3.82585) -- (4.43572,4.25543) ;
\draw (4.43572,4.25543) -- (3.43644,4.94518) ;
\draw (3.40152,3.82585) -- (3.43644,4.94518) ;
\draw (3.43644,4.94518) -- (4.43572,4.25543) ;
\draw (3.40152,3.82585) -- (3.43644,4.94518) ;
\draw (3.43644,4.94518) -- (2.72033,4.0842) ;
\draw (3.40152,3.82585) -- (2.72033,4.0842) ;
\draw (2.72033,4.0842) -- (3.43644,4.94518) ;
\draw (3.40152,3.82585) -- (3.43644,4.94518) ;
\draw (3.43644,4.94518) -- (2.72033,4.0842) ;
\draw (3.40152,3.82585) -- (2.72033,4.0842) ;
\draw (2.72033,4.0842) -- (3.43644,4.94518) ;
\draw (2.72033,4.0842) -- (3.43644,4.94518) ;
\draw (3.43644,4.94518) -- (2.35339,4.0842) ;
\draw (2.72033,4.0842) -- (2.35339,4.0842) ;
\draw (2.35339,4.0842) -- (3.43644,4.94518) ;
\draw (2.72033,4.0842) -- (3.43644,4.94518) ;
\draw (3.43644,4.94518) -- (2.35339,4.0842) ;
\draw (2.72033,4.0842) -- (2.35339,4.0842) ;
\draw (2.35339,4.0842) -- (3.43644,4.94518) ;
\draw (2.35339,4.0842) -- (3.43644,4.94518) ;
\draw (3.43644,4.94518) -- (1.63728,4.94518) ;
\draw (2.35339,4.0842) -- (1.63728,4.94518) ;
\draw (1.63728,4.94518) -- (3.43644,4.94518) ;
\draw (2.35339,4.0842) -- (3.43644,4.94518) ;
\draw (3.43644,4.94518) -- (1.63728,4.94518) ;
\draw (2.35339,4.0842) -- (1.63728,4.94518) ;
\draw (1.63728,4.94518) -- (3.43644,4.94518) ;
\draw (2.35339,4.0842) -- (1.63728,4.94518) ;
\draw (1.63728,4.94518) -- (1.09576,4.66097) ;
\draw (2.35339,4.0842) -- (1.09576,4.66097) ;
\draw (1.09576,4.66097) -- (1.63728,4.94518) ;
\draw (2.35339,4.0842) -- (1.63728,4.94518) ;
\draw (1.63728,4.94518) -- (1.09576,4.66097) ;
\draw (2.35339,4.0842) -- (1.09576,4.66097) ;
\draw (1.09576,4.66097) -- (1.63728,4.94518) ;
\draw (2.35339,4.0842) -- (1.09576,4.66097) ;
\draw (1.09576,4.66097) -- (1.39754,3.58253) ;
\draw (2.35339,4.0842) -- (1.39754,3.58253) ;
\draw (1.39754,3.58253) -- (1.09576,4.66097) ;
\draw (2.35339,4.0842) -- (1.09576,4.66097) ;
\draw (1.09576,4.66097) -- (1.39754,3.58253) ;
\draw (2.35339,4.0842) -- (1.39754,3.58253) ;
\draw (1.39754,3.58253) -- (1.09576,4.66097) ;
\draw (1.39754,3.58253) -- (1.09576,4.66097) ;
\draw (1.09576,4.66097) -- (0.290582,3.75212) ;
\draw (1.39754,3.58253) -- (0.290582,3.75212) ;
\draw (0.290582,3.75212) -- (1.09576,4.66097) ;
\draw (1.39754,3.58253) -- (1.09576,4.66097) ;
\draw (1.09576,4.66097) -- (0.290582,3.75212) ;
\draw (1.39754,3.58253) -- (0.290582,3.75212) ;
\draw (0.290582,3.75212) -- (1.09576,4.66097) ;
\draw (1.39754,3.58253) -- (0.290582,3.75212) ;
\draw (0.290582,3.75212) -- (1.05897,2.93745) ;
\draw (1.39754,3.58253) -- (1.05897,2.93745) ;
\draw (1.05897,2.93745) -- (0.290582,3.75212) ;
\draw (1.39754,3.58253) -- (0.290582,3.75212) ;
\draw (0.290582,3.75212) -- (1.05897,2.93745) ;
\draw (1.39754,3.58253) -- (1.05897,2.93745) ;
\draw (1.05897,2.93745) -- (0.290582,3.75212) ;
\draw (1.05897,2.93745) -- (0.290582,3.75212) ;
\draw (0.290582,3.75212) -- (0,2.57318) ;
\draw (1.05897,2.93745) -- (0,2.57318) ;
\draw (0,2.57318) -- (0.290582,3.75212) ;
\draw (1.05897,2.93745) -- (0.290582,3.75212) ;
\draw (0.290582,3.75212) -- (0,2.57318) ;
\draw (1.05897,2.93745) -- (0,2.57318) ;
\draw (0,2.57318) -- (0.290582,3.75212) ;
\draw (1.05897,2.93745) -- (0,2.57318) ;
\draw (0,2.57318) -- (0.0737159,1.96607) ;
\draw (1.05897,2.93745) -- (0.0737159,1.96607) ;
\draw (0.0737159,1.96607) -- (0,2.57318) ;
\draw (1.05897,2.93745) -- (0,2.57318) ;
\draw (0,2.57318) -- (0.0737159,1.96607) ;
\draw (1.05897,2.93745) -- (0.0737159,1.96607) ;
\draw (0.0737159,1.96607) -- (0,2.57318) ;
\draw (1.05897,2.93745) -- (0.0737159,1.96607) ;
\draw (0.0737159,1.96607) -- (1.18909,1.86582) ;
\draw (1.05897,2.93745) -- (1.18909,1.86582) ;
\draw (1.18909,1.86582) -- (0.0737159,1.96607) ;
\draw (1.05897,2.93745) -- (0.0737159,1.96607) ;
\draw (0.0737159,1.96607) -- (1.18909,1.86582) ;
\draw (1.05897,2.93745) -- (1.18909,1.86582) ;
\draw (1.18909,1.86582) -- (0.0737159,1.96607) ;
\draw (1.18909,1.86582) -- (0.0737159,1.96607) ;
\draw (0.0737159,1.96607) -- (0.637991,0.890932) ;
\draw (1.18909,1.86582) -- (0.637991,0.890932) ;
\draw (0.637991,0.890932) -- (0.0737159,1.96607) ;
\draw (1.18909,1.86582) -- (0.0737159,1.96607) ;
\draw (0.0737159,1.96607) -- (0.637991,0.890932) ;
\draw (1.18909,1.86582) -- (0.637991,0.890932) ;
\draw (0.637991,0.890932) -- (0.0737159,1.96607) ;
\draw (1.18909,1.86582) -- (0.637991,0.890932) ;
\draw (0.637991,0.890932) -- (1.09576,0.485387) ;
\draw (1.18909,1.86582) -- (1.09576,0.485387) ;
\draw (1.09576,0.485387) -- (0.637991,0.890932) ;
\draw (1.18909,1.86582) -- (0.637991,0.890932) ;
\draw (0.637991,0.890932) -- (1.09576,0.485387) ;
\draw (1.18909,1.86582) -- (1.09576,0.485387) ;
\draw (1.09576,0.485387) -- (0.637991,0.890932) ;
\draw (1.18909,1.86582) -- (1.09576,0.485387) ;
\draw (1.09576,0.485387) -- (1.99711,1.14998) ;
\draw (1.18909,1.86582) -- (1.99711,1.14998) ;
\draw (1.99711,1.14998) -- (1.09576,0.485387) ;
\draw (1.18909,1.86582) -- (1.09576,0.485387) ;
\draw (1.09576,0.485387) -- (1.99711,1.14998) ;
\draw (1.18909,1.86582) -- (1.99711,1.14998) ;
\draw (1.99711,1.14998) -- (1.09576,0.485387) ;
\draw (1.99711,1.14998) -- (1.09576,0.485387) ;
\draw (1.09576,0.485387) -- (2.23107,0.0548177) ;
\draw (1.99711,1.14998) -- (2.23107,0.0548177) ;
\draw (2.23107,0.0548177) -- (1.09576,0.485387) ;
\draw (1.99711,1.14998) -- (1.09576,0.485387) ;
\draw (1.09576,0.485387) -- (2.23107,0.0548177) ;
\draw (1.99711,1.14998) -- (2.23107,0.0548177) ;
\draw (2.23107,0.0548177) -- (1.09576,0.485387) ;
\draw (1.99711,1.14998) -- (2.23107,0.0548177) ;
\draw (2.23107,0.0548177) -- (2.84264,0.054817) ;
\draw (1.99711,1.14998) -- (2.84264,0.054817) ;
\draw (2.84264,0.054817) -- (2.23107,0.0548177) ;
\draw (1.99711,1.14998) -- (2.23107,0.0548177) ;
\draw (2.23107,0.0548177) -- (2.84264,0.054817) ;
\draw (1.99711,1.14998) -- (2.84264,0.054817) ;
\draw (2.84264,0.054817) -- (2.23107,0.0548177) ;
\draw (1.99711,1.14998) -- (2.84264,0.054817) ;
\draw (2.84264,0.054817) -- (3.07661,1.14998) ;
\draw (1.99711,1.14998) -- (3.07661,1.14998) ;
\draw (3.07661,1.14998) -- (2.84264,0.054817) ;
\draw (1.99711,1.14998) -- (2.84264,0.054817) ;
\draw (2.84264,0.054817) -- (3.07661,1.14998) ;
\draw (1.99711,1.14998) -- (3.07661,1.14998) ;
\draw (3.07661,1.14998) -- (2.84264,0.054817) ;
\draw (3.07661,1.14998) -- (2.84264,0.054817) ;
\draw (2.84264,0.054817) -- (3.40152,1.3205) ;
\draw (3.07661,1.14998) -- (3.40152,1.3205) ;
\draw (3.40152,1.3205) -- (2.84264,0.054817) ;
\draw (3.07661,1.14998) -- (2.84264,0.054817) ;
\draw (2.84264,0.054817) -- (3.40152,1.3205) ;
\draw (3.07661,1.14998) -- (3.40152,1.3205) ;
\draw (3.40152,1.3205) -- (2.84264,0.054817) ;
\draw (3.40152,1.3205) -- (2.84264,0.054817) ;
\draw (2.84264,0.054817) -- (4.43572,0.890929) ;
\draw (3.40152,1.3205) -- (4.43572,0.890929) ;
\draw (4.43572,0.890929) -- (2.84264,0.054817) ;
\draw (3.40152,1.3205) -- (2.84264,0.054817) ;
\draw (2.84264,0.054817) -- (4.43572,0.890929) ;
\draw (3.40152,1.3205) -- (4.43572,0.890929) ;
\draw (4.43572,0.890929) -- (2.84264,0.054817) ;
\draw (3.40152,1.3205) -- (4.43572,0.890929) ;
\draw (4.43572,0.890929) -- (4.78313,1.39424) ;
\draw (3.40152,1.3205) -- (4.78313,1.39424) ;
\draw (4.78313,1.39424) -- (4.43572,0.890929) ;
\draw (3.40152,1.3205) -- (4.43572,0.890929) ;
\draw (4.43572,0.890929) -- (4.78313,1.39424) ;
\draw (3.40152,1.3205) -- (4.78313,1.39424) ;
\draw (4.78313,1.39424) -- (4.43572,0.890929) ;
\draw (3.40152,1.3205) -- (4.78313,1.39424) ;
\draw (4.78313,1.39424) -- (4.01474,2.20891) ;
\draw (3.40152,1.3205) -- (4.01474,2.20891) ;
\draw (4.01474,2.20891) -- (4.78313,1.39424) ;
\draw (3.40152,1.3205) -- (4.78313,1.39424) ;
\draw (4.78313,1.39424) -- (4.01474,2.20891) ;
\draw (3.40152,1.3205) -- (4.01474,2.20891) ;
\draw (4.01474,2.20891) -- (4.78313,1.39424) ;
\draw (4.01474,2.20891) -- (4.78313,1.39424) ;
\draw (4.78313,1.39424) -- (4.05897,2.57318) ;
\draw (4.01474,2.20891) -- (4.05897,2.57318) ;
\draw (4.05897,2.57318) -- (4.78313,1.39424) ;
\draw (4.01474,2.20891) -- (4.78313,1.39424) ;
\draw (4.78313,1.39424) -- (4.05897,2.57318) ;
\draw (4.01474,2.20891) -- (4.05897,2.57318) ;
\draw (4.05897,2.57318) -- (4.78313,1.39424) ;
\draw (4.05897,2.57318) -- (4.78313,1.39424) ;
\draw (4.78313,1.39424) -- (5,3.18029) ;
\draw (4.05897,2.57318) -- (5,3.18029) ;
\draw (5,3.18029) -- (4.78313,1.39424) ;
\draw (4.05897,2.57318) -- (4.78313,1.39424) ;
\draw (4.78313,1.39424) -- (5,3.18029) ;
\draw (4.05897,2.57318) -- (5,3.18029) ;
\draw (5,3.18029) -- (4.78313,1.39424) ;
				\end{tikzpicture}
			\end{center}
		\end{minipage}
		\begin{minipage}[b]{0.45\textwidth}
			\begin{center}
				\begin{tikzpicture}
					\draw (4.22576,2.60779) -- (4.66951,1.61858) ;
\draw (4.66951,1.61858) -- (4.97948,3.1245) ;
\draw (4.22576,2.60779) -- (4.97948,3.1245) ;
\draw (4.97948,3.1245) -- (4.66951,1.61858) ;
\draw (4.97948,3.1245) -- (4.66951,1.61858) ;
\draw (4.74651,2.96479) -- (5,3.97631) ;
\draw (4.97948,3.1245) -- (5,3.97631) ;
\draw (5,3.97631) -- (4.74651,2.96479) ;
\draw (5,3.97631) -- (4.74651,2.96479) ;
\draw (4.62496,2.88146) -- (4.65003,4.18221) ;
\draw (5,3.97631) -- (4.65003,4.18221) ;
\draw (4.65003,4.18221) -- (4.62496,2.88146) ;
\draw (4.65003,4.18221) -- (4.62496,2.88146) ;
\draw (4.26772,2.63656) -- (3.65879,2.96866) ;
\draw (4.65003,4.18221) -- (3.65879,2.96866) ;
\draw (3.65879,2.96866) -- (4.26772,2.63656) ;
\draw (4.65003,4.18221) -- (3.65879,2.96866) ;
\draw (3.65879,2.96866) -- (3.25561,3.0512) ;
\draw (4.65003,4.18221) -- (3.25561,3.0512) ;
\draw (3.25561,3.0512) -- (3.65879,2.96866) ;
\draw (4.65003,4.18221) -- (3.25561,3.0512) ;
\draw (3.25561,3.0512) -- (2.79422,4.25902) ;
\draw (4.65003,4.18221) -- (2.79422,4.25902) ;
\draw (2.79422,4.25902) -- (3.25561,3.0512) ;
\draw (2.79422,4.25902) -- (3.25561,3.0512) ;
\draw (3.25561,3.0512) -- (2.46295,3.09055) ;
\draw (2.79422,4.25902) -- (2.46295,3.09055) ;
\draw (2.46295,3.09055) -- (3.25561,3.0512) ;
\draw (2.79422,4.25902) -- (2.46295,3.09055) ;
\draw (2.46295,3.09055) -- (1.37056,3.78242) ;
\draw (2.79422,4.25902) -- (1.37056,3.78242) ;
\draw (1.37056,3.78242) -- (2.46295,3.09055) ;
\draw (1.37056,3.78242) -- (2.46295,3.09055) ;
\draw (2.46295,3.09055) -- (1.80303,2.99252) ;
\draw (1.37056,3.78242) -- (1.80303,2.99252) ;
\draw (1.80303,2.99252) -- (2.46295,3.09055) ;
\draw (1.37056,3.78242) -- (1.80303,2.99252) ;
\draw (1.80303,2.99252) -- (0.327797,2.97707) ;
\draw (1.37056,3.78242) -- (0.327797,2.97707) ;
\draw (0.327797,2.97707) -- (1.80303,2.99252) ;
\draw (0.327797,2.97707) -- (1.80303,2.99252) ;
\draw (1.80303,2.99252) -- (1.38061,2.79807) ;
\draw (0.327797,2.97707) -- (1.38061,2.79807) ;
\draw (1.38061,2.79807) -- (1.80303,2.99252) ;
\draw (0.327797,2.97707) -- (1.38061,2.79807) ;
\draw (1.38061,2.79807) -- (0,2.05559) ;
\draw (0.327797,2.97707) -- (0,2.05559) ;
\draw (0,2.05559) -- (1.38061,2.79807) ;
\draw (0,2.05559) -- (1.38061,2.79807) ;
\draw (1.38061,2.79807) -- (0.141517,1.63068) ;
\draw (0,2.05559) -- (0.141517,1.63068) ;
\draw (0.141517,1.63068) -- (1.38061,2.79807) ;
\draw (0.141517,1.63068) -- (1.38061,2.79807) ;
\draw (1.38061,2.79807) -- (1.37393,2.43721) ;
\draw (0.141517,1.63068) -- (1.37393,2.43721) ;
\draw (1.37393,2.43721) -- (1.38061,2.79807) ;
\draw (0.141517,1.63068) -- (1.37393,2.43721) ;
\draw (1.37393,2.43721) -- (0.954995,0.991333) ;
\draw (0.141517,1.63068) -- (0.954995,0.991333) ;
\draw (0.954995,0.991333) -- (1.37393,2.43721) ;
\draw (0.954995,0.991333) -- (1.37393,2.43721) ;
\draw (1.37393,2.43721) -- (1.55008,0.813581) ;
\draw (0.954995,0.991333) -- (1.55008,0.813581) ;
\draw (1.55008,0.813581) -- (1.37393,2.43721) ;
\draw (1.55008,0.813581) -- (1.37393,2.43721) ;
\draw (1.37393,2.43721) -- (2.20404,0.74098) ;
\draw (1.55008,0.813581) -- (2.20404,0.74098) ;
\draw (2.20404,0.74098) -- (1.37393,2.43721) ;
\draw (2.20404,0.74098) -- (1.37393,2.43721) ;
\draw (1.37393,2.43721) -- (2.70226,2.22702) ;
\draw (2.20404,0.74098) -- (2.70226,2.22702) ;
\draw (2.70226,2.22702) -- (1.37393,2.43721) ;
\draw (2.20404,0.74098) -- (2.70226,2.22702) ;
\draw (2.70226,2.22702) -- (3.47646,0.891687) ;
\draw (2.20404,0.74098) -- (3.47646,0.891687) ;
\draw (3.47646,0.891687) -- (2.70226,2.22702) ;
\draw (3.47646,0.891687) -- (2.70226,2.22702) ;
\draw (2.70226,2.22702) -- (4.00126,1.08751) ;
\draw (3.47646,0.891687) -- (4.00126,1.08751) ;
\draw (4.00126,1.08751) -- (2.70226,2.22702) ;
\draw (4.00126,1.08751) -- (2.70226,2.22702) ;
\draw (2.70226,2.22702) -- (4.22576,2.60779) ;
\draw (4.00126,1.08751) -- (4.22576,2.60779) ;
\draw (4.22576,2.60779) -- (2.70226,2.22702) ;
\draw (4.00126,1.08751) -- (4.22576,2.60779) ;
\draw (4.22576,2.60779) -- (4.66951,1.61858) ;
\draw (4.00126,1.08751) -- (4.66951,1.61858) ;
\draw (4.66951,1.61858) -- (4.22576,2.60779) ;
				\end{tikzpicture}
			\end{center}
		\end{minipage}
		\caption{Two cycles in 3-graphs. The left is homeomorphic to the cylinder, while the right is homeomorphic to the M\"obius strip.}
		\label{fig:simplicialcycle}
	\end{figure}
	
	\noindent
	\textbf{Finding long higher order paths.} A simple result in extremal graph theory states that if $G$ is a graph with minimum degree $d$, then $G$ contains a path of length $d$. This can be easily generalized to show that if $G$ is an $r$-graph such that $p(G)=n$ and $\deg(f)\geq d$ for every $f\in P(G)$, then $G$ contains a \emph{tight path} of length $d$. Here, a tight path of length $d$ is a sequence of vertices $x_1,\dots,x_{d+r-1}$ such that any $r$ consecutive vertices form an edge. Clearly, setting $f_i=\{x_{i+1},\dots,x_{i+r-1}\}$ for $i\in \{0,\dots,d\}$, the sequence $f_0,\dots,f_{d}$ is a path of length $d$. In what comes, we present a variation of this result for $r\geq 3$ in which we restrict the endpoints of the path. This result will be used later.
	
	\begin{lemma}\label{lemma:longhyppaths}
		Let $r,\ell,d$ be positive integers such that $r\geq 3$ and $\ell>r$, and let $G$ be an $r$-graph such that $\deg(f)\geq d$ for every $f\in P(G)$. Let $F\subset P(G)$ such that $|F|\geq 2r\ell\cdot p(G)/d.$  Then $G$ contains a proper path of length $\ell$, whose both endpoints are in $F$.
	\end{lemma}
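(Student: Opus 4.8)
The plan is to start a proper path at a face of $F$, use the global lower bound on face-degrees to keep extending it freely for $\ell$ steps, and then exploit the size of $F$ (together with Lemma~\ref{lemma:maxdeg}) to force the other endpoint to land in $F$ as well. Throughout write $n=p(G)$. Note first that since $\ell>r$, every proper path of length $\ell$ automatically has disjoint endpoints, so we need not worry separately about properness of the endpoints; also, a consecutive segment of a path is again a path, so it would suffice, but does not obviously help, to find a long path with two faces of $F$ at distance exactly $\ell$ — the ``exactly $\ell$'' requirement is what makes the naive long-path approach fail.

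\emph{Step 1 (Free extension).} I would first prove: if $(f_0,\dots,f_i)$ is a proper path with $i<\ell$, then it can be prolonged to a proper path $(f_0,\dots,f_i,f_{i+1})$ in at least $d/2$ ways. Indeed $f_i$ lies in at least $d$ edges, each carrying a distinct ``extra'' vertex outside $f_i$; at most $i+r-1\le \ell+r-2$ of those extra vertices already lie on the path, and $d\ge 2r\ell\ge 2(\ell+r-2)$, so at least $d/2$ edges $e\supseteq f_i$ have a fresh extra vertex. For each such $e$ and each $w\in f_i$, $f_{i+1}:=e\setminus\{w\}$ is a legal new endpoint, and distinct pairs $(e,w)$ give distinct $f_{i+1}$ (the edge $e=f_i\cup f_{i+1}$ is recovered from $f_i$ and $f_{i+1}$). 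In particular, starting from any face $f_0$, the tree of proper paths of length $\ell$ from $f_0$ has depth $\ell$ and branching at least $d/2$, hence at least $(d/2)^\ell$ leaves.

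\emph{Step 2 (Reaching $F$).} Fix $f_0\in F$ and let $R_j(f_0)$ be the set of faces reachable from $f_0$ by a proper path of length exactly $j$. I would grow $R_j$ level by level: alongside Step 1 one keeps, for each $j$, a subfamily of the path-tree whose leaves have pairwise distinct endpoints, discarding only enough branches to kill collisions. A face $g'$ can occur as a one-step extension endpoint only of faces that meet $g'$, and by Lemma~\ref{lemma:maxdeg} every vertex lies in at most $rn/d$ faces, so collisions are limited; this should make $|R_j(f_0)|$ grow geometrically until it is a constant fraction of $n$. Since $|F|\ge 2r\ell\,n/d$ and, again by Lemma~\ref{lemma:maxdeg}, $F$ cannot avoid a subset of $P(G)$ that is both this large and this ``spread out'', a pigeonhole then gives $R_\ell(f_0)\cap F\neq\emptyset$. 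In the remaining case — where the reachable set stays small for every starting face — the minimum-degree hypothesis forces $G$ to look locally like a union of large clique-like pieces, inside which almost any two disjoint faces are joined by a proper path of length exactly $\ell$; since $|F|\ge 2r\ell\,n/d$ is too large and too spread to meet each such piece in an intersecting family only, two disjoint faces of $F$ land in a common piece, again finishing the proof.

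\emph{Main obstacle.} The genuinely delicate point is the interface of Steps 1 and 2: converting ``$(d/2)^\ell$ length-$\ell$ paths out of $f_0$'' into ``many distinct endpoints, forming a set large and uniform enough to meet $F$'', given that the hyperedge-degrees of $G$ are a priori unbounded, so one cannot simply cap the number of paths to a fixed endpoint. Getting the right quantitative control on how path-endpoints can collide (via Lemma~\ref{lemma:maxdeg}), and correctly handling the low-expansion case through the rigidity that the global minimum degree imposes on $G$, is where the real work lies; the free-extension estimate of Step 1 and the concluding pigeonhole are routine.
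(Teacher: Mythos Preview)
Your Step 1 is fine, but Step 2 is not a proof --- it is a wish list, and several of the wishes are false without further work. First, you never quantify how large $R_\ell(f_0)$ actually becomes: you have only $\ell$ steps of growth, with no lower bound on $\ell$ in terms of any expansion parameter of $G$, so ``geometrically until it is a constant fraction of $n$'' is unjustified. Second, even if $|R_\ell(f_0)|\ge cn$ for some constant $c$, this does \emph{not} force $R_\ell(f_0)\cap F\neq\emptyset$: the hypothesis only gives $|F|\ge 2r\ell\,n/d$, which is tiny when $d$ is large, and Lemma~\ref{lemma:maxdeg} says nothing that prevents a small $F$ from missing a fixed large set. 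Third, your ``remaining case'' is pure hand-waving: minimum face-degree $\ge d$ does not make $G$ look like a union of cliques, and even inside a clique the claim that any two disjoint faces are joined by a proper path of length \emph{exactly} $\ell$ needs an argument.

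The paper's proof sidesteps all of this with one idea you are missing: instead of growing a path inside $G$ and hoping to hit $F$, it first \emph{restricts} to the hypergraph $E_0$ of edges having an $(r-1)$-face in $F$. The size of $F$ guarantees $d(E_0)\ge |F|d/(rp(G))\ge 2\ell$, so (after splitting into edges with exactly one versus at least two faces in $F$, and passing via Lemma~\ref{lemma:mindeg} to a subhypergraph $H$ of minimum face-degree $\ge\ell$) one simply builds a tight path greedily inside $H$. Because every edge of $H$ carries a face of $F$ by construction, the endpoints can be arranged to lie in $F$ directly --- in the ``exactly one face in $F$'' case this uses $r\ge 3$ to choose the penultimate face outside $F$ and then let the final edge supply its unique $F$-face. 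No expansion, no pigeonhole against $F$, no case analysis on reachable-set size.
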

	
	\begin{proof}
		Let $E_0\subset E(G)$ be the set of edges $e$ which have an $(r-1)$-face contained in $F$. As $\deg(f)\geq d$ for every $f\in P(G)$, we have $|E_0|\geq |F|d/r$. Let $E_1\subset E_0$ be the set of edges with  exactly one face contained in $F$, and let $E_2=E_0\setminus E_1$. Let $j\in \{1,2\}$ such that $|E_{j}|\geq |E_{3-j}|$. Then $d(E_j)\geq |F|d/2p(G)\geq r\ell$, so by Lemma \ref{lemma:mindeg}, $E_j$ contains a subhypergraph $H$ such that for every $f\in P(H)$, $\deg_{H}(f)\geq \ell$. Consider two cases.
		\begin{description}
			
			\item[Case 1.] $j=1$.
			
			Let $\{v_1,\dots,v_{r-1}\}\in F\cap P(H)$. Then $H$ contains a tight path of length $\ell-1$ with vertices $v_1,\dots,v_{\ell+r-2}$, as we can greedily choose the vertex $v_{i}$ for $i=r,\dots,\ell+r-2$ such that $e_{i-r+1}:=\{v_{i-r+1},\dots,v_{i}\}\in E(H)$ and $v_{i}\not\in\{v_1,\dots,v_{i-r}\}$. As $r\geq 3$, and $e_{\ell-1}$ has exactly one face in $F$, there exists a face $f_{\ell-1}\subset e_{\ell-1}$ such that $f_{\ell-1}\neq f_{\ell-2}$, and $f_{\ell-1}\not\in F$. Let $w\in V(H)$ such that $e_{\ell}=f_{\ell-1}\cup \{w\}$ is an edge and $w\not\in \{v_1,\dots,v_{\ell-1}\}$. This exists as $\deg_{H}(f_{\ell-1})\geq \ell$. Let $f_{\ell}$ be the unique face of $e_{\ell}$ contained in $F$, then $f_{\ell}\neq f_{\ell-1}$. Note that $f_0,\dots,f_{\ell}$ is a path, whose endpoints are in $F$. It is also proper: $\ell>r$, so $f_0\cap f_{\ell}=\emptyset$
			
			\item[Case 2.] $j=2$.
			
			$H$ contains a tight path of length $\ell$ with vertices $v_1,\dots,v_{\ell+r-1}$, as we can select any $\{v_1,\dots,v_{r-1}\}\in P(H)$, and then greedily choose the vertex $v_{i}$ for $i=r,\dots,\ell+r-1$ such that $e_{i-r+1}:=\{v_{i-r+1},\dots,v_{i}\}\in E(H)$ and $v_{i}\not\in\{v_1,\dots,v_{i-r}\}$. For $i=1,\dots,\ell-1$, let $f_i=e_{i}\cap e_{i+1}$. As $e_1$ contains at least two faces in $F$, there exists $f_0\in F$ such that $f_0\neq f_1$ and $f_0\subset e_1$. Similarly, there exists $f_{\ell}\in F$ such that $f_{\ell}\neq f_{\ell-1}$ and $f_{\ell}\in e_{\ell}$. But then $f_0,\dots,f_{\ell}$ is a path, whose endpoints are in $F$. It is also proper: $\ell> r$, so $f_0\cap f_{\ell}=\emptyset$.
			
		\end{description}
		
	\end{proof}
	
	Interestingly, this result has no analogue for graphs in case $\ell$ is odd. Indeed, if $G$ is a bipartite graph, and $F$ is one of its vertex classes, then $G$ does not contain a path of length $\ell$, whose endpoints are in $F$.
	
	\bigskip
	
	\noindent	
	\textbf{$3$-graphs without short cycles.}  Finally, we present an argument showing that there are 3-graphs with $n$ vertices and more than $n^{2+\alpha}$ edges avoiding small homeomorphic copies of the cylinder and the M\"obius strip, as promised in the Introduction.
	
	\begin{lemma}\label{lemma:construction}
		Let $\alpha>0$. For every sufficiently large $n$, there exists a 3-graph $G$ with $n$ vertices and more than $n^{2+\alpha}$ edges such that $G$ avoids homeomorphic copies of the cylinder and the M\"obius strip on at most $1/\alpha$ vertices.
	\end{lemma}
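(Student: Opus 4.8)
The plan is a random construction followed by a deletion step, exactly as in the classical proof that there are graphs with $n^{1+\alpha}$ edges and girth larger than $1/\alpha$. The one topological fact I will need is a lower bound on the size of small triangulations: if $H$ is a simplicial $2$-complex homeomorphic to a compact surface with boundary and Euler characteristic $0$ — in particular the cylinder or the M\"obius strip — then the number of its $2$-faces is at least the number of its vertices. Indeed, writing $V,E,F$ for the numbers of vertices, $1$-faces and $2$-faces, Euler's formula gives $V-E+F=0$; moreover, since $H$ triangulates a surface, every $1$-face lies in one or two $2$-faces, the ones lying in exactly one being precisely the boundary edges, whose number equals the number of boundary vertices and is therefore at most $V$. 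Hence $3F=2E-(\#\text{boundary edges})\ge 2E-V$, which together with $E=V+F$ yields $F\ge V$. Also note that a homeomorphic copy of the cylinder or the M\"obius strip inside a $3$-graph $G$ is a pure $2$-dimensional subcomplex of $S_G$, hence is recorded by a sub-$3$-graph $H\subseteq G$ (its set of $2$-faces), to which the inequality $e(H)\ge v(H)$ applies.

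Put $\ell:=\lfloor 1/\alpha\rfloor$; if $\ell\le 2$ there is nothing to prove, as no surface has a triangulation with fewer than three vertices, so assume $\ell\ge 3$, hence $\alpha\le 1/3$. I would then take $G$ to be the random $3$-graph on vertex set $[n]$ in which each triple is an edge independently with probability $p:=n^{3\alpha/2-1}$ (so $p\in(0,1)$ for $n$ large). On the one hand $\mathbb{E}(e(G))=p\binom{n}{3}=(1+o(1))\tfrac16 n^{2+3\alpha/2}$, which exceeds $100\,n^{2+\alpha}$ for $n$ large since $3\alpha/2>\alpha$, so by the multiplicative Chernoff bound $e(G)>2n^{2+\alpha}$ with probability at least $9/10$. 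On the other hand, let $X$ count the sub-$3$-graphs $H\subseteq G$ with $v(H)\le\ell$ for which $S_H$ is homeomorphic to the cylinder or the M\"obius strip; bounding the number of $3$-graphs on a fixed $v$-element vertex set crudely by $2^{\binom{v}{3}}\le 2^{\ell^3}$ and using $e(H)\ge v$,
\[
\mathbb{E}(X)\ \le\ \sum_{v=3}^{\ell}\binom{n}{v}\,2^{\ell^3}\,p^{v}\ \le\ 2^{\ell^3}\sum_{v=3}^{\ell}(np)^{v}\ \le\ 2^{\ell^3}\,\ell\,(np)^{\ell},
\]
using $np=n^{3\alpha/2}\ge1$. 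Since $(np)^{\ell}=n^{3\alpha\ell/2}\le n^{3/2}$ and $\ell$ depends only on $\alpha$, this is $o(n^{2+\alpha})$, so $X<n^{2+\alpha}$ with probability at least $9/10$ by Markov's inequality.

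Consequently, with probability at least $4/5$ some outcome $G$ satisfies both $e(G)>2n^{2+\alpha}$ and $X<n^{2+\alpha}$; fix it. Now delete one $2$-face from each of the fewer than $n^{2+\alpha}$ offending sub-$3$-graphs, obtaining a $3$-graph $G'$ with $e(G')>2n^{2+\alpha}-n^{2+\alpha}=n^{2+\alpha}$. If $G'$ contained a homeomorphic copy of the cylinder or the M\"obius strip on at most $1/\alpha=\ell$ vertices, its set of $2$-faces would be a sub-$3$-graph $H\subseteq G'\subseteq G$ with $v(H)\le\ell$ and $S_H$ homeomorphic to one of these surfaces — hence one of the $X$ offending sub-$3$-graphs of $G$; but we removed a $2$-face of $H$, so $H\not\subseteq G'$, a contradiction. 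Hence $G'$ is the desired $3$-graph. The one place where care is genuinely required is the balancing of exponents: the quantity controlling $\mathbb{E}(X)$ is $n^{3\alpha\ell/2}$ with $3\alpha\ell/2\le 3/2$, which must stay below the exponent $2+3\alpha/2$ controlling $e(G)$, and this works only because of the bound $e(H)\ge v(H)$ for triangulations of the cylinder and the M\"obius strip — the trivial bound $e(H)\ge v(H)/3$ coming from purity would be hopelessly weak here.
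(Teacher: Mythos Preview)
Your proof is correct and follows essentially the same approach as the paper: a random $3$-graph plus deletion, with the key topological input being the Euler-characteristic argument showing $e(H)\ge v(H)$ for any triangulation $H$ of the cylinder or M\"obius strip. The only cosmetic differences are your choice of probability $p=n^{3\alpha/2-1}$ versus the paper's $p=12n^{\alpha-1}$, and that you estimate $e(G)$ and $X$ separately via Chernoff and Markov while the paper bounds $\mathbb{E}(e(G)-N)$ in a single line.
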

	
	\begin{proof}
		Let $p=12n^{-1+\alpha}$, and consider the $3$-graph $G$ on $n$ vertices, in which each of the $\binom{n}{3}$ triples is present as an edge independently, with probability $p$. Let $N_{k}$ denote the number of homeomorphic copies of the cylinder and M\"obius strip on $k$ vertices in $G$, and let $N=\sum_{k\leq 1/\alpha}N_{k}$
		
		Suppose that $C$ is a homeomorphic copy of the cylinder or the M\"obius strip. Then the Euler characteristic of $C$ is $0$, which means that $e(C)-p(C)+v(C)=0$. Let $a$ be the number of vertices on the boundary of $C$, and $b$ be the number of $2$-faces on the boundary. As the boundary is homeomorphic to either $S^1\cup S^1$ or $S^1$, we have $a=b$. Furthermore, counting 2-faces by edges, we have $3e(C)=2p(C)-b$. By this, we get $e(C)-(3e(C)+b)/2+v(C)=0$, that is, $v(C)-e(C)/2-b/2=0$. As $v(C)\geq a=b$, this implies that $e(C)\geq v(C)$.
		
		Let $h_{k}$ denote the total number of homeomorphic copies of the cylinder and the M\"obius strip on a fixed $k$ element vertex set. Then by earlier discussion, $N_{k}\leq h_{k}\binom{n}{k}p^{k}<h_k n^{k}p^{k}$. Therefore,
		$$\mathbb{E}(e(G)-N)\geq \binom{n}{3}p-\sum_{k\leq 1/\alpha}h_k n^{k}p^{k}> n^{2+\alpha},$$
		assuming $n$ is sufficiently large with respect to $\alpha$. Hence, there exists $G$ satisfying $e(G)-N>n^{2+\alpha}$. Fixing such a $G$ and deleting an edge from every homeomorphic copy of the cylinder and M\"obius strip on at most $1/\alpha$ vertices, we get a 3-graph with the desired properties.
	\end{proof}

	\subsection{\texorpdfstring{$\alpha$}{a}-maximal simplicial complexes}\label{sect:alphasimplicial}
	
	In this section, we extend the notion of $\alpha$-maximality to simplicial complexes, and study its properties. Here and later, we only focus on the case when $\alpha$ is viewed as a constant, unlike in the graph setting.
	
	\begin{definition}(\emph{$\alpha$-maximal simplicial complex})
		For $\alpha\in (0,1)$, the $r$-graph $G$ is \emph{$\alpha$-maximal}, if $d(G)/p(G)^{\alpha}\geq d(H)/p(H)^{\alpha}$ for every subhypergraph $H$ of $G$.
	\end{definition}
	
	By a well known lemma of Lov\'asz \cite{Lovasz93}, which is a relaxation of the celebrated Kruskal-Katona theorem \cite{Katona68,Kruskal63}, we have the following inequality between $e(G)$ and $d(G)$: if $e(G)=\binom{x}{r}$ for some real number $x\geq r$, then $p(G)\geq \binom{x}{r-1}$, so $d(G)\leq x-r+1$. Therefore, if $\alpha>1/(r-1)$, $d(G)/p(G)^{\alpha}\rightarrow 0$ as $p(G)\rightarrow \infty$, showing that there are only finitely many $\alpha$-maximal $r$-graphs. For this reason, we only consider $\alpha\in (0,\frac{1}{r-1})$.
	
	Next, we extend our results about $\alpha$-maximal graphs to $\alpha$-maximal $r$-graphs. That is, we show that if $G$ is an $\alpha$-maximal $r$-graph, then the $(r-1)$-faces of $G$ have large degrees, and sets of $(r-1)$-faces have strong expansion properties.
	
	\begin{lemma} \label{lemma:hypmax}\emph{(Properties of $\alpha$-maximal simplicial complexes)}
		Let $\alpha\in (0,\frac{1}{r-1})$, and let $G$ be an $\alpha$-maximal $r$-graph. Let $p(G)=n$, and let $d(G)=cn^{\alpha}$. Then
		\begin{itemize}
			\item[(i)] If $G$ is nonempty, then $c> 1/2$.
			\item[(ii)] For every $f\in P(G)$, $\deg(f)\geq d(G)/r=cn^{\alpha}/r$.
			\item[(iii)] Let $X\subset P(G)$ such that $|X|\leq (1/2r)^{(1+\alpha)/\alpha} n$, then $|N(X)|\geq \frac{|X|}{2r}\left(\frac{n}{|X|}\right)^{\alpha/(1+\alpha)}$.
		\end{itemize}
	\end{lemma}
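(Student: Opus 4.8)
The plan is to follow the proof of the graph analogue, Lemma~\ref{lemma:maximal}, almost verbatim, the one genuinely new ingredient being a structural observation specific to hypergraphs. Throughout, I use that $\alpha$-maximality of $G$ is equivalent to $e(H)\le\frac{c}{r}p(H)^{1+\alpha}$ for every subhypergraph $H$ (rewrite $d(G)/p(G)^{\alpha}\ge d(H)/p(H)^{\alpha}$ using $d=r\cdot e/p$ and $d(G)=cn^{\alpha}$), and that $\alpha<\frac{1}{r-1}\le 1$.

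For (i), apply this to the subhypergraph consisting of a single edge $e$, which has $e(H)=1$ and $p(H)=r$ (its $r$ many $(r-1)$-subsets); then $1\le\frac{c}{r}r^{1+\alpha}$, i.e. $c\ge r^{-\alpha}$, and since $r^{1/(r-1)}\le 2$ for every $r\ge 2$ we get $c\ge r^{-\alpha}>r^{-1/(r-1)}\ge\frac{1}{2}$, using that $r^{-x}$ is strictly decreasing. For (ii), fix $f\in P(G)$ and delete from $G$ all edges containing $f$; the resulting $H$ has $f\notin P(H)$, so $p(H)\le n-1$, while $e(H)=e(G)-\deg(f)$. Plugging this into the maximality inequality and rearranging gives
$$\deg(f)\ge e(G)-\frac{c}{r}(n-1)^{1+\alpha}=\frac{c}{r}\bigl(n^{1+\alpha}-(n-1)^{1+\alpha}\bigr)\ge\frac{c}{r}n^{\alpha}=\frac{d(G)}{r},$$
the last inequality being the elementary estimate $n^{1+\alpha}-(n-1)^{1+\alpha}\ge n^{\alpha}$ (Bernoulli's inequality; note this sharp form, not Claim~\ref{claim:technical}(i), is what is needed), exactly as in Lemma~\ref{lemma:maximal}(ii).

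The substance is in (iii), and the new point is: if an edge $e\in E(G)$ has an $(r-1)$-face $f'\in X$, then for every other $(r-1)$-face $f$ of $e$ one has $f\cup f'=e\in E(G)$ — two distinct $(r-1)$-subsets of an $r$-set span it — hence $f\in X\cup N(X)$. Thus every edge of $G$ lies either in $G[Y]$ (if all its faces avoid $X$) or in $G[X\cup N(X)]$, where $Y:=P(G)\setminus X$. Using $\alpha$-maximality on $G[Y]$ (which has at most $|Y|$ faces) and then Bernoulli's inequality on $(|X|+|Y|)^{1+\alpha}=n^{1+\alpha}$,
$$e\bigl(G[X\cup N(X)]\bigr)\ge e(G)-e(G[Y])\ge\frac{c}{r}\bigl(n^{1+\alpha}-|Y|^{1+\alpha}\bigr)\ge\frac{c}{r}|X|\,|Y|^{\alpha};$$
on the other hand $\alpha$-maximality applied to $G[X\cup N(X)]$, which has at most $|X|+|N(X)|$ faces, gives $e(G[X\cup N(X)])\le\frac{c}{r}(|X|+|N(X)|)^{1+\alpha}$. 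Comparing the two bounds and taking $(1+\alpha)$-th roots yields $|N(X)|\ge(|X|\,|Y|^{\alpha})^{1/(1+\alpha)}-|X|=|X|((|Y|/|X|)^{\beta}-1)$, where $\beta:=\frac{\alpha}{1+\alpha}\in(0,1)$. To finish, write $|X|=\theta n$; the hypothesis $|X|\le(1/2r)^{(1+\alpha)/\alpha}n=(1/2r)^{1/\beta}n$ gives $\theta^{\beta}\le\frac{1}{2r}$ and $\theta<\frac12$, so $(1-\theta)^{\beta}\ge(1/2)^{\beta}\ge\frac12$, and therefore $(|Y|/|X|)^{\beta}-1=\theta^{-\beta}((1-\theta)^{\beta}-\theta^{\beta})\ge\theta^{-\beta}(\tfrac12-\tfrac1{2r})\ge\theta^{-\beta}\cdot\tfrac1{2r}$ (here $r\ge 2$), whence $|N(X)|\ge\frac{\theta n}{2r}\theta^{-\beta}=\frac{|X|}{2r}(n/|X|)^{\beta}$, as claimed.

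I do not anticipate a real obstacle: once the edge dichotomy "$e\in G[Y]$ or $e\in G[X\cup N(X)]$" is established, the argument is the same convexity bookkeeping as in the graph case, and the only things to keep track of are the factor $r$ coming from the normalization $d(G)=r\cdot e(G)/p(G)$ and the hypothesis $\alpha<\frac{1}{r-1}$, which is what makes (i) go through.
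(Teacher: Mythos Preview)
Your proof is correct. Parts (i) and (ii) match the paper's argument essentially verbatim (the paper phrases (ii) as a contradiction, you do it directly, but the content is the same).

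For part (iii) there is a small but genuine methodological difference. Both arguments need the observation you single out---that any edge with a face in $X$ lies entirely in $G[X\cup N(X)]$---and both upper-bound $e(G[X\cup N(X)])$ via $\alpha$-maximality. The divergence is in the lower bound: you follow the graph template of Lemma~\ref{lemma:maximal}(iv) and use the decomposition $E(G)=E(G[X\cup N(X)])\cup E(G[Y])$ together with $\alpha$-maximality on $G[Y]$, obtaining $e(G[X\cup N(X)])\ge\frac{c}{r}|X||Y|^\alpha$. The paper instead invokes the minimum-degree bound from (ii) directly: each $f\in X$ lies in at least $e(G)/n$ edges, and after accounting for overcounting this gives $e(G[X\cup N(X)])\ge |X|\,e(G)/(rn)=\frac{c}{r^2}|X|n^\alpha$. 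Your bound is actually a factor of roughly $r$ sharper, but the slack in the final inequality absorbs this either way. What your route buys is a cleaner parallel to the graph case and no dependence on (ii); what the paper's route buys is brevity, since it avoids having to estimate $e(G[Y])$ separately.
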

	
	\begin{proof}
		\begin{itemize}
			\item[(i)] If $G$ is nonempty, it contains the $r$-graph $H$ with a single edge, which satisfies $d(H)/p(H)^{\alpha}=1/r^{\alpha}>1/2$.
			\item[(ii)] Suppose that $G$ has an $(r-1)$-face $f$ such that $\deg(f)<d(G)/r$. Let $H$ be the $r$-graph we get after removing the edges containing $f$. Then $p(H)\leq p(G)-1$ and $e(H)=e(G)-\deg(f)>e(G)-d(G)/r=d(G)p(G)/r-d(G)/r$.
			Therefore, 
			$$\frac{d(H)}{p(H)^{\alpha}}=\frac{re(H)}{p(H)^{1+\alpha}}\geq \frac{ re(G)-d(G)}{(p(G)-1)^{1+\alpha}}=\frac{d(G)p(G)-d(G)}{(p(G)-1)^{1+\alpha}}>\frac{d(G)}{p(G)^{\alpha}},$$
			contradicting that $G$ is $\alpha$-maximal.
			\item[(iii)]  By (ii), each $f\in X$ is contained in at least $d(G)/r=e(G)/n$ edges, so we have 
			$$e(X\cup N(X))\geq |X|\frac{e(G)}{rn}.$$ 
			By $\alpha$-maximality, we have $e(X\cup N(X))/(|X|+|N(X)|)^{1+\alpha}\leq e(G)/n^{1+\alpha}$, which gives $$e(X\cup N(X))\leq e(G)\left(\frac{|X|+|N(X)|}{n}\right)^{1+\alpha}.$$ Comparing the lower and upper bounds on $e(X\cup N(X))$, we get 
			$$|N(X)|\geq n\left(\frac{|X|}{rn}\right)^{1/(1+\alpha)}-|X|>\frac{|X|}{2r}\left(\frac{n}{|X|}\right)^{\alpha/(1+\alpha)},$$
            where the last inequality holds by the assumption $|X|\leq (1/2r)^{(1+\alpha)/\alpha} n$.
		\end{itemize}
	\end{proof}
	
	\subsection{Expansion in simplicial complexes}\label{sect:simp_expansion}

	Our goal is to prove the following theorem, which then easily implies Theorem \ref{thm:maincycle1}.
	
	\begin{theorem}\label{thm:cycleformal}
		Let $r\geq 3$, then there exists a constant $c>0$ such that the following holds. Let $\alpha\in (0,\frac{1}{r-1})$, let $G$ be an $r$-graph with $p(G)=n$ and $e(G)\geq n^{1+\alpha}$. If $\ell>\frac{c}{\alpha}\log\frac{1}{\alpha}$ and $n>n_0(\alpha,\ell)$, then $G$ contains a cycle of length $\ell$.
	\end{theorem}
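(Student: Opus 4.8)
The plan is to imitate the strategy behind Theorems~\ref{thm:rainbow_cycle}--\ref{thm:robust}: reduce to an $\alpha$-maximal $r$-graph, prove a strong expansion property of such complexes via random vertex samples, and then splice short proper paths together into a cycle of the prescribed length. First I would replace $G$ by the subhypergraph $H$ maximising $d(\cdot)/p(\cdot)^{\alpha}$; then $H$ is $\alpha$-maximal, $e(H)\ge p(H)^{1+\alpha}$, and $m:=p(H)$ may be taken huge relative to any function of $\alpha,\ell$ since $n>n_0(\alpha,\ell)$. By Lemma~\ref{lemma:hypmax}(ii) every $(r-1)$-face of $H$ has degree $\ge m^{\alpha}$, and by Lemma~\ref{lemma:maxdeg} every vertex lies in at most $rm^{1-\alpha}$ of the $(r-1)$-faces; the latter makes $H$ \emph{robust}, in that deleting $O(\ell)$ vertices kills only an $O(\ell m^{-\alpha})$-fraction of the edges, so a deleted copy still contains (via Lemma~\ref{lemma:mindeg}) a subhypergraph of minimum face-degree $\ge m^{\alpha}/2$. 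The target is concrete: a cycle of length $\ell$ is precisely a pair of internally vertex-disjoint proper paths of lengths $\ell_1,\ell_2$ with $\ell_1+\ell_2=\ell$ and $\ell_1,\ell_2>r$, joined along a pair of disjoint $(r-1)$-faces (the union then has exactly $\ell$ vertices, and those two faces give the disjoint pair required in the definition).

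Next I would establish the engine: a simplicial-complex analogue of Lemmas~\ref{lemma:master}, \ref{lemma:randompath} and \ref{lemma:randompath_v2}. It should say that after partitioning $V(H)$ randomly into $s$ classes $U_1,\dots,U_s$, with high probability, for every $(r-1)$-face $f$, every $i\in[s]$, every integer $a$ in a range beginning at $\Theta(\alpha^{-1}\log(1/\tau))$, and every prescribed set of $O(a)$ forbidden vertices, at least $m^{1-\tau}$ faces are reachable from $f$ by a proper path of length \emph{exactly} $a$ whose internal vertices lie in $U_i$ and miss the forbidden set. The proof runs a breadth-first search with ``sprinkling'' exactly as in Lemma~\ref{lemma:randompath}: for each reached face one keeps a forbidden function $\phi$ recording the $O(a)$ vertices used so far, and at each of the $a$ rounds one applies a random-sample version of the expansion estimate Lemma~\ref{lemma:hypmax}(iii), obtained by feeding the bipartite incidence graph between $(r-1)$-faces and edges into Lemma~\ref{lemma:main_technical}, so the reached set multiplies up until it surpasses $m^{1-\tau}$. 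I would take $\tau=\alpha/2$, so $m^{1-\tau}=m^{1-\alpha/2}$ stays well above the $\Theta(\ell m^{1-\alpha})$ threshold needed to apply Lemma~\ref{lemma:longhyppaths}, while the number of rounds remains $O(\alpha^{-1}\log(1/\alpha))\le\frac{1}{8}\ell$ after fixing $c$ large.

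Finally I would assemble the cycle. Fix a good partition and a short list of lengths $a_1,\dots,a_k$ summing to $\ell$, each above the threshold, where $k$ is a fixed small integer. For a length $a$ let $L_a$ be the graph on $P(H)$ whose edges are the disjoint pairs joined by a proper $(U_i)$-path of length $a$ for some $i$; by the previous step, after deleting the $o(m^{2-\alpha/2})$ pairs that share a vertex (Lemma~\ref{lemma:maxdeg}), $L_a$ has $\ge m^{2-\alpha/2}/4$ edges between disjoint faces. Passing to a subgraph of minimum degree $\ge m^{1-\alpha/2}/4$, the theorem of Bondy and Simonovits forces short even cycles; a short cycle $C$ of $L_{a_1}$ on pairwise disjoint faces can then be realised in $H$ by splicing in, one per edge of $C$, internally vertex-disjoint proper paths of lengths $a_1,\dots,a_k$ --- routed, using the robustness of Step~1, around one another's vertices --- producing a closed walk on exactly $\ell$ vertices with a disjoint pair, i.e.\ a cycle of length $\ell$.

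The hard part will be making this last step honest. Because $p(G)=n$ forces the ``surplus'' $d(G)/n^{\alpha}$ to be only a constant, the number of classes $s$ must be kept essentially constant, and the expansion of Step~2 reaches only $m^{1-\Theta(\alpha)}=o(m)$ faces; hence one cannot (as the graph proofs do) conclude that many pairs are joined in many colour classes, nor intersect two reach-sets directly, nor simply use distinct classes $U_i$ for distinct edges of $C$. The genuine work is to coordinate the class and length assigned to each edge of $C$ with the greedy routing --- reusing classes but always avoiding the already-committed vertices via the ``forbidden set'' feature of Step~2 --- and, when $\ell$ is not a multiple of the cycle length in $L$, to run all of $a_1,\dots,a_k$ through a single short cycle. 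A subsidiary nuisance, exactly as for Lemma~\ref{lemma:randompath_v2}, is ensuring the spliced paths have length \emph{exactly} $a_j$, which is handled by running the breadth-first search the right number of rounds and checking the reached set never shrinks.
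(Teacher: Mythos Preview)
Your overall architecture is right---pass to an $\alpha$-maximal subhypergraph, prove an expansion lemma for random vertex samples, then splice paths---but two load-bearing steps are either wrong or glossed over.

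\textbf{Step 2 misses the real difficulty.} In the graph lemmas you invoke, we sample from the set $B$ itself, so the max-degree condition in Lemma~\ref{lemma:main_technical} is on elements of $B$. In the hypergraph setting we sample \emph{vertices} $U\subset V(H)$, and the relevant auxiliary bipartite graph has parts $V(H)$ and a set of target faces; the degree of $v$ on the sampled side is proportional to $\deg_B(v)$, the number of reached faces containing $v$. Nothing bounds this a priori, and if some $v$ lies in a constant fraction of $B$, the event $v\in U$ alone swings $|N_\phi(B,U)|$ by a constant fraction and concentration fails. You cannot ``feed the incidence graph into Lemma~\ref{lemma:main_technical}'' without this control. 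The paper's Lemma~\ref{lemma:simp_master} therefore takes as hypothesis a bound $\deg_B(v)\le\epsilon|B|$, and---crucially---outputs a set $X\subset N_\phi(B,U)$ whose max vertex degree is again controlled, so the iteration can continue. Tracking $\epsilon$ through the rounds, and bootstrapping the initial set via Lemma~\ref{lemma:start}, is most of the technical work.

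\textbf{Step 3 is built on a false premise.} You assert that ``$p(G)=n$ forces the surplus $d(G)/n^{\alpha}$ to be only a constant'' and hence $s$ must be constant. But the hypergraph expansion lemma (the paper's Lemma~\ref{lemma:simp_randompath}) only needs $p\ge d(G)^{-\alpha^2/10}$, so one can take $s=d(G)^{\alpha^2/10}$, which is polynomial in $m$. With this, the argument of Lemma~\ref{lemma:hypmanypaths} gives $m^{2-\tau}$ pairs $(f_0,f')$ joined by at least $d(G)^{\Omega(\alpha^2)}\gg\ell$ internally vertex-disjoint proper paths of length $\ell_0:=\lfloor(\ell-r-1)/2\rfloor$. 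Now fix $f_0$ with $|F|\ge m^{1-\tau}$ good partners, delete the vertices of $f_0$, and apply Lemma~\ref{lemma:longhyppaths} (whose threshold $|F|\ge 2r\ell m/d$ you yourself noted is comfortably met) to find a proper path of length $\ell-2\ell_0\in\{r+1,r+2\}$ with both endpoints $f',f''\in F$. Because $f_0$ has $\gg\ell$ disjoint paths to each of $f',f''$, two of them can be chosen to avoid each other and the short middle path; the union of the three is a cycle of length exactly $\ell$. This tripod construction replaces your Bondy--Simonovits detour entirely, and in particular handles every parity of $\ell$ uniformly (this is exactly what Lemma~\ref{lemma:longhyppaths}, which fails for graphs, was designed for).
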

	
	In this section, we prepare the main tools needed to prove this theorem. To give some motivation, let us outline its proof.
	
	\bigskip
	
	\noindent
	\textbf{Outline of the proof of Theorem \ref{thm:cycleformal}.} We follow a similar train of thoughts as in the proof of our rainbow theorems. Let $G$ be an $r$-graph with $p(G)=n$ and $e(G)\geq n^{1+\alpha}$. We can immediately pass to an $\alpha'=\alpha/2$-maximal subhypergraph $H$ with average degree at least $n^{\alpha'}$. First, we study a random sample of $V(H)$. Similarly as in Lemma \ref{lemma:randompath_v2}, we show that if $U$ is a random subset of the vertices of $H$, where each vertex is sampled with some not too small probability, then from every $(r-1)$-face, we can reach at least $p(H)^{1-\tau}$  $(r-1)$-faces by a proper path of length $\ell_0= O(\frac{1}{\alpha}\log\frac{1}{\tau})$, whose internal vertices are in $U$. This is presented as Lemma \ref{lemma:simp_randompath}. From this, we deduce in Lemma \ref{lemma:hypmanypaths} that for every $f_0\in P(H)$, there is a set $F$ of at least $p(H)^{1-\tau}$  $(r-1)$-faces $f'$ such that there exist a large number of internally vertex disjoint proper paths of length $\ell_0$ from $f_0$ to $f'$. If $\ell$ is even, then we are immediately done as taking $\ell_0=\ell/2$, the union of two internally vertex disjoint paths between $f_0$ and $f'$ gives a cycle of length $\ell$. The case of odd $\ell$ is more difficult. We set $\ell_0=\lfloor (\ell-r-1)/2\rfloor$, and find a path $P_0$ of length $\ell-2\ell_0$, whose both endpoints $f',f''$ are in $F$, and $P_0$ is disjoint from $f_0$. If $\tau$ is sufficiently small, this exists by Lemma \ref{lemma:longhyppaths}. But then we can find two paths $P'$ and $P''$ of length $\ell_0$, such that the endpoints of $P'$ are $f_0$ and $f'$, the endpoints of $P''$ are $f_0$ and $f''$, and the three paths $P_0,P',P''$ are internally vertex disjoint. The union of $P_0,P',P''$ is a cycle of length $\ell$. This argument can be found in Lemma \ref{lemma:simp_max_cycle}, and finishes the proof.
	
	\bigskip
	
	Most of the work needed to prove Theorem \ref{thm:cycleformal} is put into the following lemma, which is a generalization of Lemma \ref{lemma:randompath_v2}. Let us introduce some notation.
	
	If $G$ is an $r$-graph and $U\subset V(G)$, a \emph{$U$-path} in $G$ is a path whose internal vertices are contained in~$U$. Also, to simplify notation, we write \wehp\  (with exponentially high probability) if some event holds with probability at least $1-\exp(-n^{z(\alpha)})$, where $z(\alpha)>0$ is some function depending only on $\alpha$.
	
	\begin{lemma}\label{lemma:simp_randompath}
		Let $r\geq 3$, $\alpha\in (0,\frac{1}{r-1})$, $\tau\in (0,1/2)$, $\ell\geq\frac{20}{\alpha}\log \frac{1}{\tau}$, then the following holds if $n>n_{0}(r,\alpha,\tau,\ell)$. Let $G$ be an $\alpha$-maximal $r$-graph with $p(G)=n$, and let $p$ be a real number such that $d(G)^{-\alpha^2/10}\leq p<1$. Let $U$ be a random sample of the vertices, each vertex chosen independently with probability $p$. Then \wehp, for every  $f\in V(G)$, there are at least $n^{1-\tau}$ $(r-1)$-faces of $G$ that can be reached from $f$ by a proper $U$-path of length $\ell$.
	\end{lemma}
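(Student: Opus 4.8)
The plan is to follow the proof of Lemma~\ref{lemma:randompath_v2}, of which this is the simplicial analogue: fix an $(r-1)$-face $f_0$, run a breadth-first search that grows the set of reachable $(r-1)$-faces by a polynomial factor at each step, and use the simplicial expansion estimate Lemma~\ref{lemma:hypmax}(iii) in place of the graph expansion estimates. By Lemma~\ref{lemma:hypmax}(i) we may write $d(G)=cn^{\alpha}$ with $c>1/2$, so $p\geq d(G)^{-\alpha^{2}/10}\geq n^{-\alpha^{3}/10}/2$. Since only the \emph{internal} vertices of the path must lie in $U$, and a tight proper path of length $\ell$ has exactly $m:=\ell-r+1$ internal vertices (the one uncovered at step $j$ for $j\le m$), I would first \emph{sprinkle}: let $q\in(0,1]$ solve $p=1-(1-q)^{m}$ (so $q=\Theta(p/\ell)=n^{-\alpha^{3}/10-o(1)}$), take independent samples $U_{1},\dots,U_{m}$ of $V(G)$ with each vertex included with probability $q$, and set $U:=U_{1}\cup\dots\cup U_{m}$, which has the required distribution; the last $r-1$ steps of the path will carry no sampling restriction.

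For $0\le i\le\ell$ let $B_{i}$ be the set of $(r-1)$-faces $f$ for which there is a \emph{tight} proper path $f_{0},\dots,f_{i}=f$ whose step-$j$ uncovered vertex lies in $U_{j}$ for every $j\le\min(i,m)$; for each $f\in B_{i}$ fix one such path, let $\phi(f)$ be its vertex set (so $|\phi(f)|\le\ell+r$), and let $w_{f}\in f$ be the vertex the next tight step deletes. Two bookkeeping points make this work: $\ell$ is much larger than $r$, so a tight path of length $\ge r-1$ is automatically proper and $\phi(f)$ forbids re-using a path vertex; and since the tight rule always deletes the oldest vertex, the vertex uncovered at step $j$ is deleted exactly at step $j+r-1$, hence is internal precisely when $j\le m$, which is why sampling in the first $m$ rounds suffices. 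The goal is to show that \wehp, simultaneously over all $f_{0}$, one has $|B_{\ell}|\ge n^{1-\tau}$.

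For the base case, the $\deg(f_{0})\ge cn^{\alpha}/r$ edges through $f_{0}$ have distinct extra vertices, so $|B_{1}|$ stochastically dominates a binomial random variable with mean $\ge qcn^{\alpha}/r=n^{\Omega(\alpha)}$, giving $|B_{1}|\ge n^{\Omega(\alpha)}$ \wehp\ by the Chernoff bound. For the inductive step, condition on $B_{i}$ (which is independent of $U_{i+1}$) and assume $n^{\Omega(\alpha)}\le|B_{i}|\le n^{1-\tau}$. Deterministically, Lemma~\ref{lemma:hypmax}(iii) applies (since $n^{1-\tau}<(1/2r)^{(1+\alpha)/\alpha}n$ for large $n$) and gives $|N(B_{i})|\ge\frac{|B_{i}|}{2r}(n/|B_{i}|)^{\alpha/(1+\alpha)}$. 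Probabilistically, one needs a simplicial analogue of Lemma~\ref{lemma:master}: in the bipartite graph with classes $V(G)$ and $N(B_{i})$, where $v$ is joined to $f'$ iff $f'=(f\setminus\{w_{f}\})\cup\{v\}$ for some admissible $f\in B_{i}$ with $f\cup\{v\}\in E(G)$ and $v\notin\phi(f)$, we have $|B_{i+1}|\ge|N(U_{i+1})|$; bucketing the faces of $N(B_{i})$ by the dyadic value of this degree — a direct Chernoff/Markov estimate on the moderate buckets, and bounded ``blobs'' (each meeting $U_{i+1}$ with probability $\ge1-e^{-\lambda}$) for the few faces of very high degree, which are few because $\sum_{f'}\deg(f')=r\,e(G)$ — yields, exactly as in Lemma~\ref{lemma:main_technical}/Lemma~\ref{lemma:master}, that $|B_{i+1}|\ge\frac{|B_{i}|}{c(r)}\min\bigl\{\tfrac{d(G)q\alpha}{\mathrm{poly}(\lambda)},(n/2|B_{i}|)^{\alpha/(1+\alpha)}-1\bigr\}$. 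Taking $\lambda=n^{\Omega(\alpha^{2})}$ and using $d(G)=cn^{\alpha}$, $q\ge n^{-\alpha^{3}/10}/\ell$, the first term exceeds $n^{\alpha/(1+\alpha)}\ge(n/2|B_{i}|)^{\alpha/(1+\alpha)}$, so the ``$\min$'' is the expansion term with no $\lambda$-loss; and since $|B_{i}|\ge n^{\Omega(\alpha)}$ throughout, every failure probability is of the form $\exp(-n^{z(\alpha)})$.

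Finally, writing $|B_{i}|=n^{1-\delta_{i}}$, the step bound gives $\delta_{i+1}\le\delta_{i}(1-\Omega(\alpha))$ whenever $\delta_{i}\ge\tau$ and $n$ is large (the additive $\log_{n}c(r)$ error is then negligible), hence $\delta_{m}\le(1-\Omega(\alpha))^{m-1}<\tau$ because $\ell\ge\frac{20}{\alpha}\log\frac1\tau$; the remaining $r-1$ unconstrained steps only enlarge $B_{\ell}$, so running the growth to $n^{1-\tau/2}$ absorbs the at most $c(r)$-factor loss of each unconstrained step. A union bound over the $\le\ell$ steps and the $n$ choices of $f_{0}$ multiplies the failure probability by only $n\ell$, so the conclusion holds \wehp. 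The main obstacle is the simplicial analogue of Lemma~\ref{lemma:master}: unlike in the graph case, expansion here uncovers a single new \emph{vertex} while adjacency lives among $(r-1)$-\emph{faces}, so one must build the correct bipartite auxiliary structure, keep the path tight and proper through the forbidden sets $\phi$, and — because $(r-1)$-faces of an $\alpha$-maximal $r$-graph can have unboundedly large degree — bundle the high-degree faces into bounded blobs before running the dyadic argument; arranging that the binding term of the resulting ``$\min$'' is the expansion term rather than a $\lambda$-loss term is exactly what lets the iteration reach arbitrarily small $\tau$.
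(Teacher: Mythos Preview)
Your overall architecture (sprinkle, grow $B_i$ step by step, use $\alpha$-maximality for expansion, union-bound at the end) matches the paper, but the inductive step as you describe it has a genuine gap: you never control the \emph{vertex degree} $\deg_{B_i}(v)$, and without that the concentration argument you invoke does not go through. In your auxiliary bipartite graph with parts $V(G)$ and $N(B_i)$, the role of the parameter $K$ in Lemma~\ref{lemma:main_technical} is played by the maximum number of faces that a single sampled vertex can activate. Since you use tight steps, every face in your base set $B_1$ contains the $r-2$ vertices of $f_0\setminus\{w_{f_0}\}$, so each of these vertices has degree $|B_1|$ in $B_1$; more generally, nothing in your induction prevents a single vertex from sitting in (or being addable to) a constant fraction of $B_i$. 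When that happens $|N(U_{i+1})|$ has huge variance---including or excluding one vertex swings it by $\Theta(|B_i|)$---and the dyadic/Chernoff scheme of Lemma~\ref{lemma:main_technical} fails its hypothesis $K\le \mu/(32\lambda\log_2(\lambda p^{-1}))$. Your remark about ``bundling high-degree faces into blobs'' diagnoses the wrong side of the bipartite graph: the obstruction is high-degree \emph{vertices} on the sampled side, not high-degree faces on the output side.

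The paper fixes this by carrying an extra invariant through the iteration: it maintains $\deg_{B_i}(v)\le\epsilon_i|B_i|$ with $\epsilon_i\ll q$, starting not at step $1$ but at step $r-1$ via a separate lemma (Lemma~\ref{lemma:start}) that produces $|B_0|\asymp d^{\,r-1}$ with $\deg_{B_0}(v)\le r d^{\,r-2}$, so $\epsilon_0\approx 1/d$. The one-step expansion is then a bespoke lemma (Lemma~\ref{lemma:simp_master}) whose hypothesis is exactly this degree bound and whose \emph{conclusion} is not just $|X|\ge|B|(n/|B|)^{\alpha/3}$ but also $\deg_X(v)\le |B|+O((\log n)|X|\epsilon/q)$, so the invariant propagates. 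This forces the weaker growth exponent $\alpha/3$ in place of your $\alpha/(1+\alpha)$, but the iteration still reaches $n^{1-\tau}$ in $O(\alpha^{-1}\log(1/\tau))$ steps. If you want to salvage your write-up, you need to (i) replace $B_1$ by a spread-out base set with small $\epsilon_0$, and (ii) prove a simplicial expansion lemma that both assumes and outputs a vertex-degree bound.
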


	The proof of this theorem follows similar ideas as the proof of Lemma \ref{lemma:randompath}. Therefore, we first prove a variant of Lemma \ref{lemma:master}. Let us recall that in Lemma \ref{lemma:master}, we showed that if $G$ is an $\alpha$-maximal graph, then a random sample of a set $B\subset V(G)$ expands almost as well as any set of size $|B|$. The right way to generalize this is as follows. First, we have to define a somewhat unusual notion of neighborhood of $(r-1)$-faces.
	
	\begin{definition}\label{def:neighbor}
		Let $G$ be an $r$-graph, $X\subset P(G)$ and $U\subset V(G)$. Let $N_{G}(X,U)=N(X,U)$ denote the set of $(r-1)$-faces $f\in P(G)\setminus X$ such that there exists $f'\in X$ satisfying $f\cup f'\in E(G)$ and $f'\setminus f\subset U$. See Figure \ref{fig:neighbour} for an illustration.
	\end{definition}
	
	\begin{figure}
		\begin{center}
			\begin{tikzpicture}
				\node[vertex,red] (x) at (-2,0) {};
				\node at (-2,-0.5) {$\in U$};
				\node[vertex] (y) at (0.6,0.6) {};
				\node[vertex] (z) at (2,0) {};
				\node[vertex] (t) at (0,3) {};
				\node at (0,-0.5) {$f'\in X$} ;
				\node at (2.6,1.5) {$f\in N(X,U)$} ;
				
				\draw (x) -- (y) -- (z) -- (x);
				\draw (y) -- (t) -- (z);
				\draw[dotted] (x) -- (t);
			\end{tikzpicture}
		\end{center}
		\caption{An illustration for the definition of $N(X,U)$ in 4-graphs.}
		\label{fig:neighbour}
	\end{figure}
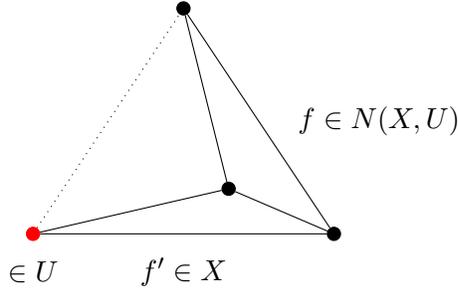

    Let us provide some explanation for the motivation of this definition. Let $X\subset P(G)$ be the set of faces we can reach by a proper path of length $\ell'\geq r-1$ from a given face $f$, such that the internal vertices of this path are from the random sample $U$. Then the set of faces we can reach by such a path of length $\ell'+1$ is a subset of $N(X,U)$. It is not necessarily equal to $N(X,U)$, as we are not allowed to repeat vertices of the path, so for this purpose, we also introduce its restricted version.

    \begin{definition}
		Let $G$ be an $r$-graph, $X\subset P(G)$ and $U\subset V(G)$. Suppose we are given a function $\phi:P(G)\rightarrow 2^{V(G)}$, which assigns each $(r-1)$-face a set of forbidden vertices. Then define $N_{\phi}(X,U)$ to be the set of those $f\in N(X,U)$ for which there exists some $f'$ satisfying the  conditions in Definition \ref{def:neighbor}, and $f\setminus f'$ is disjoint from $\phi(f')$.
	\end{definition}
 
	In what follows, if $G$ is an $r$-graph, we want to show that if $B\subset P(G)$ and $U$ is a random sample of $V(G)$, then $N(B,U)$ (or more generally $N_{\phi}(B,U)$) is large with high probability. Unfortunately, this is not true if the degree of some vertex of $B$ is too large. Therefore, we have to bound the maximum degree of $B$, and we also want to ensure that $N(B,U)$ has no large degrees either. More precisely, we find a large subset $X\subset N(B,U)$ with small maximum degree. Controlling the degrees introduces a lot of additional difficulties. Let us define a restricted version of $N(X,U)$ as well.

	To further simplify our notation, we write $a \gg b$ if $a/b>n^{z(\alpha)}$, where $z(\alpha)>0$ is some function of $\alpha$. Also, we view $r$ as a constant, so the constants hidden in the $O(.)$ and $\Omega(.)$ notation might depend on $r$.
	
	\begin{lemma}\label{lemma:simp_master}
		Let $\alpha\in (0,\frac{1}{r-1})$, $\epsilon,q\in (0,1)$, and let $r,\ell$ be positive integers, then the following holds if $n>n_{0}(r,\alpha,\ell)$. 
		\begin{itemize}
			\item  Let $G$ be an $\alpha$-maximal $r$-graph with $p(G)=n$;
			\item  let $\phi:P(G)\rightarrow 2^{V(G)}$ such that $|\phi(f)|\leq \ell$ for every $f\in P(G)$; 
			\item let $B\subset P(G)$ such that $d(G)\leq |B|< (1/10r)^{20/\alpha}n$;
			\item $\deg_{B}(v)\leq \epsilon|B|$ for every $v\in V(B)$;  
			\item suppose that $q\gg \epsilon$ and $q>d(G)^{-1/4}$.
		\end{itemize}
		Let $U$ be a random sample of $V(G)$, each vertex chosen independently with probability $q$. Then \textbf{w.e.h.p}, there exists $X\subset N_{\phi}(B,U)$ such that 
		\begin{itemize}
			\item  $|X|= |B|(n/|B|)^{\alpha/3}$, and 
			\item  $\deg_{X}(v)\leq |B|+O((\log n)|X|\epsilon/q)$ for every $v\in V(X)$.
		\end{itemize}
		
	\end{lemma}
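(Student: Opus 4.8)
The plan is to transplant the proof of Lemma~\ref{lemma:master} to the hypergraph setting and then append a degree‑regularization step to produce $X$. The key observation is that membership in $N_\phi(B,U)$ is governed by an auxiliary bipartite graph. For two distinct $(r-1)$‑faces $f,f'$ of a common edge $e$ one has $e=f\cup\{x\}$ for a unique $x\notin f$, $f'=e\setminus\{y\}$ for a unique $y\in f$, and $f'\setminus f=\{x\}$. So let $\mathcal B$ be the bipartite graph with parts $V(B)$ and $N(B)$ in which $x$ is joined to a face $f$ whenever $e:=f\cup\{x\}\in E(G)$ and some $y\in f$ satisfies $(f\setminus\{y\})\cup\{x\}\in B$ and $y\notin\phi((f\setminus\{y\})\cup\{x\})$. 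Unwinding Definition~\ref{def:neighbor} gives $N_\phi(B,U)=N_{\mathcal B}(U)\setminus B$, so sampling $V(G)$ with probability $q$ is exactly sampling the $V(B)$‑side of $\mathcal B$. For $x\in V(B)$ the neighbourhood $T_x:=N_{\mathcal B}(x)$ equals $\{e\setminus\{x\}:x\in e\in E(G),\ \exists\,f''\in B,\ x\in f''\subseteq e\}$; by Lemma~\ref{lemma:hypmax}(ii) it has size at least $d(G)/r-\ell\ge d(G)/2r$, whereas $|T_x|\le\sum_{f''\in B,\,x\in f''}\deg_G(f'')\le\deg_B(x)\,\Delta$, where $\Delta$ is the maximum face‑degree of $G$, which can be as large as $e(G)$.

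Next I would assemble the estimates needed in the main argument: Lemma~\ref{lemma:hypmax}(iii) gives $|N(B)|\ge\frac{|B|}{2r}(n/|B|)^{\alpha/(1+\alpha)}$, which since $\alpha<\frac1{r-1}$ is polynomially larger than the target size $|B|(n/|B|)^{\alpha/3}$ when $n/|B|$ is large and a large $\alpha$‑dependent constant multiple of it in general; double counting yields $|V(B)|\ge(r-1)/\epsilon$, hence $\sum_{f\in N(B)}|\{x:f\in T_x\}|=\sum_{x\in V(B)}|T_x|\gg d(G)/\epsilon$. Now run the three cases of Lemma~\ref{lemma:master}: with $\lambda=n^{z(\alpha)}$ fix a threshold $\Lambda$ (of the shape $d(G)\,q\,\alpha|B|$ divided by a fixed power of $\lambda$) and let $Y=\{x\in V(B):|T_x|\ge\Lambda\}$. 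If $|Y|\gtrsim\lambda/q$ then $U\cap Y\neq\emptyset$ \wehp\ and a single such $x$ already gives $|N_\phi(B,U)|\ge\Lambda/2$, which beats the target. Otherwise $|Y|$ is tiny, so $B':=\{f''\in B:f''\cap Y=\emptyset\}$ has $|B'|\ge|B|/2$ (using $\epsilon$ small), and every vertex of $V(B')$ has $|T_x|\le\Lambda$; split $N(B')$ according to whether a face has many or few ``back‑neighbours'' in $B'$. When $G$ has few edges from $B'$ to the few‑back‑neighbour part, an $\alpha$‑maximality computation as in Case~2 of Lemma~\ref{lemma:master} shows the many‑back‑neighbour set is large and each of its vertices is hit by $U$ \wehp\ --- crucially with \emph{no logarithmic loss}, which is exactly what lets the argument survive when $B$ is a constant fraction of $P(G)$. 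In the remaining case one applies Lemma~\ref{lemma:main_technical} to the bipartite graph on $(V(B'),N(B'))$ carrying the sets $T^{B'}_x$ (maximum $V(B')$‑degree $\le\Lambda$), where $\mu\gtrsim d(G)\,q\,\alpha|B|$, so the $\lambda\log_2(\lambda/q)$‑loss is absorbed provided $z(\alpha)$ is a small enough multiple of $\alpha$, using $q\gg\epsilon$ and $q>d(G)^{-1/4}$. In all cases one obtains \wehp\ a set $Y_0\subseteq N_\phi(B,U)$ of size comfortably above $|B|(n/|B|)^{\alpha/3}$.

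Finally, to produce $X$, take a subset $X_0\subseteq Y_0$ of size $|B|(n/|B|)^{\alpha/3}$ and delete, together with the incident faces, every vertex $v$ whose degree in the current set exceeds $|B|+c_0(\log n)|X_0|\epsilon/q$. The point is a structural split: a face $f\in N_\phi(B,U)$ containing a vertex $v$ contains it either as the vertex newly added relative to a witness $f''\in B$ --- in which case the number of such $f$ is $O(|B|)$, being at most $r-1$ times the number of edges $e\ni v$ with $e\setminus\{v\}\in B$ --- or as an inherited vertex $v\in f''$, and summing over the $\le\deg_B(v)\le\epsilon|B|$ witnesses $f''\ni v$ and over the sampled dropped vertices shows the expected number of inherited occurrences is $O(q\,\Delta\,\epsilon|B|)$, which after the concentration obtained above and by the choice of parameters is $O((\log n)|X_0|\epsilon/q)$. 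Hence only a negligible fraction of $X_0$ is deleted and restoring the size gives the required $X$.

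The main obstacle is precisely this bookkeeping of losses. Because face‑degrees in an $\alpha$‑maximal complex are unbounded, $\mathcal B$ cannot be fed directly into Lemma~\ref{lemma:main_technical}, which forces the case split, and one must route each density regime of $B$ through the right case --- the dense regime through the loss‑free $\alpha$‑maximality expansion of Case~2, the sparse regime through Lemma~\ref{lemma:main_technical}, whose logarithmic loss is then dominated by the polynomial gap $(n/|B|)^{\Omega(\alpha)}$. Making the exponents in ``$\gg$'', in $q>d(G)^{-1/4}$, in $|B|<(1/10r)^{20/\alpha}n$ and in the \wehp\ bound all mutually compatible is the delicate part; the other one is getting the degree bound for $X$ to come out as the structural quantity $|B|+O((\log n)|X|\epsilon/q)$ rather than a crude pigeonhole bound, which is what dictates the ``newly added vs.\ inherited'' dichotomy above.
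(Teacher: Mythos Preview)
Your bipartite encoding of $N_\phi(B,U)$ is correct and matches the paper's, and the size estimate via a case split in the spirit of Lemma~\ref{lemma:master} is reasonable. The genuine gap is the degree bound on $X$. Your post-processing relies on the ``inherited'' contribution being $O(q\,\Delta\,\epsilon|B|)$ with $\Delta$ the maximum face-degree of $G$, and you then claim this is $O((\log n)|X_0|\epsilon/q)$. But $\alpha$-maximality gives no upper bound on $\Delta$: a single face $f_0$ can lie in $\Theta(v(G))$ edges while $G$ remains $\alpha$-maximal (the star around $f_0$ has average degree $\approx r/(r-1)$, so the $\alpha$-maximality inequality only forces $\Delta$ to be \emph{large}). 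With $\Delta$ of order $v(G)$ your inherited bound is useless, and in fact a vertex of such an $f_0$ can have degree $\Theta(\Delta)$ in $N_\phi(B,U)$, so no one-shot deletion can repair it.

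The paper closes this hole before ever forming the bipartite graph: it passes to $C\subset B$ consisting of faces with degree $\le 2c|B|^\alpha$ in $G[B]$, and for each $f\in C$ fixes a set $E_f$ of \emph{exactly} $d/2r$ edges through $f$ whose other faces avoid $B$ and $\phi(f)$. This caps the effective face-degree at $d/2r$ and, crucially, gives $\deg_H(v)=\deg_C(v)\cdot d/2r\le \epsilon|C|\,d/2r$ on the $V(C)$ side, which is the ingredient your argument is missing. The paper then runs two cases (not three) depending on whether most edges of $E^*$ are ``heavy'': in the light case a dyadic bucket $T_I$ is selected and the degree bound on $X$ follows from a double count using that each vertex is the top-vertex of at most $|C|$ edges; in the heavy case $S\subset N_H(U)$ w.e.h.p., and the degree bound is obtained by an \emph{iterative} vertex-removal procedure whose termination is again controlled by the top-vertex count. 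Your ``newly added vs.\ inherited'' dichotomy is exactly the top-vertex idea, but it only yields the stated bound once the $E_f$ truncation has replaced $\Delta$ by $d/2r$.
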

	
	\begin{proof}
		
		Let $d:=d(G)=cn^{\alpha}$, then $d\geq n^{\alpha}/2$. As $G$ is $\alpha$-maximal, the average degree of $(r-1)$-faces in $G[B]$ is at most $c|B|^{\alpha}$. Therefore, at least half of the $(r-1)$-faces in $B$ have degree at most $2c|B|^{\alpha}$ in $G[B]$, let $C$ be the set of such $(r-1)$-faces. For $f\in C$, let $E_{f}'$ be the set of edges $e\in E(G)$ for which $e^{(r-1)}\cap B=\{f\}$ and $e\setminus f$ is disjoint from $\phi(f)$. As the minimum degree of $G$ is at least $cn^{\alpha}/r$ by Lemma \ref{lemma:hypmax}, we have $|E'_f|\geq cn^{\alpha}/r-2c|B|^{\alpha}-\ell>cn^{\alpha}/2r$. Let $E_f$ be any $cn^{\alpha}/2r=d/2r$ element subset of $E_f'$.
		
		Let $E^{*}=\bigcup_{f\in C} E_{f}$ and  $D=P(E^{*})\setminus C$, that is, $D$ is the set of $(r-1)$-faces appearing on the edges of $E_{f}$ for $f\in C$, which are not in $C$. Let us note that $|E^{*}|=|C|d/2r$. For each $e\in E^{*}$, the \emph{top-vertex} of $e$ is the unique vertex $v\in e$ such that $e\setminus \{v\}\in C$. Note that each $v\in V(G)$ is the top-vertex of at most $|C|$ edges. 
		
		Define the bipartite graph $H$ as follows.  Let the vertex classes of $H$ be $V(C)$ and $D$, and $v\in V(C)$ and $f\in D$ are joined by an edge if there exists $f'\in C$ such that $\{v\}=f'\setminus f$ and $f'\cup f\in E_{f'}$. Note that each edge $\{v,f\}$ of $H$ corresponds to the edge $\{v\}\cup f\in E^{*}$, and in the converse, each $e\in E^{*}$ corresponds to $r-1$ edges of $H$, one for each face of $e$ other than the unique face in $C$. Recalling that $U\subset V(G)$ is the random sample, we also have $N_{H}(U\cap V(C))\subset N_{\phi}(B,U)$, so it is enough to find a suitable $X\subset N_{H}(U\cap V(C))$. More precisely, our goal is to find a set $Y\subset N_{H}(U\cap V(C))$ such that $|Y|\geq 2r|B|(n/|B|)^{\alpha/3}$, and $\deg_{Y}(v)\leq r|B|+O((\log n)|Y|\epsilon/q)$ for every $v\in V(Y)$. Then a standard concentration argument shows that a random $|B|(n/|B|)^{\alpha/3}$ sized subset $X$ of $Y$ suffices with positive probability. We omit this very last concentration argument since it is routine.
		
		\bigskip
		
		For every $v\in V(C)$, we have $\deg_{H}(v)=\deg_{C}(v)d/2r$, as each $f'\in C$ containing $v$ contributes exactly $|E_{f'}|=d/2r$ edges to the degree of $v$. Say that $f\in D$ is \emph{heavy} if $\mbox{deg}_{H}(f)\geq d^{1/3}=:x$, and let $S\subset D$ be the set of heavy $(r-1)$-faces. Also, say that $e\in E^{*}$ is \emph{heavy} if all $(r-1)$-faces of $e$, except for the unique face in $C$, are heavy. Let $E_{h}\subset E^{*}$ be the set of heavy edges.
		
		Now we break our analysis into two cases, depending on whether the majority of edges of $E^{*}$ are heavy.
		
		\medskip
		\noindent
		\textbf{Case 1.} $|E_{h}|<|E^{*}|/2$. 
		
		Note that each edge $e\in E^{*}\setminus E_{h}$ corresponds to at least one edge $\{v,f\}$ of $H$ such that $f$ is not heavy. Set $T:=D\setminus S$, and consider the bipartite graph $H[T\cup V(C)]$. The previous argument implies that $$m:=e(H[T\cup V(C)])\geq |E^{*}\setminus E_{h}|\geq \frac{|C|d}{4r}.$$
		Note that $m\leq |C|d$ holds as well. For $i=0,\dots,\log_2 x$, let $T_{i}$ be the set of faces $f\in T$ such that $2^{i}\leq \deg_{H}(f)<2^{i+1}$. Let $I$ be the index $i$ maximizing the quantity $|T_{i}|2^{i}$. As 
		$$m\geq \sum_{i=0}^{\log_2 x}2^{i}|T_{i}|\geq m/2,$$
		we have 
		$$|C|d\geq m\geq |T_{I}|2^{I}\geq \frac{m}{2\log_2 x}>\frac{|C|d}{8r\log_2 n}.$$ 
		From this, $|T_{I}|\geq |C|d/8rx(\log_2 n)>|C|$.
		\begin{claim}
			For every $w\in V(T_{I})$, $\deg_{T_{I}}(w)\leq r|B|+8r\epsilon|T_{I}|\log_2 n.$
		\end{claim}
		\begin{proof}
			Let $T_{I}(w)\subset T_{I}$ be the set of $(r-1)$-faces containing $w$. Let us count the edges in the bipartite graph  $H[V(C)\cup T_{I}(w)]$ in two ways. If $\{v,f\}$ is an edge of $H$ with $v\in V(C)$ and $f\in T_{I}(w)$, then there exists $f'\in C$ such that $e:=f\cup f'\in E_{f}$ and $\{v\}=f\setminus f'$. As $w\in f$, either $w$ is a top-vertex of the edge $e$, or $w\in f'$. As each vertex of $G$ is a top-vertex of at most $|B|$ edges, we get
			$$e(H[V(C)\cup T_{I}(w)])\leq r|B|+\sum_{f'\in C, w\in f'}(r-1)|E_{f'}|\leq r|B|+\frac{\deg_{C}(w)d}{2}\leq r|B|+\epsilon|C|d.$$
			On the other hand, recalling that $2^{I}\geq |C|d/(8r|T_{I}|\log_2 n)$, we have 
			$$e(H[V(C)\cup T_{I}(w)])\geq |T_{I}(w)|2^{I}\geq |T_{I}(w)|\max\left\{1,\frac{|C|d}{8r|T_{I}|\log_2 n }\right\}.$$
			Comparing the lower and upper bound on $e(H[V(C)\cup T_{I}(w)])$, we get $|T_{I}(w)|\leq r|B|+8rt|T_{I}|\log_2 n$.
		\end{proof}
		
		Let $H':=H[V(C)\cup T_{I}]$ and recall that $U\subset V(G)$ is the random sample.
		
		\begin{claim}
			  $|N_{H'}(U\cap V(C))|\geq \frac{q|T_{I}|}{2}$ \wehp 
		\end{claim}
		\begin{proof}
			We have $m':=e(H')\geq |T_{I}|2^{I}\geq |C|d/8r\log_2 n$. Also, in the graph $H'$ the degree of each face in $T_{I}$ is between  $\Delta:=2^{I+1}$ and $\Delta/2$, and the degree of each vertex $v\in V(C)$ is at most $\deg_{C}(v)d/2r\leq \epsilon|C|d/r\leq 8\epsilon m'\log_2 n$.  That is, writing $\epsilon'=8\epsilon\log_2 n$, we have $\deg_{H'}(v)\leq \epsilon'm'$.
   
        In order to lower bound $|N_{H'}(U\cap V(C))|$, we consider a subgraph $H''$ of $H'$, in which each vertex of $T_I$ has degree 1. Then $|N_{H'}(U\cap V(C))|\geq |N_{H''}(U\cap V(C))|$, and $|N_{H''}(U\cap V(C))|$ is the sum of independent random variables. We find suitable $H''$ by assigning each face $f\in T_{I}$ to one of its neighbours $v_{f}$ randomly with uniform distribution. For each $v\in V(C)$, let $F(v)=\{f\in T_{I}:v=v_{f}\}$. Then $\mathbb{E}(|F(v)|)\leq 2\deg_{H'}(v)/\Delta\leq 2\epsilon'm'/\Delta$, so by the multiplicative Chernoff bound, we have $\mathbb{P}(|F(v)|>4\epsilon'm'/\Delta)\leq e^{-2\epsilon'm'/3\Delta}\leq e^{-\epsilon'|T_{I}|/3}=e^{-n^{\Omega(\alpha})}$. Therefore, there exists an assignment such that $|F(v)|<4\epsilon'm'/\Delta$ for every $v\in V(C)$, fix such an assignment.  Note that 
			$$|N_{H'}(U\cap V(C))|\geq \sum_{v\in U} |F(v)|.$$

   Let $X=\sum_{v\in U}|F(v)|$, then $\mathbb{E}(X)=q|T_{I}|$. Also, as $|F(v)|\leq 4\epsilon'm'/\Delta $, we can apply the multiplicative Chernoff bound with random variables $\frac{|F(v)|}{4\epsilon'm'/\Delta}\in [0,1]$ to conclude  
			$$\mathbb{P}\left(X\leq \frac{q|T_{I}|}{2}\right)\leq e^{-\Omega(q|T_{I}|\Delta/(\epsilon'm'))}.$$
			Here, $q|T_{I}|\Delta/(48\epsilon'm')=\Omega(q/\epsilon(\log n))$, so $X\geq q|T_{I}|/2$ \wehp
		\end{proof}
		Let $Y:=N_{H'}(U\cap V(C))$ and suppose that $|Y|\geq q|T_{I}|/2$. Then, for every $w\in V(Y)$, we have
		$$\deg_{Y}(w)\leq \deg_{T_{I}}(w)\leq r|B|+8r\epsilon|T_{I}|\log_2 n\leq r|B|+16r^{2}\frac{\epsilon}{q}(\log n)|Y|,$$
		so $Y$ satisfies the required maximum degree condition. Furthermore,
		$$|Y|\geq \frac{q|T_{I}|}{2}>\frac{|C|dq}{16 rx(\log_2 n)}\geq  2r|B|n^{\alpha/3}>2r|B|\left(\frac{n}{|B|}\right)^{\alpha/3},$$
		so $Y$ has the required size as well.  This concludes Case 1.
		
		\bigskip
		\noindent
		\textbf{Case 2.} $|E_{h}|\geq |E^{*}|/2$.
		
		We have $$e(G[C\cup S])\geq |E_{h}|\geq \frac{|C|d}{4r}=\frac{|C|cn^{\alpha}}{4r}.$$ 
		We show that some subset of $S$ can be chosen to be $Y$.
		\begin{claim}
			$S\subset N_{H}(U\cap V(C))$ \wehp
		\end{claim}
		
		\begin{proof}
			For a given $f\in S$, we have $\mathbb{P}(f\not\in N_{H}(U\cap V(C)))=(1-q)^{\deg_{H}(f)}<e^{-qx}$. Therefore, the probability that there exists $f\in S$ not contained in $N_{H}(U\cap V(C))$ is at most $|S|e^{-qx}=e^{-n^{\Omega(\alpha)}}$.
		\end{proof}
		
		Therefore, it remains to get rid of high degree vertices of $S$.
		To this end, we execute the following simple procedure. Let $z:=16r^2 \epsilon\log n$, $C_0:=C$, $E_{0}=E_{h}$ and $S_0:=S$. For $i=0,1,\dots$, if $C_i,E_i$ and $S_i$ are already defined, we proceed as follows. If $\deg_{S_i}(w)\leq r|B|+z|S_i|$ for every vertex $w\in V(S_i)$, we stop. Otherwise, we choose some $w_{i}$ for which $\deg_{S_i}(w_i)>r|B|+z|S_i|$, and set $C_{i+1}:=\{f\in C_{i}:w_{i}\not\in f\}$, $E_{i+1}:=\{e\in E_{i}:\exists f\in C_{i+1}\mbox{ such that }f\subset e\}$ and $S_{i+1}:=\{f\in S_{i}:\exists e\in E_{i+1}\mbox{ such that }f\subset e\}=P(E_{i+1})\setminus C_{i+1}$.
		
		Clearly, this procedure stops in a finite number of steps, and we show that if it stops at index $I$, the choice $Y:=S_{I}$ suffices. Note that for $i=0,1,\dots,I$, we have $|C_{i+1}|\geq |C_{i}|-2\epsilon|C|$ by the maximum degree condition on $B$, which also implies $|E_{i+1}|\geq |E_{i}|-2\epsilon|C|d$. Therefore, $|C_{i}|\geq (1-2\epsilon i)|C|$ and $|E_{i}|\geq (1/4r-2\epsilon i)|C|d$. Let us examine how the size of $S_{i+1}$ changes. Note that if some $f\in S_{i+1}$ contains $w_{i}$, then $w_{i}$ must be the top-vertex of some edge $e\in E_{i+1}$. Therefore, less than $r|B|$ elements of $S_{i+1}$ can contain $w_{i}$, which means that $|S_{i+1}|\leq |S_{i}|(1-z)$. This shows that $|S_{i}|\leq |S|(1-z)^{i}\leq n^{r}e^{-iz}$. But then $I\leq \frac{r\log n}{z}=\frac{1}{16r\epsilon}$, which then implies the inequality 
		$$|E_{I}|\geq \left(\frac{1}{4r}-2\epsilon I\right)|C|d\geq \frac{|C|d}{8r}.$$
		On the other hand, using the $\alpha$-maximality of $G$, we have $$|E_{I}|=e(G[C_{I}\cup S_{I}]|\leq c(|C_{I}|+|S_{I}|)^{1+\alpha}.$$
		Comparing the lower and upper bounds on $|E_{I}|$, we get 
		$$|S_{I}|\geq \left(\frac{|C|d}{8rc}\right)^{1/(1+\alpha)}-|C_{I}|\geq\frac{1}{16r}\left(|C|n^{\alpha}\right)^{1/(1+\alpha)}= \frac{|C|}{16r}\left(\frac{n}{|C|}\right)^{\alpha/(1+\alpha)}>2r|B|\left(\frac{n}{|B|}\right)^{\alpha/3}.$$
		Therefore, $Y:=S_{I}$ satisfies the required conditions. This concludes Case 2., and the proof of the theorem.
	\end{proof}
	
	Before we start the proof of Lemma \ref{lemma:simp_randompath}, we need one more lemma.
	
	\begin{lemma}\label{lemma:start}
		Let $d>r\geq 2$ be integers, and let $G$ be an $r$-graph such that $\deg(f)\geq d$ for every $f\in P(G)$. Let $f_0\in P(G)$, then there exists $F\subset P(G)$ such that 
		\begin{itemize}
			\item $|F|=(d/r)^{r-1}$,
			\item there is a proper path with endpoints $f_0$ and $f'$ of length $r-1$ for every $f'\in F$ in $G$,
			\item $\deg_{F}(v)\leq rd^{r-2}$ for every $v\in V(F)$.
		\end{itemize}
	\end{lemma}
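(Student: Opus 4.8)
The plan is to grow a rooted tree of proper paths of length $r-1$ starting at $f_0$, with a bounded branching factor, and let $F$ be the set of distinct endpoint faces. Fix an arbitrary ordering $f_0=\{u_1,\dots,u_{r-1}\}$ and set $b:=\lceil (r-1)d/r\rceil$; one has $b\le d-(r-2)$ once $d$ is large in terms of $r$, which is the only regime used in the applications. I would build the tree level by level, where a level-$i$ node is a proper path $f_0,f_1,\dots,f_i$ obtained by setting, at each step $j\le i$, $f_j=(f_{j-1}\setminus\{u_j\})\cup\{w_j\}$ with $f_{j-1}\cup\{w_j\}\in E(G)$ and $w_j\notin f_0\cup\dots\cup f_{j-1}$. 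From a level-$(i-1)$ node the only constraint on $w_i$ beyond $f_{i-1}\cup\{w_i\}\in E(G)$ is $w_i\notin f_0\cup\dots\cup f_{i-1}$, and at most $i-1\le r-2$ of the $\ge d$ candidate vertices fail it, leaving at least $d-(r-2)\ge b$ valid choices; pick $b$ of them arbitrarily as the children. The $b^{\,r-1}$ leaves are walks whose consecutive faces union to edges and satisfy $|f_0\cup\dots\cup f_{r-1}|=(r-1)+(r-1)$ (all $w_j$ are new and pairwise distinct), hence are proper paths of length $r-1$, properness holding since $f_0$ and $f_{r-1}=\{w_1,\dots,w_{r-1}\}$ are disjoint.

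Let $F$ be the set of endpoint faces $f_{r-1}$ of the leaves. Each lies in $P(G)$ and is joined to $f_0$ by a proper path of length $r-1$. A leaf is determined by its sequence $(w_1,\dots,w_{r-1})$ of added vertices, with endpoint face $\{w_1,\dots,w_{r-1}\}$, so the map from leaves to endpoint faces has fibres of size at most $(r-1)!$, and therefore
$$|F| \ge \frac{b^{\,r-1}}{(r-1)!} \ge \frac{((r-1)d/r)^{\,r-1}}{(r-1)!} \ge \left(\frac{d}{r}\right)^{\!r-1},$$
since $(r-1)^{r-1}\ge(r-1)!$. Discarding faces if needed, I may assume $|F|=(d/r)^{r-1}$; this only decreases each $\deg_F(v)$.

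For the degree condition, fix $v\in V(F)$. In a leaf with sequence $(w_1,\dots,w_{r-1})$, the vertex $v$ lies in the endpoint face iff $v=w_j$ for exactly one $j$, as the $w_i$ are distinct. For a fixed $j$, the $b$ children of any level-$(j-1)$ node carry pairwise distinct values $w_j$, so at most one has $w_j=v$; hence at most $b^{\,j-1}$ level-$j$ nodes have $w_j=v$, each with $b^{\,r-1-j}$ leaf descendants, so at most $b^{\,r-2}$ leaves have $w_j=v$. Summing over $j\in[r-1]$,
$$\deg_F(v) \le (r-1)\,b^{\,r-2} \le (r-1)\left(\frac{(r-1)d}{r}\right)^{\!r-2} = \frac{(r-1)^{\,r-1}}{r^{\,r-2}}\,d^{\,r-2} \le r\,d^{\,r-2},$$
using $(r-1)^{r-1}\le r^{r-1}$.

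The main obstacle is this degree bound: one must observe that a fixed branching factor $b$ in a depth-$(r-1)$ tree lets a given vertex be the $j$-th newly revealed vertex on at most $b^{j-1}$ partial paths of length $j$, hence on at most $\sum_{j=1}^{r-1}b^{j-1}b^{r-1-j}=(r-1)b^{r-2}$ full paths. A secondary subtlety is that distinct reorderings of the added vertices can yield the same endpoint face, which forces the branching factor up to $b=\Theta_r(d)$ (so that $|F|$ still reaches $(d/r)^{r-1}$ after the identifications), and one then has to check that this enlarged $b$ keeps $\deg_F(v)$ below $r\,d^{r-2}$ — which it just barely does.
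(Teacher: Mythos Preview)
Your approach is essentially the paper's: grow a branching tree of proper paths of length $r-1$ from $f_0$, take $F$ to be the set of endpoint faces, and bound $\deg_F(v)$ by counting, for each position $j$, how many leaves have $v$ as the $j$-th newly added vertex. The paper does exactly this, with the cosmetic difference that it uses the variable branching factor $d-i$ at step $i+1$ rather than a fixed $b$.

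Two small points. First, your choice $b=\lceil (r-1)d/r\rceil$ needs $b\le d-(r-2)$, which fails for small $d>r$ (e.g.\ $r=4$, $d=5$); you acknowledge this, but the lemma is stated for all $d>r$, and the paper's variable branching avoids the issue entirely. Second, your displayed inequality $(r-1)b^{r-2}\le (r-1)((r-1)d/r)^{r-2}$ is the wrong way round, since $b=\lceil (r-1)d/r\rceil\ge (r-1)d/r$. This is harmless: simply use $b\le d$ to get $(r-1)b^{r-2}\le (r-1)d^{r-2}<rd^{r-2}$ directly, so the conclusion stands.
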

	
	\begin{proof}
		Let $f_0=\{v_1,\dots,v_{r-1}\}$.  We generate $d(d-1)\dots(d-r+2)$ sequences of vertices $(y_1,\dots,y_{r-1})$ as follows. If $y_1,\dots,y_{i}$ are already defined for some $i\in \{0,\dots,r-2\}$, we choose $y_{i+1}$ among the vertices $y\in V(G)$ for which $\{v_{i+1},\dots,v_{r-1},y_1,\dots,y_{i},y\}$ is an edge of $G$, and $y\not\in\{v_1,\dots,v_{i}\}$; also, we choose exactly $d-i$ such vertices arbitrarily. This is possible, as by induction, we have $f:=\{v_{i+1},\dots,v_{r-1},y_1,\dots,y_{i}\}\in P(G)$, so $\deg(f)\geq d$. Let $Y$ be the set of these sequences.
		
		Clearly, if $(y_1,\dots,y_{r-1})\in Y$, then setting $f_{i}:=\{v_{i+1},\dots,v_{r-1},y_1,\dots,y_{i}\}$ for $i=1,\dots,r-1$, the sequence  $f_0,\dots,f_{r-1}$ is a proper path of length $r-1$ with endpoints $f_0$ and $\{y_1,\dots,y_{r-1}\}$. 
		
		For each sequence $(y_1,\dots,y_{r-1})\in Y$, add the face $\{y_1,\dots,y_{r-1}\}$ to $F$. Then $|F|\geq d(d-1)\dots(d-r+2)/(r-1)!>(d/r)^{r-1}$. Also, for each $y\in V(G)$ and $i\in [r-1]$, there are at most $d(d-1)\dots (d-r+2)/(d-i+1)$ sequences in $(y_1,\dots,y_{r-1})\in Y$ such that $y=y_i$. Therefore, $\deg_{F}(y)\leq rd^{r-2}$. This finishes the proof.
		
	\end{proof}

	\begin{proof}[Proof of Lemma \ref{lemma:simp_randompath}]
		In each of our calculations, we assume that $n$ is sufficiently large with respect to $r,\alpha,\ell$. Let $d=d(G)=cn^{\alpha}$, then $d\geq n^{\alpha}/2$ by Lemma \ref{lemma:hypmax}, (i). Also, by Lemma \ref{lemma:hypmax} (ii), we have $\deg(f)\geq d/r$ for every $f\in P(G)$.  Let $q\in (0,1)$ be the unique solution of $p=1-(1-q)^{\ell-r+1}$, then $p/2\ell< q< 2p/\ell$, so $d^{-\alpha^2/10}/2\ell<q<1$. Sample the vertices in $\ell-r+1$ rounds, in each round with probability $q$, independently from the other rounds, and let $U_i$ be the set of sampled vertices in round $i$ for $i\in [\ell-r+1]$. Then $U$ has the same distribution as $\bigcup_{i\in [\ell-r+1]}U_{i}$.
		
		Let  $s_{-1}:=rd^{r-2}$, $s_0:=(d/r^2)^{r-1}$, and for $i=1,\dots,\ell-r+1$, let $s_{i}:=s_{i-1}(n/s_{i-1})^{\alpha/3}$. Fix some $f_0\in P(G)$, and for $i=0,\dots,\ell-r+1$, let $\mathcal{E}_i=\mathcal{E}_i(f_0)$ denote the following event: There exist  $B_{i}\subset P(G)$ such that 
		\begin{itemize}
			\item every $f\in B_i$ can be reached from $f_0$ by a proper path of length $r-1+i$, whose internal vertices are in $U_1,\dots,U_{i}$, in this order,
			\item $|B_i|=s_{i}$,
			\item for every $v\in V(B_{i})$, we have $\deg_{B_i}(v)\leq \min\{2s_{i-1},nr^2/d\}$.
		\end{itemize}
		Here, we make the observation that $\deg_{B_i}(v)\leq \deg_{P(G)}(v)\leq nr^2/d$ holds automatically by Lemma \ref{lemma:maxdeg}. This observation is quite important, otherwise we would run into trouble when $s_{i}$ gets too close to $n$.
		
		Furthermore, note that Lemma \ref{lemma:start} guarantees that $\mathcal{E}_0$ happens. Our goal is to show that for $i=1,\dots,\ell-r+1$, we have $(\mathcal{E}_i|\mathcal{E}_{i-1},\dots,\mathcal{E}_{0})$ \wehp

		To this end, assume that $\mathcal{E}_{i-1},\dots,\mathcal{E}_{0}$ happen. Let $\phi:P(G)\rightarrow 2^{V(G)}$ be any function which assigns every $f\in B_{i-1}$ the vertices of an arbitrary path of length $i+r-2$ from $f_0$ to $f$, whose internal vertices are in $U_1,\dots,U_{i-1}$, and $\phi(f)=\emptyset$ if $f\in P(G)\setminus B_{i-1}$. Now we would like to apply Lemma \ref{lemma:simp_master} to the $r$-graph $G$ with $B=B_{i-1}$. Note that the conditions $d\leq |B|\leq (1/10r)^{20/\alpha}n$ and $|\phi(f)|\leq \ell$ for every $f\in P(G)$ are  satisfied.  Set $\epsilon:=\frac{1}{s_{i-1}}\min\{2s_{i-2},nr^2/d\}$, then we have $\deg_{B_{i-1}}(v)\leq \epsilon |B_{i-1}|$ for every $v\in V(B_{i-1})$. 
		\begin{claim}
			$n^{\alpha^2/5}\cdot \epsilon\leq q$ (that is, $q\gg \epsilon$).
		\end{claim}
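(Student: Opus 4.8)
The plan is to bound $\epsilon$ from above by a negative power of $n$ (with a constant depending only on $r$), and then to compare this with the lower bound $q>\frac{1}{2\ell}\,d^{-\alpha^2/10}$ recorded just above.

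For the bound on $\epsilon$ I would separate the cases $i=1$ and $i\ge 2$. When $i=1$, simply $\epsilon\le\frac{2s_{-1}}{s_0}=\frac{2rd^{r-2}}{(d/r^2)^{r-1}}=2r^{2r-1}d^{-1}$, which is at most $4r^{2r-1}n^{-\alpha}\le 4r^{2r-1}n^{-\alpha^2/3}$ since $d\ge n^{\alpha}/2$. When $i\ge 2$, the recursion $s_{i-1}=s_{i-2}(n/s_{i-2})^{\alpha/3}$ lets me rewrite the two terms of the minimum defining $\epsilon$ as $a:=\frac{2s_{i-2}}{s_{i-1}}=2(s_{i-2}/n)^{\alpha/3}$ and $b:=\frac{nr^2/d}{s_{i-1}}=\frac{r^2}{d}(n/s_{i-2})^{1-\alpha/3}$, so that $\epsilon=\min\{a,b\}$. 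Here is the single point that needs care: neither $a$ nor $b$ is individually small enough — $a$ is only an absolute constant, while $b$ may be a positive power of $n$ — so I balance them with the weighted geometric-mean inequality $\min\{a,b\}\le a^{1-\alpha/3}b^{\alpha/3}$. The powers of $s_{i-2}/n$ then cancel exactly and one is left with $\epsilon\le 2^{1-\alpha/3}(r^2/d)^{\alpha/3}\le 2(r^2/d)^{\alpha/3}\le 2(2r^2)^{\alpha/3}n^{-\alpha^2/3}$, again using $d\ge n^{\alpha}/2$. Thus in every case $\epsilon\le c_0\,n^{-\alpha^2/3}$ for some $c_0=c_0(r)$.

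To finish, I would invoke the Lov\'asz/Kruskal--Katona bound recalled before Lemma~\ref{lemma:hypmax}: writing $e(G)=\binom{x}{r}$, from $n=p(G)\ge\binom{x}{r-1}\ge (x-r+2)^{r-1}/(r-1)!$ one gets $d(G)\le x\le C_0\,n^{1/(r-1)}$ for a constant $C_0=C_0(r)$. Hence $q>\frac{1}{2\ell}d^{-\alpha^2/10}\ge\frac{1}{2\ell C_0}\,n^{-\alpha^2/(10(r-1))}$, and combining with the bound on $\epsilon$,
$$\frac{q}{\epsilon}\ \ge\ \frac{1}{2\ell C_0 c_0}\; n^{\alpha^2\left(\frac13-\frac{1}{10(r-1)}\right)}\ =\ \frac{1}{2\ell C_0 c_0}\; n^{\alpha^2/5}\cdot n^{\alpha^2\left(\frac{2}{15}-\frac{1}{10(r-1)}\right)}.$$
Since $r\ge 3$ gives $\frac{2}{15}-\frac{1}{10(r-1)}\ge\frac{2}{15}-\frac{1}{20}=\frac{1}{12}>0$, the last factor exceeds $2\ell C_0 c_0$ once $n$ is large enough in terms of $r,\alpha,\ell$, whence $q/\epsilon\ge n^{\alpha^2/5}$, i.e. $n^{\alpha^2/5}\epsilon\le q$. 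I expect the balancing step — using the weighted geometric mean to make $s_{i-2}$ disappear, rather than estimating $\epsilon$ by a single term of the minimum — to be the only genuinely delicate point; everything else is the elementary chain $n^{\alpha}/2\le d\le C_0 n^{1/(r-1)}$ together with the numerical inequality $\frac13-\frac15>\frac{1}{10(r-1)}$, which is precisely where the (otherwise harmless-looking) Lov\'asz upper bound on $d$ is needed.
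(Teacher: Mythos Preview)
Your proof is correct. Both you and the paper arrive at the same intermediate bound $\epsilon\le O_r(d^{-\alpha/3})$ for $i\ge 2$, but by different routes. The paper simply splits into the two cases $2s_{i-2}\le nr^2/d$ and $2s_{i-2}>nr^2/d$: in the first case $a=2(s_{i-2}/n)^{\alpha/3}\le 2(r^2/d)^{\alpha/3}$ directly, and in the second case one computes $s_{i-1}\ge \frac{nr^2}{2d}(2d/r^2)^{\alpha/3}$, whence $b\le r^2 d^{-\alpha/3}$. Your weighted geometric-mean identity $\min\{a,b\}\le a^{1-\alpha/3}b^{\alpha/3}=2^{1-\alpha/3}(r^2/d)^{\alpha/3}$, with the weights chosen to make $s_{i-2}$ cancel, is a neat way of packaging this case split into a single line; it is not essential, but it is cleaner.

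Where you diverge more substantially is in the final comparison. The paper stays in the variable $d$: from $\epsilon=O_r(d^{-\alpha/3})$ and $q>\frac{1}{2\ell}d^{-\alpha^2/10}$ one gets $q/\epsilon\ge c(r,\ell)\,d^{\alpha/3-\alpha^2/10}$, and since $\alpha<1/2$ gives $\alpha/3-\alpha^2/10>\alpha/4$, the lower bound $d\ge n^{\alpha}/2$ alone already yields $q/\epsilon\gtrsim n^{\alpha^2/4}>n^{\alpha^2/5}$. Your detour through the Lov\'asz/Kruskal--Katona upper bound $d\le C_0 n^{1/(r-1)}$ is therefore unnecessary: you first weaken $d^{-\alpha/3}$ to $n^{-\alpha^2/3}$, then separately weaken $d^{-\alpha^2/10}$ to $n^{-\alpha^2/(10(r-1))}$, and only then combine --- which works, but costs you the extra ingredient and the extra numerical check $\frac13-\frac15>\frac{1}{10(r-1)}$. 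The paper's direct comparison in $d$ is shorter.
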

		\begin{proof}
			If $i=1$, we have $\epsilon\leq 2s_{-1}/s_{0}=2r^{2r-1}/d< n^{-\alpha^2}\cdot q$. Now suppose $i\geq 2$. If $2s_{i-2}\leq nr^2/d$, we have 
			$$\epsilon=\frac{2s_{i-2}}{s_{i-1}}=2\left(\frac{s_{i-2}}{n}\right)^{\alpha/3}\leq 2\left(\frac{r^2}{d}\right)^{\alpha/3}<n^{-\alpha^2/5} d^{-\alpha/4}<n^{-\alpha^2/5} \cdot q.$$
			On the other hand, if $2s_{i-2}>nr^2/d$, then 
			$$s_{i-1}\geq \frac{nr^2}{2d}\left(\frac{2d}{r^2}\right)^{\alpha/3}>\frac{n}{d}\cdot d^{\alpha/3}.$$
			Therefore, $\epsilon=nr^2/ds_{i-1}< r^2 d^{-\alpha/3}\leq n^{-\alpha^2/5}\cdot q$.
		\end{proof}
		As the conditions of Lemma \ref{lemma:simp_master} are satisfied, \wehp, there exists $B_{i}\subset N_{\phi}(B_{i-1},U_{i})$ such that
		\begin{itemize}
			\item $|B_{i}|=|B_{i-1}|(n/|B_{i-1}|)^{\alpha/3}=s_{i}$, and
			\item $\deg_{B_{i}}(v)\leq |B_{i-1}|+O((\log n)|B_{i}|\epsilon/q)$ for every $v\in V(B_{i})$.
		\end{itemize}
		As $\deg_{B_i}(v)<nr^2/d$ holds automatically, it remains to show that $|B_{i-1}|+n^{o(1)}|B_{i}|\epsilon/q\leq 2s_{i-1}$ is also true if $|B_{i-1}|\leq nr^2/d$. If $i=1$, we have $$|B_{1}|\epsilon=\frac{2s_1 s_{-1}}{s_{0}}=\frac{2r^{2r-1}s_1}{d}=\frac{2r^{2r-1}s_{0}}{d}\left(\frac{n}{s_{0}}\right)^{\alpha/3}<s_{0}n^{-\alpha/3},$$
		so $|B_{0}|+ n^{o(1)}|B_1|\epsilon/q <2s_0$. If $i\geq 2$, then
		$$\frac{|B_{i}|\epsilon}{s_{i-1}}=\frac{2s_{i}s_{i-2}}{s_{i-1}^2}=2\left(\frac{s_{i-2}}{n}\right)^{(\alpha/3)^2}\leq 2r^2 d^{-(\alpha/3)^2}<q\cdot n^{-\alpha^3/100}.$$
		Therefore, $|B_{i-1}|+n^{o(1)}|B_{i}|\epsilon/q\leq 2s_{i-1}$ holds, as required. The set $B_{i}$ satisfies all the desired conditions, so $\mathcal{E}_i$ \wehp
		
		But then \wehp, $\mathcal{E}_{i}(f_{0})$ happens for every $i=0,\dots,\ell-r+1$ and $f_0\in P(G)$. To finish the proof, note that $s_i\geq n^{1-(1-\alpha/3)^{i}}$, so $s_{\ell-r+1}\geq n^{1-\tau}$.
	\end{proof}
	
	\subsection{Cycles in simplicial complexes --- Proof of Theorem \ref{thm:cycleformal}}\label{sect:simpcycles}
	
	In this section, we prove Theorem \ref{thm:cycleformal}. First, we use Lemma \ref{lemma:simp_randompath} to show that if $G$ is an $\alpha$-maximal $r$-graph, then between many pairs of $(r-1)$-faces, one can find $d(G)^{\Omega(\alpha^2)}$ internally vertex disjoint proper paths. We note that in order to prove the main theorem of this section, namely Theorem \ref{thm:cycleformal}, it would be enough to find $\ell$ internally vertex disjoint paths between many pairs of $(r-1)$-faces.
	
	\begin{lemma}\label{lemma:hypmanypaths}
		Let $r\geq 3$, then there exist $c_1>0$ and $0<c_2<1/4$ such that the following holds. Let  $\alpha \in (0,\frac{1}{r-1})$, $\tau\in (0,\alpha^3/30)$, $\ell\geq \frac{c_1}{\alpha}\log\frac{1}{\tau}$, and let $n>n_0(r,\alpha,\ell)$. Let $G$ be an $\alpha$-maximal $r$-graph with $p(G)=n$. Then at least $n^{2-\tau}$ pairs of faces $f,f'\in P(G)$ satisfy that there are at least $d(G)^{c_2\alpha^2}$ internally vertex disjoint proper paths of length $\ell$ with endpoints $f$ and $f'$.
	\end{lemma}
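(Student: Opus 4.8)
The plan is to deduce this from Lemma~\ref{lemma:simp_randompath} by the same ``sprinkle into $s$ parts'' device used in the proof of Theorem~\ref{thm:rainbow_subdivision}, exploiting the point that if we \emph{partition} $V(G)$ rather than take independent samples, then paths whose internal vertices lie in different parts are automatically internally vertex disjoint. First I would fix parameters: set $\tau_0:=\tau/8$, so $\tau_0<\alpha^3/30<1/240$ since $\alpha<\tfrac12$, and choose the absolute constant $c_1$ large enough that the hypothesis $\ell\ge\tfrac{c_1}{\alpha}\log\tfrac1\tau$ forces $\ell\ge\tfrac{20}{\alpha}\log\tfrac1{\tau_0}$. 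Put $d:=d(G)$ (so $d\ge n^{\alpha}/2$ by Lemma~\ref{lemma:hypmax}(i), hence $d$ is large for $n$ large) and $s:=\lfloor d^{\alpha^2/10}\rfloor$, so that $2\le s$ and $p:=1/s$ satisfies $d^{-\alpha^2/10}\le p<1$. Take a uniformly random $\pi:V(G)\to[s]$ and set $U_i:=\pi^{-1}(i)$; for each fixed $i$, $U_i$ is distributed as an independent $p$-sample of $V(G)$, so Lemma~\ref{lemma:simp_randompath} applies to $U_i$ with $\tau_0$ in place of $\tau$. Each of these $s$ conclusions holds \wehp, and $s$ is polynomially bounded in $n$ (indeed $s\le d\le n$ for $n$ large by the Kruskal--Katona-type bound preceding Lemma~\ref{lemma:hypmax}), so a union bound gives \wehp\ a choice of $\pi$ such that for \emph{every} $i\in[s]$ and \emph{every} $f_0\in P(G)$, the set $B_i(f_0)$ of $(r-1)$-faces reachable from $f_0$ by a proper $U_i$-path of length $\ell$ has $|B_i(f_0)|\ge n^{1-\tau_0}$. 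Fix such a $\pi$.

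The combinatorial core is then a double count. Since the $U_i$ are pairwise disjoint, a $U_i$-path and a $U_j$-path have disjoint sets of internal vertices whenever $i\ne j$; hence if $f'\in B_i(f_0)$ for $k$ distinct indices $i$, picking one path per index produces $k$ internally vertex disjoint proper paths of length $\ell$ from $f_0$ to $f'$. Call an ordered pair $(f_0,f')$ \emph{rich} if $f'\in B_i(f_0)$ for at least $\tfrac12 s\,n^{-\tau_0}$ indices $i$. For fixed $f_0$, the multiset $\bigsqcup_{i=1}^s B_i(f_0)$ has at least $s\,n^{1-\tau_0}$ elements from a ground set of $n$ faces; the faces of multiplicity below $\tfrac12 s\,n^{-\tau_0}$ contribute fewer than $\tfrac12 s\,n^{1-\tau_0}$ of them, and each remaining face has multiplicity at most $s$, so at least $\tfrac12 n^{1-\tau_0}$ faces $f'$ make $(f_0,f')$ rich. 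Summing over the $n$ choices of $f_0$ yields at least $\tfrac12 n^{2-\tau_0}\ge n^{2-\tau}$ rich pairs, for $n$ large (since $\tau_0=\tau/8$).

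It remains to check the path count on a rich pair. By construction there are at least $\tfrac12 s\,n^{-\tau_0}\ge\tfrac14 d^{\alpha^2/10}\,n^{-\tau/8}$ internally vertex disjoint proper paths of length $\ell$ between $f_0$ and $f'$. From $d\ge n^{\alpha}/2$ we get $n\le(2d)^{1/\alpha}$, and since $\tau<\alpha^3/30$ this gives $n^{\tau/8}\le(2d)^{\tau/(8\alpha)}\le(2d)^{\alpha^2/240}$. Hence the number of paths is at least $\tfrac14\,2^{-\alpha^2/240}\,d^{\alpha^2/10-\alpha^2/240}=\tfrac14\,2^{-\alpha^2/240}\,d^{23\alpha^2/240}\ge d^{\alpha^2/20}=d(G)^{c_2\alpha^2}$ for $n$ sufficiently large, with $c_2:=1/20<1/4$. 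This proves the lemma with the constants $c_1,c_2$ fixed above.

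The main point is that there is essentially no new idea beyond Lemma~\ref{lemma:simp_randompath}: the only two things one must get right are (i) using a genuine partition of $V(G)$, which is exactly what makes paths from distinct parts internally vertex disjoint for free, and (ii) the arithmetic of exponents — the $n^{\tau}$-type losses incurred by the double count must be absorbed into the power of $d(G)=\Theta(n^{\alpha})$ we invest, which is precisely why the hypothesis $\tau<\alpha^3/30$ appears. Beyond that, the only care needed is in the bookkeeping: the floor in the definition of $s$, the ``polynomial versus $\exp(-n^{z(\alpha)})$'' union bound, and the various ``$n$ sufficiently large'' absorptions.
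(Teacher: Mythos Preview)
Your proof is correct and follows essentially the same approach as the paper: partition $V(G)$ randomly into $s\approx d(G)^{\alpha^2/10}$ parts, apply Lemma~\ref{lemma:simp_randompath} to each part, fix a successful partition, and then double-count to extract many pairs with large multiplicity across the parts. The paper phrases the double count via an auxiliary multigraph on $P(G)$ rather than your per-$f_0$ multiset argument, and uses $\tau/2$ where you use $\tau/8$, but these are cosmetic differences; your exponent bookkeeping and your handling of the union bound and the floor in $s$ are all fine.
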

	
	\begin{proof}
		We show that $c_{1}=100$, $c_2=1/20$ suffice, say. Let $s=d(G)^{\alpha^2/10}$. We remind the reader that $d(G)>n^{\alpha}/2$ by Lemma \ref{lemma:hypmax}, (i). Partition $V(G)$ into $s$ sets $U_1,\dots,U_s$ randomly such that each vertex belongs to $U_i$ with probability $1/s$, independently from the other vertices.  
		
		Consider the multigraph $H$ on vertex set $P(G)$ in which for every $i\in [s]$, we add an edge between the $(r-1)$-faces $f$ and $f'$ if there is a proper $U_{i}$-path of length $\ell$ with endpoints $f$ and $f'$. By Lemma \ref{lemma:simp_randompath}, \wehp, every $i\in [s]$ contributes at least $\frac{1}{2}n^{2-\tau/2}>2n^{2-\tau}$ edges, so $H$ has at least $2sn^{2-\tau}$ edges. Fix a partition  $U_1,\dots,U_s$ with this property.
		
		Say that a pair of vertices $\{u,v\}$ is \emph{good} if there are at least $sn^{-\tau}$ edges between $u$ and $v$ in $H$, otherwise say that it is \emph{bad}. The bad pairs of vertices contribute at most $n^{2}\cdot sn^{-\tau}$ edges to $H$, so at least $sn^{2-\tau}$ edges of $H$ are between good pairs of vertices. As there are at most $s$ edges between any pair of vertices, this implies that there are at least $n^{2-\tau}$ good pairs of vertices. Note that each good pair of vertices $\{f,f'\}$ satisfies that there are at least $sn^{-\tau}=d(G)^{\alpha^2/10}\cdot n^{-\tau}>d(G)^{\alpha^2/20}$ internally vertex disjoint paths of length $\ell$ with endpoints $f$ and $f'$, finishing the proof.
	\end{proof}
	
	From this, we can easily deduce that sufficiently large $\alpha$-maximal $r$-graphs contain short cycles.
	
	\begin{lemma}\label{lemma:simp_max_cycle}
		Let $r\geq 3$, then there is an absolute constant $c>0$ such that the following holds. Let $r\geq 3$, $\alpha\in (0,\frac{1}{r-1})$, and let $\ell\geq \frac{c}{\alpha}\log \frac{1}{\alpha}$. If $G$ is an $\alpha$-maximal $r$-graph such that $p(G)$ is sufficiently large with respect to the parameters $r,\alpha,\ell$, then $G$ contains a cycle of length $\ell$.
	\end{lemma}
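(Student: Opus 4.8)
\emph{Proof strategy, and the easy (even) case.} I would split on the parity of $\ell$; the even case is essentially immediate, and the odd case carries the work. Write $n=p(G)$, $d=d(G)\ge n^{\alpha}/2$ by Lemma~\ref{lemma:hypmax}(i), and fix $\tau:=\alpha^{3}/100\in(0,\alpha^{3}/30)$; let $c_{1},c_{2}$ be the constants from Lemma~\ref{lemma:hypmanypaths}. The absolute constant $c$ is taken large enough that $\ell\ge\frac{c}{\alpha}\log\frac1\alpha$ forces an integer $\ell_{0}$ with $\frac{c_{1}}{\alpha}\log\frac1\tau\le\ell_{0}$, $\ell_{0}\ge r$, and $\ell-2\ell_{0}>r$; since $r$ is a constant and $\frac1\alpha>r-1$ this only costs a constant factor in $c$, and $p(G)$ may be assumed as large as needed in $r,\alpha,\ell$. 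If $\ell$ is even, take $\ell_{0}=\ell/2$ and apply Lemma~\ref{lemma:hypmanypaths} with $\ell_{0}$ in place of $\ell$: some pair of $(r-1)$-faces $f,f'$ is joined by at least $d^{c_{2}\alpha^{2}}\ge2$ internally vertex-disjoint proper paths of length $\ell_{0}$. Concatenating one of them with the reverse of another gives a closed walk $g_{0},\dots,g_{\ell}$ with $g_{0}=g_{\ell}=f$, $g_{\ell_{0}}=f'$; its vertex set is $V(f)\cup V(f')$ together with the two disjoint internal vertex sets, of total size $2(r-1)+2(\ell_{0}-r+1)=\ell$, and $g_{\ell_{0}}=f'$ is disjoint from $g_{\ell}=f$ with $\ell_{0}<\ell$, so it is a cycle of length $\ell$.

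\emph{Odd case: producing the three building blocks.} Now set $\ell_{0}:=\lfloor(\ell-r-1)/2\rfloor$, so that the ``return leg'' length $\ell_{1}:=\ell-2\ell_{0}\in\{r+1,r+2\}$ satisfies $\ell_{1}>r$ regardless of the parity of $\ell$ --- this is exactly the feature that lets us beat the obstruction that kills the analogue for graphs. Apply Lemma~\ref{lemma:hypmanypaths} with $\ell_{0}$; averaging over the $\ge n^{2-\tau}$ resulting pairs, fix a face $f_{0}$ lying in at least $n^{1-\tau}$ of them, and let $F$ be the set of its partners. Discarding the $O(n^{1-\alpha})$ members of $F$ meeting $V(f_{0})$ (bounded via Lemma~\ref{lemma:maxdeg}) leaves $|F|\ge n^{1-\tau}/2$, with every $f'\in F$ disjoint from $V(f_{0})$. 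Let $G^{\ast}$ be the $r$-graph on $V(G)\setminus V(f_{0})$ with edge set $\{e\in E(G):e\cap V(f_{0})=\emptyset\}$; deleting $V(f_{0})$ destroys at most $r-1$ edges above any surviving $(r-1)$-face, so every face of $G^{\ast}$ has degree at least $d/r-(r-1)\ge d/3r$, while $p(G^{\ast})\le n$ and $F\subseteq P(G^{\ast})$. Since $|F|\ge n^{1-\tau}/2\ge 2r\ell_{1}\cdot p(G^{\ast})/(d/3r)$ for $n$ large (using $\tau<\alpha$), Lemma~\ref{lemma:longhyppaths} applied to $G^{\ast}$ yields a proper path $P_{0}$ of length $\ell_{1}$ with both endpoints $f',f''\in F$, automatically disjoint from $V(f_{0})$.

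\emph{Odd case: gluing.} It remains to route $f_{0}$ to $f'$ and to $f''$ by short paths avoiding $P_{0}$ and one another. Among the $\ge d^{c_{2}\alpha^{2}}$ internally disjoint proper $f_{0}$--$f'$ paths of length $\ell_{0}$, at most $|V(P_{0})|\le 2r+1$ have an internal vertex in the fixed set $V(P_{0})$ (each vertex of $V(P_{0})$ is internal to at most one of them), and since $d^{c_{2}\alpha^{2}}\ge(n^{\alpha}/2)^{c_{2}\alpha^{2}}\to\infty$ while $\ell$ is a fixed constant, I may pick one such path $P'$ whose internal vertices avoid $V(P_{0})$. Likewise, at most $|V(P_{0})\cup V(P')|\le\ell_{0}+3r\le\ell$ of the $f_{0}$--$f''$ paths have an internal vertex in $V(P_{0})\cup V(P')$, so I pick one, $P''$, avoiding that set internally. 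Concatenating $P'$, $P_{0}$ and the reverse of $P''$ gives a closed walk of length $2\ell_{0}+\ell_{1}=\ell$ based at $f_{0}$. The faces $f_{0},f',f''$ are pairwise disjoint (each pair bounds a proper path), and the three internal vertex sets are pairwise disjoint and disjoint from $f_{0}\cup f'\cup f''$ by construction, so the walk has exactly $3(r-1)+(\ell_{0}-r+1)+(\ell_{0}-r+1)+(\ell_{1}-r+1)=\ell$ vertices; it visits the disjoint faces $f'$ and $f''$ at positions $\ell_{0}<\ell_{0}+\ell_{1}\le\ell$, hence it is a cycle of length $\ell$.

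\emph{Where the difficulty lies.} The crux is the odd case: one must take $\ell_{0}$ as large as the hypothesis $\ell\ge\frac{c}{\alpha}\log\frac1\alpha$ allows while keeping the return leg $\ell-2\ell_{0}$ just above $r$ (that leg then inherits whatever parity is forced on it, which is what makes odd cycles reachable at all), and then assemble three internally disjoint paths while tracking enough disjointness that the vertex total is exactly $\ell$. Two points make the accounting close: passing to $G^{\ast}$ so the connecting path $P_{0}$ is automatically off $f_{0}$, and using that $\ell$ is a fixed constant while $d^{c_{2}\alpha^{2}}$ grows polynomially in $n$, which is precisely what allows the greedy choices of $P'$ and $P''$ to go through. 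Everything else --- the even case, the degree bookkeeping for $G^{\ast}$, and the verification that the concatenated walk meets the definition of a cycle --- is routine.
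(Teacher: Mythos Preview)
Your proof is correct and follows essentially the same route as the paper: apply Lemma~\ref{lemma:hypmanypaths} with $\ell_{0}\approx\ell/2$, fix a hub face $f_{0}$ with many partners $F$, delete $V(f_{0})$ and use Lemma~\ref{lemma:longhyppaths} to find a short proper path in $F$, then greedily thread two $f_{0}$--$F$ paths through the remaining room. Two cosmetic remarks: the paper's formal proof does not split on parity (it takes $\ell_{0}=\lfloor(\ell-r-1)/2\rfloor$ uniformly, though its outline does mention your even-case shortcut), and your discarding step is redundant---since the paths from Lemma~\ref{lemma:hypmanypaths} are \emph{proper}, every $f'\in F$ is already disjoint from $f_{0}$.
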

	
	\begin{proof}
		Let $\tau=\alpha^3/40$, let $\ell_0=\lfloor(\ell-r-1)/2\rfloor$, and let $c_1,c_2,n_0$ be the parameters given by Lemma \ref{lemma:hypmanypaths}. Choose $c$ such that that $\ell_0\geq  \frac{c_1}{\alpha}\log\frac{1}{\tau}$ holds. Let $n=p(G)$, $d=d(G)>n^{\alpha}/2$, and assume that $n>n_0(r,\alpha,\ell_0)$ and $n^{c_2\alpha^3}\geq 2\ell$. Say that a pair of $(r-1)$-faces $\{f,f'\}$ is good if there are at least $d^{c_2\alpha^2}>\ell$ internally vertex disjoint proper paths of length $\ell_0$ with endpoints $f$ and $f'$. Then there are at least $n^{2-\tau}$ good pairs, so there exists $f_0\in P(G)$ which appears in at least $n^{1-\tau}$ good pairs. Let $F\subset P(G)$ be the set of faces $f$ such that $\{f_0,f\}$ is good. Let $H$ be the subhypergraph of $G$ we get after removing the vertices (and thus all edges containing some of those vertices) of $f_0$. By Lemma \ref{lemma:hypmax}, every $f\in P(G)$ satisfies $\deg_{G}(f)\geq d/r$, so every $f\in P(H)$ satisfies $\deg_{H}(f)\geq d/r-(r-1)>d/2r$. Also, as $f\cap f_0=\emptyset$ for every $f\in F$, we have $F\subset P(H)$. 
		
		Apply Lemma \ref{lemma:longhyppaths} to the $r$-graph $H$ and $F\subset P(H)$. We have $\ell-2\ell_0\leq r+2$ and $|F|\geq n^{1-\tau}>4r^2(r+2)p(H)/d$, so $H$ contains a proper path of length $\ell-2\ell_0$, whose both endpoints are in $F$. Let $f_{\ell_0},\dots,f_{\ell-\ell_0}$ be such a path. As $f_{\ell_0}\in F$, there exist at least $\ell>r+2$ internally vertex disjoint paths of length $\ell_0$ with endpoints $f_0$ and $f_{\ell_0}$. Therefore, there exists one such path whose every internal vertex is disjoint from the faces $f_{\ell_0},\dots,f_{\ell-\ell_0}$. Also, $f_{\ell-\ell_0}\in F$, so there exist at least $\ell$ internally vertex disjoint paths of length $\ell_0$ with endpoints $f_0$ and $f_{\ell-\ell_0}$. But then there exists one whose every internal vertex is disjoint from the faces $f_0,\dots,f_{\ell-\ell_0}$. Let $f_{\ell-\ell_0},f_{\ell-\ell_0+1},\dots,f_{\ell}=f_{0}$ be such a path. To finish the proof, note that $f_0\dots,f_{\ell}$ is a cycle of length $\ell$ in $G$. Indeed, $f_{\ell_0}$ and $f_{\ell-\ell_0}$ are disjoint, and the two paths $f_{\ell_0},\dots,f_{\ell-\ell_0}$ and $f_{\ell-\ell_0},\dots,f_{\ell_0}$ (where indices are meant modulo $\ell$) are internally vertex disjoint.
	\end{proof}
	
	Now everything is set to prove the main theorem of this section.
	
	\begin{proof}[Proof of Theorem \ref{thm:cycleformal}]
		Let $c'>0$ be the constant $c$ given by Lemma \ref{lemma:simp_max_cycle}. Let $G'$ be the subhypergraph of $G$ maximizing the quantity $d(G')/p(G')^{\alpha/2}$. Then $G'$ is $(\alpha/2)$-maximal and $d(G')\geq n^{\alpha/2}$. Therefore, $p(G')\geq n^{\alpha/2}$. Hence, if $\ell\geq \frac{c'}{\alpha/2}\log \frac{1}{\alpha/2}$, and $n$ is sufficiently large with respect to $r,\alpha,\ell$, then $G'$ contains a cycle of length $\ell$. This shows that $c=4c'$ suffices.
	\end{proof}
	
	Finally, we deduce Theorem \ref{thm:maincycle1}. Let us repeat this theorem formally.
	
	\begin{theorem}\label{thm:final}
		There exists a constant $c>0$ such that the following holds. Let $\alpha\in (0,1/2)$, let $G$ be a $3$-graph with $p(G)=n$ and $e(G)\geq n^{1+\alpha}$. If $n>n_{0}(\alpha)$, then $G$ contains a homeomorphic copy of the cylinder and the M\"obius strip on at most  $\frac{c}{\alpha}\log\frac{1}{\alpha}$ vertices.
	\end{theorem}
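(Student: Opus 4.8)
The plan is to derive Theorem~\ref{thm:final} from Theorem~\ref{thm:cycleformal} (with $r=3$) applied to a $3$-partite subhypergraph; the only genuinely new point is arranging $3$-partiteness so that Lemma~\ref{lemma:characterization} can be used to pin down the homeomorphism type of the cycles from the parity of their length.

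First I would pass to a $3$-partite sub-$3$-graph. Color each vertex of $G$ independently and uniformly with one of three colors, and let $G'$ consist of the edges of $G$ that receive all three colors. Such an edge survives with probability $3!/3^{3}=2/9$, so some coloring gives $e(G')\geq \tfrac{2}{9}\,n^{1+\alpha}$; fix it. By construction $G'$ is $3$-partite, every $2$-face of $G'$ is a $2$-face of $G$, hence $n':=p(G')\leq p(G)=n$, and, by the Lov\'asz form of the Kruskal--Katona theorem quoted in Section~\ref{sect:alphasimplicial}, $e(G')=O(n'^{\,3/2})$, so $n'\to\infty$ as $n\to\infty$. Using $n'\leq n$, for $n$ large in terms of $\alpha$ we get
\[
e(G')\ \geq\ \tfrac{2}{9}\,n^{1+\alpha}\ \geq\ \tfrac{2}{9}\,n^{\alpha/2}\cdot n'^{\,1+\alpha/2}\ \geq\ n'^{\,1+\alpha/2}.
\]

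Next I would invoke Theorem~\ref{thm:cycleformal} with $r=3$ and exponent $\alpha/2\in(0,\tfrac12)$ for the $3$-graph $G'$: with $c'$ the constant it provides, as soon as $n$ (hence $n'$) is large enough, $G'$ contains a cycle of length $\ell$ for every integer $\ell>\tfrac{2c'}{\alpha}\log\tfrac{2}{\alpha}$. I would take $\ell_{0}$ to be the least even integer above this threshold and $\ell_{1}$ the least odd integer above it, so that $\ell_{0},\ell_{1}\leq \tfrac{4c'}{\alpha}\log\tfrac{2}{\alpha}+2$. Each of the two resulting cycles $C_{0},C_{1}\subset G'$ is a subhypergraph of the $3$-partite $3$-graph $G'$, hence is itself a $3$-partite cycle, so Lemma~\ref{lemma:characterization} applies: $C_{0}$ is homeomorphic to the cylinder and $C_{1}$ to the M\"obius strip. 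A cycle of length $\ell$ has exactly $\ell$ vertices, and $\log\tfrac{2}{\alpha}=O(\log\tfrac{1}{\alpha})$ for $\alpha<\tfrac12$, so both $C_0$ and $C_1$ have at most $\tfrac{c}{\alpha}\log\tfrac{1}{\alpha}$ vertices for a suitable absolute constant $c$, which gives the theorem.

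Since the substantive work is already contained in Theorem~\ref{thm:cycleformal}, I do not expect a real obstacle here. The one point to be careful about is that Theorem~\ref{thm:cycleformal} produces \emph{some} cycle of the prescribed length, with no control a priori over whether it is a cylinder or a M\"obius strip; that control is recovered only through $3$-partiteness via Lemma~\ref{lemma:characterization}, which is why the random $3$-partition is applied at the outset, and one must check (as above, crucially using $p(G')\le p(G)$) that this preprocessing does not spoil the edge-to-$2$-face density needed to apply Theorem~\ref{thm:cycleformal}.
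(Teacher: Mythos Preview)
Your proposal is correct and follows essentially the same route as the paper: pass to a $3$-partite sub-$3$-graph via random $3$-coloring, apply Theorem~\ref{thm:cycleformal} to obtain cycles of both parities just above the threshold, and then invoke Lemma~\ref{lemma:characterization} to identify the homeomorphism type. You are in fact a bit more careful than the paper's own write-up in two places: you explicitly halve the exponent to $\alpha/2$ so that the $\tfrac{2}{9}$ loss and the possible drop from $n$ to $n'=p(G')$ are cleanly absorbed, and you verify via Kruskal--Katona that $n'\to\infty$ so the ``$n$ sufficiently large'' hypothesis of Theorem~\ref{thm:cycleformal} is inherited by $G'$.
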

	
	\begin{proof}
		By a standard probabilistic argument, $G$ contains a 3-partite subhypergraph $G'$ with at least $2e(G)/9$ edges. Applying, Theorem \ref{thm:cycleformal} to $G'$, we get that $G'$ contains an $\ell$-cycle for every $\ell>\frac{c'}{\alpha}\log\frac{1}{\alpha}$ for some constant $c'>0$. Recall from Lemma \ref{lemma:characterization} that a 3-partite $\ell$-cycle is homeomorphic to the cylinder if $\ell$ is even, and to the M\"obius strip if $\ell$ is odd. This finishes the proof.
	\end{proof}

	\section{Concluding remarks}
	
	\subsection{Cycles in the hypercube}
	
	Theorem \ref{thm:cycleformal} has an interesting application about the extremal numbers of cycles in the hypercube. As a reminder, $Q_n$ denotes the graph of the $n$-dimensional hypercube, that is, $V(Q_n)=\{0,1\}^{n}$, and two vertices are joined by an edge if they differ in exactly one coordinate. In other words,  each edge of $Q_n$ corresponds to a sequence $\{0,1,*\}^{n}$ containing exactly one $*$, called the \emph{flip-bit}.
	
	Given a graph $H$, let $\mbox{ex}(Q_n,H)$ denote the maximum number of edges of a subgraph of $Q_n$ containing no copy of $H$. The study of the function $\mbox{ex}(Q_n,C_{2\ell})$ was initiated by Erd\H{o}s \cite{Erdos84,Erdos90}; we refer the reader to \cite{Conlon10} for the extensive history of this problem. In case $\ell\in\{2,3\}$, one has $\mbox{ex}(Q_n,C_{2\ell})=\Omega(n\cdot 2^{n})$, while for $\ell\geq 4$ the Lov\'asz local lemma implies that $\mbox{ex}(Q_n,C_{2\ell})=\Omega(n^{\frac{1}{2}+\frac{1}{2\ell}}\cdot2^{n})$, see \cite{Conlon10}. For an upper bound, Chung \cite{Chung92} proved that if $\ell\geq 4$ is even, then $\mbox{ex}(Q_n,C_{2\ell})=O( n^{\frac{1}{2}+\frac{1}{\ell}}\cdot2^{n})$. The case of odd $\ell$ is more mysterious. It was proved only later by F\"uredi and \"Ozkahya \cite{FurOz1,FurOz2} that for $\ell\geq 7$, there exists $\epsilon(\ell)>0$ such that $\mbox{ex}(Q_n,C_{2\ell})=O(n^{1-\epsilon(\ell)}\cdot 2^{n})$, where $\epsilon(\ell)$ tends to $1/16$ as $\ell$ tends to infinity. Pursuing a remark of Conlon \cite{Conlon10}, who also gave a short proof of this (with slightly weaker bounds on $\epsilon(\ell)$), we improve this for large $\ell$.
	
	\begin{theorem}\label{thm:hypercube}
		$\mbox{ex}(Q_n,C_{2\ell})=O(n^{\frac{2}{3}+\delta}\cdot 2^{n})$ for some $\delta=O(\frac{\log \ell}{\ell})$.
	\end{theorem}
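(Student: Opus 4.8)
The plan is to derive this from Theorem~\ref{thm:cycleformal} (with $r=3$) applied to an auxiliary $3$-uniform hypergraph built out of a dense subgraph of $Q_n$, following the line suggested by Conlon. First I would reduce to the following: there are absolute constants $K,c_0$ so that if $\delta\geq c_0\frac{\log\ell}{\ell}$ and $G\subseteq Q_n$ has $e(G)\geq K n^{2/3+\delta}2^n$ (equivalently, average degree at least $2Kn^{2/3+\delta}$, and, passing to a subgraph, minimum degree at least $Kn^{2/3+\delta}$), then $G$ contains $C_{2\ell}$; this clearly implies the stated bound on $\mathrm{ex}(Q_n,C_{2\ell})$. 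Fix $\alpha:=c_0'\frac{\log\ell}{\ell}$ with $c_0'$ large enough that $2\ell>\frac{c}{\alpha}\log\frac1\alpha$ (possible since $\frac{c}{\alpha}\log\frac1\alpha=\Theta(\ell)$ for this $\alpha$), where $c$ is the constant of Theorem~\ref{thm:cycleformal}; I will arrange $\delta=\Theta(\alpha)$, so $\delta=O(\frac{\log\ell}{\ell})$ as required.

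The heart of the argument is to construct a $3$-graph $H$ with: (a) $p(H)\leq n^{O(1)}$; (b) $e(H)\geq p(H)^{1+\alpha}$; (c) $H$ is $3$-partite; (d) every tight cycle of length $2\ell$ in $H$ can be ``developed'' into a copy of $C_{2\ell}$ inside $G$. For (d) I would exploit the flip-bit description of $Q_n$: an edge is a pair (flip-bit, restriction of the endpoints to the other coordinates), a length-two path (cherry) in $G$ is governed by its centre and two flip-bits, and a $C_{2\ell}$ is a cyclic chain of $2\ell$ such pieces overlapping consecutively in single edges. Accordingly, the vertices of $H$ are taken to be objects of polynomial size encoding the \emph{local data} of such pieces --- coordinates together with a bounded amount of extra information, and/or the restriction of $Q_n$-vertices to a randomly chosen set of $O(\log n)$ coordinates so as to collapse the exponential ambient part --- a triple being declared an edge of $H$ exactly when sufficiently many configurations of $G$ realise the corresponding length-three piece, and the $2$-faces of $H$ recording the overlaps. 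One then verifies that the overlap pattern forced by a tight $3$-graph cycle, together with $3$-partiteness (which, as in the proof of Theorem~\ref{thm:final}, rules out the degenerate identifications and plays the role parity plays in Lemma~\ref{lemma:characterization}), glues the $2\ell$ realised pieces into a genuine, non-self-intersecting $C_{2\ell}$ of $G$.

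For (a)--(b): $p(H)\leq n^{O(1)}$ holds by the polynomial size of the ground set, while the lower bound on $e(H)$ comes from convexity --- a subgraph of $Q_n$ of average degree $\bar d$ contains $\gtrsim 2^n\,\bar d^{\,O(1)}$ of the relevant short configurations --- after normalising by the $2^n$ ambient ``columns''; the exponent $2/3$ is precisely the Kruskal--Katona exponent relating the numbers of $2$-faces and $3$-faces of a $3$-graph, so the hypothesis $\bar d\geq n^{2/3+\delta}$ converts into $e(H)\geq p(H)^{1+\Theta(\delta)}$. Granting (a)--(d), Theorem~\ref{thm:cycleformal} (with $p(H)$ in place of $n$, $2\ell$ in place of $\ell$, and $\alpha$ as above) yields a length-$2\ell$ cycle in $H$, and (d) turns it into $C_{2\ell}\subseteq G$. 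The main obstacle is exactly the design of $H$ in (d): one must simultaneously keep the ground set polynomial (so that (b) has any chance of holding), ensure that consecutive $3$-faces of a tight cycle really do assemble into cube-edges extending a common walk, and --- the subtlest point --- guarantee that the resulting closed walk in $Q_n$ visits $2\ell$ \emph{distinct} vertices; closing the bookkeeping with the exponent exactly $2/3+O(\frac{\log\ell}{\ell})$ rather than something larger is where the choice of which local configuration to encode, and how much auxiliary data to attach to the vertices of $H$, has to be made with care.
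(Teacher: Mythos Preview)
Your proposal has a genuine gap: the auxiliary $3$-graph $H$ is never actually constructed, and the hints you give about its design point in the wrong direction. The paper's proof is a three-line application of Conlon's black box, stated in the paper as Theorem~\ref{thm:Conlon}: one simply observes that every $3$-graph higher-order cycle of length $\ell$ (the objects in Theorem~\ref{thm:cycleformal}) is a \emph{representation} of $C_{2\ell}$ in Conlon's sense, so Theorem~\ref{thm:cycleformal} gives $\mathrm{ex}(n,\mathcal{C})\le n^{2+O(\log\ell/\ell)}$ for this family $\mathcal{C}$, and plugging $r=3$, $\epsilon=n^{-1+O(\log\ell/\ell)}$ into Theorem~\ref{thm:Conlon} yields the bound directly. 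The $2/3$ is just $1-1/r$ in Conlon's theorem; it is not a Kruskal--Katona phenomenon in your counting.

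What goes wrong in your sketch is the nature of the ground set of $H$. In Conlon's reduction (hidden inside Theorem~\ref{thm:Conlon}) one averages over base vertices of $Q_n$ at a suitable Hamming weight; for a good base vertex the relevant auxiliary $3$-graph has vertex set $[n]$, the \emph{coordinates}, and a triple $\{i,j,k\}$ is an edge when the corresponding weight-$3$ configuration above the base vertex lies in $G$. There is no need to encode ``local pieces of cherries'', and certainly no need for a random projection to $O(\log n)$ coordinates to ``collapse the exponential ambient part'' --- the exponential factor disappears by the averaging over base vertices, not by any projection. Your suggestion that vertices of $H$ carry restrictions of $Q_n$-vertices would in fact destroy the lifting property (d): after projecting, distinct hypercube vertices collide, and there is no mechanism left to guarantee that the $2\ell$ vertices of the lifted walk are distinct, which you yourself flag as ``the subtlest point'' but do not resolve. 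Finally, note the factor of two: a $3$-graph cycle of length $\ell$ (not $2\ell$) represents $C_{2\ell}$, because each triple contributes a length-$2$ path in the cube; your application of Theorem~\ref{thm:cycleformal} with $2\ell$ in place of $\ell$ reflects a different (unspecified) encoding and would need its own lifting argument.
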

	
	If $\mathcal{G}$ is a family of $r$-graphs, let $\mbox{ex}(n,\mathcal{G})$ denote the maximum number of edges in an $r$-graph on $n$ vertices containing no member of $\mathcal{G}$, and if $\mathcal{G}=\{G\}$, write $\mbox{ex}(n,G)$ instead of $\mbox{ex}(n,\mathcal{G})$.  
	
	Furthermore, we use the following terminology and results from \cite{Conlon10}. We say that an $r$-graph $G$ with vertex set $[m]$ is a \emph{representation} of a graph $H$ if the following holds: $H$ has a copy in $Q_{m}$ such that each edge $\{a_1,\dots,a_{m}\}\in \{0,1,*\}^{m}$ has exactly $r$ nonzero entries ($r-1$ ones and one flip-bit), and if $i_1,\dots,i_r$ are the indices of the nonzero entries, then $\{i_1,\dots,i_r\}\in E(G).$
	
	\begin{theorem}\cite{Conlon10}
		Let $H$ be a graph and $G$ be an $r$-graph representing $H$. Suppose that $\mbox{ex}(n,G)\leq \epsilon n^{r}$. Then $$\mbox{ex}(Q_{n},H)=O(\epsilon^{1/r}n\cdot 2^{n}).$$ 
	\end{theorem}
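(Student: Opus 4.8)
The plan is to establish the equivalent statement that there is a constant $C=C(H,G,r)$ such that every subgraph $Q\subseteq Q_n$ with $e(Q)\geq C\epsilon^{1/r}n\,2^n$ contains a copy of $H$. Identify each edge of $Q_n$ with its string in $\{0,1,*\}^n$ (one entry being the flip-bit), and fix the copy of $H$ in $Q_m$ provided by the representation: each edge $e$ of $H$ acquires a flip-bit $f(e)\in[m]$ and an $r$-element support $S_e\ni f(e)$ with $S_e\in E(G)$, and all vertices of $H$ agree outside $\bigcup_e S_e$. The point is that this gives a rigid \emph{local blueprint}: a copy of $H$ inside $Q_n$ can be produced by choosing $m$ distinct coordinates $\rho_1,\dots,\rho_m\in[n]$ and one base string, after which each $H$-edge $e$ becomes a specific edge of $Q_n$ lying inside the $r$-dimensional face spanned by $\{\rho_j:j\in S_e\}$, and the whole copy of $H$ lies in the $m$-dimensional subcube spanned by $\rho_1,\dots,\rho_m$.

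The engine of the proof is an auxiliary $r$-graph built around each vertex of the cube. For $z\in\{0,1\}^n$, let $\mathcal{G}_z$ be the $r$-graph on vertex set $[n]$ whose (ordered) $r$-sets are exactly those $r$-faces based at $z$ in which $Q$ already realises the part of the local blueprint that would be routed through a face of that type. Two things then have to be verified: (a) a copy of $G$ in a single $\mathcal{G}_z$ can be upgraded to a copy of $H$ in $Q$; and (b) $\sum_{z\in\{0,1\}^n}e(\mathcal{G}_z)$ is so large that some $\mathcal{G}_z$ has more than $\epsilon n^r$ edges, at which point the hypothesis $\mathrm{ex}(n,G)\le\epsilon n^r$ supplies the required copy of $G$.

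Part (a) is essentially the definition of ``representation'' unwound: a copy of $G$ in $\mathcal{G}_z$ assigns to each support $S_e$ of the $Q_m$-blueprint a genuine $r$-set of coordinates of $[n]$ on which the local pattern near $z$ is present in $Q$, and because these $r$-sets form a copy of $G$, the local patterns glue together exactly as they do in $Q_m$, producing a copy of $H$ inside an $m$-dimensional subcube of $Q_n$ with every edge in $Q$. (A little care with orientations is needed so that the patterns on different faces are mutually compatible; this is handled by working with ordered $r$-sets and by additionally averaging over translates $z\mapsto z+w$, which costs only a constant factor.) Part (b) is the substantive step. The local pattern should be chosen lean, so that realising it inside one $r$-face costs only about $r$ multiplicative density factors --- this is precisely what makes the gain come out as $\epsilon^{1/r}$ rather than $\epsilon^{1/e(H)}$ --- and then a convexity (Jensen / K\H{o}v\'ari--S\'os--Tur\'an-type) count over the appropriate degree sequences of $Q$ gives $\sum_z e(\mathcal{G}_z)=\Omega\bigl((e(Q)/(n2^n))^r\,n^r\,2^n\bigr)$, which exceeds $\epsilon n^r 2^n$ once $e(Q)\ge C\epsilon^{1/r}n2^n$ with $C$ large.

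The main obstacle I expect is designing the local pattern and executing the convexity count in (b). On the one hand the pattern must be rich enough that its realisation along a $G$-copy of faces genuinely forces $H$; on the other hand the convexity estimate must cope with the dependencies between edges lying in different faces. The standard device, going back to Chung's treatment of $C_{2\ell}$ in the cube and Conlon's extension, is to count within one ``direction slice'' at a time: fix the flip-bit direction $i\in[n]$, organise the edges of $Q$ by base point in $\{0,1\}^{[n]\setminus i}$, and apply convexity there, where the relevant events live on a genuine product space; summing over $i$ and over $z$ then yields the bound above. Everything else --- choosing $C$, absorbing the $O(\cdot)$ constants depending on $H$ and $r$, and the translate-averaging --- is routine.
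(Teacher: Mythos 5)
First, on provenance: the paper does not prove this statement --- it is quoted from Conlon \cite{Conlon10}, and Theorem \ref{thm:Conlon} is said to follow ``from the same proof'' --- so your attempt has to be measured against Conlon's argument. Its architecture you have reproduced correctly: an auxiliary $r$-graph $\mathcal{G}_z$ on $[n]$ for each $z\in\{0,1\}^n$, the pigeonhole step $e(\mathcal{G}_z)>\epsilon n^r\geq \mathrm{ex}(n,G)$ for some $z$, and a convexity count for $\sum_z e(\mathcal{G}_z)$. The genuine gap is exactly where you yourself locate the main obstacle: you never specify the ``local blueprint'' defining $\mathcal{G}_z$, and the two devices you offer in its place do not clearly work. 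Averaging over translates $z\mapsto z+w$ does not address the real ambiguity, which is which coordinate of an unordered $r$-set of $\mathcal{G}_z$ is to play the flip-bit of a given labelled support $S_e$; and passing to ordered $r$-sets is not covered by the hypothesis, which bounds $\mathrm{ex}(n,G)$ for the plain unordered $r$-graph $G$. Likewise, the $r$ edges needed inside one face point in $r$ \emph{different} directions, so your proposed per-direction (``flip-bit slice'') convexity does not obviously deliver the $r$-th-power gain.

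The missing idea is the choice of pattern: put $R\in \mathcal{G}_z$ iff all $r$ edges of $Q$ incident to the vertex $z\oplus R$ inside the face through $z$ with free coordinates $R$ lie in $Q$ (the full star at the ``top'' vertex of that face). This pattern is invariant under permutations of $R$, so the orientation issue disappears; and since in the representation every edge of $H$ has $r-1$ ones and one flip-bit, every edge of $H$ is incident to the top vertex of its face, so a copy of $G$ in $\mathcal{G}_z$, i.e.\ an injection $\iota\colon[m]\to[n]$ carrying $E(G)$ into $E(\mathcal{G}_z)$, makes $x\mapsto z\oplus\iota(x)$ an embedding of $H$ into $Q$ --- your part (a) with no extra care needed. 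It also trivialises part (b): because the $r$ required edges share the vertex $z\oplus R$, one has $\sum_z e(\mathcal{G}_z)=\sum_{w\in\{0,1\}^n}\binom{d_Q(w)}{r}\geq 2^n\binom{2e(Q)/2^n}{r}$ by Jensen, which exceeds $\epsilon n^r2^n$ once $e(Q)\geq C\epsilon^{1/r}n2^n$ for $C=C(r)$ large, so some $\mathcal{G}_z$ has more than $\epsilon n^r$ edges. So your outline is an accurate map of the intended proof, but the step you flag as the main obstacle is precisely the content of Conlon's argument, and it is not supplied.
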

	
	Moreover, the following slightly more general result is also true and follows from the same proof.
	
	\begin{theorem}\label{thm:Conlon}
		Let $H$ be a graph and let $\mathcal{G}$ be a family of $r$-graphs, each of which is a representation of $H$. Suppose that $\mbox{ex}(n,\mathcal{G})\leq \epsilon n^{r}$. Then $$\mbox{ex}(Q_{n},H)=O(\epsilon^{1/r}n\cdot 2^{n}).$$ 
	\end{theorem}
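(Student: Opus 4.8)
The plan is to observe that Theorem~\ref{thm:Conlon} is obtained by running, essentially verbatim, the proof of the single-graph statement of Conlon quoted just above it, with one purely cosmetic change at the step where that proof invokes a hypergraph Tur\'an bound. So I would first recall the shape of Conlon's argument. Let $F\subseteq E(Q_n)$ contain no copy of $H$, of edge density $\rho:=|F|/(n2^{n-1})$ inside $Q_n$; the goal is $\rho=O(\epsilon^{1/r})$. Write an edge of $Q_n$ as a pair $(v,i)$ with $v\in\{0,1\}^n$, $i\in[n]$, standing for $\{v,v\oplus e_i\}$, and let $g(v,i)\in\{0,1\}$ indicate whether it lies in $F$ (so $g(v,i)=g(v\oplus e_i,i)$).

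For each base point $b\in\{0,1\}^n$, introduce an auxiliary $r$-graph $G_b$ on vertex set $[n]$ by declaring an $r$-set $I\subseteq[n]$ to be an edge of $G_b$ exactly when, for every $q\in I$, the hypercube edge $\bigl(b\oplus\bigoplus_{i\in I\setminus\{q\}}e_i,\ q\bigr)$ lies in $F$ --- that is, when the full ``representing pattern'' on the coordinates of $I$ (with any one of them playing the role of a flip-bit) is certified present in $F$ near $b$. The key structural fact is: \emph{if $G_b$ contains a copy of an $r$-graph $G$ that represents $H$, then $F$ contains a copy of $H$}. Indeed, a copy of $G$ in $G_b$ is an injection $\iota:[m]\hookrightarrow[n]$ with $\iota(E(G))\subseteq E(G_b)$; composing with the isometric embedding $\Phi_{b,\iota}:Q_m\hookrightarrow Q_n$, $\Phi_{b,\iota}(z)=b\oplus\bigoplus_{j:z_j=1}e_{\iota(j)}$, carries the representing copy of $H$ in $Q_m$ to a copy of $H$ in $Q_n$, each of whose edges is, by the definition of $G_b$, one of the certified-present edges, hence in $F$. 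Therefore, if $F$ is $H$-free then every $G_b$ is $G$-free, so $|E(G_b)|\le\mbox{ex}(n,G)\le\epsilon n^r$ for all $b$. Summing over $b$ gives $\sum_{b}|E(G_b)|\le 2^n\epsilon n^r$, while on the other hand $\sum_{b}|E(G_b)|=\sum_{I\in\binom{[n]}{r}}\sum_{b}\prod_{q\in I}g\bigl(b\oplus\bigoplus_{i\in I\setminus\{q\}}e_i,\ q\bigr)$; a convexity/averaging estimate for these pattern-indicator functions --- the technical heart of Conlon's proof --- bounds the right-hand side below by a quantity of order $\rho^{r}n^{r}2^{n}$, whence $\rho^{r}=O(\epsilon)$ and $|F|=O(\epsilon^{1/r}n\cdot2^n)$.

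To pass to a family $\mathcal{G}$, note that $G$ enters the argument above only through the implication ``$|E(G_b)|>\epsilon n^r\Rightarrow G_b$ contains a copy of $G$'', together with the fact that $G$ represents $H$. Under the hypothesis $\mbox{ex}(n,\mathcal{G})\le\epsilon n^r$ the same implication reads ``$|E(G_b)|>\epsilon n^r\Rightarrow G_b$ contains a copy of \emph{some} $G'\in\mathcal{G}$'', and since every member of $\mathcal{G}$ represents $H$, the structural fact of the previous paragraph applies to $G'$ exactly as it did to $G$ and again produces a copy of $H$ in $Q_n$. Hence, if $F$ is $H$-free, every $G_b$ is $\mathcal{G}$-free, so $|E(G_b)|\le\epsilon n^r$; the rest of the computation (the summation and the averaging estimate) never mentions $G$ or $\mathcal{G}$ and is therefore untouched, again giving $|F|=O(\epsilon^{1/r}n\cdot2^n)$.

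I do not expect a genuinely new obstacle: the whole content is Conlon's, which we are entitled to invoke. The only thing to verify is that in his proof $G$ is used solely in the black-box manner just described --- as the subgraph handed over by the Tur\'an bound applied to $G_b$, which then serves as a representation of $H$ --- so that replacing it by a single application of the family Tur\'an bound to each $G_b$ is legitimate; inspecting the proof confirms this. If one instead wanted a self-contained treatment, the one nontrivial ingredient would be the averaging that extracts the exponent $1/r$ from the hypergraph bound, but that inequality concerns only the functions $g$ and is identical whether one forbids a single $G$ or a family $\mathcal{G}$.
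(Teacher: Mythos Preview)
Your proposal is correct and matches the paper's approach: the paper itself offers no proof beyond the sentence ``follows from the same proof'' as Conlon's single-graph theorem, and you have correctly identified that the only place the forbidden $r$-graph enters Conlon's argument is through the Tur\'an bound applied to each auxiliary hypergraph, so that replacing the single-graph bound by the family bound $\mathrm{ex}(n,\mathcal{G})\le\epsilon n^r$ yields the conclusion with no further change. Your outline of Conlon's argument and the passage to a family is accurate; if anything you have supplied more detail than the paper does.
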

	
	\begin{proof}[Proof of Theorem \ref{thm:hypercube}]
		Let $\mathcal{C}$ be the family of 3-graphs associated with cycles of length $\ell$. Note that each member of $\mathcal{C}$ is a representation of $C_{2\ell}$. By Theorem \ref{thm:cycleformal}, we have $\mbox{ex}(n,\mathcal{C})=n^{2+O(\log \ell/\ell)}$. But then Theorem \ref{thm:Conlon} gives $\mbox{ex}(Q_{n},C_{2\ell})=O(n^{2/3+O(\log \ell/\ell)}\cdot 2^{n}).$
	\end{proof}
	
\subsection{Open problems}
	
	In this paper, we managed to improve the best known upper bounds on the extremal numbers of rainbow cycles and subdivisions, but there are still some gaps to overcome.
	\begin{itemize}
		\item Show that if a  properly edge colored graph with $n$ vertices contains no rainbow cycles, then its average degree is $O(\log n)$.
		
		\item Let $t\geq 3$ be an integer. Show that if a properly edge colored graph with $n$ vertices contains no rainbow subdivision of $K_{t}$, then its average degree is $O_t(\log n)$.
	\end{itemize}
	Furthermore, we believe the following strengthening of Theorem \ref{thm:robust} should be also true.
	\begin{itemize}
		\item Let $G$ be a graph with $n$ vertices and $n^{1+\alpha}$ edges. Show that $G$ contains an $\ell$-subdivision of $K_t$, where $t=n^{\Theta(\alpha)}$ and $\ell=O(1/\alpha)$. 
	\end{itemize}

    \noindent
    \textbf{Updates.} After the submission of this paper, there have been several developments regarding the problems above. O. Janzer and Sudakov \cite{JS22}, and independently Kim, Lee, Liu, and Tran \cite{KLLT22} proved that the extremal number of rainbow cycles is $O(n(\log n)^2)$, both proofs relying on a clever counting argument. Also, by slightly modifying the argument of our paper, Wang \cite{W22} proved  that the extremal number of rainbow subdivisions of $K_t$ for every fixed $t$ is at most $n(\log n)^{2+o(1)}$.
	
	\section*{Acknowledgments}
	We would like to thank Matija Buci\'c, Oliver Janzer, Tao Jiang, J\'anos Nagy, Sam Spiro, and Benny Sudakov for valuable discussions, and the anonymous referees for their useful comments and suggestions. 
	
	Part of this work was done while the author was employed at ETH Zurich, where he was supported by the SNSF grant 200021\_196965.
 
	\bibliographystyle{amsplain}

 \begin{aicauthors}
\begin{authorinfo}[ist]
  Istv\'an Tomon\\
  Ume\r{a} University\\
  Ume\r{a}, Sweden\\
  istvantomon\imageat{}gmail\imagedot{}com \\
  \url{https://sites.google.com/view/istvantomon/home}
\end{authorinfo}

\end{aicauthors}
	
\end{document}